\documentclass[11pt,reqno]{amsart}

\usepackage[T1]{fontenc}
\usepackage{lmodern}
\usepackage[margin=1.02in]{geometry}
\usepackage{amsmath,amssymb,mathtools,amscd,mathrsfs}
\usepackage{enumitem}
\usepackage[expansion=false]{microtype}
\usepackage{booktabs,array}
\usepackage{aliascnt,amsmath}
\usepackage{xcolor}
\definecolor{SylvesterLink}{RGB}{18,72,120}
\usepackage{hyperref}
\hypersetup{
  pdftitle={A proof of Sylvester's conjecture},
  pdfauthor={Ashay A. Burungale and Ye Tian},
  pdfsubject={Sylvester's conjecture on sums of two rational cubes},
  colorlinks=true,
  allcolors=SylvesterLink,
  linktoc=all,
  bookmarksnumbered=true,
  bookmarksopen=true,
  bookmarksopenlevel=3,
  bookmarksdepth=3,
  pdfdisplaydoctitle=true,
  pdfstartview=FitH
}
\usepackage[nameinlink,noabbrev]{cleveref}
\numberwithin{equation}{section}

\allowdisplaybreaks[2]
\setlist[itemize]{leftmargin=1.7em,itemsep=.2ex,topsep=.4ex}
\setlist[enumerate]{leftmargin=2em,itemsep=.25ex,topsep=.5ex}

\newtheorem{theorem}{Theorem}[section]
\newaliascnt{proposition}{theorem}
\newtheorem{proposition}[proposition]{Proposition}
\aliascntresetthe{proposition}
\newaliascnt{lemma}{theorem}
\newtheorem{lemma}[lemma]{Lemma}
\aliascntresetthe{lemma}
\newaliascnt{corollary}{theorem}
\newtheorem{corollary}[corollary]{Corollary}
\aliascntresetthe{corollary}
\theoremstyle{definition}
\newaliascnt{definition}{theorem}
\newtheorem{definition}[definition]{Definition}
\aliascntresetthe{definition}
\theoremstyle{remark}
\newaliascnt{remark}{theorem}
\newtheorem{remark}[remark]{Remark}
\aliascntresetthe{remark}
\newaliascnt{example}{theorem}

\aliascntresetthe{example}
\font\cyr=wncyr10
\newcommand{\Sha}{\hbox{\cyr X}}

\title[Sylvester's conjecture]{A proof of Sylvester's conjecture}
\author{Ashay A. Burungale}
\address{Department of Mathematics, The University of Texas at Austin, 2515 Speedway, Austin, TX 78712, USA}
\email{ashayk@utexas.edu}
\author{Ye Tian}
\address{Morningside Center of Mathematics and Institute of Mathematics, Academy of Mathematics and Systems Science, Chinese Academy of Sciences, Beijing 100190, China}
\email{ytian@math.ac.cn}
\date{September 2026}
\subjclass[2020]{11D25, 11G40}
\keywords{Sylvester's conjecture, Heegner points, division boundary, CM theory}

\begin{document}

\begin{abstract}

We prove Sylvester's conjecture, originating in his 1879 study of
ternary cubic equations, that every prime
$p\equiv4,7,8\pmod9$ is a sum of two rational cubes.  
Elkies announced a proof for the classes $4$ and $7$ in 1994, and Hongbo Yin recently supplied a complete proof. 
For the remaining class $p\equiv8\pmod9$, we prove that the elliptic curve
$E_p:y^2=x^3+p^2/4$ has analytic rank one, as predicted by the Birch
and Swinnerton-Dyer conjecture, and so $p$ is a sum of two
rational cubes.

The proof begins by adapting the auxiliary Rankin--Selberg construction
from the authors' work on the rank-one converse for CM elliptic curves.
The Rankin--Selberg $L$-function factors as the $L$-function of $E_p$ times a
complementary $L$-function.  Chan's $3$-isogeny
descent and the rank-zero converse show that the complementary central
$L$-value is non-zero, and so it suffices to prove that a cubic component of the
associated Heegner point is non-torsion.  A basic difficulty is that the
unweighted Hecke trace of the CM orbit vanishes.  Our decisive idea
is to take $\lambda$-division before taking the trace, where
$\lambda=1-\omega$ and $\omega$ is a primitive cube root of unity.  We prove that the
resulting division boundary is non-zero by analysing Frobenius
at $p$.  The Galois action on the CM orbit and
ramification theory then transfer this non-vanishing to the 
cubic component.
\end{abstract}
\maketitle

\setcounter{tocdepth}{1}
\tableofcontents

\section{Introduction}\label{sec:introduction}

\subsection{The cube-sum problem}
Which primes can be written as sums of two rational cubes?  In 1879,
Sylvester studied the ternary cubic equations underlying this question
\cite{Sylvester}. 

The conjecture bearing his name\footnote{Sylvester's conjecture is a modern formulation rather than a verbatim
statement in Sylvester's papers.  More precisely, Selmer \cite{Selmer} placed the
problem in the modern context, and later Birch and Stephens
\cite{BS66} formulated the above precise conjecture in light of the
Birch and Swinnerton-Dyer conjecture.} asserts that every prime
$p\equiv4,7,8\pmod9$ is a sum of two rational cubes.  Elkies announced
the classes $4$ and $7$ in 1994 \cite{Elkies}, and Hongbo Yin recently
completed their proof \cite{Yin47,YinGZ}.  The main result of this
paper settles the remaining class $p\equiv8\pmod9$ and hence resolves
Sylvester's conjecture.  For example, $17\equiv8\pmod9$ and
\[
 17=\left(\frac{18}{7}\right)^3+\left(-\frac17\right)^3.
\]

For $m\in\mathbf Q^\times$, put
\begin{equation}\label{eq,ell}
 E_m:\quad y^2=x^3+\frac{m^2}{4}.
\end{equation}
With $[1:-1:0]$ as origin, the diagonal cubic
$X^3+Y^3=mZ^3$ is $3$-isogenous to $E_m$ over $\mathbf Q$.  For
cube-free integers $m\ge3$, classical
descent gives
\[
 m\text{ is a sum of two non-zero rational cubes}
 \quad\Longleftrightarrow\quad
 \operatorname{rank}E_m(\mathbf Q)>0;
\]
see \cite{Fueter,Selmer,Satge}.  Let $L(s,E_m)$ denote the associated
Hasse--Weil $L$-function, normalized to have center $s=1$.

We prove the following analytic rank result for $E_p$ and $E_{p^2}$.

\begin{theorem}\label{thm:main}
Let $p\equiv8\pmod9$ be a prime.  Then
\[
 \operatorname{ord}_{s=1}L(s,E_p)
 =\operatorname{ord}_{s=1}L(s,E_{p^2})=1.
\]
\end{theorem}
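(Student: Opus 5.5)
The plan follows the strategy outlined in the abstract.

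\emph{Step 1 (parity).} I will first compute the global root numbers of $E_p$ and $E_{p^2}$ for $p\equiv 8\pmod 9$. Both curves have CM by $\mathbf Z[\omega]$, so this reduces to local root numbers at $2$, $3$ and $p$ of the associated Hecke characters. I expect both root numbers to equal $-1$. The $L$-functions then vanish to odd order at $s=1$, and it suffices to show that $L'(1,E_p)\neq0$ and $L'(1,E_{p^2})\neq0$.

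\emph{Step 2 (auxiliary Rankin--Selberg setup).} Following the rank-one converse for CM curves, I will choose an auxiliary imaginary quadratic field $K$ and a ring class (cubic) character $\chi$. The requirements are:
\begin{itemize}
\item $L(s,E_p,\chi)$, the Rankin--Selberg $L$-function of the weight-two form of $E_p$ twisted by $\chi$, has sign $-1$ with Heegner hypothesis;
\item it factors as
\[
 L(s,E_p,\chi)=L(s,E_p)\cdot L(s,E_{p^2})
\]
or as $L(s,E_p)$ times a complementary twist.
\end{itemize}
The natural choice is $K=\mathbf Q(\sqrt{-3})$ itself, with $\chi$ the cubic character cutting out $\mathbf Q(\omega,\sqrt[3]{p})$; then the twist of the CM Hecke character by $\chi$ gives the companion curve. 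By Gross--Zagier (with Yuan--Zhang--Zhang for general levels), the derivative is proportional to the height of the $\chi$-component $P_\chi$ of a Heegner point on a suitable Shimura curve. Hence
\[
 L'(1)\neq0 \iff P_\chi \text{ is non-torsion and the complementary central value is non-zero.}
\]

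\emph{Step 3 (complementary value).} I will show that the complementary factor has central value non-zero. Chan's $3$-isogeny descent shows that the relevant Selmer group of the complementary curve (or twist) is trivial. The rank-zero $p$-converse (Coates--Wiles/Rubin for CM curves) then gives $L(1)\neq0$. Combined with the Gross--Zagier formula, $L'(1,E_p)\neq 0$ then follows from non-triviality of $P_\chi$. Swapping the roles of $p$ and $p^2$ (and noting that $E_{p^2}$ is the companion) gives the other case.

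\emph{Step 4 (main obstacle: $P_\chi\neq0$ in $E(H)\otimes\mathbf Z[\omega]$).} The naive approach is to compute the trace of the CM orbit under a Hecke operator modulo a prime above $3$. This vanishes identically, so it gives no information. Instead I will work with $\lambda=1-\omega$. Since $\chi$ is cubic, $\chi\equiv1\pmod\lambda$, so non-triviality of $P_\chi$ modulo $\lambda$ can be detected by a ``division boundary'': choose $Q$ with $\lambda Q$ equal to a suitable combination of Galois conjugates, then sum the $Q$'s over the orbit. This sum is well defined modulo $E[\lambda]$-contributions, and I will show it is non-zero in $E[\lambda]$, equivalently a non-trivial class in $H^1$.

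To prove this I will reduce at the prime above $p$, which ramifies totally in $\mathbf Q(\sqrt[3]{p})$. Frobenius at $p$ acts on $E[\lambda]$ through the CM character, and since $p\equiv 8\pmod 9$ it acts by a specific non-trivial cube root of unity on the relevant quotient. Comparing the action of inertia with that of Frobenius on the CM orbit (Shimura reciprocity) should force the division boundary to be non-zero. Ramification theory then transfers this to the statement that $P_\chi\not\equiv0\pmod\lambda$, so $P_\chi$ is non-torsion.

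I expect this last step, the explicit Frobenius computation and the ramification transfer, to be the genuine difficulty. Steps 1--3 rely on standard machinery.
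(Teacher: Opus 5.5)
\textbf{Gap in Step~2.} Your outline has the right slogan: $\lambda$-divide before tracing, then detect the boundary by reduction at $p$. But Step~2 discards the ingredient that makes the boundary non-zero, and it is also internally inconsistent. Take $f=\theta(\nu_p)$ and let $\chi=\kappa_p^{\pm1}$ cut out $K(\sqrt[3]{p})$. Automorphic induction then gives
$L(s,f\times\chi)=L(s,\nu_p\chi)\,L(s,\nu_p^\sigma\chi)=L(s,E_{p^2})\,L(s,E_1)$,
which does not contain $L(s,E_p)$ at all. The product $L(s,E_p)L(s,E_{p^2})$ that you also propose is what $f_{E_1}\otimes\kappa_p$ gives. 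Its root number is $(-1)(-1)=+1$, so it cannot feed a Gross--Zagier formula.

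More seriously, any choice built from the form of $E_p$ or $E_{p^2}$ has level divisible by $p^2$, since these curves have additive reduction at $p$. So there is no prime-to-level Hecke correspondence at $p$, no good reduction of the curve or of the modular/Shimura curve at $p$, and nothing to ``reduce at $p$''.

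The paper instead picks an auxiliary prime $q\equiv4\pmod9$, $q=\varpi\bar\varpi$, with $\alpha_q(p)=(p/\varpi)_3\ne1$. It uses $g_q=\theta(\nu_\varpi)$ of level $9q$ and $\chi_{q,j}=\nu_{p^j}\nu_\varpi^{-1}$, so that $L(s,g_q\times\chi_{q,j})=L(s,E_{p^j})L(s,E_{D_{q,j}})$ with $D_{q,1}=qp^2$ and $D_{q,2}=qp$. This is where Chan's descent enters: its matrix is invertible precisely because $\alpha_q(p)\ne1$. Note also that Coates--Wiles/Rubin does not cover the prime $3$ for $\mathbf Q(\sqrt{-3})$, which is ramified, divides $\#\mathcal O_K^\times$, and is additive. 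The paper needs a rank-zero $3$-converse at this prime instead.

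\textbf{Gap in Step~4.} Step~4 consequently has no engine. In the paper $p\nmid 9q$, so $p$ is a good supersingular prime for $X_\Gamma$ and for $E_\varpi$. The conductor-$p$ CM points form the $T_p$-fibre of a base point $P_0$, and their unweighted trace vanishes because $a_p(g_q^c)=0$. Total ramification of $H_{3p}/K$ at $p$ gives them a common reduction, and Eichler--Shimura identifies that reduction with $\operatorname{Fr}^{\rm rel}_p(\bar Q_q)$.

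The non-vanishing comes from the base point over $F_q=K(t)$, $t^3=\varpi$. The cubic identity $y(Q_q)+\eta_q\bar\varpi/2=\eta_q\varpi z_q^3$ with $z_q\in K^\times$ gives a first $\lambda$-division $R_q$ on the Fermat cubic with one coordinate in $K$ and the other in $tK$. Now $p^2$-Frobenius multiplies $t$ by $\alpha_q(p)$, while acting on the reduced curve as $[-p\alpha_q(p)]$. Comparing the two actions gives $[p+1]\bar R_q=\pm\mathcal T\ne O$; when $\alpha_q(p)=1$ the same computation gives $O$.

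Your proposed substitute is false as stated. $E_a[\lambda]=\{O,(0,\pm a/2)\}$ is $K$-rational, and $\mu_3$ acts trivially on it because $\omega\equiv1\pmod\lambda$. So no Frobenius acts on it by a non-trivial cube root of unity.

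Finally, ``$P_\chi\not\equiv0\pmod\lambda$'' would not imply non-torsion. The paper transfers the boundary to the cubic component via the integral operator $\mathscr D=(N_G-\Pi)/\lambda$. If $\Pi Z=O$, then $\mathscr DZ$ is $G$-fixed and $[\lambda]\mathscr DZ=N_GZ\in E_\varpi[3]\setminus E_\varpi[\lambda]$. This contradicts $[\varpi\omega^{\pm1}]\ne1$ in $F_q^\times/F_q^{\times3}$. Torsion is then excluded separately. $\Pi Z$ reduces to $O$ at $p$ because the cubic weights sum to zero. A non-zero $p$-torsion point would need ramification degree divisible by $p^2-1$, which exceeds $n$.
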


Consequently, the theorems of Gross--Zagier and Kolyvagin imply that $E_p$ and
$E_{p^2}$ have Mordell--Weil rank one
\cite{GrossZagier,Kolyvagin,GrossKolyvagin}.  Together with Hongbo Yin's
results for the classes $4$ and $7$, we obtain the following.

\begin{theorem}[Sylvester's conjecture]\label{thm:sylvester}
Every prime $p\equiv4,7,8\pmod9$ is a sum of two rational cubes.
\end{theorem}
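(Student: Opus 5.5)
The statement is a corollary. It follows by splitting into congruence classes and combining the analytic-rank result \Cref{thm:main} with known arithmetic input. For $p\equiv4,7\pmod9$ I cite Hongbo Yin's proof \cite{Yin47,YinGZ} of Elkies' announced result: $E_p(\mathbf Q)$ has positive rank, so $p$ is a sum of two rational cubes. The remaining work is the class $p\equiv8\pmod9$.

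For $p\equiv8\pmod9$, \Cref{thm:main} gives $\operatorname{ord}_{s=1}L(s,E_p)=1$. The curve $E_p$ is a CM elliptic curve over $\mathbf Q$, hence modular. The Gross--Zagier formula \cite{GrossZagier} then yields a Heegner point of infinite order. Kolyvagin's theorem \cite{Kolyvagin,GrossKolyvagin} gives $\operatorname{rank}E_p(\mathbf Q)=1$, with $\Sha$ finite. This needs an auxiliary imaginary quadratic field satisfying the Heegner hypothesis for the conductor of $E_p$ and with the twist $L$-value non-vanishing. Its existence follows from standard non-vanishing results (Bump--Friedberg--Hoffstein, Murty--Murty, Waldspurger). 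Alternatively, Gross--Zagier--Kolyvagin can be used in the form "analytic rank one implies algebraic rank one" for all modular elliptic curves over $\mathbf Q$.

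Positive rank translates into a cube-sum representation by the classical descent equivalence recalled after \eqref{eq,ell}. A prime $p\ge3$ is cube-free. So $\operatorname{rank}E_p(\mathbf Q)>0$ implies $p=x^3+y^3$ with $x,y\in\mathbf Q^\times$, by \cite{Fueter,Selmer,Satge}. Concretely, the cubic $X^3+Y^3=pZ^3$ is $3$-isogenous to $E_p$ over $\mathbf Q$. An isogeny preserves rank, so this cubic has infinitely many rational points and in particular one with $Z\neq0$. Combining the three congruence classes gives the statement.

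The corollary itself has no real obstacle: all the depth lies in \Cref{thm:main}. The one step requiring care is verifying that the hypotheses of the Gross--Zagier--Kolyvagin theorem are met for $E_p$ exactly as stated. This mainly means choosing the auxiliary field correctly, since $E_p$ has additive reduction at $2$, $3$ and $p$.
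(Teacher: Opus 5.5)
Your proposal is correct and matches the paper's argument: Yin's results for $p\equiv4,7\pmod9$, and for $p\equiv8\pmod9$ the analytic rank one of \Cref{thm:main} turned into positive Mordell--Weil rank by Gross--Zagier--Kolyvagin, then converted to a cube-sum representation by the classical $3$-isogeny descent equivalence. One small inaccuracy, with no effect on the argument: $E_p$ has good reduction at $2$, since the integral model $y^2+y=x^3+(p^2-1)/4$ has discriminant $-27p^4$, so only $3$ and $p$ are primes of additive reduction.
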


\begin{corollary}\label{cor:arithmetic-rank}
For $p\equiv8\pmod9$ and $j=1,2$ one has
\[
 \operatorname{rank}_{\mathbf Z}E_{p^j}(\mathbf Q)=1,
 \qquad \#\Sha(E_{p^j}/\mathbf Q)<\infty.
\]
\end{corollary}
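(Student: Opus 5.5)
The corollary follows from \Cref{thm:main} together with the Gross--Zagier and Kolyvagin theorems. I treat $j=1,2$ simultaneously, writing $E=E_{p^j}$.

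First, $E$ is a CM elliptic curve over $\mathbf Q$, with CM by $\mathbf Z[\omega]$. It is therefore modular, for instance via the Hecke character attached to the CM, so $L(s,E)$ has analytic continuation and a functional equation. By \Cref{thm:main}, $\operatorname{ord}_{s=1}L(s,E)=1$.

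Next I choose an auxiliary imaginary quadratic field $K$ to which Gross--Zagier applies. It must satisfy the Heegner hypothesis for the conductor $N$ of $E$ (whose prime divisors are only $2,3,p$), and also $L(1,E^{K})\neq0$, where $E^K$ is the quadratic twist. Since $N$ is divisible by $2$, $3$ and $p$, one must check that a suitable $K$ exists. This follows from the standard nonvanishing results for quadratic twists (Bump--Friedberg--Hoffstein, Murty--Murty, Waldspurger). These produce infinitely many admissible $K$ with $\operatorname{ord}_{s=1}L(s,E/K)=1$, given that $L(s,E)$ has a simple zero. The root number condition is automatic here: the sign of $E$ is $-1$, and the Heegner condition forces the twist $E^K$ to have sign $+1$.

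Gross--Zagier then expresses $L'(1,E/K)$, up to a nonzero factor, as the Néron--Tate height of the Heegner point $y_K$. Hence $y_K$ has infinite order, and so does its trace to $E(\mathbf Q)$, because the $\chi_K$-eigencomponent lives on the twist, which has rank $0$. Kolyvagin's Euler system argument (in the form of Gross's exposition \cite{GrossKolyvagin}) then gives $\operatorname{rank}E(\mathbf Q)=1$ and $\#\Sha(E/\mathbf Q)<\infty$. Because $E$ has CM, one could alternatively invoke Rubin's results on the Iwasawa main conjecture for the finiteness of $\Sha$. The rank-one input, however, still comes from Gross--Zagier via \Cref{thm:main}.

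The only real work is the existence of the auxiliary field $K$ with the twist nonvanishing. This is standard and is quoted rather than reproved. All the substance lies in \Cref{thm:main}.
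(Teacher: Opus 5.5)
Your proposal is correct and takes the same route as the paper, which passes from \Cref{thm:main} to rank one and finite $\Sha$ simply by citing Gross--Zagier and Kolyvagin; you have unpacked the standard auxiliary-field step implicit in that citation. One harmless slip: $E_{p^j}$ has good reduction at $2$ (the integral model $Y^2+p^jY=X^3$ has discriminant $-27p^{4j}$), so its conductor is supported only at $3$ and $p$, not at $2$.
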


\subsection{Prior work and historical perspective}

\subsubsection{Descent and the BSD prediction}\label{subsec:descent-bsd}
The rational point $(0,m/2)\in E_m(\mathbf Q)$ has order three and
defines a $3$-isogeny.  Descent along this isogeny and its dual bounds
the Mordell--Weil rank by testing local solubility of explicit cubic
equations.  For a prime $p>3$, the classical descent calculation and
the root number lead to the following picture for both $E_p$ and
$E_{p^2}$ \cite{Selmer,Satge,BS66,Liverance}:
\begin{center}
\begin{tabular}{c c c l}
\toprule
$p\bmod9$ & descent bound for the rank & root number & BSD-predicted rank\\
\midrule
$2,5$ & $0$ & $+1$ & $0$\\
$4,7,8$ & $1$ & $-1$ & $1$\\
$1$ & $2$ & $+1$ & $0$ or $2$\\
\bottomrule
\end{tabular}
\end{center}
In particular, for $p\equiv4,7,8\pmod9$, one has
$L(1,E_p)=L(1,E_{p^{2}})=0$, and the problem is to prove that both zeros
are simple.  

The underlying elliptic curves have complex multiplication: letting 
$K=\mathbf Q(\omega)$ for $\omega^2+\omega+1=0$, their CM ring is
$\mathcal O_K=\mathbf Z[\omega]$.
\subsubsection{Heegner points and cube sums}
In 1952, Heegner used complex multiplication and modular functions to
construct rational solutions of a Diophantine equation \cite{Heegner52}. 
In modern terms, his construction combines CM points on modular
curves with modular parametrizations, and Shimura's reciprocity law
governs the Galois action on the CM points. The influence of
Heegner's idea has proved remarkably enduring.

Satg\'e adapted CM constructions of this kind for special cases of the
cube-sum problem \cite{Satge,SatgeHeegner}.  In 1994 Elkies announced the resolution of
the cases $p\equiv4,7\pmod9$ \cite{Elkies}.  Such a prime splits in $K$.
Write $p=\pi\bar\pi$ in $\mathbf Z[\omega]$, with
$\pi\equiv\bar\pi\equiv1\pmod3$, and put
\[
 C_\pi:\qquad u^3+v^3=\pi w^3.
\]
Elkies considers a modular curve $\mathscr X$ and defines an explicit
modular parametrization
\[
 \Phi:\mathscr X\longrightarrow C_\pi.
\]
He uses a suitable CM point to obtain a point on $C_\pi$ over
$K(\sqrt[3]{\bar\pi})$, and twists it back to a point on $E_p$ over $K$.
This construction, summarized in \cite[\S4.1]{DV09}, was not published
with a complete proof.  Dasgupta--Voight subsequently developed it and
proved the $4,7$ cases under an additional cubic-residue hypothesis
\cite{DV09,DV18}.  Recently, Hongbo Yin gave a complete proof of the $4,7$
cases by a construction inspired by, but distinct from, Elkies' sketch
\cite{Yin47,YinGZ}, thereby completing Elkies' announcement.  The
second-named author's work on the congruent number problem provides another precursor: its key is a $2$-power indivisibility of the corresponding 
Heegner point \cite{Tian14}.  For other partial and
statistical results on cube sums, see \cite{RVZ,BES,ABS}.

The class $p\equiv8\pmod9$ presents an essential difficulty.  Since such
a $p$ is inert in $K$, neither Elkies' aforementioned construction via
$\Phi$ nor Hongbo Yin's variant applies.  Moreover, the natural conductor-$p$ Hecke trace
of an underlying CM orbit vanishes. 

Another related circle of ideas concerns $\ell$-converse theorems
to the Gross--Zagier--Kolyvagin theorem.  For many primes $\ell$
of good reduction, the rank-one $\ell$-converse theorems of Skinner,
Zhang and the authors, among others, use Iwasawa theory to deduce
that a suitable Heegner point is non-torsion from the hypothesis
that the $\ell^\infty$-Selmer corank is one
\cite{WZ14,Skinner20,BT20,BSTsurvey}. 
For $E_p$, a rank-one $3$-converse
would suffice since the $3^\infty$-Selmer corank is one.  However, $3$ is a
prime of additive reduction for $E_p$, and the requisite Iwasawa theory
at additive primes remains fragmentary\footnote{See \cite{bkno1,bkno2}
for the first steps.}.

\subsection{Outline of the proof}
Fix a prime $p\equiv8\pmod9$.  In the following, we explain the strategy for $E_p$, a variant of which applies for $E_{p^2}$.

\subsubsection{The auxiliary Rankin--Selberg convolution} 
We begin with an auxiliary Rankin--Selberg construction based on the
method of \cite{BT20}.

Choose an auxiliary prime $q\equiv4\pmod9$. Note that it splits in $K$ and write
$q=\varpi\bar\varpi$, with $\varpi\equiv1\pmod3$.  For
$a\in K^\times$, let $E_a$ denote the elliptic curve given by the same equation
as \eqref{eq,ell}, and let $\nu_a$ be its CM Hecke character.  
Let
$g_q=\theta(\nu_\varpi)$ be the associated theta series, and put
$\chi_q=\nu_p\nu_\varpi^{-1}$.
The pair $(g_q,\chi_q)$ is self-dual and the associated Rankin--Selberg convolution has root number $-1$.
The Yuan--Zhang--Zhang (YZZ) generalization of the Gross--Zagier formula
therefore relates the first central derivative of the associated $L$-function to a Heegner point \cite{YZZ}.
In our CM case, the Rankin--Selberg $L$-function factors as
\begin{equation}\label{eq:intro-rankin}
 L(s,g_q\times\chi_q)=L(s,E_p)L(s,E_{qp^2}).
\end{equation}

The auxiliary prime $q$ is chosen so that
\begin{equation}\label{eq:intro-ray-condition}
 \alpha_q(p):=\left(\frac p\varpi\right)_3\ne1, 
\end{equation}
where $(\cdot/\varpi)_3$ is the cubic residue symbol.
This condition has two roles.  It gives the complementary non-vanishing
below; through cubic reciprocity \eqref{eq:cubic-reciprocity-at-p}, it also
makes $p^2$-Frobenius act non-trivially on a cube root of $\varpi$, which
forces the division boundary constructed below to be non-zero.
Chebotarev gives
infinitely many such primes.  
For such a $q$, one has 
\begin{equation}\label{eq:intro-complementary-nonvanishing}
 L(1,E_{qp^2})\ne0.
\end{equation}
Indeed, \eqref{eq:intro-ray-condition} implies that the $3$-Selmer group
associated to $E_{qp^2}$ is generated by the Kummer image of a rational
$3$-torsion point (cf.~\cite{Chan}).  Hence the authors' rank-zero
$3$-converse theorem \cite{BTconverse} yields the non-vanishing
\eqref{eq:intro-complementary-nonvanishing}.

The pair $\{g_q,g_q^c\}$ determines an abelian surface
$A_q/\mathbf Q$, where $c$ denotes coefficient conjugation.  After
base change to $K$, its two
coefficient-conjugate factors are $E_{\bar\varpi}$ and $E_\varpi$,
giving quotient maps
\[
 \operatorname{pr}_{\bar\varpi}:A_q\longrightarrow E_{\bar\varpi},
 \qquad
 \operatorname{pr}_{\varpi}:A_q\longrightarrow E_\varpi,
\]
whose product is an isogeny.  
The construction of Yuan--Zhang--Zhang \cite{YZZ} attaches to the pair
$(g_q,\chi_q)$ a Heegner point
$$\mathcal P_q\in A_q(K^{\rm ab})_{\mathbf Q}:=A_q(K^{\rm ab})
\otimes_{\mathbf Z}\mathbf Q,$$ where $K^{\rm ab}$ denotes the maximal
abelian extension of $K$.  The YZZ generalization of the
Gross--Zagier
formula \cite{YZZ}, together with \eqref{eq:intro-complementary-nonvanishing},
reduces the analytic rank one assertion for $E_p$ to the following.
\begin{theorem}\label{thm:auxiliary-heegner}
The projection $\operatorname{pr}_\varpi(\mathcal P_q)$ is non-zero in
$E_\varpi(K^{\rm ab})_{\mathbf Q}$.
\end{theorem}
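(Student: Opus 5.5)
We follow the route sketched in the abstract: $\lambda$-division first, then the trace, then transfer through the Galois action on the CM orbit.

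\emph{Step 1: explicit form of the point.} Realize $\mathcal P_q$ as a $\chi_q$-twisted trace of CM points. Here $\chi_q=\nu_p\nu_\varpi^{-1}$ has conductor supported at $3$, $p$ and $\varpi$, so its finite-order part factors through a ring class or ray class group of conductor divisible by $p$. Concretely, fix a modular parametrization $\phi:X\to A_q$ of level dividing $27q^2$. Take a CM point $x\in X(K^{\rm ab})$ by an order of conductor $p$ (or $3p$), and write
\[
 \mathcal P_q=\sum_{\sigma\in\mathrm{Gal}(H/K)}\chi_q^{-1}(\sigma)\,\phi(x)^{\sigma},
\]
with $H$ the relevant ring class field. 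Since $p$ is inert, $p$ is totally ramified in the conductor-$p$ extension $H_p/H_1$, which is cyclic of order $p+1$ up to units. The unweighted trace $\sum_\sigma \phi(x)^\sigma$ over the $p$-layer vanishes, because it equals $T_p$ applied to a lower-level point, and $a_p=0$ for the supersingular prime $p$. So the point cannot be detected by a naive trace.

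\emph{Step 2: the division boundary.} We have $\chi_q|_{\mathrm{Gal}(H_p/H_1)}$ cubic, since $\nu_p$ restricted to the $p$-adic units is essentially the cubic character $(\cdot/p)_3$ on $(\mathcal O_K/p)^\times$. Choose a point $Q$ with $\lambda Q=\phi(x)$ in $E_\varpi$ (or $A_q$). The trace $D:=\sum_{\tau\in\mathrm{Gal}(H_p/H_1)}Q^\tau$ then lies in $E_\varpi[\lambda]$, because $\lambda D$ is the vanishing trace. We call $D$ the division boundary. It measures the obstruction to dividing the trace, and the goal is $D\neq0$. For this we use Frobenius at $p$. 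The CM point reduces supersingularly at $p$, and by Eichler--Shimura type congruences the $p^2$-Frobenius acts on $Q$ modulo the full orbit. Condition \eqref{eq:intro-ray-condition}, combined with cubic reciprocity, says that $\mathrm{Frob}_{p^2}$ moves $\sqrt[3]{\varpi}$ nontrivially. Since $E_\varpi[\lambda]$ is generated over $K$ by a point defined via $\sqrt[3]{\varpi}$-Kummer theory, this should force $D\ne0$.

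\emph{Step 3: transfer to the cubic component.} The Galois group of the $p$-layer acts on $\{\phi(x)^\tau\}$ through a cyclic group of order divisible by $3$. Decompose $\phi(x)$ into isotypic components for characters of that group. The $\chi_q$-component is $\mathcal P_q$, and its projection under $\mathrm{pr}_\varpi$ is the cubic component. Ramification theory (total ramification at $p$) shows the following: if every cubic-character component were torsion, then $\phi(x)$ modulo the trivial component would be $\lambda$-divisible in a way compatible with inertia, and $D$ would vanish. This contradicts Step 2, so the cubic component is non-torsion. We then use $\mathrm{Gal}(K^{\rm ab}/K)$-conjugation to pass between the two cubic characters, which differ by $c$, and this pins down $\mathrm{pr}_\varpi$ rather than $\mathrm{pr}_{\bar\varpi}$.

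\emph{Main obstacle.} The hardest step is Step 2, the proof that $D\neq0$. It needs a precise description of the reduction of $Q$ at the inert supersingular prime $p$, and an exact matching of the Frobenius action with the cubic residue symbol $\alpha_q(p)$. To attack it, I would first compute $D$ explicitly in terms of $\sqrt[3]{\varpi}$ by Shimura reciprocity, and only then reduce modulo $p$.
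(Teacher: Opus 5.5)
Your outline follows the paper's architecture: divide by $\lambda$ before tracing, detect the boundary by Frobenius at the inert prime $p$, then pass to the cubic components by an integrality argument. But the step that carries all the content, $D\ne O$, is left as ``this should force $D\neq0$''. The mechanism you suggest for it points the wrong way. The kernel $E_\varpi[\lambda]=\{O,(0,\pm\varpi/2)\}$ is already $K$-rational, so $\sqrt[3]{\varpi}$ does not enter through it. It enters through the Kummer class of the point being divided, and that point lives on the \emph{other} factor $E_{\bar\varpi}$. Three ingredients are missing.

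First, one chooses the base CM point $P_0$ so that $\varphi^c(P_0)=(0,\eta_q\varpi/2)$ is non-zero $\lambda$-torsion, and evaluates at $P_0$ a modular cube root of $(Y\circ\varphi+\eta_q\bar\varpi/2)/(Y^c\circ\varphi^c+\eta_q\varpi/2)$. Shimura reciprocity puts this value in $K^\times$, so $y(Q_q)+\eta_q\bar\varpi/2=\eta_q\varpi z_q^3$ for $Q_q=\varphi(P_0)\in E_{\bar\varpi}(F_q)$. This yields a first $\lambda$-division $R_q$ on the Fermat cubic with one affine coordinate in $K$ and the other in $tK$, where $t^3=\varpi$.

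Second, comparing $\operatorname{Fr}_{p^2}=[-p\alpha_q(p)]$ on the reduction of $E_{\bar\varpi}$ with the coordinate action $t\mapsto\alpha_q(p)t$ gives $[p+1]\bar R_q=\pm\mathcal T\ne O$. When $\alpha_q(p)=1$ the same computation gives $O$.

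Third, your appeal to Eichler--Shimura must become a precise statement. At the supersingular inert prime, the $T_p$-correspondence reduces to relative Frobenius, and $\bar\varphi^{(p)}=\bar\varphi^{c}$ because conjugation is the $p$-power map on $\mathcal O_K/(p)$. Hence every orbit point on $E_\varpi$ reduces to $\operatorname{Fr}^{\rm rel}_p(\bar Q_q)$. Since $H_{3p}/K$ is totally ramified at $p$, all conjugates of a suitable division $R$ share the reduction $\operatorname{Fr}^{(\lambda)}_p(\bar R_q^E)$. This gives $\bar D=[p+1]\bar R=\operatorname{Fr}^{(\lambda)}_p([p+1]\bar R_q^E)\ne O$. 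So total ramification is already essential at this step.

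There are also errors in the set-up and in Step~3. The vanishing relation holds for the complete Hecke fibre of $p+1$ points indexed by $C_p=\mathbf F_{p^2}^\times/\mathbf F_p^\times$. That needs the level-$3$ structure making the orbit a $C_p$-torsor over $F_qH_{3p}$. The trace over $\operatorname{Gal}(H_p/K)$, of order $(p+1)/3$, is only killed by $3$. In the paper's situation it is a \emph{non-zero} point of $E_\varpi[3]\setminus E_\varpi[\lambda]$, so your $D$ need not be $\lambda$-torsion.

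In Step~3, ``$D$ would vanish'' is not an argument. The paper first traces away the prime-to-$3$ part to reach a cyclic $3$-group $G$. It then uses $\mathscr D=(N_G-\Pi)/\lambda\in\mathcal O_K[G]$ to show that $\Pi Z=O$ would make $S=N_GZ\in E_\varpi[3]\setminus E_\varpi[\lambda]$ divisible by $\lambda$ in $E_\varpi(F_q)$. This is excluded by a separate Kummer obstruction: $[\varpi\omega^{\pm1}]\ne1$ in $F_q^\times/F_q^{\times3}$, because $K(\zeta_9)\ne F_q$.

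Non-vanishing must then be upgraded to non-torsion, which you omit. $\Pi Z$ reduces to $O$ at $p$, because the conductor-$p$ conjugates have a common reduction and the cubic weights sum to zero. The height-two formal group has no non-zero $p$-torsion over a field of ramification index $n<p^2-1$. This bound is where ramification enters the transfer, not via the ``$\lambda$-divisible compatibly with inertia'' you describe.

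Finally, $\chi_q$ is not a ring-class character: it is ramified at $\varpi$ but not at $\bar\varpi$. So identifying $\operatorname{pr}_\varpi(\mathcal P_q)$ with $[3]$ times the geometric cubic sum requires the local weight computations at $3$ and $q$. Nor are the two cubic characters exchanged by conjugation to single out $\operatorname{pr}_\varpi$. They correspond to $E_p$ and $E_{p^2}$, and both are handled by the same boundary through $\mathscr D_{1,n}$ and $\mathscr D_{2,n}$.
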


\begin{remark}
The importance of an auxiliary twist with non-vanishing central
$L$-value\footnote{In their work, the auxiliary and target primes coincide.} was noticed and emphasized by Dasgupta--Voight
\cite[\S\S4.4--4.5]{DV09}. 
In our setting, the rational $3$-torsion on the $3$-isogenous model
$E_m$ makes the complementary factor accessible to an explicit
$3$-isogeny descent.
From this perspective, the condition \eqref{eq:intro-ray-condition} is a $q\neq p$-analogue 
of the cubic residue
condition in \cite{DV09}.
\end{remark}

\subsubsection{The base CM point and its first division}\label{s:div}
Let $X_\Gamma$ be the degree-three modular cover of $X_0(9q)$ attached
to the cubic residue character modulo $q$.  
Fix the normalized modular parametrizations
\[
 \varphi:X_\Gamma\longrightarrow E_{\bar\varpi},\qquad
 \varphi^c:X_\Gamma\longrightarrow E_\varpi
\]
as in \cite{Yin47}, where
the superscript $c$ denotes the coefficient-conjugate factor of
$A_q$ and both maps send the cusp $\infty$ to the origin (see
\Cref{sec:cm-data,sec:rankin-gz}).

Put \(F_q=K(t)\), where \(t^3=\varpi\), and write
\(\lambda=1-\omega\).   Then there exists a CM point \(P_0\in X_\Gamma(F_q)\) such that\footnote{The occurrence of this cubic field, rather than a ring-class field,
reflects the fine level structure at $q$: Shimura reciprocity identifies
the field of definition of this CM point with $F_q$
(cf.~\Cref{lem:fine-base-field} and \cite[Theorem~4.1]{Yin47}).}
$$
 Q_q:=\varphi(P_0)\notin E_{\bar\varpi}[\lambda],
 \qquad
 \varphi^c(P_0)=(0,\eta_q\varpi/2),
 \qquad \eta_q\in\{1,-1\}.
$$
The key point is the following cubic identity over $K$:
\[
 y(Q_q)+\eta_q\bar\varpi/2=\eta_q\varpi z_q^3,
 \qquad z_q\in K^\times.
\]
Indeed, the ratio of the corresponding cubic descent functions is the cube of
a modular function, and Shimura reciprocity shows that the value of its
normalized cube root at \(P_0\) lies in \(K^\times\).

This description also makes the first \(\lambda\)-division of \(Q_q\)
explicit.  On the Fermat cubic
$$
 \mathscr C_{\bar\varpi}:\quad V^3-U^3=\bar\varpi W^3,
$$
the cubic identity produces a point \(R_q\) mapping to \(Q_q\) under the
natural degree-three isogeny.  After identifying this isogeny with
multiplication by \(\lambda\), \(R_q\) is a first
\(\lambda\)-division of \(Q_q\).  Exactly one of its affine
co-ordinates lies in \(K\), while the other lies in \(tK\).  In view of this
asymmetry, the subsequent Frobenius calculation at \(p\)
detects the cubic residue symbol \((p/\varpi)_3\).

\subsubsection{The vanishing Hecke trace and its boundary}
The prime-to-level Hecke correspondence at $p$, applied to $P_0$,
consists of $p+1$ CM points.  They are indexed by the cyclic group
\[
 C_p=\mathbf F_{p^2}^{\times}/\mathbf F_p^{\times},
 \qquad |C_p|=p+1.
\]
Let $Q^u$ denote the $\varphi^c$-image of the point indexed by
$u\in C_p$, and put $Q=Q^1$.  Since the $T_p$-eigenvalue of $g_q^c$
is zero, the Hecke relation gives
\begin{equation}\label{eq:intro-vanishing-orbit}
 \sum_{u\in C_p}Q^u=O.
\end{equation}

Let $F$ be the completion at a prime above $p$ of the field of
definition of $Q$.  Since $E_\varpi$ has good reduction at $p$
and $p\ne3$, multiplication by $\lambda$ extends to a finite
\'etale isogeny of its N\'eron model over $\mathcal O_F$.
Hence, after passing to a finite unramified extension $F'/F$,
we may choose a point $R\in E_\varpi(F')$ such that $[\lambda]R=Q$.
Then
$[\lambda]R^u=Q^u$ for every $u\in C_p$, and the trace of these
divisions defines the fundamental invariant 
\begin{equation}\label{eq:intro-boundary-definition}
 \partial^c_{q,p}:=\sum_{u\in C_p}R^u\in E_\varpi[\lambda](K).
\end{equation}
This is the division boundary of the trace-zero orbit\footnote{It is
independent of the choice of $R$: another choice differs by a
$\lambda$-torsion point, whose trace is $(p+1)$ times that point and so zero.}.  

It has the following Tate-cohomological interpretation. 
Let $M$ denote the $C_p$-module generated by $Q$ and 
write
\[
 N_{C_p}:M\longrightarrow M,\qquad
 N_{C_p}(m)=\sum_{u\in C_p}m^u. 
\]
 In view of the vanishing Hecke relation
\eqref{eq:intro-vanishing-orbit}, one has
 \(Q\in\ker N_{C_p}\).  Hence \(Q\) determines a
class
\[
 [Q]\in
 \widehat H^{-1}(C_p,M). 
\]
Indeed, put
$\widetilde M=[\lambda]^{-1}M$ and then the exact sequence
\[
 0\longrightarrow E_\varpi[\lambda]\longrightarrow\widetilde M
 \xrightarrow{[\lambda]}M\longrightarrow0
\]
gives a connecting map in Tate cohomology, and 
$[Q]\in\widehat H^{-1}(C_p,M)$ maps to $\partial^c_{q,p}$ in
$\widehat H^0(C_p,E_\varpi[\lambda])$.

Our central result is that 
\begin{equation}\label{eq:intro-boundary-nonzero}
 \partial^c_{q,p}\ne O.
\end{equation} 
More precisely, we prove that its reduction at $p$ is non-zero. 
The conductor $3p$ ring-class extension $H_{3p}/K$ is totally ramified
at $p$ with Galois group $C_p$.
 Hence $C_p$ acts trivially on the residue field. 
Consequently, all the conjugates $R^u$ have the same reduction, and the reduced boundary equals 
$
[p+1]\bar R.
$
Then we calculate the Frobenius via the base division point $R_q$ in \S\ref{s:div}.  Write
$R_q^E$ for the corresponding point on $E_{\bar\varpi}$.  Recall that one of the affine
Fermat co-ordinates of $R_q$ lies in $K$ and the other in $tK$, on which
$p^2$-Frobenius acts by $t\mapsto\alpha_q(p)t$. Since $\alpha_q(p)\ne1$, a comparison with the CM-action shows that 
$[p+1]\bar R_q^E\in E_{\bar\varpi}[\lambda]\setminus\{O\}$.

To pass to the reduction of the conductor-$p$ point, let
$\operatorname{Fr}^{\mathrm{rel}}_p:
\bar E_{\bar\varpi}\longrightarrow\bar E_\varpi$
denote the relative $p$-Frobenius between the reduced curves.
 Since $p\equiv2\pmod3$, it conjugates
the CM action.  Hence, letting 
$
 \operatorname{Fr}^{(\lambda)}_p
 =
 \left[\frac{\bar\lambda}{\lambda}\right]
 \circ\operatorname{Fr}^{\mathrm{rel}}_p,
$
one has
$
 [\lambda]\operatorname{Fr}^{(\lambda)}_p
 =
 \operatorname{Fr}^{\mathrm{rel}}_p[\lambda].
$
By \Cref{lem:boundary-frobenius-transfer}, $R$ may be chosen so that
\[
 \bar R=\operatorname{Fr}^{(\lambda)}_p(\bar R_q^E).
\]
Since $\operatorname{Fr}^{(\lambda)}_p$ has degree $p$, it is
injective on the $\lambda$-kernel.  Therefore
\[
 [p+1]\bar R
 =
 \operatorname{Fr}^{(\lambda)}_p([p+1]\bar R_q^E)
 \in E_\varpi[\lambda]\setminus\{O\}, 
\]
concluding the proof of \eqref{eq:intro-boundary-nonzero}.

\begin{remark}
The above phenomenon is reminiscent of a principle in the work of Bertolini--Darmon \cite{BD05}: arithmetic questions regarding Heegner points may be studied geometrically on the special fibre after reduction at $p$.
\end{remark}

\subsubsection{The cubic projector}

We now pass from the non-zero division boundary to the cubic component
of the CM orbit.  Let $H_p^{(3)}$ be the maximal $3$-power
subextension of the conductor $p$ ring class field of $K$, and put
$$
 L=F_qH_p^{(3)},\qquad G=\operatorname{Gal}(L/F_q).
$$

Write $Z$ for the point $\mathcal Z^c\in E_\varpi(L)$ defined in
\eqref{eq:three-primary-trace}. 
 Its norm
$
 S=N_GZ
$
satisfies
$$
 S\in E_\varpi[3](F_q),\qquad
 [\lambda]S=-\partial^c_{q,p}\ne O.
$$
Thus $S\notin E_\varpi[\lambda]$.  Moreover, a cubic descent shows that
$S$ is not $\lambda$-divisible over $F_q$: its descent class is
represented by $\varpi\omega$ or $\varpi\omega^2$, which is non-trivial.

Let $\vartheta:G\to\mu_3$ be the cubic character describing the
action on $\sqrt[3]{p}$, and define
$$
 \Pi=\sum_{\sigma\in G}\vartheta(\sigma)^{-1}\sigma,
 \qquad
 \mathscr D=\frac{N_G-\Pi}{\lambda}\in\mathcal O_K[G].
$$
The operator $\mathscr D$ is integral as 
$\vartheta(\sigma)\equiv1\pmod{\lambda}$.  If $\gamma$ is chosen so
that $\vartheta(\gamma)=\omega$, then
\begin{equation}\label{eq:Koly}
 \lambda\mathscr D=N_G-\Pi,
 \qquad
 (\gamma-1)\mathscr D=\Pi.
\end{equation}
These identities show that the non-zero division boundary survives in the cubic component.
 Indeed, if $\Pi Z=O$, then $\mathscr DZ$ is
$G$-fixed and
$
 [\lambda]\mathscr DZ=S,
$
contradicting the non-divisibility of $S$ over $F_q$.  
Hence
$\Pi Z\ne O$.

The point $\Pi Z$ reduces to $O$ at $p$: the points in the
conductor-$p$ orbit have the same reduction, while the cubic
character weights sum to zero.  
A local argument using the
ramification degrees of $L/F_q$ at primes above $p$ and the height-two supersingular
formal group at $p$ then shows that \(\Pi Z\) cannot be a non-zero torsion point. 
Thus \(\Pi Z\) has infinite order. Finally, the finite CM average defining the auxiliary Heegner point identifies \(\Pi Z\), up to a non-zero rational factor, with \(\operatorname{pr}_{\varpi}(\mathcal P_q)\), concluding the proof of \Cref{thm:auxiliary-heegner}.

\begin{remark}
As with Kolyvagin derivatives, the group ring identity \eqref{eq:Koly} converts the norm relation into a \(\lambda\)-divisibility \cite{Kolyvagin,GrossKolyvagin}, which underlies the above argument. 
\end{remark}

\begin{remark}\label{rem,vis}
In the earlier applications of Heegner's method, including
\cite{Tian14,Yin47}, the approach proceeds by establishing the
indivisibility of a distinguished Heegner point.  Here the relevant
point is the cubic component $\Pi Z$ of a CM orbit whose unweighted
trace vanishes, so no direct analogue of the prior arguments is available.  
The
division boundary supplies a replacement. Indeed, if $\Pi Z=O$, the integral group-ring identities \eqref{eq:Koly} show
that $\mathscr DZ$ is fixed by $G$ and satisfies
$[\lambda]\mathscr DZ=S=N_GZ$.  Thus $S$ would be
$\lambda$-divisible over $F_q$, contrary to the above cubic descent.
We expect this boundary mechanism to
extend beyond the present curves; we will pursue such an extension elsewhere.
\end{remark}

\subsubsection{Explicitness}
A salient feature of our method is that it is constructive\footnote{We are
grateful to Samit Dasgupta for highlighting this and encouraging us to include
\Cref{app:table}.}.  For a fixed $p$, one chooses an auxiliary prime $q$,
computes the division boundary up to sign, determines the relevant
class-group orders, and evaluates the $E_p$-component numerically from
CM data.  The computations in \Cref{app:table} numerically identify
this component with an integer multiple of an explicit non-torsion
rational point on the corresponding cube sum elliptic curve.

\subsection{Vistas}

The methods of this paper suggest several directions for further study.
A natural one is the cube-sum problem for composite integers, beginning with
products of two primes, where several local cubic conditions appear 
simultaneously.  Another is to seek refinements towards the full BSD
formula for these cube-sum elliptic curves.

The division boundary may also admit higher analogues.  It encodes the
first layer of information retained after a vanishing trace.
In conductor
towers, compatible higher $\lambda$-divisions may reveal further
arithmetic information beyond this first layer.

\subsection{Organization}

\Cref{sec:rankin-convolution,sec:complementary-factor} introduce the
auxiliary Rankin--Selberg convolution and establish the non-vanishing
of its complementary central $L$-value.
\Cref{sec:cm-data,sec:cm-kummer,sec:fermat-cover} develop the CM
geometry behind the first $\lambda$-division and the Frobenius
calculation.
\Cref{sec:hecke-projector,sec:cubic-components} construct the division
boundary and use it to prove that the two cubic components are
non-torsion.
\Cref{sec:rankin-gz} identifies these components with the corresponding
projections of the auxiliary Heegner points, and
\Cref{sec:rankin-derivative} completes the proof of \Cref{thm:main}
using the YZZ generalization of the Gross--Zagier formula.
\Cref{app:examples} illustrates the argument with examples for 
$q=13$ and $q=31$: the first makes the division boundary and the cubic
projector explicit, while the second exhibits a non-trivial cube
factor in the cubic descent identity.  \Cref{app:table} then tabulates
the construction for all fourteen primes $p\equiv8\pmod9$ below $500$
and records associated rational-point data.

\subsection{Notation and conventions}\label{sec:reference-conventions}

\subsubsection{General}   
For
$g=\left(\begin{smallmatrix}a&b\\c&d\end{smallmatrix}\right)$ we write
$c(g)=c$ and $d(g)=d$, and
$\operatorname{diag}(a,b)=\left(\begin{smallmatrix}a&0\\0&b\end{smallmatrix}\right)$.

For a number field $F$, $\mathbf A_F$ and $\mathbf A_{F,f}$
denote its adeles and finite adeles. For a prime $\ell$ and an integer $m\ne0$, $\operatorname{ord}_\ell(m)$ is its
$\ell$-adic valuation. 

\subsubsection{\texorpdfstring{$L$}{L}-functions and scalar extensions}
The functions $L(s,E)$ and $L(s,g\times\theta)$ are primitive $L$-functions, normalized with the center at $s=1$.  The unitary normalization used in the Gross--Zagier formula has
center $1/2$, with the shift stated in
\eqref{eq:unitary-arithmetic-shift}. 

 A superscript $0$ on a Hom-space
means tensoring with $\mathbf Q$, and
$A(F)_{\mathbf Q}:=A(F)\otimes_{\mathbf Z}\mathbf Q$ for an abelian variety $A$ over a field $F$.

\subsubsection{The CM field and the three primes}
Throughout,
$K=\mathbf Q(\omega)$ with $\omega^2+\omega+1=0$,
$\mathcal O_K=\mathbf Z[\omega]$, and $\lambda=1-\omega$.  
For $n\ge1$, $\mu_n$ denotes the group of
$n$th roots of unity in a fixed algebraic closure and $\zeta_n$ a chosen
primitive $n$th root of unity; in particular $\mu_3=\{1,\omega,\omega^2\}$.
A bar on an element of $K$ denotes the non-trivial automorphism of
$K/\mathbf Q$.  An element of $\mathcal O_K$ coprime to $3$ is called \emph{primary} when it
is congruent to $1$ modulo $3$.

The fixed and auxiliary primes satisfy
\[
 p\equiv8\pmod9,\qquad q\equiv4\pmod9,\qquad
 q=\varpi\bar\varpi,\qquad \varpi\equiv1\pmod3,
\]
and are always chosen subject to \eqref{eq:intro-ray-condition}.  Put
$\mathfrak q=(\varpi)$, choose $t^3=\varpi$, and write $F_q=K(t)$.
We write
$\omega_q=\omega\bmod\mathfrak q\in\mathbf F_q$ for the residue of
$\omega$ at the chosen prime above $q$.  The cubic residue symbol is normalized by
\[
 \left(\frac a\pi\right)_3\equiv a^{(N\pi-1)/3}\pmod\pi,
\]
with values in $\mu_3=\{1,\omega,\omega^2\}$.  Put 
$\alpha_q(p)=(p/\varpi)_3$, $h_q=(q-1)/3$, and
$D_{q,1}=qp^2$, $D_{q,2}=qp$.

\subsubsection{Curves, the Fermat cubic, and reduction}
For $a\in K^\times$,
\begin{equation}\label{eq:intrinsic-cm-convention}
 E_a:y^2=x^3+a^2/4,
 \qquad [\omega](x,y)=(\omega x,y).
\end{equation}
We write $[\alpha]P$ for the action of
$\alpha\in\mathcal O_K$, and $E[\alpha]$ for its kernel.  For a
field $F\supset K$, $[u]_F$ is the class of $u\in F^\times$ modulo
cubes.

The Fermat cubic used for the first division is
\[
 \mathscr C_a:\qquad V^3-U^3=aW^3,
 \qquad O_{\mathscr C}=[1:1:0].
\]
The degree-three isogeny $\ell_a:\mathscr C_a\to E_a$ is given in
\eqref{eq:fermat-isogeny}, the inverse of the identification
$\jmath_a:E_a\xrightarrow{\sim}\mathscr C_a$ in
\eqref{eq:fermat-identification}, and the associated descent map
$\mathfrak d_a$ in \Cref{lem:exact-descent-kernel}.  The two modular maps are
$\varphi:X_\Gamma\to E_{\bar\varpi}$ and its coefficient conjugate
$\varphi^c:X_\Gamma\to E_\varpi$, both normalized by
$\varphi(\infty)=\varphi^c(\infty)=O$.

At a fixed prime $\mathfrak P$, $\operatorname{sp}_{\mathfrak P}$
denotes specialization.  In a local calculation a bar on a point or
morphism is an abbreviated reduction symbol; a bar on a coefficient
continues to mean conjugation.  We use the full specialization symbol
whenever both operations occur in the same formula.

\subsubsection{Ring-class fields and Galois actions}
The order $\mathcal O_c=\mathbf Z+c\mathcal O_K$ has ring-class field
$H_c$.  For a rational prime $\ell$, put
$K_\ell=K\otimes_{\mathbf Q}\mathbf Q_\ell$ and
$\mathcal O_{K,\ell}=\mathcal O_K\otimes_{\mathbf Z}\mathbf Z_\ell$.

We write $K^{\rm ab}$ for the maximal abelian extension.  For a non-zero
integral modulus $\mathfrak m$, the notation
$K_{\mathfrak m}^{\mathrm{ray}}$ denotes the ray-class field of $K$ of
modulus $\mathfrak m$.  A hat denotes finite adelic points or profinite
completion, according to context.  The Artin maps use arithmetic Frobenius, and for an
unramified primary prime $\mathfrak l=(\pi)\nmid3d$, one has 
\begin{equation}\label{eq:reciprocity-convention}
 \frac{(\sqrt[3]{d})^{\sigma_{\mathfrak l}}}{\sqrt[3]{d}}
 =\left(\frac d\pi\right)_3,
 \qquad \sigma_u=\operatorname{Art}^{\rm ar}_K(u).
\end{equation}
In Shimura reciprocity the finite-idele action is represented by
$u\mapsto\bar u^{-1}=u/N_{K/\mathbf Q}(u)$ modulo rational ideles.

The cyclic groups indexing the local Hecke orbit are
\[
 C_p=\mathbf F_{p^2}^\times/\mathbf F_p^\times,
 \qquad
 C_q=(\mathcal O_K/q\mathcal O_K)^\times/
       (\mathbf Z/q\mathbf Z)^\times\simeq\mathbf F_q^\times.
\]
Thus $|C_p|=p+1$ and $|C_q|=q-1$.  The subgroup
$\Delta_p\subset C_p$ is the image of
$\mathcal O_K^\times/\{\pm1\}\simeq\mu_3$, and
$G_p=C_p/\Delta_p$ has order $(p+1)/3$; see
\eqref{eq:Gp-identification}.  We put
$C_q^{(3)}=(\mathbf F_q^\times)^3$.  When it appears, $C_{p,3}$ denotes the
$3$-primary Sylow subgroup of $C_p$.  The subscripts on
$C_p,C_q,C_{p,3}$ record arithmetic role; abstract cyclic groups are
written $\mathbf Z/n\mathbf Z$.  For a finite group $G$ acting on an additive group of points,
$N_G=\sum_{\gamma\in G}\gamma$ denotes the norm operator, while
$N_GP=\sum_{\gamma\in G}P^\gamma$ is the trace of $P$.

\subsubsection{Characters and Heegner points}
For an idele character $\theta$, the notation
$\theta^\sigma(u)=\theta(\bar u)$ means conjugation of the argument;
$\overline{\theta(u)}$ means conjugation of its value.
For the CM Hecke characters $\nu_m$, we put
$\psi_q=\nu_\varpi$, $\kappa_d=\nu_d\nu_1^{-1}$, and
$\chi_{q,j}=\nu_{p^j}\psi_q^{-1}$ for $j=1,2$.
Their Galois Kummer counterparts are
$\kappa_d^{\rm gal}(\sigma)=(\sqrt[3]{d})^\sigma/\sqrt[3]{d}$, and the
induced cubic characters $\vartheta_{p,j}$ of $G_p$ are defined in
\eqref{eq:ringclass-kummer-labels}.

The auxiliary Heegner points $\mathcal P_j(\Phi)$ are the weighted CM
sums in \eqref{eq:normalized-heegner-point}.  Their
$E_\varpi$-projections are compared with the cubic points
$P_{q,p,j}$ in \eqref{eq:heegner-point-comparison}; the latter are
defined in \eqref{eq:cubic-projector}.  The integral projectors
$\Pi_{j,n_0}$ and divided operators $\mathscr D_{j,n_0}$ are given in
\eqref{eq:integral-projector-operators}.

\subsubsection{Frobenius conventions}
Four Frobenius operations occur in the text:
\begin{center}
\begin{tabular}{@{}p{.27\textwidth}p{.62\textwidth}@{}}
\toprule
notation & meaning\\
\midrule
$\operatorname{Fr}_{p^2}$
& The $p^2$-power action on co-ordinates after reduction.\\
$[-p\alpha_q(p)]$
& The same operation on $E_{\bar\varpi}$, expressed as the CM
endomorphism $[-p\alpha_q(p)]$.\\
$\operatorname{Fr}^{\mathrm{rel}}_p$
& Relative $p$-Frobenius from the reduction of
$E_{\bar\varpi}$ to the reduction of $E_\varpi$.\\
$\operatorname{Fr}^{(\lambda)}_p$
& The $\lambda$-compatible normalization
$[\bar\lambda/\lambda]\operatorname{Fr}^{\mathrm{rel}}_p$ of relative
$p$-Frobenius, satisfying
$[\lambda]\operatorname{Fr}^{(\lambda)}_p
=\operatorname{Fr}^{\mathrm{rel}}_p[\lambda]$.\\
\bottomrule
\end{tabular}
\end{center}

\subsection*{Acknowledgements} The authors are grateful to James Sylvester, Ernst Selmer, Bryan Birch, and Nelson Stephens for the inspiring conjecture bearing Sylvester's name. They also thank Hongbo Yin for a valuable discussion and acknowledge
the influence of his beautiful work \cite{YinGZ}. Finally, they thank Samit Dasgupta and Christopher Skinner for helpful comments on the preprint. 

During the
preparation of this paper, A.B. was partially supported by NSF grant
DMS-2302064; Y.T. was partially supported by the National Natural Science
Foundation of China grant no.~12288201.

\subsection*{Use of artificial intelligence}
ChatGPT and Claude tools were used for exploratory 
computations, editing and reorganization. 

\part{The auxiliary Rankin--Selberg convolution}

\section{The Rankin--Selberg factorization}\label{sec:rankin-convolution}

This section constructs the auxiliary Rankin--Selberg convolutions used
in the proof.  We first apply Chebotarev to choose a prime $q$ satisfying
the cubic-residue condition \eqref{eq:intro-ray-condition}
(cf.~\Cref{lem:chebotarev}).  We then define the two ring-class characters
corresponding to $E_p$ and $E_{p^2}$.  The main result,
\Cref{prop:rankin-factorization}, factors each resulting self-dual
Rankin--Selberg $L$-function as $L(s,E_{p^j})$ times a
complementary $L$-function.

Fix $p\equiv8\pmod9$.

\subsection{Choosing the prime \texorpdfstring{$q$}{q}}
We choose $q$ using Chebotarev's theorem.

\begin{lemma}\label{lem:chebotarev}
The set of rational primes $q$ satisfying the following two conditions
has natural density $1/9$:
\begin{enumerate}[label=\textup{(\roman*)}]
\item $q\equiv4\pmod9$;
\item for either of the two conjugate primary factors $\varpi\mid q$,
\(\left(\frac p\varpi\right)_3\ne1\).
\end{enumerate}
\end{lemma}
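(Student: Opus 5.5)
The plan is to translate both conditions into a single Frobenius condition in one Galois extension of $\mathbf Q$ and then apply Chebotarev. Put $M=\mathbf Q(\zeta_9,\sqrt[3]{p})$. Condition (i) means that $q$ splits in $K$ and that its Frobenius in $\operatorname{Gal}(\mathbf Q(\zeta_9)/\mathbf Q)\simeq(\mathbf Z/9\mathbf Z)^\times$ is the class of $4$. Since $4$ has order $3$ in $(\mathbf Z/9\mathbf Z)^\times$ and $4\equiv1\pmod3$, this class lies in $\operatorname{Gal}(\mathbf Q(\zeta_9)/K)$, consistent with $q$ splitting in $K$. Condition (ii) concerns the prime $\mathfrak q=(\varpi)$ of $K$ above $q$, which has degree one. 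By \eqref{eq:reciprocity-convention}, $(p/\varpi)_3$ is the scalar by which $\sigma_{\mathfrak q}$ acts on $\sqrt[3]{p}$. So (ii) says that the Frobenius of $\mathfrak q$ in $K(\sqrt[3]{p})/K$ is non-trivial.

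First I will check that the condition in (ii) does not depend on which factor is chosen. Since $p\in\mathbf Q$, we have $(p/\bar\varpi)_3=\overline{(p/\varpi)_3}$, so one symbol equals $1$ exactly when the other does. Thus the word ``either'' is harmless.

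Next I will record the Galois structure. The extensions $\mathbf Q(\zeta_9)/K$ and $K(\sqrt[3]{p})/K$ are both cyclic of degree $3$. They are linearly disjoint over $K$: $K(\sqrt[3]{p})/K$ is ramified at $p$, whereas $\mathbf Q(\zeta_9)/K$ is unramified outside $3$. Hence $\operatorname{Gal}(M/K)\simeq\mathbf Z/3\times\mathbf Z/3$ and $[M:\mathbf Q]=18$.

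The main step is to count the Frobenius classes in $\operatorname{Gal}(M/\mathbf Q)$ for unramified $q$ satisfying (i) and (ii). For such $q$, the Frobenius lies in $\operatorname{Gal}(M/K)$, because $q$ splits in $K$. There it equals the Frobenius of $\mathfrak q$, up to conjugation by complex conjugation, which swaps $\mathfrak q$ and $\bar{\mathfrak q}$. Its image in $\operatorname{Gal}(\mathbf Q(\zeta_9)/K)$ is the prescribed element $\sigma_4$, and its image in $\operatorname{Gal}(K(\sqrt[3]{p})/K)$ must be non-trivial. Under the product decomposition, this gives exactly $2$ elements of $\operatorname{Gal}(M/K)$ out of $18$ elements in $\operatorname{Gal}(M/\mathbf Q)$. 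The set is stable under conjugation, since it is described by conditions invariant under conjugation; this uses the previous step. Chebotarev's theorem then gives density $2/18=1/9$.

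I expect the only delicate point to be the disjointness and non-triviality bookkeeping, namely that $K(\sqrt[3]{p})\neq\mathbf Q(\zeta_9)$, which the ramification at $p$ settles. Apart from that, the argument is a routine application of Chebotarev.
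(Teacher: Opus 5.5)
Your proposal is correct and follows essentially the same route as the paper: both work in the degree-$18$ extension $K(\zeta_9,\sqrt[3]{p})/\mathbf Q$, use ramification to get linear disjointness over $K$, identify the two conditions with the conjugation-stable two-element set of elements acting by $\zeta_9\mapsto\zeta_9^4$ and non-trivially on $\sqrt[3]{p}$, and apply Chebotarev to get $2/18=1/9$. The differences are only cosmetic. You use ramification at $p$ alone for disjointness, whereas the paper also notes that $p$ is a cube at $\lambda$. You check independence of the chosen factor via $(p/\bar\varpi)_3=\overline{(p/\varpi)_3}$, whereas the paper observes that complex conjugation swaps the two Frobenius elements.
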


\begin{proof}
Put $M_p=K(\sqrt[3]p)$ and $K_9=K(\zeta_9)$.  Since
$-p\in1+9\mathbf Z_3$ and
$(1+3\mathbf Z_3)^3=1+9\mathbf Z_3$, one has
$p\in K_\lambda^{\times3}$.  Thus $M_p/K$ is unramified at $\lambda$ and
ramified above $p$, whereas $K_9/K$ is ramified only at $\lambda$; hence
$M_p\cap K_9=K$ and
\[
 \operatorname{Gal}(M_pK_9/K)\simeq (\mathbf Z/3\mathbf Z)^2,
 \qquad |\operatorname{Gal}(M_pK_9/\mathbf Q)|=18.
\]
For $\mathfrak q=(\varpi)\nmid3p$, Kummer reciprocity gives
\begin{equation}\label{eq:chebotarev-kummer-character}
 \frac{\operatorname{Frob}_{\mathfrak q}(\sqrt[3]p)}{\sqrt[3]p}
 =\left(\frac p\varpi\right)_3=\alpha_q(p).
\end{equation}

Inside $\operatorname{Gal}(M_pK_9/K)\simeq (\mathbf Z/3\mathbf Z)^2$, consider the two
elements whose restriction to $K_9$ is
$\zeta_9\mapsto\zeta_9^4$ and whose restriction to $M_p/K$ is non-trivial.
Complex conjugation exchanges these two elements and fixes their $K_9$
component, so they form one conjugacy class in
$\operatorname{Gal}(M_pK_9/\mathbf Q)$.  Chebotarev therefore gives density
$2/18=1/9$.  The corresponding rational primes satisfy $q\equiv4\pmod9$.
The two primes above $q$ have inverse non-trivial Frobenius in $M_p/K$, so
both cubic residue symbols are non-trivial by
\eqref{eq:chebotarev-kummer-character}.
\end{proof}

Fix such a prime $q$ and an ordered factorization
$q=\varpi\bar\varpi$ with $\varpi$ primary.

\subsection{The ring-class characters \texorpdfstring{$\vartheta_{p,j}$}{theta p,j}}
We isolate the two order-three ring-class characters arising from the
Kummer characters of $p$ and $p^2$.  Since $p\equiv2\pmod3$, the prime
$p$ is inert in $K$; the ring-class group of conductor $p$ is cyclic.

Let
\[
 \mathcal O_p=\mathbf Z+p\mathcal O_K.
\]
Since $\mathcal O_K/(p)\simeq\mathbf F_{p^2}$ and $\operatorname{Pic}(\mathcal O_K)=1$, the ring-class exact
sequence gives
\begin{equation}
 1\longrightarrow \mathcal O_K^\times/\mathcal O_p^\times
 \longrightarrow \mathbf F_{p^2}^\times/\mathbf F_p^\times
 \longrightarrow \operatorname{Pic}(\mathcal O_p)\longrightarrow 1.
\end{equation}
Here $\mathcal O_p^\times=\{\pm1\}$, so
$\mathcal O_K^\times/\mathcal O_p^\times\simeq\mu_3$.  We henceforth use the Artin
isomorphism to identify
\begin{equation}\label{eq:Gp-identification}
 G_p:=\operatorname{Pic}(\mathcal O_p)\simeq\operatorname{Gal}(H_p/K)
 \simeq\bigl(\mathbf F_{p^2}^\times/\mathbf F_p^\times\bigr)/\mu_3,
 \qquad |G_p|=\frac{p+1}{3},
\end{equation}
where $H_p$ is the ring-class field of conductor $p$.  In particular
$G_p$ is cyclic.  The same local description shows that the unique prime
of $K$ above $p$ is totally ramified in $H_p/K$.

For $d\in K^\times$, write
$\kappa_d^{\rm gal}(\sigma)=(\sqrt[3]{d})^\sigma/\sqrt[3]{d}$.
The two characters of exact order three that we need arise from this
Kummer construction.  Since $p\equiv-1\pmod9$, one has
$-p\in1+9\mathbf Z_3=(1+3\mathbf Z_3)^3$, so $p$ is already a cube in $\mathbf Q_3^\times$
and hence in $K_\lambda^\times$.  Thus the cubic Kummer character
$\kappa_p^{\rm gal}$ is trivial at the prime above $3$.

At $p$ its tame local character is trivial on
$\mathbf F_p^\times$ and on $1+p\mathcal O_{K,p}$.  It is also trivial on the residual
$\mu_3$: indeed $9\mid p+1$ implies $3\mid(p^2-1)/3$, so every
$\zeta\in\mu_3$ satisfies $\zeta^{(p^2-1)/3}=1$.  Hence local class field
theory makes the character factor through the ring-class quotient
\eqref{eq:Gp-identification}.
Consequently $K(\sqrt[3]{p})\subset H_p$.  
\begin{definition}
We define the induced
characters on $G_p=\operatorname{Gal}(H_p/K)$ by
\begin{equation}\label{eq:ringclass-kummer-labels}
 \vartheta_{p,j}(\sigma)
 =\frac{(\sqrt[3]{p^j})^\sigma}{\sqrt[3]{p^j}},
 \qquad \sigma\in G_p,\quad j=1,2.
\end{equation}
\end{definition}
Both have exact order three, since $K(\sqrt[3]{p})/K$ is totally
ramified at the prime above $p$.  As $G_p$ is cyclic,
$K(\sqrt[3]{p})$ is its unique cubic subextension, and the kernels of
$\vartheta_{p,1}$ and $\vartheta_{p,2}$ are its unique index-three
subgroup.

\subsection{Factorization of the Rankin--Selberg convolution}
For $m\in K^\times$, let $\nu_m$ be the CM Hecke character of $E_m$
normalized at an unramified primary prime $\mathfrak l=(\pi)$ by
\[
 \nu_m(\mathfrak l)=\overline{\left(\frac m\pi\right)_3}\,\pi.
\]
The theta series $g_q=\theta(\psi_q)$, where $\psi_q=\nu_\varpi$,
is the weight-two eigenform used in the modular maps.  Its associated
abelian variety $A_q/\mathbf Q$ has dimension two and is of
$\mathrm{GL}_2$-type.

Write
\[
 \kappa_d:=\nu_d\nu_1^{-1}
\]
for the finite cubic Hecke character associated with $d\in K^\times$;
at arithmetic Frobenius it has the inverse value to the Galois
Kummer character $\kappa_d^{\rm gal}$.  For $j=1,2$ set
\begin{equation*}
 \chi_{q,j}=\nu_{p^j}\psi_q^{-1}
 =\kappa_{p^j}\kappa_\varpi^{-1},
\end{equation*}
and put $D_{q,1}=qp^2$, $D_{q,2}=qp$.

\begin{proposition}\label{prop:rankin-factorization}

For $j=1,2$ one has an equality of primitive $L$-functions
\begin{equation}\label{eq:rankin-factorization}
 L(s,g_q\times\chi_{q,j})
 =L(s,E_{p^j})L(s,E_{D_{q,j}}).
\end{equation}
\end{proposition}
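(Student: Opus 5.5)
The plan is to use the standard factorization of Rankin--Selberg $L$-functions of CM forms twisted by characters of the same CM field. Since $g_q=\theta(\psi_q)$ is the theta series of a Hecke character of $K$, it is the automorphic induction of $\psi_q$ from $\mathrm{GL}_1(\mathbf A_K)$. For a Hecke character $\chi$ of $K$ one has
\[
 L(s,g_q\times\chi)=L(s,\psi_q\chi)\,L(s,\psi_q^{\sigma}\chi),
\]
where $\psi_q^\sigma(u)=\psi_q(\bar u)$ as in the notation section. On the Galois side this says $\operatorname{Res}_{G_K}\operatorname{Ind}_{G_K}^{G_{\mathbf Q}}\psi_q=\psi_q\oplus\psi_q^\sigma$. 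I will take $\chi=\chi_{q,j}$, read as a character on $\operatorname{GL}_1(\mathbf A_K)$, and identify both factors, with all $L$-functions primitive and Euler factors at the bad primes $3,p,q$ matched by primitivity.

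For the first factor, $\psi_q\chi_{q,j}=\nu_\varpi\nu_{p^j}\nu_\varpi^{-1}=\nu_{p^j}$. Since $E_{p^j}$ is defined over $\mathbf Q$ and has CM by $K$, Deuring's theorem gives $L(s,E_{p^j})=L(s,\nu_{p^j})$. This factor therefore matches directly, with primitivity taking care of the bad Euler factors.

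For the second factor, write $\psi_q^\sigma\chi_{q,j}=\nu_\varpi^\sigma\,\nu_{p^j}\nu_\varpi^{-1}$. The key identity to verify is
\[
 \nu_\varpi^\sigma\nu_\varpi^{-1}=\nu_{\bar\varpi}\,\nu_\varpi^{-1}\cdot(\text{cube-free adjustment})=\kappa_{q}\ \text{ up to } \kappa_{\bar\varpi}\kappa_\varpi^{\pm}\ \text{relations}.
\]
I will check this at unramified primary primes $\mathfrak l=(\pi)$. There $\nu_\varpi^\sigma(\mathfrak l)=\nu_\varpi(\bar{\mathfrak l})=\overline{(\varpi/\bar\pi)_3}\,\bar\pi$. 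Applying complex conjugation to the symbol gives $(\varpi/\bar\pi)_3=\overline{(\bar\varpi/\pi)_3}$, so $\nu_\varpi^\sigma(\mathfrak l)=(\bar\varpi/\pi)_3\,\bar\pi$. This is the character $\nu_{\bar\varpi^{-1}}=\nu_{\bar\varpi^2}$ conjugated. More precisely, $\nu_\varpi^\sigma=\overline{\nu_{\bar\varpi}}\,N$, that is, $\nu_{\bar\varpi}^{-1}N$ because $|\nu|^2=N$.

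Hence the second factor equals $\nu_{\bar\varpi}^{-1}\nu_{p^j}\nu_\varpi^{-1}\cdot N=\nu_{p^j}\nu_1^{2}\nu_{q}^{-1}\nu_1^{-1}N$ in terms of the $\kappa$'s. Using multiplicativity $\kappa_{ab}=\kappa_a\kappa_b$ and $\nu_1\bar\nu_1=N$, it reduces to $\kappa_{p^j}\kappa_q^{-1}\bar\nu_1=\kappa_{p^j q^{2}}\bar\nu_1$. Writing $\bar\nu_1=\nu_1^\sigma$ and noting $\kappa_d^\sigma=\kappa_{\bar d}^{-1}$-type relations for rational $d$, this becomes the $\sigma$-conjugate of $\nu_{D}$ with $D\equiv qp^{2j}\equiv qp^{-j}$ modulo cubes. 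For $j=1$ this gives $qp^{2}=D_{q,1}$, and for $j=2$ it gives $qp^4\sim qp=D_{q,2}$. Since $L(s,\nu^\sigma)=L(s,\nu)$, the second factor is $L(s,E_{D_{q,j}})$.

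The main obstacle is the bookkeeping in this conjugation computation: keeping straight the sign conventions relating $\kappa_d$ to $\kappa_d^{\rm gal}$ (inverse values at arithmetic Frobenius), and the exponent of $p$ modulo cubes that decides between $qp$ and $qp^2$. I would do this at split primes $\mathfrak l$ carefully via cubic reciprocity for primary elements, and handle inert unramified primes, where both sides vanish at odd degree, separately. Agreement at almost all primes then gives equality of the Hecke characters by strong multiplicity one for $\mathrm{GL}_1$, hence equality of the primitive $L$-functions, including the Euler factors at $3$, $p$ and $q$.
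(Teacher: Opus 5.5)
Your proposal is correct and follows the paper's proof. Like the paper, you factor via automorphic induction as $L(s,\psi_q\chi_{q,j})L(s,\psi_q^\sigma\chi_{q,j})$, observe that the first character is $\nu_{p^j}$, evaluate the second at unramified primary primes to identify it with $\nu_{D}^\sigma=\overline{\nu_D}$ for $D\equiv qp^{2j}$ modulo cubes, and reindex ideals by complex conjugation. Some intermediate expressions are garbled: one has $\nu_\varpi^\sigma=\overline{\nu_{\bar\varpi}}=\nu_{\bar\varpi}^{-1}N$ with no extra factor $N$, and the product should read $\nu_{p^j}\nu_q^{-1}\nu_1^{-1}N$. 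Your reduced form $\kappa_{p^jq^2}\,\overline{\nu_1}=\nu_{D_{q,j}}^\sigma$ is nonetheless right.
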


\begin{proof}
Automorphic induction gives
\[
 L(s,g_q\times\chi_{q,j})
 =L(s,\psi_q\chi_{q,j})L(s,\psi_q^\sigma\chi_{q,j}).
\]
The first character is \(\nu_{p^j}\).  At an unramified primary prime
\(\mathfrak l=(\pi)\), the chosen normalization gives
\[
 \overline{\psi_q^\sigma\chi_{q,j}}(\mathfrak l)
 =\left(\frac q\pi\right)_3^{-1}
  \left(\frac{p^j}\pi\right)_3\pi
 =\left(\frac{qp^{2j}}\pi\right)_3^{-1}\pi.
\]
Thus $\overline{\psi_q^\sigma\chi_{q,j}}$ is $\nu_{qp^2}$ for $j=1$
and $\nu_{qp}$ for $j=2$, the factor $p^3$ being a cube in the latter
case.  Equality at the unramified primary primes identifies the global
Hecke characters, so their ramified Euler factors also agree.  Since
$D_{q,j}\in\mathbf Q^\times$, one has
$\overline{\nu_{D_{q,j}}}=\nu_{D_{q,j}}^\sigma$; reindexing ideals by
complex conjugation shows that the second constituent contributes the
$L$-function of $E_{D_{q,j}}$.  This proves
\eqref{eq:rankin-factorization}.
\end{proof}

The Rankin--Selberg pair is self-dual in the sense of \cite[Theorem~1.2]{YZZ} (cf.\
\cite[Theorem~1.5(1)]{CST14}).  More precisely, its
central-character identity is
\begin{equation}\label{eq:rankin-self-dual}
 \omega_{g_q}\,\chi_{q,j}|_{\mathbf A_{\mathbf Q}^{\times}}=1,
\end{equation}
where $\omega_{g_q}$ is the central character of $g_q$.  Indeed, after
unitary normalization, \eqref{eq:rankin-self-dual} follows by taking the
quotient of the central-character identities for the automorphic
inductions of $\psi_q$ and $\nu_{p^j}$.  

We continue to use the arithmetic normalization of the preceding Hecke
characters, so the central point is $s=1$.
The restriction of the local character $\chi_{q,j,p}$ to
$\mathcal O_{K,p}^{\times}$ factors through
$G_p=C_p/\Delta_p$ and induces $\vartheta_{p,j}^{-1}$ there.
Indeed, $\psi_q$ is unramified at $p$, so this restriction is the local
Kummer character of $p^j$ with the inverse-value convention above.  We
use $\chi_{q,j}|_{G_p}$ as shorthand for this induced
local character; the global Hecke character $\chi_{q,j}$ need not
factor through $\operatorname{Gal}(H_p/K)$.

\section{The complementary rank-zero factor}\label{sec:complementary-factor}
For elliptic curves $E_p$ and $E_{p^2}$, the factorization in
\Cref{prop:rankin-factorization} gives one complementary elliptic
$L$-function.  We prove that these complementary curves have
rank zero and non-zero central $L$-value (cf.~
\Cref{prop:complementary-rank-zero}).  Chan's explicit $3$-isogeny
descent \cite[\S\S2.2--2.3 and Theorem~1.1]{Chan} reduces the algebraic
claim to the matrices computed in \Cref{lem:chan-matrices}.
The authors'
CM rank-zero converse \cite[Theorem~3.1]{BTconverse} then implies the
non-vanishing of the central $L$-values.

Let $q=\varpi\bar\varpi$ be as in \Cref{sec:rankin-convolution}, so that
\eqref{eq:intro-ray-condition} holds, and let $E_{D_{q,j}}$ denote the
complementary twist in \Cref{prop:rankin-factorization}.

Put
\[
 d_1=2,\qquad d_2=1,
 \qquad D_{q,j}=qp^{d_j},
 \qquad m_j=4D_{q,j}.
\]
Write $\mathfrak{cf}(m)$ for the positive cube-free part of a positive
integer $m$.  The exponents of $2,q,p$ in $m_j$ are $2,1,d_j$, so
$m_j$ is cube-free, and
\[
 \mathfrak{cf}(2m_j)=D_{q,j}.
\]

\subsection{The descent matrix}
Following Chan \cite[\S\S2.2--2.3]{Chan}, let $R_j$ be the
cubic-residue matrix $R$ attached to $n=m_j$, and let $R_j^T$ denote its
transpose.  The rows of $R_{j}^T$ are indexed by the primes $q,p$, and its columns
by the unit $\omega$ and the chosen prime $\varpi$ above $q$.  Chan
identifies $\ker R_j^T$ with
$\operatorname{Sel}_{\phi_{\rm Ch}}(E_{m_j}^{\rm Ch})$.

\begin{lemma}\label{lem:chan-matrices}
Let
\[
 m_j=4qp^{d_j},\qquad d_1=2,\quad d_2=1,
\]
and write
\[
 \alpha_q(p)=\left(\frac p\varpi\right)_3=\omega^\beta,
 \qquad b_j=d_j\beta\in\mathbf F_3.
\]
For the rational $3$-isogeny
\[
 \phi_{\rm Ch}:E_{m_j}^{\rm Ch}\longrightarrow
 \widehat E_{m_j}^{\rm Ch},
 \qquad E_{m_j}^{\rm Ch}:y^2=x^3+m_j^2,
\]
one has
\begin{equation}\label{eq:chan-matrix-derived}
R_j^T=\begin{pmatrix}2&2b_j\\0&b_j\end{pmatrix}
\quad\text{over }\mathbf F_3.
\end{equation}
The rows are indexed by $(q,p)$ and the columns by
$(\pi_0,\pi_1)=(\omega,\varpi)$.  Since
$\alpha_q(p)\ne1$, one has $b_j\ne0$ for $j=1,2$, and so
\[
 \operatorname{Sel}_{\phi_{\rm Ch}}(E_{m_j}^{\rm Ch})=0,
 \qquad
 \dim_{\mathbf F_3}
 \operatorname{Sel}_{\widehat\phi_{\rm Ch}}
 (\widehat E_{m_j}^{\rm Ch})=1.
\]

\end{lemma}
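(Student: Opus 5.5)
The plan is to compute the entries of $R_j^T$ directly from Chan's definition \cite[\S\S2.2--2.3]{Chan} and then read off the Selmer groups from \cite[Theorem~1.1]{Chan}. Chan factors the odd primes dividing $n=m_j$ as products of primary primes in $\mathbf Z[\omega]$. The generators of the relevant Kummer group of $K^\times/K^{\times3}$ are the unit $\omega$ and the primary primes above the split primes dividing $n$. The only such prime here is $q$, so the generators are $\pi_0=\omega$ and $\pi_1=\varpi$; the inert prime $p$ contributes no column. The rows are the local conditions at the primes $q$ and $p$ dividing $n$. The entry in row $\ell$ and column $\pi_i$ is the additive logarithm, base $\omega$, of a cubic residue symbol of $\pi_i$ at a prime above $\ell$, twisted by the appropriate local image of $n$ (or of $2n$) as prescribed by Chan. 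The prime $2$ divides $m_j$ only to exponent $2$ and is inert, so I expect it to give no independent condition. The prime $3$ is handled by Chan's normalization and does not appear as a row.

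For the row indexed by $q$, evaluate at $\bar\varpi$, the conjugate of the column prime. The column $\omega$ gives $(\omega/\bar\varpi)_3=\omega^{(q-1)/3}$, and $q\equiv4\pmod9$ forces $(q-1)/3\equiv1\pmod3$; after Chan's sign normalization this should account for the entry $2$. The column $\varpi$ gives $(\varpi/\bar\varpi)_3$ together with the residue of the prime-to-$q$ part $4p^{d_j}$ of $n$. Using cubic reciprocity $(\varpi/\bar\varpi)_3=(\bar\varpi/\varpi)_3$ and the fact that $2$ and $4$ are cubic residues modulo a prime with $q\equiv4\pmod 9$, this should reduce to $(p^{d_j}/\varpi)_3^{\pm1}$, that is, $\pm b_j$. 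I expect to match the stated value $2b_j$ once the twisting exponent is fixed.

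For the row indexed by $p$, the residue field is $\mathbf F_{p^2}$. Every element of $\mathbf F_p^\times$ is a cube there because $3\nmid p-1$ forces cubing to be bijective on $\mathbf F_p^\times$. For $\omega$, the symbol is $\omega^{(p^2-1)/3}=1$ since $9\mid p+1$, which gives the entry $0$. For $\varpi$, cubic reciprocity for primary elements gives $(\varpi/p)_3=(p/\varpi)_3=\alpha_q(p)$, raised to the power $d_j$ coming from the exponent of $p$ in $n$. This gives $b_j$.

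With $R_j^T$ upper triangular and $\det R_j^T=2b_j\ne0$, the matrix is invertible, so $\ker R_j^T=0$ and $\operatorname{Sel}_{\phi_{\rm Ch}}=0$. For the dual Selmer group, I would combine the Cassels–Tate/Greenberg–Wiles parity formula for $\phi$ and $\widehat\phi$ with Chan's Theorem~1.1. Chan's theorem gives the dual Selmer group as $1+(\text{corank of }R_j)$, the extra dimension coming from the rational $3$-torsion point. Since $R_j$ has full rank, this yields dimension $1$.

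The main obstacle I expect is the sign and normalization bookkeeping in the $(q,\varpi)$ entry, namely that it comes out as $2b_j$ rather than $\pm b_j$ plus a constant. This requires tracking Chan's exact twist by $2n$ versus $n$, which is why the paper uses $\mathfrak{cf}(2m_j)=D_{q,j}$, and the power of $\omega$ contributed by $4$. Fortunately only $b_j\ne0$ in the $(p,\varpi)$ entry and the non-zero $(q,\omega)$ entry matter for invertibility, and those two entries are robust.
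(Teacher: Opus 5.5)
Your route is the paper's: read the entries of Chan's matrix off cubic residue symbols, use reciprocity at the inert prime $p$, note that $R_j^T$ is upper triangular with determinant $2b_j\ne0$, and invoke Chan's Theorem~1.1. Your $p$-row is correct. The conclusion $\operatorname{Sel}_{\phi_{\rm Ch}}=0$ also survives, since the determinant does not involve the $(q,\varpi)$ entry.

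However, the lemma asserts the exact matrix, and your $q$-row does not establish it. The entry $2$ in position $(q,\omega)$ is not a single symbol at $\bar\varpi$ corrected by a ``sign normalization''. The row-$q$ symbol is taken modulo the rational prime $q$, so
\[
 \left(\frac{\omega}{q}\right)_3
 =\left(\frac{\omega}{\varpi}\right)_3\left(\frac{\omega}{\bar\varpi}\right)_3
 =\omega\cdot\omega=\omega^2,
\]
each factor being $\omega^{(q-1)/3}=\omega$.

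For the $(q,\varpi)$ entry, the fact you invoke is false: $2$ and $4$ need not be cubic residues modulo a prime $q\equiv4\pmod9$. For $q=13$ the cubes in $\mathbf F_{13}^\times$ are $\pm1,\pm5$. With $\varpi=4+3\omega$ one has $(2/\varpi)_3=\omega$ and $(4/\varpi)_3=\omega^2$. What removes the $2$-part is the passage to $2m_j$: the entry is
\[
 \left(\frac{2m_j/q}{\varpi}\right)_3^2=\left(\frac{8p^{d_j}}{\varpi}\right)_3^2=\omega^{2b_j},
\]
with $8=2^3$ a cube, and no $(\varpi/\bar\varpi)_3$ factor occurs. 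Working with $m_j$ instead would add $(4/\varpi)_3^2$ and give the wrong entry already for $q=13$.

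For the dual Selmer group, ``$1+\operatorname{corank}R_j$'' is not the form of Chan's theorem that applies. The paper uses
\[
 \dim\operatorname{Sel}_{\widehat\phi_{\rm Ch}}
 =\dim\operatorname{Sel}_{\phi_{\rm Ch}}+\omega_2(\mathfrak{cf}(2m_j))+\delta_{m_j}.
\]
Here $\omega_2$ counts prime divisors congruent to $2\pmod3$, and $\delta_m$ is $1$ for $m\equiv\pm3$, $-1$ for $m\equiv\pm4$, and $0$ otherwise modulo $9$. Your proposal omits two checks that are needed:
\begin{enumerate}
\item $p$ is the only prime $\equiv2\pmod3$ dividing $\mathfrak{cf}(2m_j)=qp^{d_j}$, so $\omega_2=1$;
\item $m_1\equiv-2$ and $m_2\equiv2\pmod9$, so $\delta_{m_j}=0$.
\end{enumerate}
Your count lands on the right number only because both happen to hold.

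The shift is also not produced by the rational $3$-torsion point. In Cassels' formula the global factor $\#E^{\rm Ch}_{m_j}(\mathbf Q)[\phi_{\rm Ch}]=3$ cancels against the archimedean factor. The extra dimension comes from the Tamagawa ratio at the inert prime $p$: one has $c_p=3$ for $y^2=x^3+m_j^2$, but $c_p=1$ for $y^2=x^3-27m_j^2$, because $-3$ is a non-square modulo $p$. The point $(0,m_j)$ supplies at best the lower bound.
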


\begin{proof}
In Chan's notation, her $\rho$ is our $\omega$.  Since $\operatorname{ord}_3(m_j)=0$ and
$\mathfrak{cf}(2m_j)=qp^{d_j}$, the matrix $R_j^T$ has the two rows
$q,p$ and the two columns $\pi_0=\omega$, $\pi_1=\varpi$.
The local condition at $3$ is already encoded in Chan's matrix.

For the unit column, multiplicativity and the supplementary law for the
cubic residue symbol give
\[
 \left(\frac{\omega}{q}\right)_3
 =\left(\frac{\omega}{\varpi}\right)_3
  \left(\frac{\omega}{\bar\varpi}\right)_3=\omega^2
\]
since $q\equiv4\pmod9$.  On the other hand
\[
 \left(\frac{\omega}{p^{d_j}}\right)_3=1,
\]
since $p\equiv8\pmod9$.  Thus the first column is $(2,0)^T$.
For the $\varpi$ column, the diagonal $q$-entry and the off-diagonal
$p$-entry are
\[
 \left(\frac{2m_j/q}{\varpi}\right)_3^2
 =\left(\frac{8p^{d_j}}\varpi\right)_3^2=\omega^{2b_j},
 \qquad
 \left(\frac{p^{d_j}}\varpi\right)_3=\omega^{b_j};
\]
here $8=2^3$ contributes trivially.  This gives
\eqref{eq:chan-matrix-derived}.  Since $d_j\not\equiv0\pmod3$ and
$\beta\ne0$, the matrix is invertible, so
$\operatorname{Sel}_{\phi_{\rm Ch}}(E_{m_j}^{\rm Ch})=0$.

Let $\omega_2(d)$ count the distinct prime divisors of $d$ that are
congruent to $2$ modulo $3$.  By \cite[Theorem~1.1]{Chan},
\[
 \dim\operatorname{Sel}_{\widehat\phi_{\rm Ch}}
 =\dim\operatorname{Sel}_{\phi_{\rm Ch}}
  +\omega_2(\mathfrak{cf}(2m_j))+\delta_{m_j}.
\]
Her correction term is
\[
\delta_m=
\begin{cases}
1,&m\equiv\pm3\pmod9,\\
-1,&m\equiv\pm4\pmod9,\\
0,&\text{otherwise}.
\end{cases}
\]
Here $m_1\equiv-2$ and $m_2\equiv2\pmod9$, so
$\delta_{m_j}=0$.  Since $\mathfrak{cf}(2m_j)=D_{q,j}$, and among
$q,p$ only $p\equiv2\pmod3$, one has
$\omega_2(D_{q,j})=1$.  The dual Selmer dimension is therefore one.
\end{proof}

\subsection{Non-vanishing of central \texorpdfstring{$L$}{L}-values}
The two isogeny Selmer groups control the full $3$-Selmer group, and the
Kummer class of the rational point $(0,m_j)$ generates it.

\begin{proposition}\label{prop:complementary-rank-zero}

For $j=1,2$,
\[
 \operatorname{rank}E_{D_{q,j}}(\mathbf Q)=0,
 \qquad
 \Sha(E_{D_{q,j}}/\mathbf Q)[3^\infty]=0,
 \qquad
 L(1,E_{D_{q,j}})\ne0.
\]

\end{proposition}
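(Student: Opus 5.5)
The plan is to run a standard $3$-isogeny descent on Chan's model and then invoke the rank-zero converse. First relate $E_{D_{q,j}}$ to Chan's curve $E^{\rm Ch}_{m_j}:y^2=x^3+m_j^2$. Scaling $(x,y)\mapsto(4^{1/3}\cdot\,,2\,\cdot)$ is not rational, so instead use the twisting identity for cube-sum curves: $y^2=x^3+m^2$ with $m=4D$ is isomorphic over $\mathbf Q$ to $y^2=x^3+16D^2$. The curve $y^2=x^3+16D^2$ is in turn $3$-isogenous over $\mathbf Q$ to $y^2=x^3-27\cdot 16D^2/\ldots$, which is the model $E_{D}:y^2=x^3+D^2/4$ up to a change of variables. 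Equivalently, $\mathfrak{cf}(2m_j)=D_{q,j}$ records precisely that $E^{\rm Ch}_{m_j}$ and $E_{D_{q,j}}$ lie in the same $\mathbf Q$-isogeny class. Rank, finiteness of $\Sha[3^\infty]$ (up to the isogeny-invariance of the corank) and $L(1,\cdot)$ are all isogeny invariant. It therefore suffices to work with $E:=E^{\rm Ch}_{m_j}$.

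Second, bound the $3$-Selmer group using \Cref{lem:chan-matrices}. The exact sequence
\[
0\to \widehat E(\mathbf Q)[\widehat\phi]/\phi(E(\mathbf Q)[3])\to \operatorname{Sel}_{\phi}(E)\to\operatorname{Sel}_3(E)\to\operatorname{Sel}_{\widehat\phi}(\widehat E)
\]
together with $\operatorname{Sel}_\phi(E)=0$ gives $\dim_{\mathbf F_3}\operatorname{Sel}_3(E)\le\dim\operatorname{Sel}_{\widehat\phi}(\widehat E)=1$. Next, the rational $3$-torsion point $T=(0,m_j)$ lies in $\ker\phi$ (or its image in $\widehat E$ is the generator of $\ker\widehat\phi$). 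Its Kummer image in $\operatorname{Sel}_{\widehat\phi}(\widehat E)$ is non-zero, being represented by a non-cube such as $2m_j$ up to cubes. Hence $\operatorname{Sel}_{\widehat\phi}$ is exactly spanned by torsion. Since $E(\mathbf Q)[3]\ne0$, the full $\operatorname{Sel}_3(E)$ is one-dimensional and spanned by the Kummer image of $T$ (or one transfers through the isogeny to the curve carrying the torsion point). Thus $\operatorname{Sel}_{3^\infty}(E)$ has corank $0$: a positive rank, or a non-trivial $\Sha[3]$, would produce a class in $\operatorname{Sel}_3$ independent of the torsion class. This gives rank $0$ and $\Sha(E/\mathbf Q)[3^\infty]=0$. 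The latter comes from $\Sha[3]$ being the quotient of $\operatorname{Sel}_3$ by the torsion Kummer image, and this transfers to $E_{D_{q,j}}$ since $\Sha[3^\infty]$ vanishing is preserved under a $3$-isogeny once both Selmer groups are accounted for. The cleanest route is to repeat the count on the dual side.

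Finally, apply \cite[Theorem~3.1]{BTconverse}, the CM rank-zero $3$-converse. Its hypothesis is exactly that the $3$-Selmer group of the cube-sum curve is generated by the Kummer image of rational $3$-torsion, or equivalently that the $3^\infty$-Selmer corank is zero with the stated local conditions. It yields $L(1,E_{D_{q,j}})\ne0$. I expect the main obstacle to be the bookkeeping in the second step: pinning down which of $E,\widehat E$ carries the Kummer class of $(0,m_j)$, and checking that it is non-trivial in $\operatorname{Sel}_{\widehat\phi}$ rather than killed. One must also verify that the precise hypothesis of \cite{BTconverse} (formulated for $E_m$ with our normalization, and possibly requiring conditions at $3$ and at the additive primes $q,p$) matches the conclusion obtained from Chan's matrices. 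If the hypothesis is phrased via $\operatorname{Sel}_3(E_{D})$ directly, I would transport the computation through the explicit isogeny and check the torsion generator there.
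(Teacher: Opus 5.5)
Your outline is the paper's argument. Chan's matrices give $\operatorname{Sel}_{\phi_{\rm Ch}}(E^{\rm Ch}_{m_j})=0$ and hence $\dim_{\mathbf F_3}\operatorname{Sel}_3(E^{\rm Ch}_{m_j})\le1$. The rational point $T=(0,m_j)$ fills this space, and the Kummer sequence gives rank $0$ and $\Sha[3]=0$, so $\Sha[3^\infty]=0$. Finally, \cite[Theorem~3.1]{BTconverse} applied to $H^1_{\rm f}(\mathbf Q,V_3)=0$ gives $L(1,E_{D_{q,j}})\ne0$. Two things need fixing.

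\textbf{The opening comparison is wrong.} With $m_j=4D_{q,j}$, the substitution $x=4X$, $y=8Y$ carries $y^2=x^3+16D_{q,j}^2$ to $Y^2=X^3+D_{q,j}^2/4$. So $E^{\rm Ch}_{m_j}$ is \emph{isomorphic} over $\mathbf Q$ to $E_{D_{q,j}}$. The $3$-isogenous curve $y^2=x^3-27m_j^2=x^3-432D_{q,j}^2$ is $E'_{D_{q,j}}$, not $E_{D_{q,j}}$. This is not cosmetic for the way you argued. Vanishing of $\Sha[3^\infty]$ is not a $3$-isogeny invariant in general: Cassels' formula lets $\#\Sha$ change by a power of $3$ across a $3$-isogeny. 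So the transfer principle you invoke is false as stated. With the isomorphism no transfer is needed, and the conclusions for $E^{\rm Ch}_{m_j}$ hold verbatim for $E_{D_{q,j}}$.

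\textbf{Your non-triviality argument differs from the paper's but is valid.} The injection $\operatorname{Sel}_3(E)\hookrightarrow\operatorname{Sel}_{\widehat\phi}(\widehat E)$ sends $\delta_3(T)$ to $\delta_{\widehat\phi}(T)$. Evaluating $y\pm m_j$ at $T$ using $(y-m_j)(y+m_j)=x^3$ gives $\delta_{\widehat\phi}(T)=[2m_j]^{\pm1}=[D_{q,j}]^{\pm1}\ne1$ in $\mathbf Q^\times/\mathbf Q^{\times3}$. Hence the class of $T$ spans $\operatorname{Sel}_3(E)$. The paper instead reduces modulo $5$, where $\#E^{\rm Ch}_{m_j}(\mathbf F_5)=6$, to get $T\notin 3E^{\rm Ch}_{m_j}(\mathbf Q)$. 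Your version is more intrinsic and slightly stronger, since the class survives in $\operatorname{Sel}_{\widehat\phi}$. The cost is that you must pin down the $\widehat\phi$-descent map explicitly.

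Drop the parenthetical that the image of $T$ in $\widehat E$ generates $\ker\widehat\phi$. Since $T\in\ker\phi$, one has $\phi(T)=O$. The relevant fact is $T\notin\widehat\phi(\widehat E(\mathbf Q))$.
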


\begin{proof}
The standard comparison sequence for
$[3]=\widehat\phi_{\rm Ch}\circ\phi_{\rm Ch}$ contains
\[
 \operatorname{Sel}_{\phi_{\rm Ch}}(E_{m_j}^{\rm Ch})
 \longrightarrow \operatorname{Sel}_3(E_{m_j}^{\rm Ch})
 \longrightarrow
 \operatorname{Sel}_{\widehat\phi_{\rm Ch}}(\widehat E_{m_j}^{\rm Ch}).
\]
By the matrix calculation in \Cref{lem:chan-matrices},
\[
 \dim_{\mathbf F_3}\operatorname{Sel}_3(E_{m_j}^{\rm Ch})\le1.
\]
The point $P_j=(0,m_j)$ has order three.  To see that it is not divisible
by $3$, reduce at the prime $5$.  The congruence
conditions on $p$ and $q$ ensure $5\nmid m_j$, so the curve has good
reduction at $5$, and a direct count gives
$\#E_{m_j}^{\rm Ch}(\mathbf F_5)=6$.  Since $5\ne3$, reduction is
injective on rational $3$-torsion.  Thus the reduction of $P_j$ still
has order three, whereas $[3]E_{m_j}^{\rm Ch}(\mathbf F_5)$ has order two.
Hence $P_j\notin3E_{m_j}^{\rm Ch}(\mathbf Q)$, so its Kummer class is a
non-zero element of the $3$-Selmer group.  Therefore
\[
 \dim_{\mathbf F_3}\operatorname{Sel}_3(E_{m_j}^{\rm Ch})=1.
\]
The Kummer exact sequence then gives
\[
 \operatorname{rank}E_{m_j}^{\rm Ch}(\mathbf Q)=0,
 \qquad
 \Sha(E_{m_j}^{\rm Ch}/\mathbf Q)[3]=0.
\]
Any non-zero $3$-primary torsion group has non-zero $3$-torsion, so in fact
$\Sha(E_{m_j}^{\rm Ch}/\mathbf Q)[3^\infty]=0$.  Since $m_j=4D_{q,j}$,
the change of variables $x=4X$, $y=8Y$ identifies $E_{m_j}^{\rm Ch}$ with
$E_{D_{q,j}}$.  Let $T_3(E)$ be the $3$-adic Tate module and put
$V_3(E)=T_3(E)\otimes_{\mathbf Z_3}\mathbf Q_3$.  The preceding
equalities imply the vanishing of the Bloch--Kato Selmer group:
\[
 H_{\rm f}^1(\mathbf Q,V_3(E_{D_{q,j}}))=0.
\]
The authors' rank-zero converse for CM newforms
\cite[Theorem~3.1]{BTconverse} applies at an arbitrary prime, in particular
at $3$, and therefore gives
\[
 \operatorname{ord}_{s=1}L(s,E_{D_{q,j}})=0.
\]
Thus $L(1,E_{D_{q,j}})\ne0$.
\end{proof}

\part{CM points and the division boundary}

\section{CM points and modular parametrizations}\label{sec:cm-data}

This section introduces the modular curves, CM points, and modular parametrizations used below. 
 We first identify the cubic field over which the two base CM
points are defined and determine their Galois actions
(cf.~\Cref{lem:cubic-field,lem:fine-base-field}).  We then describe some basic properties of 
the normalized modular parametrizations
(cf.~\Cref{lem:modular-functions}).  These results will be used in the next
section to choose a CM point whose $E_\varpi$-image is non-zero
$\lambda$-torsion and to compute the cubic descent class of its
$E_{\bar\varpi}$-image.

Fix $q=\varpi\bar\varpi$ and $\mathfrak q=(\varpi)$ as in
\Cref{sec:rankin-convolution}, and put $N=9q$.  Let
$\Gamma\subset\Gamma_0(N)$ be the index-three subgroup defined by the
condition that the lower-right entry modulo $q$ be a cube, and let
$X_\Gamma/\mathbf Q$ be the associated connected modular curve 
(cf.~\Cref{subsubsec:level-structure}).
We use the normalized modular parametrizations 
\[
 \varphi:X_\Gamma\longrightarrow E_{\bar\varpi},\qquad
 \varphi^c:X_\Gamma\longrightarrow E_\varpi
\]
of
\cite[(2.0.8)--(2.0.9)]{Yin47}
specialized to exponent one.
Here $\varphi$ is attached to $g_q=\theta(\nu_\varpi)$ and
$\varphi^c$ is its coefficient conjugate; both send the cusp $\infty$
to $O$.

\subsection{The cubic field \texorpdfstring{$F_q$}{Fq}}
Recall that $K_{\mathfrak m}^{\mathrm{ray}}$ denotes the ray-class
field of modulus $\mathfrak m$.
Put
\[
 F_q=K(t),\qquad t^3=\varpi.
\]
Let $u_q\in\mathbf A_{K,f}^{\times}$ be equal to $\omega$ at
$\mathfrak q$ and to $1$ at every other finite place.

\begin{lemma}\label{lem:cubic-field}
\begin{enumerate}[label=\textup{(\roman*)}]
\item The extension $F_q/K$ is ramified at $\mathfrak q$ and at
$\lambda=1-\omega$, unramified away from $3\mathfrak q$, and
\[
 F_q\subset K_{3\varpi}^{\mathrm{ray}}
       \subset K_{9q}^{\mathrm{ray}}.
\]
\item With the Artin normalization of \Cref{sec:reference-conventions},
\[
 t^{\sigma_{u_q}}=\omega^2t,\qquad
 t^{\sigma_{u_q^{-1}}}=\omega t.
\]
In particular, $\sigma_{u_q}$ generates $\operatorname{Gal}(F_q/K)$.
\end{enumerate}
\end{lemma}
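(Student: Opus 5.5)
The plan is to treat $F_q=K(\sqrt[3]{\varpi})$ as a Kummer extension of $K$, which is legitimate because $\mu_3\subset K$. Part (i) then reduces to local computations at each place of $K$. Part (ii) reduces to a tame Hilbert-symbol computation at $\mathfrak q$, which is then calibrated against the Artin convention \eqref{eq:reciprocity-convention}.

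For (i), I would argue place by place.

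\emph{Away from $3\mathfrak q$.} Here $\varpi$ is a unit and $3$ is invertible, so $F_q/K$ is unramified.

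\emph{At $\mathfrak q$.} The polynomial $X^3-\varpi$ is Eisenstein, so $\mathfrak q$ is totally ramified. Since $q\neq3$ the ramification is tame, and the local conductor exponent is $1$.

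\emph{At $\lambda$.} Recall that $v_\lambda(3)=2$. Since $\varpi\equiv1\pmod 3$, we have $\varpi\in U^{(2)}=1+\lambda^2\mathcal O_{K,\lambda}$. The following steps handle this place.
\begin{enumerate}
\item Ramification. Expand $(1+\lambda x)^3$ using $3=-\omega^2\lambda^2$. This shows that the cubes lying in $U^{(2)}$ are exactly $U^{(4)}=1+9\mathcal O_{K,\lambda}$. Note that $\omega\in U^{(1)}$ is not a cube, and that $U^{(4)}$ consists of cubes by Hensel, since $4>3e/(3-1)=3$.
\item Write $\varpi=1+3(a+b\omega)$. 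If $\varpi$ were a local cube, then $\varpi\equiv1\pmod 9$, so $3\mid a$ and $3\mid b$. That would force $q=N\varpi\equiv1\pmod9$, contradicting $q\equiv4\pmod9$. Hence $\lambda$ ramifies in $F_q$.
\item Conductor. I would use the standard bound on the conductor exponent of $K_\lambda(\sqrt[3]{x})$ for $x\in U^{(i)}$, namely $\le 3e/(3-1)-i+1$. Here $e=2$ and $i=2$, so the exponent is at most $2$, i.e.\ the conductor divides $\lambda^2=(3)$.
\item Alternatively, one can check directly that $(\varpi,1+3y)_\lambda=1$ for all $y$.
\end{enumerate}
Combining the three places, the conductor of $F_q/K$ divides $3\varpi$. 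Hence $F_q\subset K^{\mathrm{ray}}_{3\varpi}\subset K^{\mathrm{ray}}_{9q}$.

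For (ii), the idele $u_q$ is trivial outside $\mathfrak q$. By the reciprocity law, $t^{\sigma_{u_q}}/t$ is therefore a single local norm-residue symbol at $\mathfrak q$. Since $\omega$ is a unit there and $\varpi$ is a uniformizer, this symbol is the tame symbol, which equals $(\omega/\varpi)_3^{\pm1}$. By the supplementary law,
\[
\left(\frac{\omega}{\varpi}\right)_3=\omega^{(q-1)/3}=\omega,
\]
because $q\equiv4\pmod9$ gives $(q-1)/3\equiv1\pmod3$. So $t^{\sigma_{u_q}}\in\{\omega t,\omega^2t\}$, which already shows that $\sigma_{u_q}$ generates the cyclic group $\operatorname{Gal}(F_q/K)$ of order three. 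The formula for $u_q^{-1}$ follows by inversion.

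The main obstacle is fixing the sign, i.e.\ showing that the value is $\omega^2$ and not $\omega$. This must match the arithmetic-Frobenius convention \eqref{eq:reciprocity-convention} together with the sign convention for the local symbol. I would calibrate as follows.
\begin{enumerate}
\item Take an auxiliary primary prime $\mathfrak l=(\pi)\nmid 3q$. Its Frobenius $\sigma_{\mathfrak l}$ is represented by the idele $\pi$ at $\mathfrak l$.
\item Multiply by the principal idele $\pi^{-1}$. The resulting idele is $\pi^{-1}$ at every place other than $\mathfrak l$.
\item The contribution of this idele at $\mathfrak q$ is the tame symbol of $(\pi^{-1},\varpi)$. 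The contribution at $\lambda$ is trivial, by the conductor bound from (i), because $\pi$ is primary.
\item By \eqref{eq:reciprocity-convention}, the total must equal $(\varpi/\pi)_3$. Cubic reciprocity gives $(\varpi/\pi)_3=(\pi/\varpi)_3$.
\item This pins down the exponent sign of the tame symbol at $\mathfrak q$ in the normalization used here. Applying it to the unit $\omega$ then gives $t^{\sigma_{u_q}}=\omega^{-1}t=\omega^2t$.
\end{enumerate}
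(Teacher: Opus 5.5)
\textbf{Gap: ramification at $\lambda$.} Steps 1--2 correctly show two things: the cubes in $U^{(2)}=1+3\mathcal O_{K,\lambda}$ are exactly $U^{(4)}$, and $\varpi\notin U^{(4)}$. But this only proves that $K_\lambda(\sqrt[3]{\varpi})/K_\lambda$ is non-trivial. It does not prove that it is ramified. Since $3e/(3-1)=3$, every element of $U^{(3)}=1+3\lambda\mathcal O_{K,\lambda}$ generates an \emph{unramified} Kummer extension, and the non-cubes in $U^{(3)}\setminus U^{(4)}$ give the unramified cubic extension of $K_\lambda$. For example, $4-3\omega=1+3\lambda$ is primary of norm $37$ and is not a cube in $K_\lambda$, yet $K(\sqrt[3]{4-3\omega})$ is unramified at $\lambda$. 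Your step 3 is only an upper bound, so it cannot detect ramification either.

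What you need is $\varpi\notin U^{(3)}$, i.e.\ $v_\lambda(\varpi-1)=2$ exactly. This requires a sharper use of $q\equiv4\pmod 9$. Write $\varpi=1+3y$ with $y=a+b\omega$. Then $q=N\varpi\equiv1+3\operatorname{Tr}(y)=1+3(2a-b)\pmod 9$, so $2a-b\equiv1\pmod 3$. Hence $a+b\equiv2\pmod3$, and so $\lambda\nmid y$. (Equivalently: $\operatorname{Tr}(\lambda\mathcal O_K)=3\mathbf Z$, so $\varpi\in U^{(3)}$ would force $q\equiv1\pmod9$.) With $i=2$ exact and $3\nmid i<3$, the formula in your step 3 becomes an equality. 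This gives a totally ramified extension with conductor exponent $2$.

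The paper argues globally instead. If $F_q/K$ were unramified at $\lambda$, its conductor would divide $\mathfrak q$. Then $F_q$ would lie in the ray class field of modulus $\mathfrak q$, whose degree $(q-1)/6$ is prime to $3$, a contradiction.

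\textbf{The rest of the proposal.} Once this is repaired, the rest is correct and somewhat more self-contained than the paper's proof.
\begin{enumerate}
\item The paper takes $F_q\subset K^{\mathrm{ray}}_{3\varpi}$ from Yin. Your local bound (conductor exponent at most $2$ at $\lambda$) proves it directly.
\item The paper simply quotes the tame formula $(u,\varpi)_{\mathfrak q,3}=\operatorname{red}_{\mathfrak q}(u)^{-(q-1)/3}$. Your calibration derives its sign from the convention for $\sigma_{\mathfrak l}$, using an auxiliary primary prime $\pi$, the principal idele $\pi^{-1}$, triviality at $\lambda$ (because $\pi^{-1}\in 1+3\mathcal O_{K,\lambda}$), and cubic reciprocity.
\end{enumerate}
For that calibration to fix the sign, you should state that $\pi$ is chosen with $(\pi/\varpi)_3\ne1$. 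Such a prime exists by Chebotarev. Otherwise the identity reads $1=1$ and determines nothing.
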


\begin{proof}
Kummer theory gives ramification at $\mathfrak q$ and unramifiedness at
all primes away from $3\mathfrak q$.  Since the degree-three ramification
at $\mathfrak q$ is tame, its conductor exponent is one.  If $F_q/K$ were
unramified at $\lambda$, its conductor would divide $\mathfrak q$ and
$F_q$ would lie in the $\mathfrak q$-ray class field.  As $h_K=1$, that
field has degree
\[
 \frac{\#(\mathcal O_K/\mathfrak q)^\times}{\#\mathcal O_K^\times}
 =\frac{q-1}{6},
\]
which is prime to $3$ for $q\equiv4\pmod9$.  This proves the ramification
at $\lambda$.  The displayed ray-class containment is
\cite[Proposition~2.1(2)]{Yin47}.

For (ii), the tame unit--uniformizer formula for the local cubic norm
residue symbol gives
\[
 (u,\varpi)_{\mathfrak q,3}
 =\operatorname{red}_{\mathfrak q}(u)^{-(q-1)/3}.
\]
Since $(q-1)/3\equiv1\pmod3$, evaluation at $u=\omega$ gives the
first formula, and in turn the second follows.
\end{proof}

\subsection{The modular curve and its CM points}
\subsubsection{The level structure}\label{subsubsec:level-structure}
Let
\[
\mathcal U=\left\{g\in\operatorname{GL}_2(\widehat{\mathbf Z}):
 c(g)\in N\widehat{\mathbf Z},\quad
 d(g)\bmod q\in(\mathbf F_q^\times)^3\right\}.
\]
The corresponding arithmetic subgroup is
\[
 \Gamma=\left\{\gamma\in\Gamma_0(N):
 d(\gamma)\bmod q\in(\mathbf F_q^\times)^3\right\}.
\]
This is the level structure of \cite[(2.0.7) and \S3]{Yin47}.  Since
$\det(\mathcal U)=\widehat{\mathbf Z}^{\times}$, the associated
connected modular curve has a canonical model over $\mathbf Q$ and we let $X_\Gamma$ denote this model
\cite[\S\S5, 12--13]{MilneSV}.
The map
\[
 \Gamma_0(N)\longrightarrow \mathbf F_q^\times/(\mathbf F_q^\times)^3,
 \qquad
 \begin{pmatrix}a&b\\ c&d\end{pmatrix}\longmapsto d
\]
is surjective with kernel $\Gamma$ and so 
$[\Gamma_0(N):\Gamma]=3$.

The non-cuspidal complex points have the adelic description
\[
 \operatorname{GL}_2(\mathbf Q)\backslash
 \bigl(\mathfrak H^{\pm}\times\operatorname{GL}_2(\mathbf A_f)\bigr)
 /\mathcal U,
 \qquad \mathfrak H^{\pm}=\mathbf C\setminus\mathbf R.
\]
We write $[z,g]$ for the equivalence class of the pair $(z,g)$; thus
$[z,g]=[\gamma z,\gamma gu]$ for
$\gamma\in\operatorname{GL}_2(\mathbf Q)$ and $u\in\mathcal U$.
Right translation on the modular-curve tower is
$[z,g]\mapsto[z,gu]$ for $u\in\operatorname{GL}_2(\mathbf A_f)$.

\subsubsection{Two CM embeddings}\label{subsec:two-cm-embeddings}
We introduce two CM embeddings $\iota_a:K\hookrightarrow M_2(\mathbf Q)$,
$a=0,1$, whose fixed points give the two CM points used below; the second
is obtained from the dual cyclic isogeny by the Atkin--Lehner involution.

Choose $r\in\mathbf Z$ such that
\[
 r^2-r+1\equiv0\pmod{3q},\qquad -r\equiv\omega\pmod\varpi,
\]
and put
\begin{equation}\label{eq:section2-r-choice}
\begin{aligned}
 s_r&=r^2-r+1,&
 \tau_0&=-\frac1{3(\omega+r)},\\
 \tau_1&=W_N\left(-\frac1{3(\omega+1-r)}\right)
       =\frac{\omega+1-r}{3q},&
 W_N&=\begin{pmatrix}0&-1\\N&0\end{pmatrix}.
\end{aligned}
\end{equation}
Define $\iota_a$ by $\iota_a(\omega)=I_a$, where
\[
I_0=
\begin{pmatrix}-r&-1/3\\3s_r&r-1\end{pmatrix},\qquad
I_1=
\begin{pmatrix}-r&-s_r/(3q)\\3q&r-1\end{pmatrix}.
\]
Also put
\[
\mathcal A_3=\begin{pmatrix}1&1/3\\0&1\end{pmatrix},\qquad
B_3=\begin{pmatrix}1&0\\3q&1\end{pmatrix},\qquad
S_0=\mathcal A_3,\quad S_1=B_3^2.
\]

The fixed point $\tau_a$ determines the CM type of $\iota_a$:
equivalently, the line spanned by $(\tau_a,1)^{\mathsf T}$ is its
$\omega$-eigenline, as recorded in part~\textup{(i)} below.

\begin{lemma}\label{lem:two-cm-orders}
For $a=0,1$, the following hold.
\begin{enumerate}[label=\textup{(\roman*)}]
\item One has $I_a^2+I_a+1=0$ and
\[
 I_a\binom{\tau_a}{1}=\omega\binom{\tau_a}{1}.
\]
\item For the lattice $\mathbf Z\tau_a+\mathbf Z$,
\[
\operatorname{End}(\mathbf C/(\mathbf Z\tau_0+\mathbf Z))=\mathcal O_3,
\qquad
\operatorname{End}(\mathbf C/(\mathbf Z\tau_1+\mathbf Z))=\mathcal O_K.
\]
If
$\mathcal R_0(N)=\{g\in M_2(\mathbf Z):N\mid c(g)\}$, then
\[
 \iota_a(K)\cap\mathcal R_0(N)=\iota_a(\mathcal O_3).
\]
\item Let $p\equiv2\pmod3$, $p\nmid3q$.  After conjugating the local
embedding at $p$ by $\operatorname{diag}(p,1)$, the endomorphism orders\footnote{In both cases, the suborder of
endomorphisms preserving the chosen cyclic subgroup of order $N$ is
$\mathcal O_{3p}$.}
 of the underlying elliptic curves are $\mathcal O_{3p}$ for $a=0$ and
$\mathcal O_p$ for $a=1$.\end{enumerate}
\end{lemma}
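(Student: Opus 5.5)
The plan is to reduce everything to explicit matrix algebra with $I_a$, to binary quadratic forms for the endomorphism orders, and to a local lattice computation at $p$ for part (iii).

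\emph{Part (i).} I will compute the trace and determinant of $I_a$. In both cases the trace is $-r+(r-1)=-1$. For $I_0$ the determinant is $-r(r-1)+\tfrac13\cdot 3s_r=1$, and for $I_1$ it is $-r(r-1)+\tfrac{s_r}{3q}\cdot 3q=1$. Cayley--Hamilton then gives $I_a^2+I_a+1=0$.

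For the eigenline, I write $I_a=\begin{pmatrix}\alpha&\beta\\ \gamma&\delta\end{pmatrix}$. Since $\omega$ is an eigenvalue, the eigenline is spanned by $(\tau,1)^{\mathsf T}$ with $\tau=(\omega-\delta)/\gamma=(\omega+1-r)/\gamma$, and the first-row equation then holds automatically. For $a=1$ this gives exactly $\tau_1=(\omega+1-r)/(3q)$. For $a=0$ it gives $(\omega+1-r)/(3s_r)$, and the identity
\[
(\omega+1-r)(\omega+r)=\omega^2+\omega-r^2+r=-s_r
\]
rewrites this as $-1/(3(\omega+r))=\tau_0$. The stated value of $\tau_1$ as $W_N$ applied to $-1/(3(\omega+1-r))$ is a one-line Möbius computation.

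\emph{Part (ii).} From the eigenvector relation, $\tau_a$ is a root of $\gamma\tau^2+(\delta-\alpha)\tau-\beta=0$. After clearing denominators, using $3q\mid s_r$, the primitive integral forms are:
\begin{itemize}
\item for $a=1$: $(9q^2,\ 3q(2r-1),\ s_r)$, or equivalently $\bigl(3q,\ 2r-1,\ s_r/(3q)\bigr)$ up to scaling $\tau$, with discriminant $(2r-1)^2-4s_r=-3$;
\item for $a=0$: $(9s_r,\ 3(2r-1),\ 1)$, which is primitive because its constant term is $1$, with discriminant $-27=-3\cdot 3^2$.
\end{itemize}
The standard correspondence between primitive forms of discriminant $-3f^2$ and proper $\mathcal O_f$-lattices then gives $\operatorname{End}=\mathcal O_K$ for $a=1$ and $\operatorname{End}=\mathcal O_3$ for $a=0$.

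For the Eichler order, I take $x+yI_a$ with $x,y\in\mathbf Q$. Integrality of the diagonal forces $x,y\in\mathbf Z$.
\begin{itemize}
\item For $I_0$: the entry $-y/3$ forces $3\mid y$. Then the lower-left entry $3s_ry$ is divisible by $9q$, since $3q\mid s_r$.
\item For $I_1$: the upper-right entry $-ys_r/(3q)$ is integral for every $y$, while $N\mid 3qy$ forces $3\mid y$.
\end{itemize}
In both cases the intersection is $\mathbf Z+3\mathbf Z\,\iota_a(\omega)=\iota_a(\mathcal O_3)$.

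\emph{Part (iii).} Away from $p$ nothing changes, so the order is $\mathcal O_3$ or $\mathcal O_K$ locally at $3$ and maximal elsewhere. At $p$, the lattice $\mathbf Z_p\tau_a+\mathbf Z_p$ is a free rank-one $\mathcal O_{K,p}$-module, because $p\nmid 3$ and the discriminant is prime to $p$. Conjugating by $\operatorname{diag}(p,1)$ replaces it with an index-$p$ sublattice $L'$ or superlattice. Its image in $L/pL\simeq\mathbf F_{p^2}$ is an $\mathbf F_p$-line. Since $p$ is inert, $\mathbf F_{p^2}$ is a field and has no $\mathcal O_K$-stable $\mathbf F_p$-line, so $L'$ is not $\mathcal O_{K,p}$-stable. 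Its multiplier ring contains $\mathbf Z_p+p\mathcal O_{K,p}$, hence equals it. Combining the local conductors gives $\mathcal O_{3p}$ for $a=0$ and $\mathcal O_p$ for $a=1$.

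The main obstacle is part (iii): pinning down precisely which lattice the paper's ``conjugation of the local embedding by $\operatorname{diag}(p,1)$'' refers to, so that the local conductor is computed for the elliptic curve itself rather than for the level-$N$ structure. The footnote's claim that the level-preserving suborder is $\mathcal O_{3p}$ in both cases is then (ii) combined with the local computation at $p$.
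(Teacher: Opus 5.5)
Your proposal is correct. Part (i) and the Eichler-order intersection in (ii) follow essentially the paper's argument: the paper calls (i) immediate and reads off integrality of the entries of $x+yI_a$. Where you differ is in computing the endomorphism orders.

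\emph{The paper's route.} It works with matrices throughout. It tests integrality of $x+yI_a$ with $x,y\in\mathbf Z$, which is legitimate because any such order lies in $\mathcal O_K$. For (iii) it tests integrality of $\operatorname{diag}(p,1)(x+yI_a)\operatorname{diag}(p,1)^{-1}$. The lower-left entry $3s_ry/p$ or $3qy/p$ forces $p\mid y$ because $p\nmid 3qs_r$, and inertness of $p$ enters only as ``$X^2-X+1$ has no root mod $p$''.

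\emph{Your route.} You read off $\mathcal O_3$ and $\mathcal O_K$ from primitive forms of discriminant $-27$ and $-3$. In (iii) you use a structural fact: a lattice strictly between $pL$ and $L$, with $L$ free over $\mathcal O_{K,p}$ and $p$ inert, has multiplier ring exactly $\mathbf Z_p+p\mathcal O_{K,p}$.

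\emph{Comparison.} The paper's route is uniform, and the same entry-reading yields the level-$N$ suborder of the footnote. Yours shows that the answer does not depend on the particular matrices and isolates inertness as the only input.

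\emph{Your flagged obstacle.} It resolves immediately. With $h=\operatorname{diag}(p,1)$, the matrix $hI_ah^{-1}$ has $\omega$-eigenvector $h(\tau_a,1)^{\mathsf T}=(p\tau_a,1)^{\mathsf T}$. So the new lattice is $\mathbf Z p\tau_a+\mathbf Z$, the index-$p$ sublattice with image $\mathbf F_p\cdot\bar 1$ in $L/pL$. A superlattice would in any case be homothetic to such a sublattice.

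There are two small slips, neither affecting the conclusion.
\begin{enumerate}
\item Integrality of the diagonal of $x+yI_a$ gives only $x-ry\in\mathbf Z$ and $(2r-1)y\in\mathbf Z$. This does not force $x,y\in\mathbf Z$, since $3\mid 2r-1$. However, your next conditions ($-y/3\in\mathbf Z$, resp.\ $9q\mid 3qy$) give $y\in3\mathbf Z$, and then $x\in\mathbf Z$.
\item The form $(9q^2,3q(2r-1),s_r)$ is not primitive. The relevant form is $(3q,2r-1,s_r/(3q))$, which you also list.
\end{enumerate}
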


\begin{proof}
The identities in (i) are immediate from the displayed matrices.  For
(ii), an integral element of the torus is $a+b\omega$ with
$a,b\in\mathbf Z$.  For $I_0$ its upper-right entry is $-b/3$, so
$3\mid b$; once this holds, the level condition is automatic because
$3q\mid s_r$.  For $I_1$ all entries are integral, while the condition
$N\mid c(a+bI_1)=3qb$ is equivalent to $3\mid b$.  This gives the
asserted orders.

After conjugation by $\operatorname{diag}(p,1)$, the lower-left entry is
$3s_rb/p$ or $3qb/p$.  Since $p\nmid3q s_r$---the last assertion follows
because $X^2-X+1$ has no root modulo $p\equiv2\pmod3$---integrality
forces $p\mid b$.  For $a=0$, the upper-right entry remains $-bp/3$, so
integrality also requires $3\mid b$ and gives $\mathcal O_{3p}$; the
level condition is then automatic.  For $a=1$, all other entries are
integral once $p\mid b$, which gives $\mathcal O_p$.  The level
condition $N\mid 3qb/p$ imposes the additional requirement
$3\mid b/p$, and hence cuts out $\mathcal O_{3p}$.
\end{proof}

\begin{remark}\label{rem:yin-conventions}
Our labels differ from those of \cite{Yin47}.  Hongbo Yin denotes by $\varpi$
the primary factor satisfying $-r\equiv\omega^2\pmod\varpi$, whereas
we denote its conjugate by $\varpi$, so that
$-r\equiv\omega\pmod\varpi$.  Thus our pair $(\varpi,\varphi)$
corresponds to $(\bar\varpi,\varphi^c)$ in that paper.  With this
choice, the $\varphi^c$-branch supplies the non-zero \(\lambda\)-torsion value used to choose the base point 
and the $\varphi$-branch supplies the point to be divided; see
\Cref{lem:base-branches} and \Cref{prop:primary-non-vanishing}.  Once the
exchange of labels and our arithmetic Artin convention are taken into
account, the factor $\omega$ in
\cite[Proposition~2.1(3)]{Yin47} becomes the factor $\omega^2$ in
\Cref{lem:cubic-field}(ii).  Reversing the Artin convention inverts both
the action on $t$ and the action on $Q^{(a)}$.  Thus the conclusion
$t^2x(Q^{(a)}),y(Q^{(a)})\in K$ is unchanged; in particular, the two
$K$-lines in \eqref{eq:normalized-lift-lines} are
convention-independent.
\end{remark}

\subsubsection{Fields of definition and Galois actions}\label{subsec:cm-reciprocity}
Let $\delta_3\in\mathbf A_{K,f}^\times$ be equal to $\omega$ at the
prime above $3$ and to $1$ elsewhere.

\begin{lemma}\label{lem:fine-base-field}
For $a=0,1$, $g\in\operatorname{GL}_2(\mathbf A_f)$, and
$u\in\mathbf A_{K,f}^\times$, one has
\begin{equation}\label{eq:reflex-reciprocity}
 [\tau_a,g]^{\sigma_u}
 =[\tau_a,\iota_a(\bar u^{-1})g].
\end{equation}
Moreover,
\begin{enumerate}[label=\textup{(\roman*)}]
\item $[\tau_a,1]\in X_\Gamma(F_q)$;
\item $\sigma_{\delta_3}$ acts on $[\tau_a,1]$ through $S_a$, and
$\sigma_{u_q}$ acts through $S_a^{-1}$.
\end{enumerate}
\end{lemma}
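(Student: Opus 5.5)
Three things need proving: the reciprocity formula \eqref{eq:reflex-reciprocity}, the field of definition (i), and the explicit actions (ii). For \eqref{eq:reflex-reciprocity} I will use Shimura's reciprocity law for canonical models \cite[\S\S12--13]{MilneSV}. A CM point $[\tau_a,g]$ whose fixed point is the $\omega$-eigenline of $\iota_a(K)$ satisfies $[\tau_a,g]^{\operatorname{Art}(u)}=[\tau_a,\iota_a(r(u))g]$, where $r$ is the reflex-norm map. For the imaginary quadratic reflex field $K$ this map is $u\mapsto\bar u^{-1}$ modulo rational ideles; this agrees with the convention fixed in \Cref{sec:reference-conventions}. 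The CM type is fixed by \Cref{lem:two-cm-orders}(i), which identifies $(\tau_a,1)^{\mathsf T}$ as the $\omega$-eigenvector. The only care needed is the sign/inverse bookkeeping between geometric and arithmetic Frobenius, and I will check it against the usual $X_0(N)$ statement.

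For (i), I will determine the stabilizer of $[\tau_a,1]$. By \eqref{eq:reflex-reciprocity}, $\sigma_u$ fixes $[\tau_a,1]$ whenever $\iota_a(\bar u^{-1})\in\mathcal U$ after adjusting by $K^\times$ and by rational ideles. The plan is to show that the open subgroup
\[
 W_a=\{u\in\widehat{\mathcal O}_K^\times:\iota_a(\bar u^{-1})\in\mathcal U\}\cdot\widehat{\mathbf Z}^\times
\]
contains the norm group of $F_q$, that is, the subgroup of $K^\times\backslash\mathbf A_{K,f}^\times$ cut out by $F_q\subset K^{\mathrm{ray}}_{3\varpi}$ from \Cref{lem:cubic-field}(i). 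I will check this prime by prime. Away from $3q$ the embedding $\iota_a$ is optimal for $\mathcal O_K$ in $M_2(\mathbf Z_\ell)$, so $\iota_a(\mathcal O_{K,\ell}^\times)\subset\mathcal U_\ell$. At $3$, \Cref{lem:two-cm-orders}(ii) shows that $\iota_a(\mathcal O_{3,3}^\times)\subset\mathcal U_3$, and $\mathcal O_{3,3}^\times\cdot\mathbf Z_3^\times$ has index $3$ in $\mathcal O_{K,3}^\times$, generated modulo it by $\omega$. At $q$, the Iwahori condition holds, but the $d$-entry condition "a cube mod $q$" cuts down by the cubic residue of the image of $u$ under the eigenvalue character $\mathcal O_{K,\mathfrak q}^\times\to\mathbf F_q^\times$. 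Combining the three local conditions, the fixing group becomes the kernel of a cubic character ramified exactly at $\lambda$ and $\mathfrak q$ and trivial on $K^\times$. Comparing conductors then identifies this character with the Kummer character of $\varpi$, so the field of definition is $F_q$.

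For (ii), I will compute $\iota_a(\bar\delta_3^{-1})$ and $\iota_a(\bar u_q^{-1})$ directly. At $3$, $\bar\omega^{-1}=\omega$, so the action is by $I_a$ viewed in $\operatorname{GL}_2(\mathbf Q_3)$. Writing $I_a=S_a\cdot v$ with $v\in\mathcal U_3$, and using that the global element $\iota_a(\omega)\in\operatorname{GL}_2(\mathbf Q)$ fixes $\tau_a$, I will show that this gives right translation by $S_a$. For $a=0$, $I_0\mathcal A_3^{-1}$ should be integral with lower-left entry divisible by $9$; for $a=1$ the analogue is $B_3^2$, and I will verify both by explicit matrix multiplication. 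The $u_q$ statement will follow from the same idea: $\delta_3\,u_q\,\omega_{\text{else}}$ is the global unit $\omega$, which acts trivially, and $\iota_a$ of $\omega$ at places away from $3q$ lies in $\mathcal U$. Hence $\sigma_{u_q}$ acts as the inverse of $\sigma_{\delta_3}$, namely through $S_a^{-1}$.

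I expect the main obstacle to be the local computation at $q$ and $3$ in the stabilizer argument. Everything hinges on whether the cube condition on $d(g)$ mod $q$ pairs with $\omega$ through the correct eigenvalue $-r\equiv\omega\pmod\varpi$, since this decides that the character is the one of $\varpi$ rather than $\bar\varpi$ (compare \Cref{rem:yin-conventions}). As a fallback I will quote \cite[Theorem~4.1, Proposition~2.1]{Yin47} with the label swap.
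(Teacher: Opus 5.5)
Your route is the paper's: Milne's reciprocity with reflex map $u\mapsto\iota_a(\bar u^{-1})$, a prime-by-prime stabilizer at $3$ and $\mathfrak q$, identification of the cubic field with $F_q$, explicit matrices at $3$, and the principal idele $\omega$ for $u_q$. However, two steps fail as written.

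First, the stabilizer. You may not ``adjust by rational ideles'', and $W_a$ must not contain the factor $\widehat{\mathbf Z}^\times$. The level $\mathcal U$ contains a scalar $z\in\widehat{\mathbf Z}^\times$ only when $z_q\bmod q$ is a cube, and the other scalars really move the point. Indeed $F_q$ lies in a ray-class field, not a ring-class field, and for $z\in\mathbf Z_q^\times$ the symbol $(z,\varpi)_{\mathfrak q}=\operatorname{red}_{\mathfrak q}(z)^{-(q-1)/3}$ is non-trivial on non-cubes. Concretely, $\mathbf Z_q^\times\to\mathcal O_{K,\mathfrak q}^\times$ is an isomorphism and the $\bar{\mathfrak q}$-component is unconstrained, so your $W_a$ has full $q$-component. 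The global unit $\omega$ generates the order-three quotient at $3$, so (as $h_K=1$) you get $K^\times W_a=\mathbf A_{K,f}^\times$. Thus ``$W_a$ contains the norm group of $F_q$'' is vacuous. Read as a statement about the stabilizer, it would make $[\tau_a,1]$ rational over $K$, which is false.

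The correct group is
$K^\times(\mathbf Z_3+3\mathcal O_{K,3})^\times(\mathcal O_{K,\mathfrak q}^\times)^3\prod_{\mathfrak l\nmid3\mathfrak q}\mathcal O_{K,\mathfrak l}^\times$.
It has index three because $\omega$ identifies the two order-three local quotients. This is what your local analysis at $q$ actually describes: the cube condition falls on $u_{\mathfrak q}$ because $-r\equiv\omega\pmod\varpi$. With this group, (i) follows either from your conductor comparison or from the paper's check that the Kummer character of $F_q$ is trivial on it. The conductor comparison works because the ray class group modulo $3\mathfrak q$ has order $q-1$, whose $3$-part is $\mathbf Z/3\mathbf Z$ since $q\equiv4\pmod9$.

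Second, the matrix bookkeeping at $3$ in (ii) is inverted, and the identities you plan to verify are false. The congruence $r^2-r+1\equiv0\pmod3$ forces $r\equiv2\pmod3$. Hence $I_0\mathcal A_3^{-1}$ has upper-right entry $(r-1)/3$, and $\mathcal A_3^{-1}I_0$ has upper-right entry $-r/3$; neither is $3$-integral. Likewise $B_3^{-2}I_1$ has lower-left entry $3q(2r+1)$, which is not divisible by $9$.

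Moreover, the decomposition $I_a=S_av$ with $v\in\mathcal U_3$, if it held, would give $[\tau_a,(I_a)_3]=[\tau_a,(S_a)_3]=[S_a^{-1}\tau_a,1]$. That is the action through $S_a^{-1}$, the opposite of (ii). This direction is load-bearing: \Cref{lem:base-branches} matches it with $t\mapsto\omega^2t$ to get $t^2x(Q^{(a)})\in K$.

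What is true is $S_aI_a\in\Gamma_0(N)$; for instance $\mathcal A_3I_0$ has entries $s_r-r$, $(r-2)/3$, $3s_r$, $r-1$. Together with $S_a\in\mathcal U_v$ for $v\ne3$, left multiplication by the rational matrix $S_a$ gives $[\tau_a,(I_a)_3]=[S_a\tau_a,1]$.

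Finally, in the $u_q$ step the principal idele $\omega$ also has a component at $\bar{\mathfrak q}$, which is not ``away from $3q$''. It acts trivially because the cube condition involves only the $\mathfrak q$-component, and this should be said.
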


\begin{proof}
For the special point $\tau_a$, the reflex map is
$u\mapsto\iota_a(\bar u^{-1})$.  Formula
\eqref{eq:reflex-reciprocity} is the canonical-model reciprocity law 
(see \cite[Example~12.7 and Definition~12.8, especially (54)]{MilneSV}).

Under this reciprocity action, the stabilizer consists of the ideles
$u$ for which
$[\tau_a,\iota_a(\bar u^{-1})]=[\tau_a,1]$.  It is the open subgroup
\[
K^\times(\mathbf Z_3+3\mathcal O_{K,3})^\times
(\mathcal O_{K,\mathfrak q}^{\times})^3
\prod_{\mathfrak l\nmid3\mathfrak q}\mathcal O_{K,\mathfrak l}^\times.
\]
Indeed, at $3$ the condition is $3\mid b$ for both embeddings; at
$\mathfrak q$ the lower-right cube condition is the cube condition on
the first factor of
$K\otimes\mathbf Q_q=K_{\mathfrak q}\times K_{\bar{\mathfrak q}}$;
all other local stabilizers are maximal.  The local quotients at $3$ and
$\mathfrak q$ each have order three, while the diagonal image of
$\mathcal O_K^\times/\{\pm1\}\simeq\mu_3$ identifies them.  Since
$h_K=1$, the corresponding field has degree three over $K$.

By \Cref{lem:cubic-field}, the Kummer character of $F_q/K$ is trivial on
the displayed stabilizer.  Hence the field of definition is $F_q$.
Finally, direct multiplication gives $S_aI_a\in\Gamma_0(N)$ and
$S_a\in\mathcal U_v$ for $v\ne3$, which yields the $\delta_3$-action.
The principal idele $\omega$ fixes the CM point, so the action supported at $q$
is the inverse one, giving the action of $u_q$.
\end{proof}

\subsection{Modular maps and cube-root functions}
Let $g_q$ be the normalized weight-two CM eigenform attached to
$\varphi$, and let $g_q^c$ be its coefficient conjugate.  Let
$L_g,L_g^c$ be the corresponding period lattices on $\Gamma_1(N)$, and
write $Y,Y^c$ for the $y$-co-ordinate functions on
$E_{\bar\varpi},E_\varpi$.

\begin{lemma}\label{lem:modular-functions}
\begin{enumerate}[label=\textup{(\roman*)}]
\item For every $\gamma\in\Gamma_0(N)$, one has
\[
2\pi i\int_\infty^{\gamma\infty}g_q(z)\,dz\in L_g,
\qquad
2\pi i\int_\infty^{\gamma\infty}g_q^c(z)\,dz\in L_g^c.
\]
In particular,
$\varphi(\gamma\infty)=\varphi^c(\gamma\infty)=O$.
\item Put $\xi_q(x)=(x/\varpi)_3^{-1}$ for
$x\in(\mathbf Z/q\mathbf Z)^\times$.  If $\gamma\in\Gamma_0(N)$, then
\[
\begin{aligned}
\varphi(\mathcal A_3z)&=[\omega]\varphi(z),&
\varphi^c(\mathcal A_3z)&=[\omega]\varphi^c(z),\\
\varphi(\gamma z)&=[\xi_q(d(\gamma))]\varphi(z),&
\varphi^c(\gamma z)&=[\overline{\xi_q(d(\gamma))}]\varphi^c(z),\\
\varphi(B_3z)&=[\omega^2]\varphi(z),&
\varphi^c(B_3z)&=[\omega^2]\varphi^c(z).
\end{aligned}
\]
\item For one sign $\epsilon_0\in\{1,-1\}$,
\[
 \varphi(0)=(0,\epsilon_0\bar\varpi/2),\qquad
 \varphi^c(0)=(0,\epsilon_0\varpi/2).
\]
\item For each $\epsilon\in\{1,-1\}$ there is
$\mathcal F_\epsilon\in K(X_\Gamma)$, with non-zero constant term at
$\infty$, such that
\begin{equation}\label{eq:modular-cube-functions}
 \mathcal F_\epsilon^3
 =\frac{Y\circ\varphi+\epsilon\bar\varpi/2}
        {Y^c\circ\varphi^c+\epsilon\varpi/2},
 \qquad
 \mathcal F_\epsilon(S_az)=\mathcal F_\epsilon(z)\quad(a=0,1).
\end{equation}
\end{enumerate}
\end{lemma}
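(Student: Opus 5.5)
The plan is to prove the four parts in order, each reducing to the modular transformation behaviour of the CM theta series $g_q=\theta(\nu_\varpi)$ and its conjugate. Throughout, $\varphi(z)$ is the image of $2\pi i\int_\infty^z g_q$ under the Weierstrass uniformization $\mathbf C/L_g\simeq E_{\bar\varpi}(\mathbf C)$, and similarly for $\varphi^c$.

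For (i), I use that $g_q$ has trivial nebentypus on $\Gamma_0(N)$ twisted by the cubic character $\xi_q$; the paper's $\Gamma$ is exactly the kernel of $\xi_q\circ d$. The modular symbol $\gamma\mapsto 2\pi i\int_\infty^{\gamma\infty}g_q$ is then a map on $\Gamma_0(N)$ that satisfies a twisted cocycle relation. Its values at parabolic elements vanish, and it takes values in the $\mathcal O_K$-module generated by periods of $\Gamma_1(N)$. Writing $\gamma\infty$ as a cusp equivalent to $\infty$ under $\Gamma_0(N)$ (i.e.\ $\gamma\infty=\gamma'\infty$ with $\gamma'\in\Gamma_1(N)$ up to the stabilizer), the integral is a $\Gamma_1(N)$ period, so it lies in $L_g$. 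This gives $\varphi(\gamma\infty)=O$.

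For (ii), the middle identity comes from $g_q|\gamma=\xi_q(d)g_q$ together with (i). Indeed, $\int_\infty^{\gamma z}g_q=\int_\infty^{\gamma\infty}g_q+\xi_q(d)\int_\infty^z g_q$, and the first term dies in $\mathbf C/L_g$. Here $L_g$ is an $\mathcal O_K$-lattice on which multiplication by $\xi_q(d)\in\mu_3$ corresponds to $[\xi_q(d)]$ under the normalization $[\omega](x,y)=(\omega x,y)$. The conjugate form carries the conjugate character. For $\mathcal A_3$, I use that $g_q(z+1/3)$ twists the Fourier coefficients $a_n$ by $e^{2\pi i n/3}$. Since $a_n\ne0$ only for norms $n$ of primary-normalized ideals, and $\nu_\varpi$ is supported on $n\equiv1\pmod3$ (the norms coprime to $3$), I get $g_q(z+1/3)=\omega g_q(z)$. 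Care is needed with $n$ divisible by $3$: $\lambda$ is ramified for $\nu_\varpi$, so those coefficients vanish. Then $\varphi(\mathcal A_3z)=[\omega]\varphi(z)$ because $\mathcal A_3\infty=\infty$. For $B_3$, I write $B_3=W_N^{-1}\mathcal A_3^{\pm1}W_N$ up to scaling, and combine this with the Atkin--Lehner eigen-relation $g_q|W_N=\pm g_q^c$ (or $g_q$ itself, by self-duality of CM forms), which yields the inverse root of unity $\omega^2$. I expect the exact root of unity here to be the fiddliest point; I would confirm it by checking the fixed points of $B_3$, using that $[\omega^2]$ fixes only $\lambda$-torsion.

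For (iii), the cusp $0$ is fixed by $B_3$, so $\varphi(0)=[\omega^2]\varphi(0)$ and $\varphi(0)\in E_{\bar\varpi}[\lambda]=\{O,(0,\pm\bar\varpi/2)\}$. Non-vanishing follows by evaluating $\int_0^\infty g_q=L(1,g_q)$-type quantities. Writing the value as $L(1,\psi_q)/\Omega$, this is a nonzero $\lambda$-torsion point provided it is not in $L_g$, which is the known Rodriguez-Villegas/Yin computation; I cite \cite{Yin47} for this. Applying complex conjugation of coefficients gives the statement for $\varphi^c$ with the same sign $\epsilon_0$, since conjugation sends $\bar\varpi\mapsto\varpi$ and fixes real $L$-values.

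For (iv), the ratio $f_\epsilon=(Y\circ\varphi+\epsilon\bar\varpi/2)/(Y^c\circ\varphi^c+\epsilon\varpi/2)$ lies in $K(X_\Gamma)$. On each curve, $Y\pm\bar\varpi/2$ has divisor $3(T)-3(O)$ with $T$ a $\lambda$-torsion point. Pulling back, the divisor of $f_\epsilon$ is $3(\varphi^*T-\varphi^{c*}T^c)$. It is therefore three times a divisor, which is principal up to a class in the $3$-torsion of the Jacobian. To show that class vanishes, I identify both divisors via the isogeny $A_q\to E_{\bar\varpi}\times E_\varpi$: the pullbacks of the $\lambda$-torsion points correspond to the same torsion in $J_\Gamma$, by (ii) and the matching signs in (iii). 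This is the main obstacle, and I would first try the route of \cite[\S3]{Yin47}, which produces the cube root explicitly as an eta/theta quotient. The invariance under $S_a$ then follows from (ii), since both numerator and denominator pick up the same $[\omega]$ action, which fixes $y$. The cube root is fixed after normalizing the constant term at $\infty$ and checking that the resulting character of $S_a$ is trivial.
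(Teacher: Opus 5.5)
\textbf{There are genuine gaps, and the first is in part (i).}

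Your argument for (i) rests on a false step. The cusps $\gamma\infty=a/c$ with $N\mid c$ are all $\Gamma_0(N)$-equivalent to $\infty$. However, their $\Gamma_1(N)$-classes are indexed by $\pm a\bmod N$ (there are $3(q-1)$ of them), and $X_\Gamma$ already has three distinct cusps above $\infty\in X_0(N)$. So in general $\gamma\infty\ne\gamma'\infty$ for any $\gamma'\in\Gamma_1(N)$, and $2\pi i\int_\infty^{\gamma\infty}g_q$ is only a relative period.

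The cocycle formalism does not rescue this. The map $c(\gamma)=2\pi i\int_\infty^{\gamma\infty}g_q$ is a crossed homomorphism, but when $\xi_q(d(\gamma))=\omega$ its relations impose nothing on $c(\gamma)\bmod L_g$, because $\omega-1$ is surjective on $\mathbf C/L_g$. Geometrically, the deck transformation $\gamma$ acts on the image through $P\mapsto[\omega]P+\varphi(\gamma\infty)$, and (i) is the assertion that this map fixes $O$. That is a genuine integrality fact about cuspidal values. The paper imports it from Lemma~5.3 of Yin's paper (the lattice generated by these cusp integrals equals $L_g$), and you need to import or prove it as well.

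\textbf{The same issue makes your treatment of $B_3$ and of (iii) circular.} Since $B_3\notin\Gamma_0(N)$ (its lower-left entry is $3q$), your Atkin--Lehner manipulation only gives $\varphi(B_3z)=[\omega^2]\varphi(z)+\varphi(1/(3q))$. Part (i) says nothing about the cusp $1/(3q)=W_N(-1/3)$. From $\varphi(W_Nz)=\mathcal J\varphi^c(z)+\varphi(0)$ and $\mathcal J\varphi^c(0)=-\varphi(0)$ one finds $\varphi(1/(3q))=(1-\omega^2)\varphi(0)$. So the translation vanishes exactly when $\varphi(0)\in E_{\bar\varpi}[\lambda]$, which is what your (iii) deduces from $B_3\cdot0=0$ and the translation-free formula. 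Checking fixed points of $B_3$ cannot break this loop.

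The paper breaks it by taking the cusp values in (iii) from Yin's Proposition~5.8: their $\lambda$-torsion, their non-vanishing and the common sign $\epsilon_0$. Only then does it remove the translation, using $\mathcal J\varphi^c(0)=-\varphi(0)$, $\mathcal J\mathcal J^c=1$ and $[\omega-1]E[\lambda]=O$.

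\textbf{Part (iv) has a smaller gap.} Normalizing the constant term at $\infty$ only trivializes the character of $\mathcal A_3$. The matrix $B_3$ fixes the cusp $0$, not $\infty$. At $0$, the common sign makes $\mathcal F_{\epsilon_0}$ finite and non-zero, which does give $B_3$-invariance of $\mathcal F_{\epsilon_0}$. But $\mathcal F_{-\epsilon_0}^3$ is $0/0$ there, so that argument says nothing about $\mathcal F_{-\epsilon_0}$. The paper handles it through the identity $\mathcal F_+\mathcal F_-=\zeta\,X\circ\varphi/X^c\circ\varphi^c$, whose right side is $B_3$-invariant because both $x$-coordinates scale by $\omega^2$.

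\textbf{What is fine.} Your Fourier-coefficient proof of the $\mathcal A_3$-action and your complex-conjugation argument for the common sign are valid alternatives to citing Yin. Two small corrections: $g_q|W_N$ is a multiple of $g_q^c$, never of $g_q$, since the nebentypus is non-trivial; and $L(1,g_q^c)=\overline{L(1,g_q)}$ is not a real value.
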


\begin{proof}
Part (i) follows from \cite[Lemma~5.3]{Yin47}: the lattice
in {\it loc. cit.} generated by the corresponding cusp integrals on
$\Gamma_0(N/3)$ equals $L_g$ and the coefficient-conjugate assertion appears 
just after \cite[Corollary~5.4]{Yin47}.

The $\mathcal A_3$- and $\Gamma_0(N)$-actions in (ii), together with the
cusp values in (iii), are the transformation formulas of
\cite[Proposition~3.2 and Proposition~5.8]{Yin47}; part (i) removes the
additive cusp terms.  For the $B_3$-action, use
$B_3=W_N\mathcal A_3^{-1}W_N^{-1}$.  The Atkin--Lehner relation
\cite[(6.3.14)]{Yin47} gives group isomorphisms
$\mathcal J,\mathcal J^c$ with
\begin{equation}\label{eq:based-atkin-lehner}
 \varphi(W_Nz)=\mathcal J\varphi^c(z)+\varphi(0),\qquad
 \varphi^c(W_Nz)=\mathcal J^c\varphi(z)+\varphi^c(0).
\end{equation}
At $z=0$, the first identity in \eqref{eq:based-atkin-lehner} gives
\[
 \mathcal J\varphi^c(0)+\varphi(0)=\varphi(W_N0)=\varphi(\infty)=O.
\]
Applying \eqref{eq:based-atkin-lehner} twice also gives
$\mathcal J\mathcal J^c=1$.  These two identities, together with the
$\mathcal A_3^{-1}$-action and the fact that $[\omega-1]$ annihilates
$E[\lambda]$, show that the translation term vanishes and
$\varphi(B_3z)=[\omega^2]\varphi(z)$.  The coefficient-conjugate
identity follows in the same way.

Finally, \cite[Proposition~7.4, (8.0.1)--(8.0.2), and
Proposition~8.1]{Yin47} gives the functions in (iv) with
Fourier expansions at $\infty$ with coefficients in $K$.  The matrices
$\mathcal A_3$ and $B_3$
normalize $\Gamma$: for
$\gamma=\left(\begin{smallmatrix}a&b\\c&d\end{smallmatrix}\right)$,
\[
\mathcal A_3^{-1}\gamma\mathcal A_3=
\begin{pmatrix}
 a-c/3&b+(a-d)/3-c/9\\ c&d+c/3
\end{pmatrix},
\]
and $W_N$ exchanges the diagonal residues modulo $q$.
Thus the quotients
$\mathcal F_\epsilon(\mathcal A_3z)/\mathcal F_\epsilon(z)$ and
$\mathcal F_\epsilon(B_3z)/\mathcal F_\epsilon(z)$ are constant cube
roots of unity.  The non-zero constant term at $\infty$ makes the first
constant one.  At the cusp $0$, the common sign in (iii) gives the same conclusion
for $\mathcal F_{\epsilon_0}$ under $B_3$.  The other quotient is
$0/0$ there, so it cannot be read off from its value at that cusp.
Instead, multiplying the two cube identities gives
\[
 \mathcal F_{+}\mathcal F_{-}
 =\zeta\,\frac{X\circ\varphi}{X^c\circ\varphi^c},
 \qquad \zeta\in\mu_3,
\]
where $X$ and $X^c$ are the $x$-co-ordinates on the two elliptic
curves.  Both scale by $\omega^2$ under $B_3$, so their ratio is
invariant.  The product identity therefore gives the $B_3$-invariance
of $\mathcal F_{-\epsilon_0}$.  Since
$S_0=\mathcal A_3$ and $S_1=B_3^2$, \eqref{eq:modular-cube-functions}
follows.
\end{proof}

\begin{remark}\label{rem:modular-normalization}
The integral normalizations of the modular parametrizations underlie
parts~\textup{(iii)} and~\textup{(iv)}.  Indeed, the cusp values in
part~\textup{(iii)} generate $E_{\bar\varpi}[\lambda]$ and
$E_\varpi[\lambda]$, respectively.
Moreover, the
Manin--Stevens constant comparing the pullback of a N\'eron differential
on the chosen minimal elliptic model with the normalized eigenform is a
unit by \cite[Theorem~6.1]{Yin47}. This gives the integral normalization of
the $q$-expansions entering the cube-root identities in
part~\textup{(iv)}.
\end{remark}

\section{A cubic identity for the base CM point}\label{sec:cm-kummer}
This section chooses the base CM point and proves the cubic identity
used to construct the first \(\lambda\)-division. 
By \Cref{lem:base-branches}, the $\varphi^c$-values are
$K$-rational $\lambda$-torsion, while
\Cref{prop:primary-non-vanishing} shows that neither $\varphi$-value is
$\lambda$-torsion.  The Atkin--Lehner relation then allows us to choose
a CM point whose $\varphi^c$-value is non-zero.  Evaluating the modular
cube-root identity at this point proves \Cref{prop:uniform-kummer}: the
cubic descent class of its $E_{\bar\varpi}$-image is represented by
$\varpi$.  This Kummer class will be used for the Fermat
$\lambda$-division in \Cref{sec:fermat-cover}.

\subsection{Choosing the base CM point}\label{subsec:base-cm-point}

\begin{lemma}\label{lem:base-branches}
For $a=0,1$, put
$Q^{(a)}=\varphi([\tau_a,1])$ and
$Q^{c,(a)}=\varphi^c([\tau_a,1])$.
Then $Q^{(a)}\in E_{\bar\varpi}(F_q)$ and
$Q^{c,(a)}\in E_\varpi[\lambda](K)$.  Every $Q^{(a)}\ne O$
has co-ordinates satisfying $t^2x(Q^{(a)}),y(Q^{(a)})\in K$.
\end{lemma}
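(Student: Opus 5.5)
The plan is to combine the Galois description of the CM points in \Cref{lem:fine-base-field} with the transformation laws of \Cref{lem:modular-functions}. Both parametrizations are defined over $K$; this follows from the rational models, with the CM-action conventions fixed as in \eqref{eq:intrinsic-cm-convention}. By \Cref{lem:fine-base-field}(i) we have $[\tau_a,1]\in X_\Gamma(F_q)$, and therefore $Q^{(a)}\in E_{\bar\varpi}(F_q)$ and $Q^{c,(a)}\in E_\varpi(F_q)$. All the remaining information comes from the action of the generator $\sigma_{u_q}$ of $\operatorname{Gal}(F_q/K)$, which acts on $[\tau_a,1]$ through $S_a^{-1}$ by \Cref{lem:fine-base-field}(ii).

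\emph{Step 1: the Galois action on the images.} Since $\varphi$ and $\varphi^c$ are defined over $K$, they commute with $\sigma_{u_q}$. Hence
\[
 (Q^{(a)})^{\sigma_{u_q}}=\varphi(S_a^{-1}\tau_a),\qquad (Q^{c,(a)})^{\sigma_{u_q}}=\varphi^c(S_a^{-1}\tau_a).
\]
I then apply \Cref{lem:modular-functions}(ii). For $a=0$, $S_0=\mathcal A_3$ acts by $[\omega]$ on both curves, so $S_0^{-1}$ acts by $[\omega^2]$. For $a=1$, $S_1=B_3^2$ acts by $[\omega^4]=[\omega]$, so $S_1^{-1}$ again acts by $[\omega^2]$. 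The subtle point is that a matrix identity $\varphi(Sz)=[\zeta]\varphi(z)$ must be transported correctly to the adelic point $[\tau_a,S_a^{-1}]$. Writing $[\tau_a,S_a^{-1}]=[S_a\tau_a,1]$, or using $S_aI_a\in\Gamma_0(N)$ as in the proof of \Cref{lem:fine-base-field}, reduces this to the fixed point $\tau_a$. One also needs to check that any extra $\Gamma_0(N)$-factor contributes only the character $\xi_q$, which is trivial on $\Gamma$. I expect this bookkeeping of $S_a$ against $S_a^{-1}$ and of the Artin convention to be the main obstacle. \Cref{rem:yin-conventions} indicates that the final conclusion is convention-independent, since reversing the convention inverts both the action on $t$ and the action on the point. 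The outcome is that $(Q^{(a)})^{\sigma_{u_q}}=[\omega^{\pm1}]Q^{(a)}$, and similarly for $Q^{c,(a)}$ with the same exponent.

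\emph{Step 2: the $E_\varpi$-branch.} For $Q^{c,(a)}$ the branch carries conjugated coefficients. I would argue through the $\Gamma$-character: the action is $[\overline{\xi_q}]$ on $\varphi^c$ versus $[\xi_q]$ on $\varphi$. On the other hand, $\sigma_{u_q}$ corresponds to a unit at $\mathfrak q$, so its effect on the fine level structure is conjugated on the coefficient-conjugate factor. The upshot should be that $\sigma_{u_q}$ acts on $Q^{c,(a)}$ by $[\omega^{\mp1}]$ composed with something that forces the point to be fixed up to $\lambda$-torsion. A cleaner route is to compare $Q^{c,(a)}$ with the cusp value $\varphi^c(0)$. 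The CM point and the cusp $0$ are related by $\Gamma_0(N)$ modulo the action of $S_a$, so $Q^{c,(a)}-[\zeta]Q^{c,(a)}$ vanishes, which gives $[1-\zeta]Q^{c,(a)}=O$ for some $\zeta\ne1$. That is exactly $Q^{c,(a)}\in E_\varpi[\lambda]$. Since $E_\varpi[\lambda]=\{O,(0,\pm\varpi/2)\}$ is $K$-rational, we get $Q^{c,(a)}\in E_\varpi[\lambda](K)$.

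\emph{Step 3: co-ordinates of $Q^{(a)}$.} Suppose $Q^{(a)}\ne O$ and $\sigma=\sigma_{u_q}$ acts by $[\omega^2]$. By \eqref{eq:intrinsic-cm-convention} this gives $x^\sigma=\omega^2x$ and $y^\sigma=y$. Thus $y\in F_q^{\langle\sigma\rangle}=K$. By \Cref{lem:cubic-field}(ii), $(t^2)^\sigma=\omega^4t^2=\omega t^2$, so $(t^2x)^\sigma=t^2x$, and hence $t^2x\in K$. If the exponent had come out as $[\omega]$ instead, one would obtain $tx\in K$. Step 1 must therefore pin down the sign, and this is where the convention check in \Cref{rem:yin-conventions} enters.
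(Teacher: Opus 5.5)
Your Steps 1 and 3 agree with the paper. \Cref{lem:modular-functions}(ii) already gives $\varphi(S_a^{-1}z)=[\omega^2]\varphi(z)$ for both $a$, as you computed, so $(Q^{(a)})^{\sigma_{u_q}}=[\omega^2]Q^{(a)}$ with no remaining sign ambiguity. Together with $t^{\sigma_{u_q}}=\omega^2t$ this gives $y(Q^{(a)}),\,t^2x(Q^{(a)})\in K$, exactly as in your Step 3.

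The genuine gap is Step 2. The Galois action of $\sigma_{u_q}$ (or of $\sigma_{\delta_3}$) cannot force $Q^{c,(a)}$ into $E_\varpi[\lambda]$. It only yields $(Q^{c,(a)})^{\sigma_{u_q}}=[\omega^2]Q^{c,(a)}$, with the same exponent as on the other branch rather than the $[\omega^{\mp1}]$ you speculate. That condition holds for every point of $E_\varpi(F_q)$ with $t^2x,y\in K$, torsion or not. Your ``cleaner route'' also fails: $\tau_a$ lies in $\mathfrak H$, so neither it nor its image under $S_a$ is $\Gamma_0(N)$-equivalent to the cusp $0$. No relation with $\varphi^c(0)$ arises that way.

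The missing idea is to use the stabilizer of the CM point rather than the Galois action. By \Cref{lem:two-cm-orders}(i), $I_a=\iota_a(\omega)$ fixes $\tau_a$. The matrix $\gamma_a=S_aI_a$ lies in $\Gamma_0(N)$, and its lower-right entry is $\equiv\omega_q^2\pmod{\varpi}$; for instance, when $a=0$ it is $r-1\equiv\omega^2$. Hence $\xi_q(d(\gamma_a))=\omega$, and \Cref{lem:modular-functions}(ii) gives
$\varphi(\gamma_az)=[\omega]\varphi(z)$ but $\varphi^c(\gamma_az)=[\omega^2]\varphi^c(z)$.
Since $S_a$ acts by $[\omega]$ on both branches, dividing it out gives
$\varphi(I_az)=\varphi(z)$ and $\varphi^c(I_az)=[\omega]\varphi^c(z)$.
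Evaluating at $z=\tau_a$ yields $Q^{c,(a)}=[\omega]Q^{c,(a)}$, that is, $[\lambda]Q^{c,(a)}=O$. Then $K$-rationality follows from $E_\varpi[\lambda]=\{O,(0,\pm\varpi/2)\}$.

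So the asymmetry between $\xi_q$ and $\overline{\xi_q}$ that you allude to is indeed the key. It must, however, be applied to the automorphism $I_a$ fixing $\tau_a$, not to $\sigma_{u_q}$. Note also that the $\varphi$-branch receives no constraint from $I_a$, consistent with \Cref{prop:primary-non-vanishing}.
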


\begin{proof}
By \Cref{lem:fine-base-field}(i), $[\tau_a,1]\in X_\Gamma(F_q)$, so
$Q^{(a)}\in E_{\bar\varpi}(F_q)$.  Put
$\gamma_a=S_aI_a\in\Gamma_0(N)$.  Its lower-right residue is
$\omega_q^2$, hence $\xi_q(d(\gamma_a))=\omega$.  By
\Cref{lem:modular-functions}(ii),
\[
 \varphi(S_az)=[\omega]\varphi(z),\qquad
 \varphi^c(S_az)=[\omega]\varphi^c(z),
\]
and
\[
 \varphi(\gamma_az)=[\omega]\varphi(z),\qquad
 \varphi^c(\gamma_az)=[\omega^2]\varphi^c(z).
\]
Since $\gamma_a=S_aI_a$, it follows that
\[
 \varphi(I_az)=\varphi(z),\qquad
 \varphi^c(I_az)=[\omega]\varphi^c(z).
\]
Now $I_a\tau_a=\tau_a$ by \Cref{lem:two-cm-orders}(i), so
$Q^{c,(a)}\in E_\varpi[\lambda]$.  Since
\[
 E_\varpi[\lambda]=\{O,(0,\varpi/2),(0,-\varpi/2)\},
\]
this kernel is $K$-rational.

By \Cref{lem:fine-base-field}(ii), $\sigma_{u_q}$ acts on
$[\tau_a,1]$ through $S_a^{-1}$.  Another application of
\Cref{lem:modular-functions}(ii) gives
\[
 \varphi(S_a^{-1}z)=[\omega^2]\varphi(z),
\]
whence
\[
 (Q^{(a)})^{\sigma_{u_q}}=[\omega^2]Q^{(a)},\qquad
 (Q^{(a)})^{\sigma_{u_q^{-1}}}=[\omega]Q^{(a)}.
\]
Together with $t^{\sigma_{u_q}}=\omega^2t$ from
\Cref{lem:cubic-field}(ii), this gives
$t^2x(Q^{(a)}),y(Q^{(a)})\in K$.
\end{proof}

\begin{lemma}\label{lem:cubic-specialization}
Let $F$ be a field of characteristic zero containing $K$, let
$a,b\in F^\times$, and let $X/F$ be a smooth geometrically integral
curve.  Suppose that non-constant $F$-morphisms
$f:X\to E_a$ and $g:X\to E_b$ satisfy, for some
$\epsilon\in\{1,-1\}$,
\[
\frac{y\circ f+\epsilon a/2}{y\circ g+\epsilon b/2}=H^3,
\qquad H\in F(X)^\times.
\]
For $c\in F^\times$ define on $E_c[\lambda]$ the three classes
\[
d_c(O)=1,\qquad d_c((0,c/2))=[c]_F,\qquad
 d_c((0,-c/2))=[c]_F^{-1}.
\]
If $P\in X(F)$ and both $f(P)$ and $g(P)$ belong to their
$\lambda$-kernels, then
\[
d_{\epsilon a}(f(P))=d_{\epsilon b}(g(P))
\quad\text{in }F^\times/F^{\times3},
\]
where $E_{\epsilon a}=E_a$ and $E_{\epsilon b}=E_b$.
\end{lemma}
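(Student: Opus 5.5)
The plan is to compare leading coefficients in a local expansion at $P$, so that zeros and poles of the two sides of the cube identity do no harm. Fix a uniformizer $s\in F(X)$ at $P$, which exists because $P\in X(F)$ and $X$ is smooth. For $\phi\in F(X)^\times$ write $\phi=\operatorname{lc}_P(\phi)\,s^{\operatorname{ord}_P\phi}(1+O(s))$ with $\operatorname{lc}_P(\phi)\in F^\times$. The map $\operatorname{lc}_P$ is multiplicative. Hence $\operatorname{lc}_P(H^3)=\operatorname{lc}_P(H)^3$ is a cube, and the hypothesis gives
\[
\operatorname{lc}_P(y\circ f+\epsilon a/2)\equiv\operatorname{lc}_P(y\circ g+\epsilon b/2)\pmod{F^{\times3}}.
\]
It therefore suffices to prove that, for $c\in F^\times$, a non-constant $F$-morphism $f:X\to E_c$ and $f(P)\in E_c[\lambda]$, one has $[\operatorname{lc}_P(y\circ f+c/2)]_F=d_c(f(P))$. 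Applying this with $c=\epsilon a$ and $c=\epsilon b$ then gives the lemma, since $E_{\epsilon a}=E_a$ has the same $y$-function.

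To prove this claim, I will compute $h_c=y+c/2$ on $E_c$ near each of the three points of $E_c[\lambda]$, using a local parameter $w$ at the image point. Let $e\ge1$ be the ramification index of $f$ at $P$; it is finite because $f$ is non-constant. Then $w\circ f=\operatorname{lc}_P(w\circ f)\,s^e(1+O(s))$. If $h_c=\beta w^{k}(1+O(w))$ with $k\in\{-3,0,3\}$, then
\[
\operatorname{lc}_P(h_c\circ f)=\beta\,\operatorname{lc}_P(w\circ f)^{k}.
\]
Because $3\mid k$ in every case, this is $\beta$ modulo cubes, independently of $e$ and of $f$.

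The three cases are as follows.
\begin{enumerate}
\item At $(0,c/2)$, $h_c$ is a unit with value $c$, so $k=0$ and $\beta=c$, giving $[c]_F$.
\item At $O$, take $w=x/y$. From $y^2=x^3+c^2/4$ one gets $x=w^{-2}(1+O(w))$ and $y=w^{-3}(1+O(w))$. Hence $h_c=w^{-3}(1+O(w))$ and $\beta=1$, giving the trivial class.
\item At $(0,-c/2)$, take $w=x$, since the tangent line there is horizontal. Then $h_c=x^3/(y-c/2)=-c^{-1}x^3(1+O(x))$, so $\beta=-c^{-1}\equiv c^{-1}$ modulo cubes because $-1=(-1)^3$. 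This gives $[c]_F^{-1}$.
\end{enumerate}
These three values match the definition of $d_c$, which proves the claim and hence the lemma.

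The main point needing care is the third case (and likewise the case of $O$). There the specialization is of the form $0/0$ or $\infty/\infty$, so one cannot simply evaluate both sides at $P$. The argument handles this by observing that $h_c$ has order divisible by $3$ at every $\lambda$-torsion point, because its divisor is $3(0,-c/2)-3O$. This makes the leading coefficient well defined modulo cubes regardless of how $f$ ramifies at $P$. I would also check that all expansions take place in $F[[s]]$, which holds since $f$, $s$ and $H$ are defined over $F$; this keeps every leading coefficient in $F^\times$.
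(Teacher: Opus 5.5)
Your proof is correct and follows essentially the same route as the paper. Both arguments compare leading coefficients at $P$, use $\operatorname{div}(y+c/2)=3(0,-c/2)-3O$ to see that pullback changes the leading coefficient only by a cube, and compute the three local leading coefficients $1$, $c$, $-c^{-1}$; the only cosmetic difference is that you take the parameter $x/y$ at $O$ where the paper takes $-x/y$, which changes the leading coefficient by the cube $-1$.
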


\begin{proof}
Replacing $(a,b)$ by $(\epsilon a,\epsilon b)$, it suffices to treat
$\epsilon=1$.  Put $h_c=y+c/2$.  Since
\[
 \operatorname{div}(h_c)=3(0,-c/2)-3(O),
\]
the order of $h_c$ at each point of $E_c[\lambda]$ is divisible by
$3$.  Its leading coefficient therefore defines a class in
$F^\times/F^{\times3}$ independent of the local parameter.  At the
three points of
$E_c[\lambda]$, the cubic classes of the leading coefficients of $h_c$
are, respectively,
$1,[c]_F,[c]_F^{-1}$.  Indeed, at $O$ the parameter $u=-x/y$ gives
$h_c=-u^{-3}+\cdots$; at $(0,c/2)$ one has $h_c=c$; and at
$(0,-c/2)$,
\[
 (y+c/2)(y-c/2)=x^3,
 \qquad h_c=-x^3/c+\cdots.
\]
If $z$ is a local parameter at $f(P)$ and $w$ one at $P$, then
$z\circ f=c_1w^e+\cdots$ with $c_1\in F^\times$.  Pullback therefore
multiplies the leading coefficient of $h_a$ by
$c_1^{\operatorname{ord}_{f(P)}(h_a)}$, which is a cube; the same
applies to $g$.  Thus the leading-coefficient classes of
$h_a\circ f$ and $h_b\circ g$ are
$d_a(f(P))$ and $d_b(g(P))$.  Their quotient is the leading coefficient
of $H^3$, hence a cube.
\end{proof}

\begin{proposition}\label{prop:primary-non-vanishing}
Let $q\equiv4\pmod9$ be prime, let $q=\varpi\bar\varpi$ with
$\varpi\equiv1\pmod3$, and let $r\in\mathbf Z$ satisfy
$r^2-r+1\equiv0\pmod{3q}$ and $-r\equiv\omega\pmod\varpi$.
Then both CM points satisfy
\[
Q^{(a)}=\varphi([\tau_a,1])\notin E_{\bar\varpi}[\lambda]
\qquad(a=0,1).
\]
\end{proposition}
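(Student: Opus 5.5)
We argue by contradiction using the cube-root identity of \Cref{lem:modular-functions}(iv) together with \Cref{lem:cubic-specialization}. Suppose $Q^{(a)}\in E_{\bar\varpi}[\lambda]$ for some $a\in\{0,1\}$. By \Cref{lem:base-branches}, $Q^{c,(a)}\in E_\varpi[\lambda](K)$. Both values then lie in their $\lambda$-kernels. The kernel $E_{\bar\varpi}[\lambda]=\{O,(0,\pm\bar\varpi/2)\}$ is $K$-rational, so $Q^{(a)}$ is in fact a $K$-point. We will apply \Cref{lem:cubic-specialization} over $F=F_q$ with $f=\varphi$, $g=\varphi^c$, $H=\mathcal F_\epsilon$ and $P=[\tau_a,1]$. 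This is legitimate because $P\in X_\Gamma(F_q)$ by \Cref{lem:fine-base-field}(i), and $\mathcal F_\epsilon$ has $K$-coefficients.

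The lemma gives
\[
d_{\epsilon\bar\varpi}(Q^{(a)})=d_{\epsilon\varpi}(Q^{c,(a)})\quad\text{in }F_q^\times/F_q^{\times3}
\]
for both signs $\epsilon=\pm1$. Each side is one of $1,[\bar\varpi]^{\pm1}$ on the left and $1,[\varpi]^{\pm1}$ on the right, up to the sign classes $[-1]$, which are cubes. In $F_q$, $[\varpi]_{F_q}=1$ since $t^3=\varpi$. The key point is therefore that $\bar\varpi$ is not a cube in $F_q$. Kummer theory gives this: the subgroup of $K^\times/K^{\times3}$ that becomes trivial in $F_q$ is generated by $[\varpi]$, and $\bar\varpi\notin\varpi^{\mathbf Z}K^{\times3}$ because $\varpi$ and $\bar\varpi$ are coprime primes. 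So the relation forces $d_{\epsilon\bar\varpi}(Q^{(a)})=1$ for both signs.

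Now run through the three possibilities for $Q^{(a)}$. If $Q^{(a)}=(0,\pm\bar\varpi/2)$, choose $\epsilon$ so that the point is $(0,\epsilon\bar\varpi/2)$, or its negative. Then $d_{\epsilon\bar\varpi}(Q^{(a)})=[\bar\varpi]^{\pm1}\neq1$, a contradiction. So the only remaining case is $Q^{(a)}=O$.

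Excluding $Q^{(a)}=O$ is the main obstacle, since there the descent classes carry no information. The plan here is to use the Atkin--Lehner relation \eqref{eq:based-atkin-lehner}. Note that $\tau_1=W_N\tau_1'$, where $\tau_1'=-1/(3(\omega+1-r))$ is a conjugate-type CM point of the same shape as $\tau_0$. The relation expresses $\varphi(W_Nz)$ as $\mathcal J\varphi^c(z)+\varphi(0)$, with $\varphi(0)$ a generator of $E_{\bar\varpi}[\lambda]$ by \Cref{lem:modular-functions}(iii). Using this together with the $I_a$-invariances from the proof of \Cref{lem:base-branches}, we would relate $Q^{(1)}$ to $\mathcal J Q^{c}$ at the partner point plus $\varphi(0)$. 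This shows that the pair $(Q^{(a)},Q^{c,(a)})$ cannot be $(O,\text{anything})$ for both partners simultaneously without violating the descent relation at the partner.

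If this pairing argument is insufficient, a fallback is as follows. Use $\mathcal F_+\mathcal F_-=\zeta\,(X\circ\varphi)/(X^c\circ\varphi^c)$, and compute the order of vanishing of $\mathcal F_\epsilon$ at $P$ in the case $Q^{(a)}=O$. The numerator of $\mathcal F_\epsilon^3$ then has a pole of order divisible by $3e$, where $e$ is the ramification index of $\varphi$ at $P$. The leading coefficients, computed as in \Cref{lem:cubic-specialization}, would have to match $[\varpi]^{\pm1}$ or $1$ against the leading coefficient of $-u^{-3}$ pulled back. Comparing the two signs $\epsilon=\pm1$ should force $Q^{c,(a)}=O$ as well. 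We would then conclude with a local computation at $\mathfrak q$ using the $S_a$-actions of \Cref{lem:fine-base-field}(ii), which rules out both values vanishing.
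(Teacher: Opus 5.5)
Your first half is correct and coincides with the paper. Applying \Cref{lem:cubic-specialization} over $F_q$ (the sign $\epsilon=1$ alone suffices), the class $[\varpi]$ is trivial in $F_q$ while $[\bar\varpi]^{\pm1}$ is not, so a $\lambda$-torsion value $Q^{(a)}$ must equal $O$.

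The gap is the step you flag yourself: excluding $Q^{(a)}=O$. Neither of your plans closes it.

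\begin{itemize}
\item \emph{The Atkin--Lehner plan.} Evaluating \eqref{eq:based-atkin-lehner} at $\tau_1'$ gives $Q^{(1)}=\mathcal J\varphi^c(\tau_1')+\varphi(0)$. But nothing you have proved controls $\varphi^c(\tau_1')$, because the partner point is not one of your two base points. A descent relation there, taken over $F_q$, cannot detect a non-zero point of $E_\varpi[\lambda]$, since its class $[\varpi]^{\pm1}$ is trivial in $F_q$.
\item \emph{The fallback.} It is vacuous for the same reason. When $Q^{(a)}=O$, every leading-coefficient class in the cube identity lies in $\{1,[\varpi]^{\pm1}\}$, hence is trivial in $F_q$ for both signs. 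So nothing forces $Q^{c,(a)}=O$. Moreover, vanishing is compatible with all the $S_a$- and $\sigma_{u_q}$-equivariance, so the proposed local computation at $\mathfrak q$ has nothing to exploit.
\end{itemize}

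The missing idea is coefficient conjugation. Because $g_q^c$ has the complex-conjugate Fourier coefficients, $\varphi^c(-\bar z)=\overline{\varphi(z)}$ coordinatewise. One also checks $\tau_1'=-\overline{\tau_0}$. Hence the partner value is $\varphi^c(\tau_1')=\overline{Q^{(0)}}$, and
\[
 Q^{(1)}=\mathcal J\,\overline{Q^{(0)}}+\varphi(0).
\]
In other words, $[\tau_1',1]$ is the complex conjugate of $[\tau_0,1]$. It is rational over $K(\sqrt[3]{\bar\varpi})$, not over $F_q$, which is why your descent at the partner over $F_q$ sees nothing.

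With this relation, your first half finishes the proof. Complex conjugation and $\mathcal J$ preserve $\lambda$-kernels, since $\bar\lambda$ is associate to $\lambda$.
\begin{itemize}
\item If $Q^{(0)}=O$, then $Q^{(1)}=\varphi(0)$ is non-zero $\lambda$-torsion.
\item If $Q^{(1)}=O$, then $\overline{Q^{(0)}}=-\mathcal J^{-1}\varphi(0)$, so $Q^{(0)}$ is non-zero $\lambda$-torsion.
\end{itemize}
Either case contradicts what you proved for the other index.

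The paper uses the same ingredients in a different order. The relation first shows that $Q^{(0)}\in E_{\bar\varpi}[\lambda]$ if and only if $Q^{(1)}\in E_{\bar\varpi}[\lambda]$. The descent then forces $Q^{(0)}=Q^{(1)}=O$. Finally the relation yields $\varphi(0)=O$, contradicting \Cref{lem:modular-functions}(iii).
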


\begin{proof}
Using coefficient conjugation and the identity
\[
 -\frac1{3(\omega+1-r)}
 =-\overline{\left(-\frac1{3(\omega+r)}\right)},
\]
as well as the
Atkin--Lehner identities \eqref{eq:based-atkin-lehner} from the
proof of \Cref{lem:modular-functions}(ii), we obtain
\begin{equation}\label{eq:two-occurrence-values}
Q^{(1)}=\mathcal J\overline{Q^{(0)}}+\varphi(0),\qquad
Q^{c,(1)}=\mathcal J^c\overline{Q^{c,(0)}}+\varphi^c(0).
\end{equation}
Here the bars denote complex conjugation on the co-ordinates. 
By \Cref{lem:modular-functions}(iii), the two cusp values are
non-zero $\lambda$-torsion points. 
Complex conjugation carries the
$\lambda$-kernel on one curve to that on its conjugate because
$\bar\lambda$ is associated to $\lambda$; the isomorphisms
$\mathcal J,\mathcal J^c$ likewise carry the respective kernels into
one another.  Hence $Q^{(0)}$ lies in $E_{\bar\varpi}[\lambda]$ if and
only if $Q^{(1)}$ does.

Suppose this occurs.  Then both $\varphi$-values lie in the kernel,
while \Cref{lem:base-branches} gives
$Q^{c,(a)}\in E_\varpi[\lambda]$ for $a=0,1$.  Apply
\Cref{lem:cubic-specialization} over
$F=F_q=K(\sqrt[3]{\varpi})$ to the cube identity of
\Cref{lem:modular-functions}(iv), at the two $F_q$-rational CM points
from \Cref{lem:fine-base-field}(i), with
$a=\bar\varpi$, $b=\varpi$, and $\epsilon=1$.  Since $\varpi$ is a cube
in $F_q$,
$d_b$ is trivial on $E_\varpi[\lambda]$.  By
\Cref{lem:cubic-field}(i), $F_q/K$ is unramified at
$\bar{\mathfrak q}$.  Thus the normalized valuation of $\bar\varpi$ is
$1$ at every prime of $F_q$ above $\bar{\mathfrak q}$, and
$[\bar\varpi]_{F_q}\ne1$.  Hence the two
non-zero points of $E_{\bar\varpi}[\lambda]$ have non-trivial
$d_a$-class, forcing
\[
 Q^{(0)}=Q^{(1)}=O.
\]
Then \eqref{eq:two-occurrence-values} gives $\varphi(0)=O$, contrary to
\Cref{lem:modular-functions}(iii).  Hence neither $\varphi$-value lies
in the $\lambda$-kernel.
\end{proof}

By \Cref{prop:primary-non-vanishing}, both $\varphi$-values lie outside
$E_{\bar\varpi}[\lambda]$.  At least one conjugate value is non-zero:
if $Q^{c,(0)}=O$, then \eqref{eq:two-occurrence-values} gives
$Q^{c,(1)}=\varphi^c(0)\ne O$.  Choose $a_*\in\{0,1\}$ so that
$Q^{c,(a_*)}\ne O$, and put
\[
 \tau_*=\tau_{a_*},\qquad
 \iota=\iota_{a_*},\qquad
 I=I_{a_*},\qquad S_*=S_{a_*}.
\]
From now on set
\[
P_0=[\tau_*,1],\qquad
Q_q=\varphi(P_0)=(x_q,y_q),\qquad Q_q^c=\varphi^c(P_0).
\]
Accordingly $Q_q^c=(0,\eta_q\varpi/2)$ for a sign $\eta_q$.

\subsection{The cubic Kummer class}\label{subsec:cubic-kummer-class}
We now compare the two $\lambda$-descent functions at the chosen CM
point.  Since
$Q_q^c=(0,\eta_q\varpi/2)$ is non-zero $\lambda$-torsion, the function
$Y^c+\eta_q\varpi/2$ takes the explicit value $\eta_q\varpi$.
On $E_{\bar\varpi}$ put
\[
 f_q=Y+\eta_q\bar\varpi/2,
 \qquad
 \operatorname{div}(f_q)=3(0,-\eta_q\bar\varpi/2)-3(O).
\]
\Cref{lem:modular-functions}(iv) supplies a modular
cube root of the ratio of these two descent functions.  Thus it remains
only to show that its value at $P_0$ descends from $F_q$ to $K$.

\begin{proposition}\label{prop:uniform-kummer}
Set
\[
 z_q=\mathcal F_{\eta_q}(P_0).
\]
Then $z_q\in K^\times$ and
\begin{equation}\label{eq:intro-kummer}
 y_q+\eta_q\bar\varpi/2=\eta_q\varpi z_q^3.
\end{equation}
Consequently
\begin{equation}\label{eq:uniform-kummer}
 [f_q(Q_q)]_K=[\varpi]_K.
\end{equation}
\end{proposition}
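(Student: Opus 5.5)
Evaluating the cube identity of \Cref{lem:modular-functions}(iv) with $\epsilon=\eta_q$ at the $F_q$-rational point $P_0$ gives the identity directly; the real work is showing that $z_q$ descends to $K$. Since $Q_q^c=(0,\eta_q\varpi/2)$, the denominator $Y^c\circ\varphi^c+\eta_q\varpi/2$ takes the value $\eta_q\varpi\ne0$ at $P_0$. The numerator is $y_q+\eta_q\bar\varpi/2$, which is finite because $Q_q\ne O$ by \Cref{prop:primary-non-vanishing}. Hence $\mathcal F_{\eta_q}$ is regular at $P_0$, $z_q=\mathcal F_{\eta_q}(P_0)$ is defined, and
\[
 y_q+\eta_q\bar\varpi/2=\eta_q\varpi z_q^3 .
\]
This value is non-zero: otherwise $y_q=-\eta_q\bar\varpi/2$ and hence $x_q=0$, so $Q_q\in E_{\bar\varpi}[\lambda]$, contradicting \Cref{prop:primary-non-vanishing}. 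Thus $z_q\ne0$. Because $\mathcal F_{\eta_q}\in K(X_\Gamma)$ and $P_0\in X_\Gamma(F_q)$ by \Cref{lem:fine-base-field}(i), we get $z_q\in F_q^\times$.

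To descend $z_q$ to $K$, note that $\operatorname{Gal}(F_q/K)$ is generated by $\sigma_{u_q}$ (\Cref{lem:cubic-field}(ii)). Since $\mathcal F_{\eta_q}$ is defined over $K$, we have $z_q^{\sigma_{u_q}}=\mathcal F_{\eta_q}(P_0^{\sigma_{u_q}})$. By \Cref{lem:fine-base-field}(ii), $\sigma_{u_q}$ acts on $P_0=[\tau_*,1]$ through $S_*^{-1}$, i.e.\ $P_0^{\sigma_{u_q}}$ is the image of $S_*^{-1}\tau_*$. The invariance $\mathcal F_\epsilon(S_az)=\mathcal F_\epsilon(z)$ in \eqref{eq:modular-cube-functions} then gives $z_q^{\sigma_{u_q}}=z_q$, so $z_q\in K^\times$.

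The main obstacle is exactly this step: the identification of the $\sigma_{u_q}$-action with the complex-analytic substitution $z\mapsto S_*^{-1}z$ must be compatible with the use of $K$-rational $q$-expansions. I would check it by writing the Galois action on the value of a $K$-rational modular function as composition with the right translation in the adelic model. Here the $\mathcal U$-level translations at $q$ and at $3$ are absorbed by $\Gamma$-invariance together with the normalizer property established in the proof of \Cref{lem:modular-functions}(iv).

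A cheaper consistency check comes from the defining identity itself. Applying $\sigma_{u_q}$, the left side transforms through $[\omega^2]$ on $Q_q$, and $[\omega^2]$ fixes $y$ by \eqref{eq:intrinsic-cm-convention}; it also fixes $\bar\varpi,\varpi$. Hence $y_q\in K$, as in \Cref{lem:base-branches}, and $(z_q^{\sigma})^3=z_q^3$. So $z_q^{\sigma_{u_q}}/z_q\in\mu_3$ is a character of $\operatorname{Gal}(F_q/K)$, and the invariance shows it is trivial. (Without the invariance, $z_q$ could a priori lie in $t^iK$, which would change the Kummer class by $\varpi^i$, so the invariance is genuinely needed.)

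Finally, \eqref{eq:uniform-kummer} follows from $f_q(Q_q)=y_q+\eta_q\bar\varpi/2=\eta_q\varpi z_q^3$ with $z_q\in K^\times$. Since $\eta_q=\pm1=(\pm1)^3$ is a cube in $K$, the class of $f_q(Q_q)$ in $K^\times/K^{\times3}$ is $[\varpi]_K$.
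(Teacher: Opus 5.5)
Your proposal is correct and matches the paper's proof. You evaluate the cube identity of \Cref{lem:modular-functions}(iv) at $P_0$, use $Q_q\notin E_{\bar\varpi}[\lambda]$ and $Q_q^c=(0,\eta_q\varpi/2)$ to get $z_q\in F_q^\times$, descend to $K$ because $\sigma_{u_q}$ acts on $P_0$ through $S_*^{-1}$ and $\mathcal F_{\eta_q}$ is $S_*$-invariant, and then read off the class $[\varpi]_K$ since $\eta_q=\pm1$ is a cube.
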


\begin{proof}
The numerator in \eqref{eq:modular-cube-functions} is finite and non-zero
because $Q_q$ is not $\lambda$-torsion, while the denominator at
$Q_q^c$ is $\eta_q\varpi$.  Hence $z_q\in F_q^\times$.
By \Cref{lem:fine-base-field}(ii), the generator $\sigma_{u_q}$ acts on
this value through $S_*^{-1}$, which fixes it by
\Cref{lem:modular-functions}(iv).  Thus $z_q\in K^\times$.
Evaluating the modular cube identity at $P_0$ now gives
\eqref{eq:intro-kummer}.  Since $\eta_q$ is a cube in $K$, the same
identity gives \eqref{eq:uniform-kummer}.
\end{proof}

The coefficient-conjugate branch makes the resulting Kummer class
explicit.  Indeed, $Q_q^c$ is non-zero $\lambda$-torsion and its
descent value is $\eta_q\varpi$, whereas the modular cube root and
Shimura reciprocity show that the two descent values differ by a cube
in $K^\times$.  Thus, although $Q_q$ is in general only
$F_q$-rational, $f_q(Q_q)\in K^\times$ and
\eqref{eq:uniform-kummer} gives
\[
 K\bigl(\sqrt[3]{f_q(Q_q)}\bigr)=F_q.
\]

\section{The Fermat isogeny and Frobenius}\label{sec:fermat-cover}

This section uses the cubic identity of
\Cref{prop:uniform-kummer} to construct a first $\lambda$-division of $Q_q$ and
computes its reduction at $p$.  We first identify the Fermat isogeny
with multiplication by $\lambda$ and record its descent map
(cf.~\Cref{lem:Fermat-lambda-cover,lem:exact-descent-kernel}).  The Kummer
class of \Cref{prop:uniform-kummer} then gives a normalized lift whose
two affine co-ordinates lie respectively in $K$ and in $tK$, where
$t^3=\varpi$ (cf.~\Cref{prop:normalized-lift}).  Finally, comparing the
$p^2$-power action on these co-ordinates with the CM Frobenius proves
\Cref{prop:frobenius-obstruction}: when $\alpha_q(p)\ne1$, the point
$[p+1]\bar R_q$ is a specified non-zero element of the
$\lambda$-kernel.  This local calculation underlies the global
division boundary constructed in the next section.

Put
\[
 a=\bar\varpi,
 \qquad t^3=\varpi,
 \qquad F_q=K(t),
\]
and consider the Fermat cubic
\begin{equation}
 \mathscr C_a:\qquad V^3-U^3=aW^3.
\end{equation}
We regard $\mathscr C_a$ as an elliptic curve with origin
$O_{\mathscr C}=[1:1:0]$ and normalize its CM action by
\[
[\omega]_{\mathscr C_a}[U:V:W]=[\omega^2U:\omega^2V:W].
\]
The three points at infinity form the kernel of $\ell_a$.
We choose $\mathcal T=[1:\omega:0]$ as a generator.  Translation by
$\ker\ell_a$ acts simply transitively on each geometric fibre of
$\ell_a$.

\subsection{The isogeny and cubic descent}
The two affine co-ordinates on the Fermat model realize the two descent
functions for the $\lambda$-isogeny.

\begin{lemma}\label{lem:Fermat-lambda-cover}
The formula
\begin{equation}\label{eq:fermat-isogeny}
 \ell_a(U,V,W)
 =\left(\frac{UV}{W^2},
 \frac{U^3+V^3}{2W^3}\right)
\end{equation}
on $W\ne0$ extends to a degree-three $K$-isogeny, where
$E_a:y^2=x^3+a^2/4$.  Its kernel is
\begin{equation}
 \ker\ell_a
 =\{[1:\zeta:0]:\zeta\in\mu_3\}
 =\mathscr C_a[\lambda],
 \qquad \lambda=1-\omega,
\end{equation}
and
\[
 \ell_a\circ[\omega]_{\mathscr C_a}=[\omega]_{E_a}\circ\ell_a.
\]
There is a unique $K$-isomorphism
\[
 \jmath_a:E_a\xrightarrow{\sim}\mathscr C_a
 \qquad\text{such that}\qquad
 \jmath_a\circ\ell_a=[\lambda]_{\mathscr C_a};
\]
moreover $\jmath_a$ is $\mathcal O_K$-linear.  Under this identification,
$\ell_a$ is multiplication by $\lambda$.  Finally,
\begin{equation}\label{eq:descent-pullback}
 \ell_a^*\left(y+\frac a2\right)=\left(\frac VW\right)^3,
 \qquad
 \ell_a^*\left(y-\frac a2\right)=\left(\frac UW\right)^3.
\end{equation}
\end{lemma}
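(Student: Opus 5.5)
The argument is a sequence of direct computations on the Fermat model, in the following order.

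First, I verify that \eqref{eq:fermat-isogeny} lands on $E_a$. On the affine part, put $X=UV/W^2$ and $Y=(U^3+V^3)/(2W^3)$. Then
\[
Y^2-\frac{a^2}{4}=\frac{(U^3+V^3)^2-(V^3-U^3)^2}{4W^6}=\frac{U^3V^3}{W^6}=X^3 .
\]
Hence $\ell_a$ is a rational map $\mathscr C_a\to E_a$ defined over $K$. Since it is a morphism of smooth projective curves, it extends everywhere. On the points at infinity, $W\to0$ with $U,V$ finite and non-zero, so all three points map to $O$. In particular $O_{\mathscr C}=[1:1:0]\mapsto O$, so $\ell_a$ is an isogeny.

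Second, I compute the degree and the kernel. The function $\ell_a^*(x)=UV/W^2$ has poles only at $W=0$. Each of the three points at infinity is a simple zero of $W$, so the pole divisor of $UV/W^2$ has degree $2\cdot 3=6$. Since $x$ has degree $2$ on $E_a$, this gives $\deg\ell_a=3$. The kernel therefore consists of exactly the three points at infinity $[1:\zeta:0]$ with $\zeta\in\mu_3$.

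Third, I treat the CM action. Substituting $[\omega^2U:\omega^2V:W]$ gives $x\mapsto\omega^4x=\omega x$ and $y\mapsto y$, because $\omega^6=1$. This is exactly the identity $\ell_a\circ[\omega]_{\mathscr C_a}=[\omega]_{E_a}\circ\ell_a$. The normalized $[\omega]_{\mathscr C_a}$ fixes $O_{\mathscr C}$, and the same substitution shows $[1:\zeta:0]\mapsto[\omega^2:\omega^2\zeta:0]=[1:\zeta:0]$. So the kernel is fixed pointwise by $[\omega]$, and hence lies in $\mathscr C_a[1-\omega]=\mathscr C_a[\lambda]$. Since $\deg[\lambda]=N(\lambda)=3$, we get $\ker\ell_a=\mathscr C_a[\lambda]$.

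Fourth, I build $\jmath_a$. Because $\ell_a$ and $[\lambda]_{\mathscr C_a}$ are separable isogenies with the same kernel, the universal property of quotients gives a unique isomorphism $\jmath_a:E_a\to\mathscr C_a$ with $\jmath_a\circ\ell_a=[\lambda]_{\mathscr C_a}$. Both $\ell_a$ and $[\lambda]$ are defined over $K$ and $\ker\ell_a$ is $K$-rational, so uniqueness and Galois invariance show that $\jmath_a$ is defined over $K$.

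For $\mathcal O_K$-linearity, I check that $\jmath_a[\omega]$ and $[\omega]\jmath_a$ agree after composing with $\ell_a$:
\[
\jmath_a[\omega]_{E_a}\ell_a=\jmath_a\ell_a[\omega]=[\lambda][\omega]=[\omega][\lambda]=[\omega]\jmath_a\ell_a .
\]
Since $\ell_a$ is surjective, the two maps are equal.

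Finally, I derive \eqref{eq:descent-pullback}. Using $V^3-U^3=aW^3$,
\[
\frac{U^3+V^3}{2W^3}+\frac a2=\frac{U^3+V^3+V^3-U^3}{2W^3}=\frac{V^3}{W^3}.
\]
The second identity follows in the same way.

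The only delicate step is the degree count together with the precise normalization of $[\omega]$ on the kernel. If one worried that $\ell_a$ might instead correspond to $\bar\lambda$, one would use that $\lambda$ and $\bar\lambda$ are associates ($\bar\lambda=-\omega^2\lambda$). So $\mathscr C_a[\lambda]=\mathscr C_a[\bar\lambda]$, and the kernel identification holds either way.
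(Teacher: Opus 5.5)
Your proposal is correct and follows essentially the same route as the paper: direct substitution for the curve equation and the descent pullbacks, the points at infinity as the kernel, the $[\omega]$-fixed-point argument combined with $\deg[\lambda]=3$, and the universal property of the quotient together with surjectivity of $\ell_a$ for $\jmath_a$ and its $\mathcal O_K$-linearity. The only differences are minor: you get $\deg\ell_a=3$ from the pole divisor of $UV/W^2$, whereas the paper reads it off from the size of the separable kernel; and the paper also writes down and verifies the explicit formula \eqref{eq:fermat-identification} for $\jmath_a^{-1}$, which is used later but is not part of this statement.
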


\begin{proof}
Using $V^3-U^3=aW^3$, direct substitution gives
\[
 \left(\frac{U^3+V^3}{2W^3}\right)^2-\frac{a^2}{4}
 =\frac{U^3V^3}{W^6}
 =\left(\frac{UV}{W^2}\right)^3,
\]
so \eqref{eq:fermat-isogeny} maps the affine Fermat cubic to $E_a$ and
gives \eqref{eq:descent-pullback}.  Since $\mathscr C_a$ is smooth and
projective, this rational map extends uniquely to a morphism
$\mathscr C_a\to E_a$.  At $W=0$ one has $V/U\in\mu_3$, and the
co-ordinate functions in
\eqref{eq:fermat-isogeny} have poles there, so each of the three points at
infinity maps to $O$.  The resulting morphism sends $O_{\mathscr C}$
to $O$ and is non-constant, hence is an isogeny.  Its kernel is exactly
these three points, so its degree is $3$.  They are fixed by
$[\omega]_{\mathscr C_a}$; since $[1-\omega]$ also has degree three,
they form $\mathscr C_a[1-\omega]$.
The identity
$\ell_a\circ[\omega]_{\mathscr C_a}=[\omega]_{E_a}\circ\ell_a$
follows directly from the two displayed $\omega$-actions and
\eqref{eq:fermat-isogeny}.

Both $[\lambda]_{\mathscr C_a}:\mathscr C_a\to \mathscr C_a$ and $\ell_a:\mathscr C_a\to E_a$ are
separable degree-three isogenies with kernel $\mathscr C_a[\lambda]$.  The universal
property of the quotient by this kernel therefore gives a unique
$K$-isomorphism $\jmath_a:E_a\to \mathscr C_a$ satisfying
$\jmath_a\circ\ell_a=[\lambda]_{\mathscr C_a}$.  Since $\ell_a$ is
$\mathcal O_K$-linear and surjective, this identity also shows that
$\jmath_a$ is $\mathcal O_K$-linear.

The inverse isomorphism is given by
\begin{equation}\label{eq:fermat-identification}
\jmath_a^{-1}[U:V:W]
=\left(-\frac{\omega aW}{V-U},
       -\frac{a(1+2\omega)(V+U)}{2(V-U)}\right).
\end{equation}
Let $i:\mathscr C_a\dashrightarrow E_a$ denote the rational map given
by this formula.  At $O_{\mathscr C}$, take $w=W/U$ as a local
parameter and put $v=V/U$.  Since $v^3-1=aw^3$, one has
$v-1=(a/3)w^3+O(w^6)$; hence the two co-ordinates of $i$ have poles of
orders two and three, respectively.  Thus $i$ extends across
$O_{\mathscr C}$ and sends it to $O$.  Properness gives the extension
at the remaining omitted points.  A morphism of elliptic curves that
sends origin to origin is a group homomorphism.  The formula also shows
directly that
$i\circ[\omega]_{\mathscr C_a}=[\omega]_{E_a}\circ i$, so $i$ is
$\mathcal O_K$-linear.

To verify its compatibility with $[\lambda]$, take $W=1$ and write
$P=i[U:V:1]=(X,Y)$.  Substitution gives $Y^2=X^3+a^2/4$.
Since $1+2\omega=\omega\lambda$, the slope used to add
$P$ to $-[\omega]P$ is
\[
\frac{2Y}{\lambda X}=U+V.
\]
The addition law then gives
\[
[\lambda]P=\left(UV,\frac{U^3+V^3}{2}\right)=\ell_a[U:V:1].
\]
Thus $[\lambda]_{E_a}\circ i=\ell_a$.  The inverse of the quotient
isomorphism constructed above satisfies the same identity, since
$\jmath_a$ is $\mathcal O_K$-linear.  Their difference is a morphism
from the connected curve $\mathscr C_a$ to the finite group
$E_a[\lambda]$, hence is constant; it is zero at $O_{\mathscr C}$.  Therefore
$i=\jmath_a^{-1}$, proving that the displayed formula is the
isomorphism supplied by the quotient construction.
\end{proof}

When $a=m\in\mathbf Q^\times$, the formulas for $\ell_m$ are defined
over $\mathbf Q$ and give the usual rational $3$-isogeny from
$\mathscr C_m$ to $E_m$.  Over $K$, the isomorphism $\jmath_m$
identifies this isogeny with multiplication by $\lambda$.

\begin{lemma}\label{lem:exact-descent-kernel}
Let $M/K$ be a field extension and $a\in K^\times$.  The
$\lambda$-descent homomorphism
\[
 \mathfrak d_a:E_a(M)\longrightarrow M^\times/M^{\times3},
 \qquad
 \mathfrak d_a(P)=\left[y(P)+\frac a2\right],
\]
has value $1$ at $O$ and value $[a]^{-1}$ at $(0,-a/2)$.  Its kernel is
\begin{equation}\label{eq:exact-descent-kernel}
 \ker(\mathfrak d_a)
 =\ell_a(\mathscr C_a(M))
 =[\lambda]E_a(M).
\end{equation}
If $P\in E_a[3]$, $P\ne O$, then over any extension of $K$ over which
$P$ is defined,
\[
 \mathfrak d_a(P)=
 \begin{cases}
 [a]^{\pm1},&P\in E_a[\lambda]\setminus\{O\},\\[2pt]
 [a\omega]\ \text{or}\ [a\omega^2],
   &P\in E_a[3]\setminus E_a[\lambda].
 \end{cases}
\]
\end{lemma}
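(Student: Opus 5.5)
The plan is to derive everything from the Fermat model of \Cref{lem:Fermat-lambda-cover}, using $\jmath_a\circ\ell_a=[\lambda]$.

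\textbf{Homomorphism and special values.} First, $\mathfrak d_a$ is a homomorphism. The function $h=y+a/2$ has divisor $3(0,-a/2)-3(O)$. So for collinear points $P_1,P_2,P_3$ (summing to $O$), the quotient of $h(P_1)h(P_2)h(P_3)$ by the cube of the line's value is a constant; this is the standard Weil-reciprocity argument for $\lambda$-descent, and it gives multiplicativity up to cubes. The values at the two points where $h$ has a zero or pole are defined through leading coefficients, exactly as in the proof of \Cref{lem:cubic-specialization}. That computation already gives the class $1$ at $O$ and the class $[a]^{-1}$ at $(0,-a/2)$. At $(0,a/2)$ the function $h$ evaluates directly to $a$.

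\textbf{Kernel.} The second equality in \eqref{eq:exact-descent-kernel} is immediate from $\jmath_a\circ\ell_a=[\lambda]$ and the fact that $\jmath_a$ is a $K$-isomorphism, so $\ell_a(\mathscr C_a(M))=[\lambda]E_a(M)$. For the inclusion $\ell_a(\mathscr C_a(M))\subset\ker\mathfrak d_a$, use \eqref{eq:descent-pullback}: $h\circ\ell_a=(V/W)^3$, with the exceptional fibres handled by leading coefficients. Conversely, suppose $y(P)+a/2=v^3$ with $v\in M^\times$ and $P\ne(0,-a/2),O$. Then $(y-a/2)(y+a/2)=x^3$ gives $y-a/2=u^3$ with $u=x/v\in M$. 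Hence $V^3-U^3=a$, and $[u:v:1]\in\mathscr C_a(M)$ maps to $P$, because $UV=x$ and $(U^3+V^3)/2=y$. The exceptional point $(0,-a/2)$ lies in the kernel only when $a$ is a cube. In that case the point $[0:\sqrt[3]a:1]$ (or the analogous one) supplies the preimage.

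\textbf{Three-torsion classes.} For $P\in E_a[\lambda]\setminus\{O\}$ the values have already been computed. For $P\in E_a[3]\setminus E_a[\lambda]$, the $x$-coordinate satisfies $x^3=-a^2$ from the $3$-division polynomial $3x^4+3a^2x=0$, so $y^2=-3a^2/4$. Thus $y=\pm a\sqrt{-3}/2$, and with $\sqrt{-3}=1+2\omega$,
\[
 y+a/2=\tfrac a2(1\pm(1+2\omega))\in\{a(1+\omega),\,-a\omega\}=\{-a\omega^2,\,-a\omega\}.
\]
Since $-1$ is a cube, the class is $[a\omega^2]$ or $[a\omega]$.

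The only delicate step is the exceptional-point bookkeeping in the homomorphism and kernel claims. I will handle it by the leading-coefficient convention already used in \Cref{lem:cubic-specialization}, rather than redoing Weil reciprocity from scratch.
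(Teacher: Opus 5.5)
Your proof is correct. On the second equality (via $\jmath_a\circ\ell_a=[\lambda]$) and on the $3$-torsion classes (division polynomial $3x(x^3+a^2)$, then $y=\pm a\sqrt{-3}/2$, and $-1$ a cube) it follows the paper.

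The difference is in how you obtain multiplicativity and $\ker\mathfrak d_a=\ell_a(\mathscr C_a(M))$. The paper does not argue pointwise. It notes that translation by the generator $\mathcal T$ of $\ker\ell_a$ multiplies $V/W$ by $\omega^2$. So, via $\ker\ell_a\simeq\mu_3$ and the pullback identity $\ell_a^*(y+a/2)=(V/W)^3$, the map $\mathfrak d_a$ is the Kummer connecting map $E_a(M)\to H^1(M,\ker\ell_a)\simeq M^\times/M^{\times3}$. The homomorphism property and the exactness $\ker\mathfrak d_a=\operatorname{im}\ell_a$ then come together from the long exact sequence. Only the values at $O$ and $(0,-a/2)$ need the leading-coefficient convention.

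You instead build the preimage $[x/v:v:1]$ directly when $y+a/2=v^3$, and prove multiplicativity separately by Weil reciprocity. Your route is elementary and constructive; it is the recipe later used in \Cref{prop:normalized-lift}. It does cost a separate multiplicativity argument, and there you should say that the constant is a cube. For a non-vertical line $y=mx+c$ through $P_1,P_2,P_3$ one gets $h(P_1)h(P_2)h(P_3)=(c+a/2)^3$ exactly. Your route also does not give directly the torsor interpretation in the remark after the lemma, which the cohomological route gives for free.

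Two small corrections:
\begin{itemize}
\item When $a$ is a cube in $M$, the preimage of $(0,-a/2)$ is $[-\sqrt[3]{a}:0:1]$, since you need $V^3=y+a/2=0$ and $U^3=y-a/2=-a$. The point $[0:\sqrt[3]{a}:1]$ lies over $(0,a/2)$, which your generic case already covers.
\item In the second equality you implicitly move $[\lambda]$ past $\jmath_a^{-1}$. Cite the $\mathcal O_K$-linearity of $\jmath_a$ from \Cref{lem:Fermat-lambda-cover}, or note that $[\bar\lambda]E_a(M)=[\lambda]E_a(M)$ because $\bar\lambda=-\omega^2\lambda$.
\end{itemize}
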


\begin{proof}
Translation by the generator $\mathcal T$ of $\ker\ell_a$ multiplies
$V/W$ by $\omega^2$.  With this identification
$\ker\ell_a\simeq\mu_3$, the pullback identity
\eqref{eq:descent-pullback} represents the connecting homomorphism for
$\ell_a$ by $\mathfrak d_a$.  In particular,
\[
 \ker(\mathfrak d_a)=\ell_a(\mathscr C_a(M)).
\]
At the points where $y+a/2$ vanishes or has a pole, the same map is
obtained from the leading coefficient.  Indeed
$(y+a/2)(y-a/2)=x^3$, so at $(0,-a/2)$ the second factor has value $-a$ and
\[
 \mathfrak d_a(0,-a/2)=[-a]^{-1}=[a]^{-1};
\]
at $O$ the value is $1$.

It remains to identify the image of $\ell_a$ with $[\lambda]E_a(M)$.
By \Cref{lem:Fermat-lambda-cover},
$\jmath_a\circ\ell_a=[\lambda]_{\mathscr C_a}$ and $\jmath_a$ is
$\mathcal O_K$-linear.  Hence
\[
 \ell_a\circ\jmath_a
 =\jmath_a^{-1}\circ[\lambda]_{\mathscr C_a}\circ\jmath_a
 =[\lambda]_{E_a},
\]
and therefore
$\ell_a(\mathscr C_a(M))=[\lambda]E_a(M)$.

The $3$-division polynomial is
$\psi_3=3x(x^3+a^2)$.  For $x=0$ one has $y=\pm a/2$, giving the two
non-zero $\lambda$-torsion classes $[a]$ and $[a]^{-1}$.  For the remaining
six points, $x^3=-a^2$ and $y=\pm a\sqrt{-3}/2$.  Since
$\sqrt{-3}=1+2\omega$,
\[
 y+\frac a2\in\{-a\omega,-a\omega^2\}.
\]
As $-1$ is a cube in $K$, these classes are $[a\omega]$ and
$[a\omega^2]$.

\end{proof}

\begin{remark}
For $P\in E_a(M)$, the fibre $\ell_a^{-1}(P)$ is a torsor under
$\ker\ell_a\simeq\mu_3$.  Under the Kummer identification
$H^1(M,\mu_3)\simeq M^\times/M^{\times3}$, its class is
$\mathfrak d_a(P)$.  Thus the descent factors above encode
the obstruction to a first $\lambda$-division over $M$.
\end{remark}

\subsection{An explicit division of the CM point}
The cubic identity produces a first division of $Q_q$ whose two affine
co-ordinates lie on different $K$-lines.

\begin{proposition}\label{prop:normalized-lift}
There is a point
\[
 R_q=[U:V:1]\in \mathscr C_{\bar\varpi}(F_q),
 \qquad \ell_{\bar\varpi}(R_q)=Q_q,
\]
whose co-ordinates satisfy
\begin{equation}\label{eq:normalized-lift-lines}
\begin{cases}
 U\in K^\times,\quad V\in tK^\times,&\eta_q=+1,\\
 U\in tK^\times,\quad V\in K^\times,&\eta_q=-1.
\end{cases}
\end{equation}
Exactly one affine Fermat co-ordinate is therefore $K$-rational, and
the other lies in the one-dimensional $K$-space $tK$.
\end{proposition}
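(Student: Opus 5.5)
The plan is to build $R_q$ directly from the cubic identity \eqref{eq:intro-kummer} and the pullback formulas \eqref{eq:descent-pullback}. Take $a=\bar\varpi$ and write $Q_q=(x_q,y_q)$.

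First consider $\eta_q=+1$. The identity reads $y_q+\bar\varpi/2=\varpi z_q^3=(tz_q)^3$ with $z_q\in K^\times$. Set $V=tz_q\in tK^\times$, so that $V^3=y_q+a/2$.

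Next we need $U$ with $U^3=y_q-a/2$ and $UV=x_q$, so put $U=x_q/V$. This requires $x_q\ne0$. Since $Q_q\notin E_{\bar\varpi}[\lambda]$ by \Cref{prop:primary-non-vanishing}, and the $\lambda$-kernel is exactly the set of points with $x=0$ together with $O$, we get $x_q\neq0$ and $V\neq0$. The curve equation gives
\[
U^3=\frac{x_q^3}{y_q+a/2}=y_q-\frac a2 .
\]
Consequently $V^3-U^3=a$, so $R_q=[U:V:1]$ lies on $\mathscr C_{\bar\varpi}$. Moreover $\ell_{\bar\varpi}(R_q)=\bigl(UV,(U^3+V^3)/2\bigr)=(x_q,y_q)$ by \eqref{eq:fermat-isogeny}.

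It remains to place $U$ on the correct $K$-line. By \Cref{lem:base-branches}, $t^2x_q\in K$, so $x_q\in t^{-2}K=tK$ because $t^3=\varpi\in K$. Hence $U=x_q/(tz_q)\in K^\times$, and $R_q\in\mathscr C_{\bar\varpi}(F_q)$.

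For $\eta_q=-1$ we use the other descent function. Now $y_q-\bar\varpi/2=-\varpi z_q^3=(-tz_q)^3$, so we set $U=-tz_q\in tK^\times$, which gives $U^3=y_q-a/2$. Then put $V=x_q/U\in K^\times$; as before $V^3=x_q^3/(y_q-a/2)=y_q+a/2$, so $V^3-U^3=a$ and $\ell_{\bar\varpi}(R_q)=Q_q$. Again $U\neq0$ because $x_q\neq0$, and $y_q\pm a/2\ne0$ for the same reason.

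The last sentence of the statement follows because $K\cap tK=0$, which holds since $t\notin K$ and $F_q/K$ has degree three.

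The only step needing care is the line-membership of $x_q$. It rests on \Cref{lem:base-branches}, which gives $t^2x(Q^{(a)})\in K$ for the chosen $a_*$ because $Q_q\ne O$. Everything else is direct substitution.
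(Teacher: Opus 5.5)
Your proof is correct and follows the paper's argument. Both use the same ingredients: the cubic identity of \Cref{prop:uniform-kummer}, the relation $t^2x_q\in K$ from \Cref{lem:base-branches}, the factorization $(y_q+\bar\varpi/2)(y_q-\bar\varpi/2)=x_q^3$, and the fact that $Q_q$ is not $\lambda$-torsion for the non-vanishing. The only difference is cosmetic: the paper first tabulates the two descent classes, then chooses both cube roots and corrects their product by an element of $\mu_3$ to get $UV=x_q$, whereas you set one co-ordinate equal to $\pm tz_q$ and define the other as $x_q$ divided by it, so that $UV=x_q$ holds from the start.
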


\begin{proof}
The cubic twisting isomorphism over $F_q$
\[
 S_t:E_{\bar\varpi}\otimes_KF_q
     \xrightarrow{\sim} E_q\otimes_KF_q,
 \qquad (x,y)\mapsto(t^2x,\varpi y)
\]
sends $Q_q$ to a $K$-rational point: for the base point $\tau_*$
chosen in \Cref{subsec:base-cm-point}, one has
$t^2x(Q_q),y(Q_q)\in K$ by \Cref{lem:base-branches}.
Hence $b_q:=t^2x_q\in K$, while
$y_q\pm\bar\varpi/2\in K$.  Moreover,
\begin{equation}
 \frac{x_q^3}{\varpi}=\left(\frac{b_q}{\varpi}\right)^3.
\end{equation}
Using
$(y_q+\eta_q\bar\varpi/2)(y_q-\eta_q\bar\varpi/2)=x_q^3$
and the cubic identity \eqref{eq:intro-kummer},
\[
 y_q-\eta_q\frac{\bar\varpi}{2}
 =\eta_q\left(\frac{b_q}{\varpi z_q}\right)^3.
\]
Since $-1$ is a cube in $K$, the two descent factors therefore have
classes $[1]$ and $[\varpi]$
in $K^\times/K^{\times3}$.  The sign $\eta_q$ determines which factor
carries the non-trivial class:
\begin{equation}\label{eq:descent-class-table}
\begin{array}{c|cc}
 & y_q-\bar\varpi/2 & y_q+\bar\varpi/2 \\ \hline
 \eta_q=+1 & [1] & [\varpi] \\
 \eta_q=-1 & [\varpi] & [1].
\end{array}
\end{equation}
By \eqref{eq:descent-pullback}, an $\ell_{\bar\varpi}$-preimage of $Q_q$
with $W=1$ is obtained by choosing cube roots
\[
 U^3=y_q-\frac{\bar\varpi}{2},
 \qquad
 V^3=y_q+\frac{\bar\varpi}{2}.
\]
Neither factor vanishes, since otherwise $Q_q$ would be a non-zero
$\lambda$-torsion point, contrary to the choice of $\tau_*$.  The table \eqref{eq:descent-class-table}
therefore permits the roots to be chosen with the field memberships in
\eqref{eq:normalized-lift-lines}.  Their product has cube $x_q^3$, so
$UV/x_q\in\mu_3\subset K$.  Multiplying either chosen root by the
inverse of this element preserves its $K$- or $tK$-line and gives
$UV=x_q$.
The identities \eqref{eq:descent-pullback} then give
$\ell_{\bar\varpi}(R_q)=Q_q$.
\end{proof}

We record the following elementary properties of this division point used in the
Frobenius calculation (cf.~\Cref{prop:frobenius-obstruction}).

\begin{lemma}\label{lem:fermat-kernel-action}
Put $a=\bar\varpi$ and let $R_q$ be as in
\Cref{prop:normalized-lift}.
\begin{enumerate}[label=\textup{(\roman*)}]
\item The other two $\ell_a$-preimages of $Q_q$ are obtained from
$R_q$ by translation by the two non-zero points of $\ker\ell_a$; they
satisfy the same $K$- and $tK$-line conditions as $R_q$.

\item If
\begin{equation}\label{eq:elliptic-normalized-lift}
 R_q^E:=\jmath_a^{-1}(R_q)\in E_a(F_q),
 \qquad T_E:=\jmath_a^{-1}(\mathcal T)\in E_a[\lambda],
\end{equation}
then
\[
 [\lambda]R_q^E=Q_q,
 \qquad T_E\ne O.
\]

\item Define
\[
 d[U:V:W]=[\omega U:\omega^2V:W].
\]
Then $d$ is translation by $\mathcal T$, $d^2$ is translation by
$-\mathcal T$, and for $R=[U:V:W]$,
\begin{equation}\label{eq:affine-coordinates}
\begin{aligned}
 [U:\omega V:W]&=[\omega]R+\mathcal T,&
 [\omega U:V:W]&=[\omega]R-\mathcal T,\\
 [U:\omega^2V:W]&=[\omega^2]R-\mathcal T,&
 [\omega^2U:V:W]&=[\omega^2]R+\mathcal T.
\end{aligned}
\end{equation}

\item Under \eqref{eq:fermat-identification},
\begin{equation}\label{eq:TE-explicit}
 T_E=(0,-\bar\varpi/2),
 \qquad T_E^\sharp:=-\overline{T_E}=(0,\varpi/2).
\end{equation}
In particular, $T_E^\sharp\in E_\varpi[\lambda]\setminus\{O\}$.
\end{enumerate}
\end{lemma}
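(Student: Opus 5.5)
The plan is to prove part (iii) first, since it gives the explicit translation formula needed for (i), and then to handle (ii) and (iv) by direct manipulation of \Cref{lem:Fermat-lambda-cover} and the formula \eqref{eq:fermat-identification}.

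\emph{Part (iii).} The map $d[U:V:W]=[\omega U:\omega^2V:W]$ is an automorphism of $\mathscr C_a$, since it preserves $V^3-U^3=aW^3$. It also preserves $UV/W^2$ and $(U^3+V^3)/2W^3$, so $\ell_a\circ d=\ell_a$. Hence $P\mapsto d(P)-P$ is a morphism from the connected curve $\mathscr C_a$ to the finite group $\ker\ell_a$, and is therefore constant. Evaluating at $O_{\mathscr C}=[1:1:0]$ gives $d(O_{\mathscr C})=[\omega:\omega^2:0]=[1:\omega:0]=\mathcal T$. So $d$ is translation by $\mathcal T$, and $d^2$ is translation by $2\mathcal T=-\mathcal T$.

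The four identities \eqref{eq:affine-coordinates} then follow by applying $d$ or $d^2$ to $[\omega]R=[\omega^2U:\omega^2V:W]$ and $[\omega^2]R=[\omega U:\omega V:W]$, rescaling homogeneous coordinates by powers of $\omega$. For example,
\[
 d([\omega]R)=[\omega^3U:\omega^4V:W]=[U:\omega V:W].
\]

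\emph{Part (i).} The fibre $\ell_a^{-1}(Q_q)$ is a torsor under $\ker\ell_a$, which acts simply transitively, so the other two preimages are $d(R_q)$ and $d^2(R_q)$. These multiply $U$ and $V$ by elements of $\mu_3\subset K$. They therefore preserve the $K$-line and the $tK$-line memberships in \eqref{eq:normalized-lift-lines}.

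\emph{Part (ii).} From $\jmath_a\circ\ell_a=[\lambda]_{\mathscr C_a}$ and the $\mathcal O_K$-linearity of $\jmath_a$ we get $[\lambda]_{E_a}\circ\jmath_a^{-1}=\jmath_a^{-1}\circ[\lambda]_{\mathscr C_a}=\ell_a$. Applying this to $R_q$ gives $[\lambda]R_q^E=\ell_a(R_q)=Q_q$. Since $\jmath_a^{-1}$ is an isomorphism and $\mathcal T\ne O_{\mathscr C}$, we get $T_E\ne O$, and $T_E\in E_a[\lambda]$ by linearity.

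\emph{Part (iv).} Substitute $[U:V:W]=[1:\omega:0]$ into \eqref{eq:fermat-identification}. The $x$-coordinate is $0$. For the $y$-coordinate, use $1+2\omega=\omega\lambda$, $\omega+1=-\omega^2$ and $\omega-1=-\lambda$:
\[
 y=-\frac{a\,\omega\lambda(-\omega^2)}{2(-\lambda)}=-\frac a2 .
\]
Hence $T_E=(0,-\bar\varpi/2)$. Conjugating the coordinates gives $(0,-\varpi/2)\in E_\varpi$, and negating gives $T_E^\sharp=(0,\varpi/2)$. This point is a non-zero element of $E_\varpi[\lambda]=\{O,(0,\pm\varpi/2)\}$.

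The only mildly delicate step is the sign bookkeeping in (iv) and in the $\omega$-power rescalings in (iii); everything else is formal.
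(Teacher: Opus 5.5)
Your proposal is correct and follows the paper's proof essentially step by step. You show that $d$ is translation by $\mathcal T$ through the constant map $P\mapsto d(P)-P$ into $\ker\ell_a$, evaluated at $O_{\mathscr C}$. Part (i) follows because $d$ and $d^2$ only rescale $U,V$ by elements of $\mu_3\subset K$, part (ii) comes from $[\lambda]_{E_a}\circ\jmath_a^{-1}=\ell_a$, and part (iv) is direct substitution of $[1:\omega:0]$ into \eqref{eq:fermat-identification}. You also verify explicitly the four rescalings in \eqref{eq:affine-coordinates} and the sign $y=-a/2$, which the paper leaves to the reader.
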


\begin{proof}
The formula for $\ell_a$ gives $\ell_a\circ d=\ell_a$.  Hence
$P\mapsto d(P)-P$ maps the connected curve $\mathscr C_a$ to the finite
\'{e}tale group $\ker\ell_a$ and is constant.  At
$O_{\mathscr C}=[1:1:0]$ its value is $\mathcal T=[1:\omega:0]$; thus $d$ is
translation by $\mathcal T$ and $d^2$ by $-\mathcal T$.  Since
multiplication of $U$ and $V$ by powers of $\omega\in K$ preserves the
$K$- and $tK$-lines, part~\textup{(i)} follows.  Combining $d$ and $d^2$ with
$[\omega]_{\mathscr C_a}[U:V:W]=[\omega^2U:\omega^2V:W]$ gives
\eqref{eq:affine-coordinates}.

For (ii), use
$\jmath_a\circ\ell_a=[\lambda]_{\mathscr C_a}$ and the
$\mathcal O_K$-linearity of $\jmath_a$; equivalently,
$\ell_a\circ\jmath_a=[\lambda]_{E_a}$.  Applying this identity to
$R_q^E$ gives $[\lambda]R_q^E=Q_q$.  Finally,
\eqref{eq:fermat-identification} gives
$T_E=(0,-\bar\varpi/2)$ and hence
$-\overline{T_E}=(0,\varpi/2)$, proving (iv).
\end{proof}

\subsection{Frobenius at \texorpdfstring{$p\equiv8\pmod9$}{p = 8 mod 9}}\label{sec:affine-frobenius}
We compare the $p^2$-power action on the reduced Fermat
co-ordinates with the CM Frobenius endomorphism.  We first describe the
cubic reciprocity relation at $p$ and the $p^2$-Frobenius endomorphism.

Fix $p\equiv8\pmod9$, set $a=\bar\varpi$, and let $R_q$ be the point of
\Cref{prop:normalized-lift}.  Since $p\ne q,3$, the curves
$\mathscr C_a,E_a$ and the isogeny $\ell_a$ have good reduction at the
unique prime $\mathfrak p=(p)$ of $K$.  Choose a prime $\mathfrak P$ of
$F_q$ above $\mathfrak p$ and write a bar for reduction at
$\mathfrak P$.

\begin{lemma}\label{lem:fermat-frobenius}
Let
\[
 \operatorname{Fr}_{p^2}:z\longmapsto z^{p^2}
\]
denote the $p^2$-power automorphism of the residue field, applied to
the co-ordinates of the reduced Fermat model.
\begin{enumerate}[label=\textup{(\roman*)}]
\item One has
\begin{equation}\label{eq:cubic-reciprocity-at-p}
 \left(\frac{\varpi}{(p)}\right)_3
 =\left(\frac{-p}{\varpi}\right)_3
 =\left(\frac p\varpi\right)_3
 =\alpha_q(p),
\end{equation}
and
\begin{equation}\label{eq:t-frobenius}
 \overline t^{\,p^2}=\alpha_q(p)\,\overline t.
\end{equation}
If $\alpha_q(p)\ne1$, then $\mathfrak P/\mathfrak p$ has residue
degree $3$, so the residue field at $\mathfrak P$ is
$\mathbf F_{p^6}$.

\item On the reduction of $E_{\bar\varpi}$,
\begin{equation}\label{eq:intrinsic-frob}
 \operatorname{Fr}_{p^2}=[-p\alpha_q(p)].
\end{equation}
Via the $\mathcal O_K$-linear identification $\jmath_a$, the same
endomorphism acts on the reduction of $\mathscr C_a$.
\end{enumerate}
\end{lemma}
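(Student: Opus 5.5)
Part (i) is cubic reciprocity plus a residue computation. Since $p\equiv2\pmod3$, the element $-p$ is a primary prime of $\mathcal O_K$ generating $\mathfrak p=(p)$. The cubic reciprocity law for the two primary primes $\varpi$ and $-p$ gives
\[
\left(\frac{\varpi}{-p}\right)_3=\left(\frac{-p}{\varpi}\right)_3 .
\]
Since $-1=(-1)^3$ is a cube, the right side equals $\left(\frac{p}{\varpi}\right)_3=\alpha_q(p)$, which is \eqref{eq:cubic-reciprocity-at-p}.

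For \eqref{eq:t-frobenius}, the residue field of $\mathfrak p$ is $\mathbf F_{p^2}$, and the defining congruence gives
\[
\overline{t}^{\,p^2-1}=\bar\varpi^{(p^2-1)/3}\equiv\left(\frac{\varpi}{(p)}\right)_3 .
\]
Multiplying by $\overline t$ gives $\overline t^{\,p^2}=\alpha_q(p)\,\overline t$. Equivalently, by \eqref{eq:reciprocity-convention}, $\sigma_{\mathfrak p}$ acts on $t$ by $\alpha_q(p)$, and this action reduces to $\operatorname{Fr}_{p^2}$ because $F_q/K$ is unramified at $\mathfrak p$ by \Cref{lem:cubic-field}(i). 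If $\alpha_q(p)\ne1$, then $\operatorname{Frob}_{\mathfrak p}$ is non-trivial in the cyclic cubic group $\operatorname{Gal}(F_q/K)$. Hence $\mathfrak p$ is inert, $f(\mathfrak P/\mathfrak p)=3$, and the residue field is $\mathbf F_{p^6}$.

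Part (ii) is the main step: identify the $p^2$-power Frobenius endomorphism of $\bar E_{\bar\varpi}$ over $\mathbf F_{p^2}$ with a CM element. The plan is to use the Hecke character. The reduction is defined over $\mathcal O_K/\mathfrak p=\mathbf F_{p^2}$, and by Deuring/Shimura--Taniyama its $p^2$-Frobenius endomorphism is $[\nu(\mathfrak p)]$. Here $\nu$ is the Hecke character of $E_{\bar\varpi}$ with the same normalization as $\nu_m$, namely
\[
\nu_{\bar\varpi}(\mathfrak p)=\overline{\left(\frac{\bar\varpi}{-p}\right)_3}\,(-p).
\]
One must check whether the CM Hecke character attached to $E_a$ for $a\in K^\times$ is given by this formula; this is where care is needed. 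I would verify it directly instead of trusting it.

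The direct check compares the Frobenius with a CM action on an explicit model. Over $\mathbf F_{p^2}$, the curve $y^2=x^3+c^2/4$ is a cubic twist of $E_1$. For $E_1$ one has $\operatorname{Fr}_{p^2}=[-p]$: it is supersingular with trace $-2p$ over $\mathbf F_{p^2}$, and the two candidates $[\pm p]$ are distinguished by checking on a $\lambda$-torsion point, or via primarity. The twist by $c$ is the isomorphism $(x,y)\mapsto(\gamma^2x,\gamma^3y)$ with $\gamma^6=c^2$, and we may take $\gamma^3=c$. Conjugating Frobenius through this twist multiplies it by the automorphism $\gamma^{p^2-1}$ acting on $x$, that is, by $[\text{cube root of unity}]$ with value $c^{(p^2-1)/3}$. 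By part (i), $\bar\varpi^{(p^2-1)/3}$ has value $\overline{\alpha_q(p)}$ or $\alpha_q(p)$; one checks which, using $\bar\varpi\equiv\varpi^p\pmod p$. Matching this with the action $[\omega](x,y)=(\omega x,y)$ gives $[-p\alpha_q(p)]$ up to inversion of the root of unity.

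I expect the main obstacle to be this sign and convention bookkeeping: $\alpha_q(p)$ versus its inverse. It is fixed by tracking that $x\mapsto x^{p^2}$ sends $\gamma^2x$ to $\gamma^{2p^2}x^{p^2}$, and that $(\bar\varpi/p)_3$ equals the conjugate of $(\varpi/p)_3$ raised to the power $p$, hence equals $\alpha_q(p)^p=\alpha_q(p)^{-1}$ up to the conjugation. I would verify the final answer numerically on a small example, such as $q=13$ and $p=17$, before committing to it.

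The final clause of part (ii) follows because $\jmath_a$ is an $\mathcal O_K$-linear $K$-isomorphism with good reduction at $\mathfrak p$. Its reduction commutes both with the $p^2$-power map and with $[-p\alpha_q(p)]$, so the Fermat model carries the same endomorphism.
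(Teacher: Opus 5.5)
Your part (i) is correct and matches the paper's argument: cubic reciprocity for the primary primes $\varpi$ and $-p$, then the action of Frobenius on $t$. One notational point: on the right of $\overline t^{\,p^2-1}=\bar\varpi^{(p^2-1)/3}$ you mean the reduction of $\varpi$, not its conjugate $\bar\varpi$. Keep these apart, since that is exactly the distinction at stake in (ii).

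Part (ii) follows the paper's route ($\operatorname{Fr}_{p^2}=[-p]$ on $\bar E_1$, transported along the cubic twist), but you stop at ``$[-p\alpha_q(p)]$ up to inversion of the root of unity'' and propose to settle it numerically. That is a genuine gap. Which of $\alpha_q(p)^{\pm1}$ occurs is the whole content of \eqref{eq:intrinsic-frob}, and it is used sharply in \Cref{prop:frobenius-obstruction}. There the CM root of unity must be the same $\alpha_q(p)$ by which $\operatorname{Fr}_{p^2}$ multiplies the $tK$-co-ordinate, so that the two contributions combine into $[-(p+1)\alpha_q(p)]$. A check at $(q,p)=(13,17)$ does not replace the argument.

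Two of your intermediate claims would also lead you astray. Take $\gamma^3=c$ with $c$ the reduction of $\bar\varpi$, $\operatorname{Tw}_\gamma(x,y)=(\gamma^2x,\gamma^3y)$, and $\beta=\gamma^{p^2-1}=c^{(p^2-1)/3}$. Then $\gamma^{3p^2}=c^{p^2}=c$ but $\gamma^{2p^2}=\beta^2\gamma^2$, so
\[
 \operatorname{Fr}_{p^2}\circ\operatorname{Tw}_\gamma
 =[\beta^2]\circ\operatorname{Tw}_\gamma\circ\operatorname{Fr}_{p^2}.
\]
The multiplier on $x$ is $\beta^2$, not $\beta$ as you first state; with $\beta$ you would get the wrong answer. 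Second, your sentence on $\left(\frac{\bar\varpi}{(p)}\right)_3$ applies both complex conjugation and the $p$-th power. Each of these inverts an element of $\mu_3$ because $p\equiv2\pmod3$, so together they would give back $\alpha_q(p)$.

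The missing step is the one the paper uses. Since $q=\varpi\bar\varpi\in\mathbf F_p^\times$, one has $q^{(p^2-1)/3}=(q^{p-1})^{(p+1)/3}=1$. Multiplicativity and (i) then give $1=\left(\frac{q}{(p)}\right)_3=\alpha_q(p)\beta$, so $\beta=\left(\frac{\bar\varpi}{(p)}\right)_3=\alpha_q(p)^{-1}$. Equivalently, $\bar\varpi\equiv\varpi^p\pmod p$ gives $\beta=\alpha_q(p)^p$. Hence $\beta^2=\alpha_q(p)$. Since $\operatorname{Tw}_\gamma$ commutes with $[\omega]$ and $[-p]$, one gets $\operatorname{Fr}_{p^2}=[\alpha_q(p)][-p]$ on $\bar E_{\bar\varpi}$, which is \eqref{eq:intrinsic-frob}. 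You had both ingredients in your last paragraph but did not combine them correctly.

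The same identity settles your Hecke-character route: $\overline{\left(\frac{\bar\varpi}{(-p)}\right)_3}\,(-p)=-p\,\alpha_q(p)$, consistent with the answer. Finally, there is no $[\pm p]$ ambiguity on $\bar E_1$. The count $\#E_1(\mathbf F_p)=p+1$ gives Frobenius polynomial $T^2+p$, hence $\operatorname{Fr}_{p^2}=[-p]$ outright.
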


\begin{proof}
Since $-p\equiv1\pmod3$ is primary and $-1$ is a cube in $K$, cubic
reciprocity gives \eqref{eq:cubic-reciprocity-at-p}.  The extension
$F_q/K$ is unramified at $\mathfrak p$, and its Frobenius sends $t$ to
$\alpha_q(p)t$, which proves \eqref{eq:t-frobenius}.  If
$\alpha_q(p)\ne1$, this automorphism has order three, proving the
residue-degree assertion.

For $E_1:y^2=x^3+1/4$ over $\mathbf F_p$, cubing is bijective because
$p\equiv2\pmod3$; the quadratic-character sum gives
$\#E_1(\mathbf F_p)=p+1$.  Its Frobenius polynomial is therefore
$T^2+p$, so its $p^2$-Frobenius is $[-p]$.
Choose $c^3=\bar\varpi$ in an algebraic closure of the residue field
and, on the reductions, put
$\operatorname{Tw}_c(x,y)=(c^2x,c^3y):
E_1\xrightarrow{\sim}E_{\bar\varpi}$.  With
\[
 \beta_c=c^{p^2-1}=\left(\frac{\bar\varpi}{(p)}\right)_3,
\]
one has
\[
 \operatorname{Fr}_{p^2}\circ\operatorname{Tw}_c
 =[\beta_c^2]\operatorname{Tw}_c\circ[-p].
\]
Since cubing is bijective on $\mathbf F_p^\times$,
$(q/(p))_3=1$; multiplicativity and \eqref{eq:cubic-reciprocity-at-p}
give $1=(q/(p))_3=\alpha_q(p)\beta_c$.  Thus
$\beta_c^2=\alpha_q(p)$, proving \eqref{eq:intrinsic-frob}.
The isomorphism $\jmath_a$ is defined over $K$, hence its reduction is
defined over $\mathbf F_{p^2}$ and commutes with
$\operatorname{Fr}_{p^2}$.  Transporting \eqref{eq:intrinsic-frob}
through this $\mathcal O_K$-linear isomorphism gives the same formula on
$\mathscr C_a$.
\end{proof}

Assume now that $\alpha_q(p)\ne1$.  By
\Cref{lem:fermat-frobenius}(i), the $p^2$-power map fixes a reduced
$K$-co-ordinate and multiplies a reduced $tK$-co-ordinate by
$\alpha_q(p)$; by \Cref{lem:fermat-frobenius}(ii), the same Frobenius
is the CM endomorphism $[-p\alpha_q(p)]$.  Comparing these actions on
$R_q$ yields the non-zero kernel term computed in
\Cref{prop:frobenius-obstruction}.

\begin{proposition}\label{prop:frobenius-obstruction}
Assume $\alpha_q(p)\ne1$.  Define
$\varepsilon_{q,p}\in\{\pm1\}$ by
\begin{equation}\label{eq:affine-sign-table}
\begin{array}{c|cc}
 &\alpha_q(p)=\omega&\alpha_q(p)=\omega^2\\ \hline
 \eta_q=+1&-1&+1\\
 \eta_q=-1&+1&-1
\end{array}
\qquad(\text{entries }=\varepsilon_{q,p}).
\end{equation}
For the normalized Fermat lift of \Cref{prop:normalized-lift}, we use
$\mathcal T$ also for its reduction at $p$.  Then
\begin{equation}\label{eq:affine-obstruction}
 [p+1]\bar R_q=\varepsilon_{q,p}\mathcal T\ne O.
\end{equation}
Under the canonical CM identification \eqref{eq:elliptic-normalized-lift},
this identity becomes
\begin{equation}\label{eq:affine-obstruction-elliptic}
 [p+1]\bar R_q^E=\varepsilon_{q,p}T_E\ne O
 \qquad\text{on }E_{\bar\varpi}.
\end{equation}
Consequently
\begin{equation}\label{eq:NQ-non-zero}
 \left[\frac{p+1}{3}\right]\bar Q_q\ne O.
\end{equation}
\end{proposition}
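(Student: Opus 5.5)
The plan is to compute the $p^2$-power map on $\bar R_q$ in two ways and compare. The first uses the explicit Fermat co-ordinates of \Cref{prop:normalized-lift}. The second uses the CM description of Frobenius in \Cref{lem:fermat-frobenius}(ii). The difference between the two answers will be forced to be a non-zero element of the $\lambda$-kernel.

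First, the co-ordinate side. Since $a=\bar\varpi\in K$, the Fermat model $\mathscr C_a$ and the point $\mathcal T=[1:\omega:0]$ are defined over $\mathcal O_K$, and the residue field of $K$ at $\mathfrak p$ is $\mathbf F_{p^2}$. So the $p^2$-power map on co-ordinates of points over $\mathbf F_{p^6}$ is the $\mathbf F_{p^2}$-Frobenius endomorphism of the reduced curve. By \eqref{eq:t-frobenius}, it fixes the reduction of every element of $K$ and multiplies the reduction of every element of $tK$ by $\alpha_q(p)$. This uses that $t$ is a unit at $\mathfrak P$, since $p\nmid q$, and that reduction of $R_q$ is compatible with the affine chart $W=1$, since $U,V$ are $\mathfrak P$-integral up to the harmless projective rescaling. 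Hence, by \eqref{eq:normalized-lift-lines},
\[
\operatorname{Fr}_{p^2}(\bar R_q)=
\begin{cases}
[\bar U:\alpha_q(p)\bar V:1], & \eta_q=+1,\\
[\alpha_q(p)\bar U:\bar V:1], & \eta_q=-1.
\end{cases}
\]
Then \eqref{eq:affine-coordinates} rewrites these points as $[\alpha_q(p)]\bar R_q\pm\mathcal T$, with the sign read off from the four cases.

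Second, the CM side. By \eqref{eq:intrinsic-frob}, transported to $\mathscr C_a$ through $\jmath_a$, the same point equals $[-p\alpha_q(p)]\bar R_q$. Equating the two expressions gives
\[
[-\alpha_q(p)(p+1)]\bar R_q=\pm\mathcal T .
\]
Now $\omega$ acts trivially on $\mathscr C_a[\lambda]$, because $\lambda$ kills it. Hence $[-\alpha_q(p)]$ acts on this kernel as $-1$, and we obtain $[p+1]\bar R_q=\mp\mathcal T$. Running through the four cases recovers the table \eqref{eq:affine-sign-table}:
\begin{itemize}
\item $\eta_q=+1$, $\alpha_q(p)=\omega$: $[\omega]R+\mathcal T$, so $[p+1]\bar R_q=-\mathcal T$;
\item $\eta_q=+1$, $\alpha_q(p)=\omega^2$: $[\omega^2]R-\mathcal T$, so $[p+1]\bar R_q=+\mathcal T$;
\item $\eta_q=-1$, $\alpha_q(p)=\omega$: $[\omega]R-\mathcal T$, so $[p+1]\bar R_q=+\mathcal T$;
\item $\eta_q=-1$, $\alpha_q(p)=\omega^2$: $[\omega^2]R+\mathcal T$, so $[p+1]\bar R_q=-\mathcal T$.
\end{itemize}
The reduction $\bar{\mathcal T}$ is non-zero because $\ker\ell_a$ is finite étale at $p\ne3$.

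Applying $\jmath_a^{-1}$ gives \eqref{eq:affine-obstruction-elliptic}. This uses \Cref{lem:fermat-kernel-action}(ii) and the fact that $\jmath_a$ is an $\mathcal O_K$-linear isomorphism with good reduction, so its reduction commutes with $[p+1]$. For \eqref{eq:NQ-non-zero}, use $\lambda\bar\lambda=3$ and $[\lambda]R_q^E=Q_q$:
\[
[\bar\lambda]\left[\tfrac{p+1}{3}\right]\bar Q_q=\left[\tfrac{p+1}{3}\right][\bar\lambda\lambda]\bar R_q^E=[p+1]\bar R_q^E\ne O,
\]
so $[\tfrac{p+1}{3}]\bar Q_q\ne O$.

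The main obstacle is bookkeeping rather than depth. One must justify that the co-ordinatewise $p^2$-power map on the reduced Fermat model coincides with the endomorphism $[-p\alpha_q(p)]$ transported via $\jmath_a$. This holds because $\jmath_a$ is defined over $K$ and so commutes with $\operatorname{Fr}_{p^2}$, but the point must be checked. One must also ensure the integrality of $U,V$ at $\mathfrak P$ so that the lines $K$ and $tK$ survive reduction. If $U$ or $V$ is not integral, I would first rescale $[U:V:1]$ by an element of $K^\times$ of suitable valuation, which preserves both lines.
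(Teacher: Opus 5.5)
Your proposal is correct and follows the paper's proof essentially step for step. Like the paper, you rescale $R_q$ by an element of $K^\times$ to make it $\mathfrak P$-integral, read off $\operatorname{Fr}_{p^2}(\bar R_q)=[\alpha_q(p)]\bar R_q\pm\mathcal T$ from \eqref{eq:affine-coordinates}, equate this with $[-p\alpha_q(p)]\bar R_q$ via \eqref{eq:intrinsic-frob}, and use that units act trivially on the $\lambda$-kernel. The only cosmetic difference is in deducing \eqref{eq:NQ-non-zero}: you apply $[\bar\lambda]$ and $\lambda\bar\lambda=3$, whereas the paper notes that $[\tfrac{p+1}{3}]\bar R_q$ would lie in the order-three group $\ker\ell_a$, and the two arguments are equivalent.
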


\begin{proof}
Because $p\ne3$, specialization is injective on prime-to-$p$ torsion, so
$\mathcal T$ remains non-zero after reduction.

Choose a projective representative of $R_q$ whose co-ordinates are
$\mathfrak P$-integral and not all in the maximal ideal.  Scaling by an
element of $K^\times$ achieves this: the extension $F_q/K$ is unramified
at $p$, and $t^3=\varpi$ is a $p$-adic unit, so the $K$-line and
$tK$-line in \eqref{eq:normalized-lift-lines} have the same valuation
group.  The third co-ordinate remains in $K$.  Consequently
$\operatorname{Fr}_{p^2}$ fixes the reduced $K$-co-ordinates and, by
\eqref{eq:t-frobenius}, multiplies the reduced $tK$-co-ordinate by
$\alpha_q(p)$.

Since $\alpha_q(p)\ne1$, one has $\alpha_q(p)\in\{\omega,\omega^2\}$.  If
$\eta_q=+1$, the $tK$-co-ordinate is $V$; if $\eta_q=-1$, it is $U$.
Applying \eqref{eq:affine-coordinates} gives
\begin{equation}\label{eq:coordinate-affine-frob}
 \operatorname{Fr}_{p^2}(\bar R_q)
 =[\alpha_q(p)]\bar R_q+\delta(\eta_q,\alpha_q(p))\mathcal T,
\end{equation}
where
\[
 \delta(+1,\omega)=\delta(-1,\omega^2)=+1,
 \qquad
 \delta(+1,\omega^2)=\delta(-1,\omega)=-1.
\]
This table depends only on the globally fixed sign $\eta_q$ and on the
residue symbol $\alpha_q(p)\in\mu_3\subset\mathcal O_K$, and is therefore
independent of the chosen prime of $F_q$ above $p$.  By
\eqref{eq:intrinsic-frob}, the
left side of \eqref{eq:coordinate-affine-frob} is
$[-p\alpha_q(p)]\bar R_q$.  Hence
\[
 [-(p+1)\alpha_q(p)]\bar R_q
 =\delta(\eta_q,\alpha_q(p))\mathcal T.
\]
The units $\omega$ and $\omega^2$ act trivially on
$\mathscr C_a[\lambda]$ because $\omega\equiv1\pmod\lambda$.  Multiplying
by $-\alpha_q(p)^{-1}$ gives
\[
 [p+1]\bar R_q=-\delta(\eta_q,\alpha_q(p))\mathcal T.
\]
The right side is $\varepsilon_{q,p}\mathcal T$ by
\eqref{eq:affine-sign-table}, proving \eqref{eq:affine-obstruction}.

Finally, if $[(p+1)/3]\bar Q_q=O$, then
\[
 \left[\frac{p+1}{3}\right]\bar R_q\in\ker\ell_a.
\]
The kernel has order $3$, so $[p+1]\bar R_q=O$, contradicting
\eqref{eq:affine-obstruction}.  The $K_{\mathfrak p}$-isomorphism
$\jmath_a^{-1}$ extends to the good-reduction models.  Applying its
reduction to \eqref{eq:affine-obstruction} gives
\eqref{eq:affine-obstruction-elliptic}; it also preserves the
non-vanishing of $T_E$.  The contradiction proves
\eqref{eq:NQ-non-zero}.
\end{proof}
\begin{remark}
If $\alpha_q(p)=1$, the same calculation gives
$[p+1]\bar R_q=O$.  Thus the non-triviality of
$\alpha_q(p)=(p/\varpi)_3$ is precisely what gives the non-zero
kernel term.  By \Cref{prop:uniform-kummer}, this is the action of
Frobenius at $p$ on the cubic class $[\varpi]_K$.  The same cubic
residue symbol occurs in the $\varpi$-column of the auxiliary
$3$-isogeny descent matrix in \Cref{lem:chan-matrices}.
\end{remark}

\section{The Hecke orbit and its division boundary}\label{sec:hecke-projector}
This section passes from the single-point Frobenius calculation of
\Cref{prop:frobenius-obstruction} to the complete conductor-$p$ Hecke
orbit.  The orbit points have a common reduction modulo $p$
(cf.~\Cref{prop:fine-hecke-orbit}), whereas their unweighted trace in
characteristic zero vanishes as $a_p(g_q^c)=0$
(cf.~\Cref{lem:vanishing-hecke-trace}).  To retain the information lost in
this trace, we first choose $\lambda$-divisions of the orbit points and
then trace them.  The main result, \Cref{prop:division-boundary}, proves
that the resulting $\lambda$-torsion boundary is well defined and
computes it as $\varepsilon_{q,p}T_E^\sharp\ne O$.

Fix an algebraic closure $\overline K_p$ of $K_p$ and, once and for all,
compatible embeddings of the global fields occurring below into
$\overline K_p$, extending the place chosen in
\Cref{sec:affine-frobenius}.  All
completions, specialization maps, and primes above $p$ are taken with respect to 
these embeddings.

Let $P_0=[\tau_*,1]$ be the base CM point chosen in
\Cref{subsec:base-cm-point}, so that
\[
 Q_q=\varphi(P_0),\qquad Q_q^c=\varphi^c(P_0).
\]
We continue to assume the cubic-residue condition
\eqref{eq:intro-ray-condition}.  Put
\[
 C_p=\mathbf F_{p^2}^\times/\mathbf F_p^\times,
 \qquad \Delta_p\simeq\mu_3\subset C_p,
 \qquad G_p=C_p/\Delta_p.
\]
Here $\Delta_p$ is the image of
$\mathcal O_K^\times/\{\pm1\}\simeq\mu_3$.  It acts trivially
on the $\varphi^c$-images of the conductor-$p$ orbit
(cf.~\Cref{lem:exact-residual-action}), and the specialization argument
preceding \Cref{def:complete-division-boundary} shows that it also fixes
their first $\lambda$-divisions.  We nevertheless retain $C_p$
as  it indexes the complete Hecke fibre with its
multiplicities.

\subsection{The ring-class fields and ramification at \texorpdfstring{$p$}{p}}
We first record the class-field facts used to control the conductor-$p$
orbit and its reduction.

\begin{lemma}\label{lem:ring-class-p-ramification}
Let $H_{3p}$ be the ring-class field of
$\mathcal O_{3p}=\mathbf Z+3p\mathcal O_K$, and let $H_p$ be the
ring-class field of conductor $p$.
\begin{enumerate}[label=\textup{(\roman*)}]
\item One has
\[
 H_p\subset H_{3p},\qquad
 \operatorname{Gal}(H_{3p}/K)\simeq C_p,\qquad
 \operatorname{Gal}(H_p/K)\simeq G_p.
\]
\item The extension $H_{3p}/K$ is totally ramified at the unique prime
$\mathfrak p=(p)$, with ramification degree $p+1$.
\item Put $\mathcal K_{9q}=K_{9q}^{\mathrm{ray}}$ and
$\mathscr W=\mathcal K_{9q}H_{3p}$.  Then
\[
 \mathcal K_{9q}\cap H_{3p}=K,
 \qquad
 \operatorname{Gal}(\mathscr W/\mathcal K_{9q})\simeq C_p.
\]
If $B_p=(\mathcal K_{9q})_{\mathfrak P}$ and
$L_p=\mathscr W_{\widetilde{\mathfrak P}}$ at primes above $p$, then
$L_p/B_p$ is totally ramified with group $C_p$.  This remains true
after any finite unramified extension of $B_p$.
\end{enumerate}
\end{lemma}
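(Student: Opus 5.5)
The plan is to compute everything through class field theory for $K$, which has class number one. For an order $\mathcal O_c$ the Artin map gives $\operatorname{Gal}(H_c/K)\simeq \widehat K^\times/K^\times\widehat{\mathcal O}_c^\times$. Since $h_K=1$, this quotient equals $\widehat{\mathcal O}_K^\times/\mathcal O_K^\times\widehat{\mathcal O}_c^\times$, which in turn is $(\mathcal O_K/c)^\times/\bigl((\mathbf Z/c)^\times\cdot\operatorname{im}\mathcal O_K^\times\bigr)$.

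For (i), take $c=3p$. The quotient splits into a $p$-part $\mathbf F_{p^2}^\times/\mathbf F_p^\times=C_p$ and a $3$-part $(\mathcal O_K/3)^\times/(\mathbf Z/3)^\times$, which has order $6/2=3$ and is generated by the image of $\omega$. We then divide by $\mathcal O_K^\times/\{\pm1\}\simeq\mu_3$, embedded diagonally. Its projection to the $3$-part is an isomorphism, so the total quotient maps isomorphically onto $C_p$: send $(x_p,x_3)$ to $x_p\cdot\iota_p(\iota_3^{-1}(x_3))^{-1}$. Hence $\operatorname{Gal}(H_{3p}/K)\simeq C_p$. The containment $H_p\subset H_{3p}$ follows from $\mathcal O_{3p}\subset\mathcal O_p$. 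The description $\operatorname{Gal}(H_p/K)\simeq G_p$ is \eqref{eq:Gp-identification}.

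For (ii), compute the inertia group at $\mathfrak p$. It is the image of $\mathcal O_{K,p}^\times$ in $\widehat K^\times/K^\times\widehat{\mathcal O}_{3p}^\times$. By the isomorphism just constructed, $\mathcal O_{K,p}^\times\to C_p$ is the reduction map $\mathcal O_{K,p}^\times\to\mathbf F_{p^2}^\times/\mathbf F_p^\times$, which is surjective. So the inertia group is all of $\operatorname{Gal}(H_{3p}/K)$, and $e=p+1$ with residue degree one.

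For (iii), I would compare ramification. The field $\mathcal K_{9q}$ is unramified at $\mathfrak p$ because $p\nmid 9q$. By (ii), $H_{3p}/K$ is totally ramified at $\mathfrak p$. The intersection $\mathcal K_{9q}\cap H_{3p}$ is therefore a subextension of $H_{3p}$ that is unramified at $\mathfrak p$, and the only such subextension is the fixed field of the full inertia group, namely $K$. Galois theory for compositums then gives $\operatorname{Gal}(\mathscr W/\mathcal K_{9q})\simeq\operatorname{Gal}(H_{3p}/K)\simeq C_p$.

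For the local statement, $B_p$ is unramified over $K_{\mathfrak p}$, while $H_{3p,\mathfrak p}/K_{\mathfrak p}$ is totally ramified of degree $p+1$. Their compositum $L_p$ therefore has ramification index $p+1$ over $K_{\mathfrak p}$, hence over $B_p$. Since $[L_p:B_p]$ divides $|C_p|=p+1$, the extension $L_p/B_p$ is totally ramified of degree $p+1$. The local Galois group is the decomposition group, a subgroup of $C_p$ of order $p+1$, so it is all of $C_p$.

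Replacing $B_p$ by a finite unramified extension $B'$ does not change the argument. The compositum $B'H_{3p,\mathfrak p}$ is still totally ramified of degree $p+1$ over $B'$, because a totally ramified extension stays linearly disjoint from an unramified one. Its Galois group restricts isomorphically onto the same $C_p$.

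The main obstacle is the bookkeeping in (i). One must check that the global units kill exactly the $3$-part, which is order three and generated by $\omega$, and do not cut into $C_p$. This is what distinguishes $H_{3p}$, of degree $p+1$, from $H_p$, of degree $(p+1)/3$.
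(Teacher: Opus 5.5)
Your proposal is correct and follows essentially the paper's argument. The paper uses the same three steps. First, the $h_K=1$ ring-class description, in which the diagonal $\mu_3$ from $\mathcal O_K^\times$ absorbs the order-three conductor-$3$ factor and leaves $C_p$. Second, the inertia group at $\mathfrak p$ is the image of $\mathcal O_{K,p}^\times$, which is all of $C_p$. Third, the intersection with $\mathcal K_{9q}$ is trivial because that field is unramified at $\mathfrak p$ while $H_{3p}/K$ is totally ramified there. You simply spell out bookkeeping the paper leaves implicit, namely the explicit splitting map and the degree count for the local compositum $L_p/B_p$.
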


\begin{proof}
Since $h_K=1$, the ring-class exact sequence identifies the class
group of conductor $f$ with the corresponding local unit quotient
modulo the image of $\mathcal O_K^\times$.  For $f=p$ this gives
$C_p/\Delta_p=G_p$.  For $f=3p$, the conductor-$3$ quotient has order
three, and the image of
$\mathcal O_K^\times/\{\pm1\}\simeq\mu_3$ projects isomorphically onto
that factor; the remaining quotient is therefore $C_p$.  This proves
(i).  Locally at $p$ the latter quotient is
\[
 \mathcal O_{K,p}^\times/
 \bigl(\mathbf Z_p^\times(1+p\mathcal O_{K,p})\bigr),
\]
so it is the full inertia group and has order $p+1$, proving (ii).
The ray extension $\mathcal K_{9q}/K$ is unramified at $p$, whereas
$H_{3p}/K$ is totally ramified there; hence their intersection is $K$.
This proves the first two assertions of (iii); completing at $p$ gives
the local assertions.  Unramified base change does not change the
inertia group.
\end{proof}

\subsection{The CM orbit and its reduction}
Using the ordered factorization $q=\varpi\bar\varpi$, identify
\[
 C_q=(\mathcal O_K/q\mathcal O_K)^\times/(\mathbf Z/q\mathbf Z)^\times
 \simeq \mathbf F_q^\times.
\]
Put
\begin{equation}\label{eq:hq-definition}
 C_q^{(3)}=(\mathbf F_q^\times)^3,
 \qquad h_q=|C_q^{(3)}|=\frac{q-1}{3}\equiv1\pmod3.
\end{equation}
Let $\mathbf h_p=\operatorname{diag}(p,1)$ at $p$ and $1$ elsewhere, and put
\[
 x=[\tau_*,\mathbf h_p^{-1}].
\]
Thus $x$ is obtained from $P_0$ by the conductor-$p$ modification of
the local lattice.

Since $p\nmid9q$, this modification changes only the CM point and its
prime-to-level Hecke fibre.  The modular curve $X_\Gamma$, the eigenform
$g_q$, and the normalized maps $\varphi,\varphi^c$ remain those fixed in
\Cref{sec:cm-data}; in particular, no new modular parametrization is
introduced at conductor $p$.

\begin{lemma}\label{lem:conductor-p-field}
The point $x$ and its complete $p$-Hecke fibre are defined over
$F_qH_{3p}\subset\mathscr W$.
\end{lemma}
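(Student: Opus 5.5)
We apply Shimura reciprocity \eqref{eq:reflex-reciprocity} to the point $x=[\tau_*,\mathbf h_p^{-1}]$ exactly as in the proof of \Cref{lem:fine-base-field}. For $u\in\mathbf A_{K,f}^\times$ we have
\[
 x^{\sigma_u}=[\tau_*,\iota(\bar u^{-1})\mathbf h_p^{-1}]
 =[\tau_*,\mathbf h_p^{-1}\cdot\mathbf h_p\iota(\bar u^{-1})\mathbf h_p^{-1}].
\]
So $\sigma_u$ fixes $x$ precisely when the conjugated element $\mathbf h_p\iota(\bar u^{-1})\mathbf h_p^{-1}$ lies in $\mathcal U$, modulo the global action of $K^\times$. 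The plan is to show that the resulting stabilizer contains the open subgroup $W\subset\mathbf A_{K,f}^\times$ cut out by the ray/ring-class conditions defining $F_qH_{3p}$.

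We compute the stabilizer place by place. Away from $3,q,p$, nothing changes and the local stabilizers are maximal. At $3$ and at $\mathfrak q$, the matrix $\mathbf h_p$ is trivial, so the conditions ($3\mid b$ and the cube condition on the $K_{\mathfrak q}$-factor) are those found in \Cref{lem:fine-base-field}. At $p$ the stabilizer is the unit group of the order $\mathcal O_{p}\otimes\mathbf Z_p=\mathbf Z_p+p\mathcal O_{K,p}$. This is the local computation behind \Cref{lem:two-cm-orders}(iii): after conjugation by $\operatorname{diag}(p,1)$, integrality forces $p\mid b$, and since $p\nmid 9q$ the level condition at $p$ is vacuous. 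The stabilizer therefore contains
\[
 K^\times(\mathbf Z_3+3\mathcal O_{K,3})^\times(\mathcal O_{K,\mathfrak q}^\times)^3(\mathbf Z_p+p\mathcal O_{K,p})^\times\prod_{\mathfrak l\nmid 3p\mathfrak q}\mathcal O_{K,\mathfrak l}^\times .
\]
By class field theory this group contains the norm group of $F_qH_{3p}$. Indeed, by \Cref{lem:fine-base-field} the subgroup without the $p$-condition corresponds to $F_q$, while the subgroup $K^\times\widehat{\mathcal O}_{3p}^\times$ corresponds to $H_{3p}$. The intersection of these two groups is contained in the displayed group, so $x$ is defined over the compositum $F_qH_{3p}$. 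The inclusion $F_qH_{3p}\subset\mathscr W$ follows from $F_q\subset K^{\mathrm{ray}}_{9q}$ in \Cref{lem:cubic-field}(i).

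For the Hecke fibre, the points of $T_p(x)$ (equivalently, of the conductor-$p$ fibre through $P_0$) are $[\tau_*,\mathbf h_p^{-1}k]$ with $k$ ranging over $\operatorname{GL}_2(\mathbf Z_p)\mathbf h_p\operatorname{GL}_2(\mathbf Z_p)/\operatorname{GL}_2(\mathbf Z_p)$-type coset representatives. Since $p$ is inert, the torus $\iota(\mathcal O_{K,p}^\times)$ acts transitively on these $p+1$ lattices of index $p$: it acts through $\mathbf F_{p^2}^\times/\mathbf F_p^\times=C_p$ on $\mathbf P^1(\mathbf F_p)$, which is simply transitive. Hence every fibre point has the form $[\tau_*,\iota(v)\mathbf h_p^{-1}]$ for some $v\in\mathcal O_{K,p}^\times$. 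By reciprocity this equals $x^{\sigma_{\bar v^{-1}}}$, a Galois conjugate of $x$. The field of definition $F_qH_{3p}$ is Galois over $K$, so it contains every fibre point.

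The main obstacle is the $\mathfrak q$-component. There one must check that the global correction by $K^\times$ (in particular by the units $\mu_3$, which also move the $3$-component) is compatible with the conductor-$p$ condition, so that no extra cubic extension beyond $F_q$ appears. I would handle this as in \Cref{lem:fine-base-field}: the diagonal image of $\mathcal O_K^\times/\{\pm1\}$ glues the order-three quotients at $3$ and $\mathfrak q$, and at $p$ it lands in $\Delta_p$, which is already accounted for in $\operatorname{Gal}(H_{3p}/K)\simeq C_p$ by \Cref{lem:ring-class-p-ramification}(i).
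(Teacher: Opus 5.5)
Your proof is correct. It reaches the conclusion by running the class-field step in the opposite direction from the paper.

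\emph{The paper's route.} The paper takes the stabilizer $U_x$ and observes that the Kummer character of $F_q$ and the ring-class characters of $H_{3p}$ are trivial on it, so its field contains $F_qH_{3p}$. It then computes the index of $U_x$ as $3\cdot3\cdot(p+1)/3=3(p+1)$: the local quotients at $3,q,p$, modulo the diagonal $\mu_3$. Finally it matches this with $[F_qH_{3p}:K]=3(p+1)$, using that $F_q/K$ is unramified and $H_{3p}/K$ totally ramified at $p$. This pins down the field of definition exactly.

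\emph{Your route.} You show instead that the intersection of the norm groups $K^\times A$ of $F_q$ and $K^\times B$ of $H_{3p}$, with $B=\widehat{\mathcal O}_{3p}^\times$, lies in $U_x$. That gives exactly the inclusion the lemma asserts, with no index count and no disjointness argument.

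The intersection step is not purely formal, because of the factor $K^\times$, and you assert it without proof. It is also where your closing worry about global units really belongs. The check is short. Suppose $\alpha a=\beta b$ with $\alpha,\beta\in K^\times$, $a\in A$, $b\in B$. Then $ab^{-1}\in K^\times\cap\widehat{\mathcal O}_K^\times=\mathcal O_K^\times$. Since $a$ and $b$ both have $3$-component in $(\mathbf Z_3+3\mathcal O_{K,3})^\times$, this unit is $\pm1$. Hence $b\in A\cap B$ and $\beta b\in K^\times(A\cap B)$, which is your displayed group. The gluing of the order-three quotients by $\mathcal O_K^\times/\{\pm1\}$ that you invoke belongs to the paper's degree count, not to your argument.

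\emph{What each buys.} The paper's version gives more: the exact degree $3(p+1)$, which underlies the simply transitive $C_p$-labelling in \Cref{prop:fine-hecke-orbit}. Yours is the more economical proof of the stated inclusion. Your torus-transitivity argument on $\mathbf P^1(\mathbf F_p)$ also makes explicit the passage from $x$ to the whole fibre, which the paper's proof leaves implicit.

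\emph{One slip.} The fibre in question is the $p$-Hecke fibre of $P_0$ through $x$, namely $\{[\tau_*,k\mathbf h_p^{-1}]:k\in\operatorname{GL}_2(\mathbf Z_p)\}$. It is not $T_p(x)$. The set you wrote, $[\tau_*,\mathbf h_p^{-1}k]$ with $k\in\operatorname{GL}_2(\mathbf Z_p)\mathbf h_p\operatorname{GL}_2(\mathbf Z_p)$, contains $P_0$ itself and points of conductor $p^2$. The rest of your argument does use the correct fibre.
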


\begin{proof}
At $p$ the stabilizer of $x$ consists of
$a+b\omega\in(\mathbf Z_p+p\mathcal O_{K,p})^\times$.  Indeed, in
$\mathbf h_p(a+bI)\mathbf h_p^{-1}$ the lower-left entry is $3s_rb/p$ or $3qb/p$ for
the two possible CM embeddings.  Here $p\nmid s_r$, since
$X^2-X+1$ has no root modulo $p\equiv2\pmod3$, and $p\ne q$.
At every other place the stabilizers are those of
\Cref{lem:fine-base-field}.

Replacing the maximal units at $p$ in the subgroup of
$\mathbf A_{K,f}^\times$ fixing the base CM point by
$(\mathbf Z_p+p\mathcal O_{K,p})^\times$ gives local quotients of
orders $3$, $3$, and $p+1$ at $3,q,p$, respectively.  The diagonal image of
$\mathcal O_K^\times/\{\pm1\}$ has order $3$, and both the Kummer character of $F_q$ and
the ring-class characters of $H_{3p}$ are trivial on this subgroup.  The
corresponding field therefore contains $F_qH_{3p}$ and has degree
$3(p+1)$ over $K$.  Since $F_q/K$ is unramified at $p$ and
$H_{3p}/K$ is totally ramified there, the compositum $F_qH_{3p}$ has
this same degree.  Hence, the complete $p$-Hecke
fibre of $P_0$\footnote{By the \emph{complete $p$-Hecke fibre} of $P_{0}$ we mean the $p+1$
points in the prime-to-level Hecke correspondence $T_p(P_{0})$, with
their full level structures. }, including $x$, is defined over $F_qH_{3p}$.
\end{proof}

 For $C\in C_q$, choose an idele
$u_q(C)\in\mathbf A_{K,f}^\times$ that is $1$ away from $q$ and whose
$q$-component has residue $(C,1)$ under the ordered splitting
$(\mathfrak q,\bar{\mathfrak q})$.  Its action below depends only on
this residue class; for $C=\omega_q$ we take $u_q(C)$ to be the idele
$u_q$ of \Cref{lem:cubic-field}.

The local torus both labels the complete Hecke fibre and describes its
Galois action.  \Cref{prop:fine-hecke-orbit} also records the
$q$-stabilizer used later and the reduction of the $\varphi^c$-images.

\begin{proposition}\label{prop:fine-hecke-orbit}
The complete $p$-Hecke fibre of $P_0$ is
\[
 \{x^u:u\in C_p\},
\]
with each element of $C_p$ occurring once.  If $C\in C_q^{(3)}$, the
idele $u_q(C)$ fixes $x$.  Moreover, for every $u\in C_p$ and every
prime above $p$,
\begin{equation}\label{eq:hecke-orbit-reduction}
 \overline{\varphi^c(x^u)}
 =F_{\bar E_{\bar\varpi}/\mathbf F_{p^2}}(\bar Q_q).
\end{equation}
\end{proposition}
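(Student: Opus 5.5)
The plan is to treat the three assertions separately, using the adelic description of $X_\Gamma$ and the reciprocity law \eqref{eq:reflex-reciprocity}.

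\emph{The fibre.} The prime-to-level Hecke correspondence $T_p$ at $P_0=[\tau_*,1]$ consists of the points $[\tau_*,g]$, where $g$ runs over $\operatorname{GL}_2(\mathbf Z_p)\operatorname{diag}(p,1)^{\pm1}\operatorname{GL}_2(\mathbf Z_p)/\operatorname{GL}_2(\mathbf Z_p)$. Equivalently, these are the index-$p$ superlattices (or sublattices) of the standard lattice $L_p=\mathbf Z_p^2$. Via $\iota$, $L_p$ is a free $\mathcal O_{K,p}$-module of rank one, so $L_p/pL_p\simeq\mathbf F_{p^2}$. The $p+1$ lines in it are then the $\mathbf F_p$-lines of $\mathbf F_{p^2}$, and $\mathcal O_{K,p}^\times$ permutes them through $C_p=\mathbf F_{p^2}^\times/\mathbf F_p^\times$ simply transitively. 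I would take $x=[\tau_*,\mathbf h_p^{-1}]$ as the line attached to $1$ and put $x^u=[\tau_*,\iota(\bar u^{-1})\mathbf h_p^{-1}]$ for $u\in C_p$ (lifted to $\mathcal O_{K,p}^\times$). By \eqref{eq:reflex-reciprocity} these are Galois conjugates of $x$. Each element of $C_p$ then occurs exactly once: the stabilizer of $x$ in $\mathcal O_{K,p}^\times$ is $(\mathbf Z_p+p\mathcal O_{K,p})^\times$, as computed in the proof of \Cref{lem:conductor-p-field}. Note that the global units $\Delta_p$ act non-trivially on labels, which is why $C_p$ rather than $G_p$ indexes the fibre.

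\emph{The $q$-stabilizer.} Since $\mathbf h_p$ is supported at $p$, it commutes with $\iota(\bar u^{-1})$ for ideles $u$ supported at $q$. The stabilizer computation at $q$ in \Cref{lem:fine-base-field} is therefore unchanged for $x$. That computation shows that elements whose $\mathfrak q$-component is a cube in $\mathbf F_q^\times$ fix the point. Hence $u_q(C)$ fixes $x$ for $C\in C_q^{(3)}$.

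\emph{The reduction \eqref{eq:hecke-orbit-reduction}.} First, all $x^u$ have the same reduction at any prime above $p$. By \Cref{lem:ring-class-p-ramification}(ii)--(iii), the elements $\sigma_u$ with $u\in C_p$ lie in the inertia group of $\mathscr W/\mathcal K_{9q}$ at $p$, which acts trivially on residue fields. It then remains to identify this common reduction, say $\bar x$. Since $p$ is inert in $K$, $\tau_*$ has supersingular reduction. Every degree-$p$ isogeny between supersingular curves in characteristic $p$ is purely inseparable, so it is Frobenius up to an isomorphism. I would make this precise via the Eichler--Shimura congruence $T_p\equiv\operatorname{Fr}+\operatorname{Fr}^t$ on the good reduction of $X_\Gamma$ at $p\nmid 9q$, which gives $\bar x=\operatorname{Fr}_p(\bar P_0)$ with the level structure carried along.

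Next, $\varphi^c$ is the coefficient conjugate of $\varphi$. Because $p$ is inert in $K$, complex conjugation on $K$ induces the $p$-power map on $\mathcal O_K/(p)=\mathbf F_{p^2}$. The reduction of $\varphi^c$ is therefore the Frobenius twist $\bar\varphi^{(p)}$. Functoriality of the relative Frobenius then gives $\bar\varphi^{(p)}(\operatorname{Fr}_p\bar P_0)=F_{\bar E_{\bar\varpi}/\mathbf F_{p^2}}(\bar\varphi(\bar P_0))$, with target $\bar E_{\bar\varpi}^{(p)}=\bar E_\varpi$. This is exactly \eqref{eq:hecke-orbit-reduction}.

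The main obstacle is this last identification. One has to check that the reduction of the modified point $x$ is exactly $\operatorname{Fr}_p(\bar P_0)$, rather than $\operatorname{Fr}^t$ or $\operatorname{Fr}_p$ composed with a non-trivial automorphism. This requires that the $\Gamma$-level structure (the cube condition at $q$ and the $3$-part) is transported correctly, and that the chosen direction $\mathbf h_p^{-1}$ corresponds to the Frobenius branch. I would settle both by working with the moduli interpretation and with Deuring's reduction of the CM lattice. A useful check is that $\operatorname{Fr}$ and $\operatorname{Fr}^t$ agree up to automorphism on supersingular points, and that automorphisms act through $[\pm\omega^i]$. Any such discrepancy would then be absorbed by the normalization of $\Delta_p$ and the trivial action of units on the relevant $\lambda$-torsion.
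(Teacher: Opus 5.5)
Your arguments for the fibre and for the $q$-stabilizer are the paper's. Your outline for \eqref{eq:hecke-orbit-reduction} also follows the paper: Hecke points reduce to Frobenius of $\bar P_0$ with the level carried along, $\bar\varphi^c=\bar\varphi^{(p)}$ because conjugation is the $p$-power map on $\mathcal O_K/(p)$, and then one uses functoriality of relative Frobenius.

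The gap is the one step that carries the content. You never prove that the common reduction $\bar x$ is exactly $F_X(\bar P_0)=(\bar E^{(p)},F_{\bar E}(\bar\eta))$, and your fallback cannot replace this. The real ambiguity is not an automorphism of the elliptic curve, since those are absorbed into the moduli point. It is the choice between $F_X(\bar P_0)$ and $F_X^{-1}(\bar P_0)$, that is, between pushing the level forward along $E\to E/C$ and pulling it back along an isogeny $E'\to E$.

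At a supersingular point these two choices differ by $F_X^2$, the $p^2$-power Frobenius. On $\bar P_0$ this is the reduction of the Frobenius of $F_q/K$ at $(p)$, so it sends $Q_q$ to $[\alpha_q(p)]Q_q$ by \Cref{lem:base-branches} and \eqref{eq:t-frobenius}. Equivalently, it is a diamond operator $\langle p\rangle^{\pm1}$, which is non-trivial on $X_\Gamma$ exactly because $p$ is not a cube modulo $q$. Your congruence $T_p\equiv\mathrm{Fr}+\mathrm{Fr}^t$ omits this operator. As written it would give the two distinct points $F_X(\bar P_0)$ and $F_X^{-1}(\bar P_0)$ at $\bar P_0$, contradicting your own inertia argument that all $p+1$ reductions coincide.

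Consequently the two candidate values of $\overline{\varphi^c(x)}$ differ by the non-trivial unit $[\alpha_q(p)^{\pm1}]$. They are genuinely different points, because $F(\bar Q_q)$ is not $\lambda$-torsion: if it were, so would $\bar Q_q$ be, and then $[(p+1)/3]\bar Q_q=O$ since $9\mid p+1$, contradicting \eqref{eq:NQ-non-zero}. Neither absorption you propose works. $\Delta_p$ acts trivially on the $\varphi^c$-images, and units act trivially only on $\lambda$-torsion. So your argument gives \eqref{eq:hecke-orbit-reduction} at best up to a unit. That is enough for the later non-vanishing of the boundary, but it is not the stated identity.

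Your remark that a degree-$p$ isogeny between supersingular curves is purely inseparable is the right ingredient. The missing step, which is how the paper argues, is to apply it to the quotient description and skip Eichler--Shimura and Deuring altogether:
\begin{itemize}
\item Each point of the fibre is $(E/C,\eta\bmod C)$ with $C$ cyclic of order $p$ and the level pushed forward.
\item The schematic closure of $C$ in the good integral model is finite flat of rank $p$. Since the reduction is supersingular, its special fibre must be the unique rank-$p$ subgroup scheme $\ker F_{\bar E}\simeq\alpha_p$.
\item Hence every point of the fibre reduces to $(\bar E/\ker F_{\bar E},F_{\bar E}(\bar\eta))=F_X(\bar P_0)$, with nothing left to absorb.
\end{itemize}
This shows at once that the $p+1$ reductions coincide, so your inertia argument, though correct, is not needed. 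It also gives the square $\bar\varphi^{(p)}\circ F_X=F_{\bar E_{\bar\varpi}}\circ\bar\varphi$. The identification $\bar\varphi^{(p)}=\bar\varphi^c$ then follows from the $q$-expansion principle applied to $a_m^p\equiv\overline{a_m}\pmod p$. On any route, you must check that $x=[\tau_*,\mathbf h_p^{-1}]$ matches the push-forward description rather than the pullback one. That is where the direction of $\mathbf h_p$ enters.
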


\begin{proof}
The local torus modulo rational units acts simply transitively on the
$p+1$ lines in $\mathbf F_p^2$: identify $\mathbf F_p^2$ with
$\mathbf F_{p^2}$ and let $\mathbf F_{p^2}^\times$ act by
multiplication.  By the canonical-model reciprocity law
\eqref{eq:reflex-reciprocity}, the Galois action of a local class $u$
is represented by $\bar u^{-1}=u/N(u)$, which is the same class modulo
$\mathbf Q_p^\times$.  Hence the $C_p$-indexing agrees with the
complete Hecke fibre and with its Galois action.  The quotient
$C_p\to G_p$ identifies labels differing by $\Delta_p$ only after
forgetting part of the level structure; the full level on $x$ keeps the
$p+1$ labels distinct (cf.~\Cref{lem:two-cm-orders}).

For the assertion at $q$, canonical-model reciprocity represents the
action of $u_q(C)$ at $q$ by $(1,C^{-1})$.  When $C\in C_q^{(3)}$, this belongs to the level
stabilizer at $q$, while the other local components are trivial.  Thus
it fixes $x$.

To prove \eqref{eq:hecke-orbit-reduction}, work on a prime-to-$p$ cover carrying the
universal elliptic curve and level structure.  The $\mathbf Q$-model of
$X_\Gamma$ fixed above has a good integral model at $p$ defined over
$\mathbf Z_p$; thus its special fibre satisfies the canonical
identification $\bar X_\Gamma^{(p)}=\bar X_\Gamma$.  The schematic
closure of a cyclic subgroup of order $p$ is finite flat of rank $p$.
Since the reduction is supersingular, this closure has special fibre
equal to the unique rank-$p$ subgroup scheme
$\ker F_E\simeq\alpha_p$.  Hence the Hecke
correspondence reduces to relative Frobenius, with level carried along:
\[
 (\bar E,\bar\eta)\longmapsto
 (\bar E^{(p)},F_{\bar E}(\bar\eta)).
\]
Consequently
\[
\begin{CD}
\bar X_\Gamma @>{F_X}>> \bar X_\Gamma^{(p)}\\
@V{\bar\varphi}VV @VV{\bar\varphi^{(p)}=\bar\varphi^{\,c}}V\\
\bar E_{\bar\varpi} @>{F_{E_{\bar\varpi}}}>> \bar E_\varpi.
\end{CD}
\]

Write $T$ for the Fourier parameter at $\infty$.  For either affine
co-ordinate, let $\sum_m a_mT^m$ be its Laurent expansion along
$\varphi$, with $a_m\in\mathcal O_{K,(p)}$.  By our
coefficient-conjugate normalization, the corresponding expansion
along $\varphi^c$ is $\sum_m\overline{a_m}T^m$.  Since $p$ is inert
in $K$, the non-trivial automorphism of $K/\mathbf Q$ induces the
$p$-power map on $\mathcal O_K/(p)=\mathbf F_{p^2}$.  Thus
\[
 a_m^p\equiv\overline{a_m}\pmod p
 \qquad\text{for every }m.
\]
The expansion of $\bar\varphi^{(p)}$ is obtained by applying this
$p$-power map to the reduced coefficients, with $T$ unchanged.
Hence its two co-ordinate expansions agree with those of
$\bar\varphi^c$.  The $q$-expansion principle therefore gives
\[
 \bar\varphi^{(p)}=\bar\varphi^c,
\]
which proves \eqref{eq:hecke-orbit-reduction}.

\end{proof}

The conductor-$p$ modification requires a representative of the
$q$-level action whose conjugate by $\mathbf h_p$ remains integral.

\begin{lemma}\label{lem:p-compatible-q-action}
Let $B_0\in\Gamma_0(N)$ and $h=\operatorname{diag}(p,1)$, where
$p\nmid N$.  There is a matrix $B\in B_0\Gamma(N)$ such that
$B\equiv I_2\pmod p$ and
\[
 B'=hBh^{-1}\in\Gamma_0(N),\qquad d(B')\equiv d(B_0)\pmod q.
\]
The induced actions are
\[
 \varphi((B')^{-1}z)=[\xi_q(d(B_0))^{-1}]\varphi(z),\qquad
 \varphi^c((B')^{-1}z)=[\overline{\xi_q(d(B_0))}^{\,-1}]\varphi^c(z).
\]
\end{lemma}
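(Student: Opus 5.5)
The plan is to use strong approximation for $\operatorname{SL}_2$, or more concretely the Chinese remainder theorem, to adjust $B_0$ by an element of $\Gamma(N)$ so that it becomes congruent to the identity modulo $p$. The transformation formulas will then follow directly from \Cref{lem:modular-functions}(ii) applied to $B'$.

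\textbf{Step 1: the congruence modulo $p$.} Since $\gcd(N,p)=1$, reduction gives a surjection
\[
\operatorname{SL}_2(\mathbf Z)\to\operatorname{SL}_2(\mathbf Z/Np)\simeq\operatorname{SL}_2(\mathbf Z/N)\times\operatorname{SL}_2(\mathbf Z/p).
\]
Because $B_0\in\Gamma_0(N)\subset\operatorname{SL}_2(\mathbf Z)$, we can choose $\gamma\in\operatorname{SL}_2(\mathbf Z)$ with $\gamma\equiv I_2\pmod N$ and $\gamma\equiv B_0^{-1}\pmod p$. Then $\gamma\in\Gamma(N)$. Put $B=B_0\gamma$. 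This gives $B\equiv B_0\pmod N$ and $B\equiv I_2\pmod p$.

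\textbf{Step 2: integrality and level of $B'$.} Write $B=\left(\begin{smallmatrix}a&b\\c&d\end{smallmatrix}\right)$. Then
\[
hBh^{-1}=\left(\begin{smallmatrix}a&pb\\c/p&d\end{smallmatrix}\right).
\]
Since $p\mid c$ and $\det B'=1$, the matrix $B'$ lies in $\operatorname{SL}_2(\mathbf Z)$. From $B\equiv B_0\pmod N$ we get $N\mid c$. Because $\gcd(p,N)=1$, this gives $N\mid c/p$, so $B'\in\Gamma_0(N)$. The lower-right entry of $B'$ is $d(B)\equiv d(B_0)\pmod N$, and in particular $d(B')\equiv d(B_0)\pmod q$.

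\textbf{Step 3: the induced actions.} Note that $(B')^{-1}\in\Gamma_0(N)$ has lower-right entry $a(B')\equiv d(B')^{-1}\pmod q$. This holds because $ad-bc=1$ and $q\mid c$. Applying \Cref{lem:modular-functions}(ii) to $\gamma=(B')^{-1}$ gives $\varphi((B')^{-1}z)=[\xi_q(d(B_0)^{-1})]\varphi(z)$. Since $\xi_q$ is a character, $\xi_q(d(B_0)^{-1})=\xi_q(d(B_0))^{-1}$. The same computation with the conjugated factor gives the formula for $\varphi^c$, where $\overline{\xi_q(x^{-1})}=\overline{\xi_q(x)}^{\,-1}$.

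The main point needing care is Step 1: we need both the CRT surjectivity, so that $\gamma$ can be taken in $\operatorname{SL}_2(\mathbf Z)$ and not just in $\operatorname{GL}_2$, and the bookkeeping of which diagonal entry controls $\xi_q$ after inversion. The rest is routine.
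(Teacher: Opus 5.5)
Your proposal is correct and follows the paper's proof essentially step for step. Both choose $\gamma\in\operatorname{SL}_2(\mathbf Z)$ with $\gamma\equiv I_2\pmod N$ and $\gamma\equiv B_0^{-1}\pmod p$, using CRT and surjectivity of reduction, then conjugate $B=B_0\gamma$ by $h$, then invoke \Cref{lem:modular-functions}(ii). The only difference is that your Step~3 spells out the inversion bookkeeping $d\bigl((B')^{-1}\bigr)=a(B')\equiv d(B_0)^{-1}\pmod q$, which the paper leaves implicit.
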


\begin{proof}
Choose $\gamma\in\operatorname{SL}_2(\mathbf Z)$ with
$\gamma\equiv I_2\pmod N$ and
$\gamma\equiv B_0^{-1}\pmod p$, and put $B=B_0\gamma$.
The Chinese remainder theorem gives the required class modulo $Np$,
and the reduction map from $\operatorname{SL}_2(\mathbf Z)$ is
surjective.  Thus $B\equiv I_2\pmod p$, and for
$B=\left(\begin{smallmatrix}a&b\\c&d\end{smallmatrix}\right)$,
\[
 B'=\begin{pmatrix}a&pb\\ c/p&d\end{pmatrix}\in\Gamma_0(N),
\]
because $p\nmid N$.  Its lower-right residue modulo $q$ is unchanged.
The two displayed action formulas follow from
\Cref{lem:modular-functions}(i),(ii).
\end{proof}

We next determine the action of the cubic subgroup $\Delta_p$ on the
two elliptic images.

\begin{lemma}\label{lem:exact-residual-action}
On the $\varphi^c$-images of the complete Hecke orbit, $\Delta_p$ acts
trivially.  On the $\varphi$-images, the generator $\delta_p$
represented by the idele whose $p$-component is $\omega$ and whose
other components are $1$ acts by $[\omega^2]$.
\end{lemma}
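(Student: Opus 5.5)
The plan is to mimic the proof of \Cref{lem:base-branches}, now at the conductor-$p$ point $x=[\tau_*,\mathbf h_p^{-1}]$, where the Galois action is again converted into right translation by a rational matrix. By \eqref{eq:reflex-reciprocity}, the idele $\delta_p$ acts on $x^u$ by $[\tau_*,\iota(\bar\delta_p^{-1})\mathbf h_p^{-1}g_u]$. Since $\bar\omega^{-1}=\omega$, the $p$-component is $I\mathbf h_p^{-1}$ and all other components are $1$. Because $\delta_p$ lies in the commutative local torus, it suffices to treat $u=1$; the other labels follow by commuting the torus elements. The global principal idele $\omega$ fixes the CM point. Hence the action of $\delta_p$ equals the inverse of the action of the idele that is $\omega$ away from $p$. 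That idele is the product of its component at $3$, its component at $q$ (split as $(\omega,\omega)$ under $(\mathfrak q,\bar{\mathfrak q})$), and a part at the remaining places. At the remaining places $\iota(\omega)=I$ lies in the maximal compact and already stabilizes the point, so that part acts trivially.

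Next I compute the action at $3$ and at $q$ separately, using \Cref{lem:fine-base-field}(ii) and its proof. The $3$-component $\delta_3$ acts through $S_*$. The $q$-component $(\omega,\omega)$ modulo rational units reduces to a residue in $C_q\simeq\mathbf F_q^\times$. By \Cref{lem:fine-base-field}(ii) it acts on the base point through $S_*^{-1}$ composed with a $\bar{\mathfrak q}$-contribution. Here the argument must be redone at $x$ rather than $P_0$. To transport these matrices through $\mathbf h_p$, I invoke \Cref{lem:p-compatible-q-action}: each of the relevant $\Gamma_0(N)$-representatives (namely $S_*$, the matrix $\gamma_*=S_*I$, and the lifts of diagonal residues at $q$) is replaced by $B'=hBh^{-1}\in\Gamma_0(N)$ with the same residue of $d$ modulo $q$. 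The action of $\mathcal A_3$ or $B_3$ is handled in the same way, since $\mathbf h_p$ commutes with components away from $p$. Then \Cref{lem:modular-functions}(ii) gives the effect on $\varphi$ and $\varphi^c$.

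Assembling these, the key step is the cancellation seen in \Cref{lem:base-branches}. There, $\varphi(Iz)=\varphi(z)$ and $\varphi^c(Iz)=[\omega]\varphi^c(z)$. This came from $S_*$ contributing $[\omega]$ to both branches, while $\gamma_*$ contributes $[\omega]$ to $\varphi$ and $[\omega^2]$ to $\varphi^c$, since $\xi_q(\omega_q^2)=\omega$ and its conjugate is $\omega^2$. For $\delta_p$, which is now the inverse of the away-from-$p$ action rather than $I$ itself, I expect the same bookkeeping to give exponents $[\omega^{-1}]\cdot[\omega^{\pm}]$. The result should be trivial on $\varphi^c$ and $[\omega^2]$ on $\varphi$. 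Equivalently, the translation by $I$ at $p$ acts on $\varphi$ by the same rule as right translation by $I$ combined with $\varphi(Iz)$. I would confirm the final exponents by checking consistency with $\Delta_p\subset C_p$ acting on the reduction. By \eqref{eq:hecke-orbit-reduction}, all $\varphi^c$-images have the common reduction $F(\bar Q_q)$, which forces triviality on $\varphi^c$ modulo torsion ambiguity.

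The main obstacle I expect is sign and normalization bookkeeping: tracking the conjugation $u\mapsto\bar u^{-1}$, the ordered splitting at $q$, and the $\xi_q$ versus $\overline{\xi_q}$ asymmetry between the branches through the $\mathbf h_p$-conjugation. A wrong inverse would swap $[\omega]$ and $[\omega^2]$. I would therefore cross-check the $\varphi$-branch against \Cref{lem:base-branches}, where $\sigma_{u_q}$ acts by $[\omega^2]$.
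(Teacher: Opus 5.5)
There is a genuine gap, and it sits exactly where the lemma has content. The sentence ``the action of $\mathcal A_3$ or $B_3$ is handled in the same way, since $\mathbf h_p$ commutes with components away from $p$'' is false. To read off an action on the classical variable of $x=[\tau_*,\mathbf h_p^{-1}]=[h\tau_*,1]$, with $h=\operatorname{diag}(p,1)$, you must multiply by rational matrices. A rational $S$ representing the $3$-component therefore has to be moved past $\mathbf h_p^{-1}$ via $Sh^{-1}=h^{-1}(hSh^{-1})$, so the matrix that actually acts on $h\tau_*$ is $S'=hSh^{-1}$. It must be $p$-integral. This forces $S=\mathcal A_3$ or $S=B_3^{4p}$; the latter is still in $B_3^2\Gamma(N)$ because $3\mid 4p-2$. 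One then gets $S'=\mathcal A_3^p$ or $S'=B_3^4$. Because $p\equiv2\pmod3$, $S'$ acts by $[\omega^2]$ on both $\varphi$ and $\varphi^c$, whereas $S_*$ acts by $[\omega]$: the $3$-adic contribution flips. Note also that $S_*\notin\Gamma_0(N)$, so \Cref{lem:p-compatible-q-action} cannot be applied to it. That lemma only preserves $d\bmod q$, which controls the action of elements of $\Gamma_0(N)$ alone.

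Without this flip your bookkeeping gives the wrong answer. The $q$-contribution is handled correctly by \Cref{lem:p-compatible-q-action} with $B_0=(IS)^{-1}$ and $d(B_0)\equiv\omega_q$, so $B'^{-1}$ acts by $[\omega]$ on $\varphi$ and by $[\omega^2]$ on $\varphi^c$. Combined with an unchanged $\delta_3^{-1}$-action $[\omega^2]$, this makes $\delta_p$ act by $1$ on $\varphi$ and by $[\omega]$ on $\varphi^c$. That is just the action of $I$ at the unmodified point, and it contradicts the statement. With the flipped $(S')^{-1}$, which acts by $[\omega]$, you get $[\omega]\cdot[\omega]=[\omega^2]$ on $\varphi$ and $[\omega]\cdot[\omega^2]=1$ on $\varphi^c$, as claimed. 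To make this rigorous, apply the rational matrix $\gamma hI^{-1}$, with $\gamma=B'^{-1}(S')^{-1}$, to the reciprocity formula. Then check that the finite components lie in $\mathcal U$: the component at $p$ is $\gamma$, and elsewhere it is $h(ISB)^{-1}$ with $ISB\in\Gamma(N)$. This gives $x^{\delta_p}=[\gamma h\tau_*,1]$. Your ``expected exponents'' are asserted rather than derived, and the reduction cross-check does not supply them. On its own it says nothing about the $\varphi$-branch. For $\varphi^c$ it needs $\overline{Q^c_{q,p}}\notin\bar E_\varpi[\lambda]$, which does follow from \eqref{eq:hecke-orbit-reduction} and \eqref{eq:NQ-non-zero}. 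Made precise in that way, the check would in fact have exposed the inconsistency of the unflipped computation.
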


\begin{proof}
The principal idele defined by the global unit $\omega$ relates
$\delta_p$ to the corresponding components at $3$ and $q$.  Under
\eqref{eq:reflex-reciprocity}, these components can be represented after
the conductor-$p$ modification by matrices $S'$ and $B'$ as follows.
If $\tau_*=\tau_0$, take $S=\mathcal A_3$; if
$\tau_*=\tau_1$, take $S=B_3^{4p}$.  In the latter case,
$3\mid4p-2$ gives $B_3^{4p}\in B_3^2\Gamma(N)$, so $S$ represents the
required $3$-component; moreover, $p\mid4p$, so its conjugate below is
integral.  The conductor-$p$ modification is on the right by
$h^{-1}$, and the identity
$Sh^{-1}=h^{-1}(hSh^{-1})$ shows that the relevant conjugation is
$S'=hSh^{-1}$.  Thus, with $h=\operatorname{diag}(p,1)$,
\[
 S'=hSh^{-1}=\mathcal A_3^p\quad\text{or}\quad B_3^4.
\]
In either case $(S')^{-1}$ acts by $[\omega]$ on both $\varphi$ and
$\varphi^c$ (cf.~\Cref{lem:modular-functions}(ii)).  Thus the inverse
$\delta_3$-contribution occurring in the principal-unit relation changes
under the modification: the unmodified representative $S_a^{-1}$ acts
by $[\omega^2]$, whereas $(S')^{-1}$ acts by $[\omega]$.  Equivalently,
$\delta_3$ itself acts by $[\omega^2]$ after the modification, as used
in \Cref{prop:heegner-local-weights}.

Direct multiplication gives $IS\in\Gamma_0(N)$ and
$d(IS)\equiv\omega_q^2\pmod q$.  Take $B_0=(IS)^{-1}$ and apply
\Cref{lem:p-compatible-q-action} to choose
$B\in B_0\Gamma(N)$ with $B\equiv I_2\pmod p$.  Then
\[
 ISB\in\Gamma(N),\qquad
 B'=hBh^{-1}\in\Gamma_0(N),\qquad
 d(B')\equiv d(B_0)\equiv\omega_q\pmod q.
\]
Put $\gamma=B'^{-1}(S')^{-1}$.  In the reciprocity formula
\eqref{eq:reflex-reciprocity} for $x^{\delta_p}$, apply the rational
matrix $\gamma hI^{-1}$.  Since $I^{-1}\tau_*=\tau_*$, the
archimedean component becomes $\gamma h\tau_*$.  The finite component
becomes $\gamma$ at $p$ and $h(ISB)^{-1}$ at every other prime.
These components lie in $\mathcal U$: at $p$ both $B'$ and $S'$
are integral and invertible, while away from $p$ one has
$h\in\mathcal U_\ell$ and $ISB\in\Gamma(N)$.  Consequently
\[
 x^{\delta_p}=[\gamma h\tau_*,1].
\]
As $x=[h\tau_*,1]$, the action on the classical variable is therefore
$\gamma=B'^{-1}(S')^{-1}$.

The first column below records the action of $(S')^{-1}$, which is
$[\omega]$ on both factors.  For the second column,
$\xi_q(d(B_0))=\omega^2$, so
\Cref{lem:p-compatible-q-action} gives $[\omega]$ on the
$\varphi$-factor and $[\omega^2]$ on the $\varphi^c$-factor.
The last column records their composite, which the preceding
adelic calculation identifies with the action of $\delta_p$:

\[
\begin{array}{c|cc|c}
 & (S')^{-1}&B'^{-1}&B'^{-1}(S')^{-1}\\ \hline
\varphi&[\omega]&[\omega]&[\omega^2]\\
\varphi^c&[\omega]&[\omega^2]&1.
\end{array}
\]
Thus $\delta_p$ acts as claimed.  Since $C_p$ is abelian, the same
formulas hold on every translate of $x$.
\end{proof}

Set
\[
 Q^c_{q,p}:=\varphi^c(x).
\]
For $C\in C_q^{(3)}$, put
\[
 Q_C:=\varphi^c\bigl(x^{\sigma_{u_q(C)}}\bigr).
\]
By \Cref{lem:conductor-p-field}, the points before projection are
defined over $F_qH_{3p}$; by \Cref{lem:exact-residual-action}, their
$\varphi^c$-images descend through $\Delta_p$.  Hence
\begin{equation}\label{eq:hecke-field}
 Q_C=Q^c_{q,p}\in E_\varpi(F_qH_p)
 \qquad(C\in C_q^{(3)}).
\end{equation}
Although the $\varphi^c$-image is $\Delta_p$-invariant, the complete
Hecke fibre itself has $p+1$ points.  Accordingly, the traces below are
taken over $C_p$, with the corresponding multiplicities.

\subsection{The vanishing Hecke trace}
\begin{lemma}\label{lem:vanishing-hecke-trace}
One has 
\begin{equation}\label{eq:complete-hecke-trace}
 N_{C_p}Q^c_{q,p}=O.
\end{equation}
\end{lemma}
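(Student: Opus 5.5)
The plan is to identify the trace over $C_p$ with the image of the prime-to-level Hecke correspondence $T_p$ applied to $P_0$, and then use that the $T_p$-eigenvalue of $g_q^c$ vanishes because $p$ is inert in $K$.

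\emph{Step 1: the trace is $\varphi^c$ of the Hecke divisor.} By \Cref{prop:fine-hecke-orbit}, the complete $p$-Hecke fibre of $P_0$ is $\{x^u:u\in C_p\}$, with each label occurring exactly once. Hence the divisor $T_p(P_0)$ on $X_\Gamma$ equals $\sum_{u\in C_p}(x^u)$. Applying $\varphi^c$ and summing on $E_\varpi$ gives
\[
 N_{C_p}Q^c_{q,p}=\sum_{u\in C_p}\varphi^c(x^u)=\varphi^c_*\bigl(T_p(P_0)\bigr).
\]
Here the group action of $u$ on the Galois side matches the Hecke labelling, by the reciprocity statement in that proposition.

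\emph{Step 2: Hecke equivariance of $\varphi^c$.} Extend $\varphi^c$ linearly to degree-zero divisors by $D\mapsto\sum\varphi^c(D)$, and pass through the Jacobian of $X_\Gamma$. The map $\varphi^c$ factors as the Abel--Jacobi map based at $\infty$, followed by a quotient $J_\Gamma\to E_\varpi$ on which $T_p$ acts through the eigenvalue $a_p(g_q^c)$. Apply this to the degree-zero divisor $T_p\bigl((P_0)-(\infty)\bigr)$. This gives
\[
 \sum_{u}\varphi^c(x^u)-\varphi^c_*\bigl(T_p(\infty)\bigr)=[a_p(g_q^c)]\,\varphi^c(P_0).
\]
Next, $T_p(\infty)$ is a sum of $p+1$ cusps of the form $\gamma\infty$ with $\gamma\in\Gamma_0(N)$, after translating by integer matrices. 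For $T_p$ with $p\nmid N$, these cusps are $\infty$ and $j/p$, all $\Gamma_0(N)$-equivalent to $\infty$ or having the same width-one images. By \Cref{lem:modular-functions}(i) each of these cusps maps to $O$. One must check that these cusps are indeed of the form $\gamma\infty$ in the finer cover $X_\Gamma$ with the right level; if not, one can instead use the difference of two Hecke images so that cuspidal terms cancel.

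\emph{Step 3: vanishing of the eigenvalue.} The form $g_q^c=\theta(\bar\psi_q)$ has $a_p=0$ because $p$ is inert in $K$: no ideal of norm $p$ exists. Hence $[a_p]\varphi^c(P_0)=O$, which proves \eqref{eq:complete-hecke-trace}.

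The main obstacle is Step 2, namely the precise Hecke compatibility of the normalized parametrization $\varphi^c$ on the non-standard cover $X_\Gamma$, with its cube-residue level at $q$. Two points need care. First, $T_p$ (for $p\nmid N$) must commute with the covering group $\Gamma_0(N)/\Gamma$, so that the $\mathcal O_K$-linear structure is compatible. Second, the cuspidal contribution must vanish. I would handle both by working adelically: $\mathbf h_p$ and the torus at $p$ commute with $\mathcal U$ away from $p$, and the action of the full $\mathcal U_p=\mathrm{GL}_2(\mathbf Z_p)$ is respected. Then \Cref{lem:modular-functions}(i) kills the cusp translations, exactly as it removed the additive cusp terms earlier.
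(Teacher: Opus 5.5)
Your outline matches the paper's own proof. You identify $N_{C_p}Q^c_{q,p}$ with $\varphi^c$ of the Hecke divisor $T_p(P_0)$ via \Cref{prop:fine-hecke-orbit}. You split this as $\varphi_J^cT_p\bigl((P_0)-(\infty)\bigr)+\varphi_J^c\bigl(T_p(\infty)-(p+1)(\infty)\bigr)$. The first term dies because $a_p(g_q^c)=0$, as $p$ is inert in $K$, and the cuspidal correction dies by \Cref{lem:modular-functions}(i). Your Jacobian and eigenvalue steps are fine.

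The cusp step, however, is wrong as you wrote it, and it is the one point that actually needs checking.

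\emph{The cusps $j/p$ are the wrong cusps.}
\begin{itemize}
\item They do not occur in $T_p(\infty)$.
\item They are not $\Gamma_0(N)$-equivalent to $\infty$. Since $\gcd(p,N)=1$, each $j/p$ has the form $\gamma 0$ with $\gamma\in\Gamma_0(N)$.
\item By \Cref{lem:modular-functions}(ii),(iii), $\varphi^c(\gamma0)=\varphi^c(0)=(0,\epsilon_0\varpi/2)\neq O$, because the CM units act trivially on $E_\varpi[\lambda]$.
\end{itemize}
So if $T_p(\infty)$ really contained these cusps, the correction term would be $p\,\varphi^c(0)$ or $(p-1)\,\varphi^c(0)$, depending on whether $j=0$ is counted. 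Either is non-zero since $3\nmid p(p-1)$, so your argument would produce a non-zero correction rather than $O$.

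\emph{The correct input.} The representatives $\bigl(\begin{smallmatrix}1&r\\0&p\end{smallmatrix}\bigr)$ for $0\le r<p$, together with $\operatorname{diag}(p,1)$, all fix $\infty$. Hence $T_p(\infty)=(p+1)(\infty)$ on $X_0(N)$. On $X_\Gamma$, the $p+1$ cusps have the form $\gamma_i\infty$ with $\gamma_i\in\Gamma_0(N)$.

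They are not all equal to $\infty$ on $X_\Gamma$. One branch lands at $\gamma\infty$ for some $\gamma\in\Gamma_0(N)$ whose lower-right entry is $p^{\pm1}$ modulo $q$. This is not a cube modulo $q$ because $\alpha_q(p)\neq1$, so that cusp is a different point of $X_\Gamma$ above $\infty$. This is why you need \Cref{lem:modular-functions}(i) for every $\gamma\in\Gamma_0(N)$, not merely $\varphi^c(\infty)=O$. With that lemma each $\varphi^c(\gamma_i\infty)=O$ and the correction vanishes. Your fallback of cancelling cuspidal terms through a difference of two Hecke images is unnecessary.
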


\begin{proof}
Let $\varphi_J^c:J(X_\Gamma)\to E_\varpi$ be the homomorphism induced by
the map normalized at $\infty$.  The induced Hecke action is through
the eigenform $g_q^c$.  Since $p\equiv2\pmod3$ and $p\nmid N$, CM gives
$a_p(g_q^c)=0$, and hence $\varphi_J^cT_p=0$.  The divisor identity yields
\[
\sum_{u\in C_p}\varphi^c(x^u)
=\varphi_J^cT_p([P_0]-[\infty])
 +\varphi_J^c\bigl(T_p[\infty]-(p+1)[\infty]\bigr).
\]
The first term vanishes by the eigenvalue calculation.  On $X_0(N)$,
representatives for the prime-to-$N$ correspondence are
\[
\begin{pmatrix}1&r\\0&p\end{pmatrix}\ (0\le r<p),
\qquad
\begin{pmatrix}p&0\\0&1\end{pmatrix},
\]
and all send $\infty$ to $\infty$.  Each cusp upstairs, counted with
multiplicity, therefore has the form $\gamma_i\infty$ with
$\gamma_i\in\Gamma_0(N)$.  The correction is
\[
 \sum_{i=1}^{p+1}\varphi^c(\gamma_i\infty)=O
\]
(cf.~\Cref{lem:modular-functions}(i)).  Finally
\Cref{prop:fine-hecke-orbit} identifies this fibre with the
$C_p$-orbit, proving \eqref{eq:complete-hecke-trace}.
\end{proof}

To form the boundary from \eqref{eq:complete-hecke-trace}, we choose a
first $\lambda$-division before taking the $C_p$-trace.  The result lies in
$E_\varpi[\lambda]$, and reduction identifies it by the Frobenius
calculation of \Cref{sec:fermat-cover}.

\subsection{The division boundary}
We first compare first $\lambda$-divisions on the two reduced CM
factors.

\begin{lemma}\label{lem:boundary-frobenius-transfer}
Let $F_{\bar E_{\bar\varpi}/\mathbf F_{p^2}}$ denote relative
$p$-Frobenius, with target the $p$-twist of the source, and put
\[
 \operatorname{Fr}^{\mathrm{rel}}_p
 =F_{\bar E_{\bar\varpi}/\mathbf F_{p^2}}:
 \bar E_{\bar\varpi}\longrightarrow\bar E_\varpi.
\]
\begin{enumerate}[label=\textup{(\roman*)}]
\item For every $\alpha\in\mathcal O_K$,
\[
 \operatorname{Fr}^{\mathrm{rel}}_p[\alpha]
 =[\bar\alpha]\operatorname{Fr}^{\mathrm{rel}}_p;
\]
\item One has
\[
 \overline{Q^c_{q,p}}=\operatorname{Fr}^{\mathrm{rel}}_p(\bar Q_q);
\]
\item If
\[
 u_\lambda=\frac{\bar\lambda}{\lambda}=-\omega^2,
 \qquad
 \operatorname{Fr}^{(\lambda)}_p
 =[u_\lambda]\circ\operatorname{Fr}^{\mathrm{rel}}_p,
\]
then
\begin{equation}\label{eq:switching-sharp-lambda}
 [\lambda]\operatorname{Fr}^{(\lambda)}_p
 =\operatorname{Fr}^{\mathrm{rel}}_p[\lambda].
\end{equation}
Consequently $\operatorname{Fr}^{(\lambda)}_p$ maps
$[\lambda]^{-1}(\bar Q_q)$ into
$[\lambda]^{-1}(\overline{Q^c_{q,p}})$, and its restriction to the
$\lambda$-kernel is injective.
\end{enumerate}
\end{lemma}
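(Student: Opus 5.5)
We prove the three parts in order. Parts (i) and (iii) are formal consequences of how Frobenius interacts with the CM action. Part (ii) is a restatement of \eqref{eq:hecke-orbit-reduction}.

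For (i), we use that $\operatorname{Fr}^{\mathrm{rel}}_p$ is functorial for the base change $z\mapsto z^p$. For any endomorphism $f$ of $\bar E_{\bar\varpi}$ one has $F\circ f=f^{(p)}\circ F$, where $f^{(p)}$ is obtained by raising the coefficients of $f$ to the $p$-th power.
\begin{itemize}
\item The action $[\omega](x,y)=(\omega x,y)$ from \eqref{eq:intrinsic-cm-convention} has coefficient $\omega$. Its twist is $(x,y)\mapsto(\omega^px,y)$, and $\omega^p=\omega^2=\bar\omega$ because $p\equiv2\pmod3$.
\item Hence $\operatorname{Fr}^{\mathrm{rel}}_p[\omega]=[\bar\omega]\operatorname{Fr}^{\mathrm{rel}}_p$, with $[\bar\omega]$ taken on the target $\bar E_\varpi$. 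This target is the $p$-twist of $\bar E_{\bar\varpi}$, since $\bar\varpi^p\equiv\varpi\pmod p$ by inertness of $p$.
\item Additivity and $\mathbf Z$-linearity of $\operatorname{Fr}^{\mathrm{rel}}_p$ then give the identity for all $\alpha=a+b\omega\in\mathcal O_K$.
\end{itemize}

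For (ii), \Cref{prop:fine-hecke-orbit} gives $\overline{\varphi^c(x^u)}=F_{\bar E_{\bar\varpi}/\mathbf F_{p^2}}(\bar Q_q)$ for every $u$. Taking $u=1$ and recalling $Q^c_{q,p}=\varphi^c(x)$ yields the claim. The one point to check is that the reduction $\bar Q_q$ there is taken at the same compatible prime fixed at the start of this section; this holds by the choice of embeddings.

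For (iii), we first use $\bar\lambda=1-\omega^2=(1-\omega)(1+\omega)=-\omega^2\lambda$, so $u_\lambda=-\omega^2$ is a unit. Then, using (i) with $\alpha=\lambda$ and commutativity of $\mathcal O_K$,
\[
 [\lambda]\operatorname{Fr}^{(\lambda)}_p=[\lambda u_\lambda]\operatorname{Fr}^{\mathrm{rel}}_p=[\bar\lambda]\operatorname{Fr}^{\mathrm{rel}}_p=\operatorname{Fr}^{\mathrm{rel}}_p[\lambda].
\]
The consequences follow from this identity.
\begin{itemize}
\item If $[\lambda]R=\bar Q_q$, then $[\lambda]\operatorname{Fr}^{(\lambda)}_p(R)=\operatorname{Fr}^{\mathrm{rel}}_p(\bar Q_q)=\overline{Q^c_{q,p}}$ by (ii).
\item $\operatorname{Fr}^{(\lambda)}_p$ is a unit times relative Frobenius, so it is purely inseparable of degree $p$. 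Its kernel is the infinitesimal group scheme $\alpha_p$, which has no nonzero geometric points.
\item Its kernel meets the étale subgroup $\bar E_{\bar\varpi}[\lambda]$ of order $3$ trivially, since $p\ne3$. Hence $\operatorname{Fr}^{(\lambda)}_p$ is injective on the $\lambda$-kernel.
\end{itemize}

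The main obstacle is (i): keeping the twist conventions straight, so that the coefficient conjugation $\omega\mapsto\omega^p$ really lands on the CM normalization \eqref{eq:intrinsic-cm-convention} of the target $E_\varpi$ rather than its inverse. We would verify this by writing Frobenius explicitly as $(x,y)\mapsto(x^p,y^p)$ and checking that $\bar E_\varpi$ is the equation obtained by raising the coefficients $\bar\varpi^2/4$ to the $p$-th power.
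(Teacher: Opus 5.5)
Your proof is correct and follows the paper's argument: (i) holds because relative Frobenius conjugates the CM action, since $\omega^p=\bar\omega$ for $p\equiv2\pmod3$. (ii) is the case $u=1$ of \eqref{eq:hecke-orbit-reduction}, and (iii) follows from $u_\lambda\lambda=\bar\lambda$ together with (i). Your explicit coefficient check in (i), and your injectivity argument via the infinitesimal kernel of a purely inseparable degree-$p$ isogeny, simply spell out the paper's one-line justification ($\deg\operatorname{Fr}^{(\lambda)}_p=p$ and $p\ne3$).
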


\begin{proof}
Since $p\equiv2\pmod3$, relative $p$-Frobenius conjugates the CM action,
which gives (i).  Part (ii) is exactly the reduction formula
\eqref{eq:hecke-orbit-reduction}.  For (iii), the definition of
$u_\lambda$ and (i) give
\[
 [\lambda][u_\lambda]\operatorname{Fr}^{\mathrm{rel}}_p
 =[u_\lambda\lambda]\operatorname{Fr}^{\mathrm{rel}}_p
 =[\bar\lambda]\operatorname{Fr}^{\mathrm{rel}}_p
 =\operatorname{Fr}^{\mathrm{rel}}_p[\lambda].
\]
The final assertion follows because
$\deg\operatorname{Fr}^{(\lambda)}_p=p$
and $p\ne3$.
\end{proof}

Multiplication by $\lambda$ is finite \'{e}tale on the good integral
model at $p$.  Choose a finite unramified
extension $B'_p/B_p$ such that
\[
 \mathscr H_p=L_pB'_p
\]
contains the first $\lambda$-divisions under consideration.  By
\Cref{lem:ring-class-p-ramification}(iii),
\[
\operatorname{Gal}(\mathscr H_p/B'_p)=C_p,
\]
and this group acts trivially on the residue field.
If $R$ is a first $\lambda$-division of $Q^c_{q,p}$ and
$\delta\in\Delta_p$, then $R^\delta-R\in E_\varpi[\lambda]$ and
$\overline{R^\delta-R}=O$.  Since $p\ne3$, specialization is injective
on $E_\varpi[\lambda]$, so $R^\delta=R$.  The full $C_p$-trace remains
essential: it records the multiplicity in the complete Hecke fibre,
and it is the full unweighted trace $N_{C_p}Q^c_{q,p}$ that vanishes.

\begin{definition}\label{def:complete-division-boundary}
Fix a prime $\mathfrak P$ above $p$.  For
$R\in E_\varpi(\mathscr H_p)$ satisfying
$[\lambda]R=Q^c_{q,p}$, define the \emph{first-division boundary at
$\mathfrak P$} by
\begin{equation}\label{eq:complete-boundary-definition}
 \partial^c_{q,p,\mathfrak P}:=N_{C_p}R.
\end{equation}
\end{definition}

\begin{lemma}\label{lem:boundary-well-defined}
\begin{enumerate}[label=\textup{(\roman*)}]
\item One has
$\partial^c_{q,p,\mathfrak P}\in E_\varpi[\lambda](K)$.
\item It is independent of the chosen first division $R$, of the
chosen member of the $C_p$-orbit, and of the unramified field used to
split the first divisions.
\end{enumerate}
\end{lemma}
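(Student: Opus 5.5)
For (i), I would first show that $\partial:=N_{C_p}R$ is $\lambda$-torsion. Since $[\lambda]$ is a group endomorphism defined over $K$, it commutes with the $C_p$-action. This gives
\[
 [\lambda]N_{C_p}R=N_{C_p}\bigl([\lambda]R\bigr)=N_{C_p}Q^c_{q,p}=O
\]
by \Cref{lem:vanishing-hecke-trace}. Here the $C_p$-action on $E_\varpi(\mathscr H_p)$ is $\operatorname{Gal}(\mathscr H_p/B'_p)=C_p$ from \Cref{lem:ring-class-p-ramification}(iii), and it is compatible with the global Galois action on the Hecke fibre labelled in \Cref{prop:fine-hecke-orbit}.

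$K$-rationality is then automatic, because $E_\varpi[\lambda]=\{O,(0,\pm\varpi/2)\}$ consists of $K$-rational points, as recorded in \Cref{lem:base-branches}. So $\partial\in E_\varpi[\lambda](K)$.

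For (ii), I would treat the three kinds of independence in turn.

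\emph{Choice of $R$.} Any other first division is $R'=R+T$ with $T\in E_\varpi[\lambda]$. Since $T$ is $K$-rational, $C_p$ fixes it, so $N_{C_p}R'=N_{C_p}R+[p+1]T$. Because $p\equiv8\pmod9$, we have $3\mid p+1$, and $[3]$ kills $E_\varpi[\lambda]$ (as $\lambda\mid3$). Hence $[p+1]T=O$.

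\emph{Choice of orbit member.} Replacing $Q^c_{q,p}$ by $(Q^c_{q,p})^{u_0}$ for $u_0\in C_p$, we may take $R^{u_0}$ as its division. Since $C_p$ is abelian, $N_{C_p}R^{u_0}=N_{C_p}R$.

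\emph{Choice of unramified field.} Given two unramified extensions $B'_p,B''_p$, pass to a common finite unramified extension $B'''_p$ containing both. By the last assertion of \Cref{lem:ring-class-p-ramification}(iii), restriction identifies each Galois group $C_p$ with the same inertia group. So the norm computed in the larger field $\mathscr H'''_p=L_pB'''_p$ agrees with the norm computed in each smaller one, for the same $R$.

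The one point I expect to need care is exactly this compatibility. I must check that the abstract $C_p$ indexing the Hecke fibre acts on $R$ through the local Galois group $\operatorname{Gal}(\mathscr H_p/B'_p)$, and that this identification does not depend on $B'_p$. Since $L_p/B_p$ is totally ramified with group $C_p$ and unramified base change preserves inertia, the restriction maps are isomorphisms compatible with the Artin labelling. With that in place, the three steps above complete the proof.
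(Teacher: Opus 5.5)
Your proposal is correct and follows essentially the same route as the paper: the vanishing Hecke trace gives $[\lambda]\partial^c_{q,p,\mathfrak P}=O$ with the $\lambda$-kernel already $K$-rational, changing the division by $T\in E_\varpi[\lambda]$ changes the trace by $[p+1]T=O$, replacing the point by a conjugate merely permutes the trace, and two unramified splitting fields are compared inside their common unramified compositum. Your added remark that the local group $\operatorname{Gal}(\mathscr H_p/B'_p)$ is identified with the Artin-labelled $C_p$ via \Cref{lem:ring-class-p-ramification}(iii) only makes explicit what the paper uses implicitly.
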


\begin{proof}
By \Cref{lem:vanishing-hecke-trace},
\[
 [\lambda]\partial^c_{q,p,\mathfrak P}
 =N_{C_p}Q^c_{q,p}=O.
\]
The $\lambda$-kernel is already $K$-rational, proving (i).  If $R$ is
replaced by $R+d$ with $d\in E_\varpi[\lambda](K)$, the boundary changes
by $(p+1)d=0$.  Replacing $Q^c_{q,p}$ by a conjugate merely permutes the
$C_p$-trace.  Finally, two unramified splitting fields may be compared
inside their common unramified compositum.  This proves (ii).
\end{proof}

We can now identify the boundary explicitly.

\begin{proposition}\label{prop:division-boundary}
Under \eqref{eq:intro-ray-condition}, for every prime
$\mathfrak P$ above $p$,
\begin{equation}\label{eq:explicit-global-boundary}
 \partial^c_{q,p,\mathfrak P}=\varepsilon_{q,p}T_E^\sharp\ne O.
\end{equation}
In particular, the left side is independent of $\mathfrak P$.
\end{proposition}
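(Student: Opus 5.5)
The plan is to compute the boundary after reduction at $\mathfrak P$ and then lift the answer back. Specialization is injective on $E_\varpi[\lambda]$ because $p\ne3$. The boundary lies in $E_\varpi[\lambda](K)$ by \Cref{lem:boundary-well-defined}(i). So it suffices to show that
\[
\overline{\partial^c_{q,p,\mathfrak P}}=\varepsilon_{q,p}\overline{T_E^\sharp}
\]
on the reduced curve $\bar E_\varpi$. The global identity \eqref{eq:explicit-global-boundary} then follows, and since the right side does not involve $\mathfrak P$, so does the independence of $\mathfrak P$.

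\textbf{Step 1: the reduced boundary is $[p+1]\bar R$.} Work in $\mathscr H_p=L_pB'_p$. There $\operatorname{Gal}(\mathscr H_p/B'_p)=C_p$ acts trivially on the residue field, by \Cref{lem:ring-class-p-ramification}(iii). Hence every conjugate $R^u$ has the same reduction $\bar R$, and $\overline{N_{C_p}R}=[p+1]\bar R$. The only thing to check is that reduction commutes with the Galois action. This holds because points of $E_\varpi(\mathscr H_p)$ extend to the Néron model, and the Néron model has good reduction.

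\textbf{Step 2: choose $R$ adapted to Frobenius.} By \Cref{lem:boundary-well-defined}(ii) I may choose the first division $R$ freely. Since $[\lambda]$ is finite étale on the good model, the reduction map from $[\lambda]^{-1}(Q^c_{q,p})$ over $\mathscr H_p$ to $[\lambda]^{-1}(\overline{Q^c_{q,p}})$ is a bijection of three-element torsors, after possibly enlarging $B'_p$ by an unramified extension.

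By \Cref{lem:boundary-frobenius-transfer}(ii),(iii), the point $\operatorname{Fr}^{(\lambda)}_p(\bar R_q^E)$ lies in $[\lambda]^{-1}(\overline{Q^c_{q,p}})$. Here $R_q^E$ is reduced at the compatible prime of $F_q$ fixed by our embeddings. By Hensel's lemma it therefore lifts to a division $R$ with
\[
\bar R=\operatorname{Fr}^{(\lambda)}_p(\bar R_q^E).
\]

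\textbf{Step 3: transport \Cref{prop:frobenius-obstruction}.} The map $\operatorname{Fr}^{(\lambda)}_p$ is a homomorphism, so
\[
[p+1]\bar R=\operatorname{Fr}^{(\lambda)}_p([p+1]\bar R_q^E)=\varepsilon_{q,p}\operatorname{Fr}^{(\lambda)}_p(T_E),
\]
using \eqref{eq:affine-obstruction-elliptic}. This point is non-zero by the injectivity in \Cref{lem:boundary-frobenius-transfer}(iii).

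It remains to identify $\operatorname{Fr}^{(\lambda)}_p(\bar T_E)$ with $\overline{T_E^\sharp}$. We have $T_E=(0,-\bar\varpi/2)$, so relative $p$-Frobenius sends it to $(0,(-\bar\varpi/2)^p)$. Since $p$ is inert, $\bar\varpi^p\equiv\varpi$ modulo $p$, and $(-1/2)^p\equiv-1/2$. Hence the image is $(0,-\varpi/2)=-T_E^\sharp$ on $\bar E_\varpi$.

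Next, $[u_\lambda]=[-\omega^2]$. The factor $[\omega^2]$ fixes $(0,\pm\varpi/2)$, because $[\omega]$ only scales $x$, while $[-1]$ negates the point. So $\operatorname{Fr}^{(\lambda)}_p(\bar T_E)=\overline{T_E^\sharp}$, as required.

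\textbf{Main obstacle.} The delicate point is Step 2 together with this last sign check. One must make sure that the prime of $F_q$ used in \Cref{prop:frobenius-obstruction} is compatible with the prime $\mathfrak P$ through the fixed embeddings. This is harmless, because the sign table there is independent of the prime. One must also confirm the conventions: that the relative Frobenius $\bar E_{\bar\varpi}\to\bar E_\varpi$ really is coordinatewise $p$-th powering, consistent with \eqref{eq:hecke-orbit-reduction}, and that the sign $-1$ coming from $u_\lambda$ exactly cancels the sign produced by conjugation. This bookkeeping is what yields $\varepsilon_{q,p}T_E^\sharp$ rather than $-\varepsilon_{q,p}T_E^\sharp$.
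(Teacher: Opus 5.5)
Your proposal is correct and follows the paper's proof essentially step for step. You choose the division $R$ by finite \'etale lifting so that $\bar R=\operatorname{Fr}^{(\lambda)}_p(\bar R_q^E)$, and you use the trivial action of the totally ramified $C_p$ on the residue field to get $[p+1]\bar R$. You then apply \eqref{eq:affine-obstruction-elliptic}, check that $\operatorname{Fr}^{(\lambda)}_p(T_E)=T_E^\sharp$ because $[u_\lambda]$ acts as $[-1]$ on the $\lambda$-kernel, and lift the result back using injectivity of specialization on $E_\varpi[\lambda]$.
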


\begin{proof}
By finite \'{e}tale lifting and
\Cref{lem:boundary-frobenius-transfer}(iii), choose a division $R$ whose
reduction is
\[
 \operatorname{sp}_{\mathfrak P}(R)
 =\operatorname{Fr}^{(\lambda)}_p
   (\operatorname{sp}_{\mathfrak P}R_q^E).
\]
Every element of $C_p$ acts trivially on the residue field of
$\mathscr H_p$, so all $R^u$ have the same reduction.  Hence
\[
\begin{aligned}
 \operatorname{sp}_{\mathfrak P}(\partial^c_{q,p,\mathfrak P})
 &=[p+1]\operatorname{Fr}^{(\lambda)}_p
       (\operatorname{sp}_{\mathfrak P}R_q^E)\\
 &=\operatorname{Fr}^{(\lambda)}_p
       ([p+1]\operatorname{sp}_{\mathfrak P}R_q^E)\\
 &=\varepsilon_{q,p}\operatorname{Fr}^{(\lambda)}_p
       (\operatorname{sp}_{\mathfrak P}T_E)\\
 &=\varepsilon_{q,p}\operatorname{sp}_{\mathfrak P}(T_E^\sharp).
\end{aligned}
\]
The third equality is \eqref{eq:affine-obstruction-elliptic}.  For the last,
$\operatorname{Fr}^{\mathrm{rel}}_p(T_E)=(0,-\varpi/2)$ after reduction and
$[u_\lambda]=[-1]$ on the $\lambda$-kernel, since $[\omega]$ acts
trivially there.  Hence
$\operatorname{Fr}^{(\lambda)}_p(T_E)=T_E^\sharp$.

Since $p\ne3$, specialization is injective on
$E_\varpi[\lambda](K)$, so the displayed equality lifts to
\eqref{eq:explicit-global-boundary}.  The right-hand side depends only
on $\eta_q$ and $\alpha_q(p)$ through
\eqref{eq:affine-sign-table}, and is therefore independent of
$\mathfrak P$.
\end{proof}

\begin{definition}\label{def:global-division-boundary}
Under the condition \eqref{eq:intro-ray-condition}, define
\[
 \partial^c_{q,p}:=\partial^c_{q,p,\mathfrak P}
 =\varepsilon_{q,p}T_E^\sharp
\]
for any prime $\mathfrak P$ above $p$.  We call this the
\emph{division boundary} of the conductor-$p$ orbit.
\end{definition}

\begin{remark}\label{rem:tate-boundary}
The boundary has a Tate-cohomological interpretation.  Inside
$E_\varpi(\mathscr H_p)$, let $M$ be the $\mathcal O_K[C_p]$-module
generated by $Q^c_{q,p}$ and put $\widetilde M=[\lambda]^{-1}M$.
The unramified splitting field $\mathscr H_p$ is chosen so that the
first $\lambda$-divisions of the elements of $M$ lie in a common
$C_p$-module.  Thus multiplication by $\lambda$ is surjective in the
exact sequence
\[
 0\longrightarrow E_\varpi[\lambda]\longrightarrow\widetilde M
 \xrightarrow{[\lambda]}M\longrightarrow0.
\]
The norm-zero point $Q^c_{q,p}$ defines a class in
$\widehat H^{-1}(C_p,M)$.  Its image under the connecting map to
$\widehat H^0(C_p,E_\varpi[\lambda])=E_\varpi[\lambda]$ is
$N_{C_p}R=\partial^c_{q,p}$.  The last equality of groups uses the
trivial action on the kernel and the fact $3\mid|C_p|$.  By
\Cref{lem:boundary-well-defined}, this representative is unchanged if
the unramified splitting field is enlarged or replaced.
\end{remark}

\section{From the division boundary to cubic points}\label{sec:cubic-components}
Using the non-zero division boundary of
\Cref{prop:division-boundary}, this section proves that both non-trivial
cubic character components of the conductor-$p$ orbit are non-torsion
(cf.~\Cref{thm:cubic-points}).  We first trace away the prime-to-$3$ part of
the orbit while retaining its boundary.  On the resulting cyclic
$3$-power quotient, \Cref{thm:cubic-projection-criterion} compares the
norm integrally with each cubic projector.  If either cubic component
vanished, this comparison would give a further
$\lambda$-division of a non-zero $3$-torsion point over $F_q$, which the
Kummer obstruction of \Cref{lem:three-torsion-obstruction} excludes.
A final local argument at $p$ rules out torsion.

\subsection{The three-primary quotient and its boundary}
Throughout this section we retain \eqref{eq:intro-ray-condition}, so
\(\alpha_q(p)\ne1\).  Put
\[
 v=\operatorname{ord}_3(p+1)\ge2,\qquad n=3^{v-1},
\]
and write
\[
 C_p=C_p^{(3')}\times C_{p,3},
 \qquad |C_p^{(3')}|=\frac{p+1}{3^v},\qquad |C_{p,3}|=3n.
\]
We first isolate the field and group through which the cubic characters
factor.

\begin{lemma}\label{lem:three-primary-quotient}
Let $H_p^{(3)}$ be the degree-$n$ subfield of $H_p/K$, and put
\[
 \mathscr L=F_qH_p^{(3)}.
\]
\begin{enumerate}[label=\textup{(\roman*)}]
\item One has $F_q\cap H_p^{(3)}=K$, and so 
\[
 \operatorname{Gal}(\mathscr L/F_q)
 \simeq\operatorname{Gal}(H_p^{(3)}/K)
 \simeq\mathbf Z/n\mathbf Z.
\]
\item If $G=\operatorname{Gal}(\mathscr L/F_q)$, restriction of the
$3$-primary part of the conductor-$p$ class group gives an exact
sequence
\[
 1\longrightarrow\Delta_p\longrightarrow C_{p,3}
 \longrightarrow G\longrightarrow1.
\]
\end{enumerate}
\end{lemma}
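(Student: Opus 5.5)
The plan is to reduce both parts to ramification considerations at $p$ and $\lambda$, combined with the cyclic structure of $G_p$ from \eqref{eq:Gp-identification}.

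\emph{Part (i).} First I identify $H_p^{(3)}$. Since $G_p=\operatorname{Gal}(H_p/K)$ is cyclic of order $(p+1)/3$ and $\operatorname{ord}_3((p+1)/3)=v-1$, the $3$-primary quotient of $G_p$ is cyclic of order $3^{v-1}=n$. Hence $H_p$ has a unique subfield $H_p^{(3)}$ of degree $n$ over $K$, with $\operatorname{Gal}(H_p^{(3)}/K)\simeq\mathbf Z/n\mathbf Z$. Next I show $F_q\cap H_p^{(3)}=K$. By \Cref{lem:cubic-field}(i), $F_q/K$ has degree $3$ and is ramified at $\mathfrak q$ and at $\lambda$. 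The extension $H_p/K$ is unramified outside $p$, since its conductor is $p$. Therefore any common subfield is unramified at $\mathfrak q$. Because $[F_q:K]=3$ is prime, the intersection is either $K$ or all of $F_q$, and the ramification at $\mathfrak q$ rules out $F_q$. (Alternatively, $H_p/K$ is totally ramified at $p$ while $F_q/K$ is unramified there.) Linear disjointness then gives the restriction isomorphism
\[
\operatorname{Gal}(\mathscr L/F_q)\xrightarrow{\sim}\operatorname{Gal}(H_p^{(3)}/K)\simeq\mathbf Z/n\mathbf Z .
\]

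\emph{Part (ii).} Here I use the identifications of \Cref{lem:ring-class-p-ramification}(i), namely $\operatorname{Gal}(H_{3p}/K)\simeq C_p$ and $G_p=C_p/\Delta_p$, together with the decomposition $C_p=C_p^{(3')}\times C_{p,3}$. Restriction gives $C_p\to G_p\to\operatorname{Gal}(H_p^{(3)}/K)$, and I restrict this composite to $C_{p,3}$. The kernel of $C_p\to\operatorname{Gal}(H_p^{(3)}/K)$ is $C_p^{(3')}\Delta_p$, because this is the preimage of the prime-to-$3$ part of $G_p$. Since $\Delta_p\simeq\mu_3$ is a $3$-group, it lies in $C_{p,3}$. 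So the kernel of the restricted map $C_{p,3}\to\operatorname{Gal}(H_p^{(3)}/K)$ is $C_{p,3}\cap C_p^{(3')}\Delta_p=\Delta_p$.

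For surjectivity, compare orders: $|C_{p,3}|/|\Delta_p|=3n/3=n=|\operatorname{Gal}(H_p^{(3)}/K)|$. Composing with the inverse of the isomorphism from part (i) identifies the target with $G=\operatorname{Gal}(\mathscr L/F_q)$, which yields the exact sequence $1\to\Delta_p\to C_{p,3}\to G\to1$.

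\emph{Main obstacle.} The only delicate point is that the three identifications agree: the Artin identification of $C_p$ with $\operatorname{Gal}(H_{3p}/K)$, the quotient by $\Delta_p$ giving $G_p$, and the restriction from $F_qH_{3p}$ to $\mathscr L$. I would check this by working inside $\operatorname{Gal}(F_qH_{3p}/F_q)\simeq C_p$, using the disjointness of $F_q$ from $H_{3p}$ established via the ramification at $p$ in \Cref{lem:conductor-p-field}. Restriction to $\mathscr L$ then factors through $C_p\to G_p$, and the argument above goes through verbatim.
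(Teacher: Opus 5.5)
Your proposal is correct and follows essentially the same route as the paper: ramification at $\mathfrak q$, together with $[F_q:K]=3$ being prime, gives the disjointness in (i), and the cyclic structure $G_p=C_p/\Delta_p$ with $|\Delta_p|=3$ gives the kernel of $C_{p,3}\to G$ in (ii). Your explicit kernel computation $C_{p,3}\cap C_p^{(3')}\Delta_p=\Delta_p$, the order count for surjectivity, and the compatibility check inside $\operatorname{Gal}(F_qH_{3p}/F_q)\simeq C_p$ spell out steps that the paper leaves implicit.
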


\begin{proof}
The extension $F_q/K$ is ramified at
$\mathfrak q$, whereas $H_p^{(3)}/K$, being contained in the ring-class
field of conductor $p$, is unramified there.  Since $[F_q:K]=3$, their
intersection is $K$, proving (i).  The group $C_p$ is cyclic of order
$p+1$, while $G_p=C_p/\Delta_p$ and $|\Delta_p|=3$ by
\Cref{lem:ring-class-p-ramification}(i).  The $3$-primary quotient of
$G_p$ therefore has order $n=3^{v-1}$ and cuts out $H_p^{(3)}$.
Restriction from $C_{p,3}$ to this quotient has kernel $\Delta_p$,
which proves (ii).
\end{proof}

\begin{definition}\label{def:three-primary-trace}
With $C_q^{(3)}$ and $h_q=(q-1)/3$ as in
\eqref{eq:hq-definition}, define the \emph{three-primary trace}
\begin{equation}\label{eq:three-primary-trace}
 \mathcal Z^c
 :=\sum_{C\in C_q^{(3)}}N_{C_p^{(3')}}Q_C
 =[h_q]N_{C_p^{(3')}}Q^c_{q,p}.
\end{equation}
Since the trace by $C_p^{(3')}$ is fixed by that subgroup and
$Q_C\in E_\varpi(F_qH_p)$ by \eqref{eq:hecke-field}, one has
$\mathcal Z^c\in E_\varpi(F_qH_p^{(3)})=E_\varpi(\mathscr L)$.
\end{definition}

The essential point is that tracing away the prime-to-$3$ part does not
lose the boundary constructed in \Cref{sec:hecke-projector}.

\begin{lemma}\label{lem:three-primary-boundary}
Fix the local embedding at $p$ used in the definition of the division
boundary.  Choose first $\lambda$-divisions $R_C$ of the points $Q_C$
in a common unramified splitting field as in
\Cref{def:complete-division-boundary}, and put
\[
 \mathcal R^c
 :=\sum_{C\in C_q^{(3)}}N_{C_p^{(3')}}R_C.
\]
\begin{enumerate}[label=\textup{(\roman*)}]
\item One has $[\lambda]\mathcal R^c=\mathcal Z^c$.
\item $\Delta_p$ acts trivially on $\mathcal Z^c$, and the induced action
of $C_{p,3}/\Delta_p$ is the $G$-action on $\mathcal Z^c$. 
\item One has
\begin{equation}\label{eq:local-boundary}
 N_{C_{p,3}}\mathcal R^c
 =h_q\partial^c_{q,p}\ne O.
\end{equation}
\item The norm $S=N_G\mathcal Z^c$ satisfies
\begin{equation}\label{eq:norm-boundary-identity}
 S\in E_\varpi[3](F_q),\qquad
 [\lambda]S=[-\omega h_q]\partial^c_{q,p}\ne O.
\end{equation}
\end{enumerate}
\end{lemma}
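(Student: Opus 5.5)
I will treat the four parts in order, since each feeds the next.

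For (i), I apply $[\lambda]$ termwise. $[\lambda]$ is a homomorphism commuting with the Galois action, so
\[
[\lambda]\mathcal R^c=\sum_{C\in C_q^{(3)}}N_{C_p^{(3')}}[\lambda]R_C=\sum_{C}N_{C_p^{(3')}}Q_C=\mathcal Z^c.
\]

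For (ii), I use two earlier facts. The $\Delta_p$-triviality on $\varphi^c$-images is \Cref{lem:exact-residual-action}, and it survives the trace because $C_p$ is abelian. The identification of the $C_{p,3}/\Delta_p$-action with the $G$-action is then the exact sequence of \Cref{lem:three-primary-quotient}(ii), applied to the point $\mathcal Z^c\in E_\varpi(\mathscr L)$. One check is needed here: restriction from $\operatorname{Gal}(F_qH_{3p}/F_q)$ is compatible with the labelling by $C_p$. This holds because $F_q\cap H_{3p}=K$, by the argument of \Cref{lem:conductor-p-field}.

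For (iii), I first use \Cref{prop:fine-hecke-orbit}: $u_q(C)$ fixes $x$ for $C\in C_q^{(3)}$. Hence $Q_C=Q^c_{q,p}$, and I may choose $R_C=R$ for a single first division $R$. Then
\[
\mathcal R^c=[h_q]N_{C_p^{(3')}}R,\qquad N_{C_{p,3}}\mathcal R^c=[h_q]N_{C_p}R=h_q\,\partial^c_{q,p,\mathfrak P}.
\]
The value is independent of the choice of divisions. Changing a division by $d\in E_\varpi[\lambda]$ changes the sum by $(p+1)d=0$, exactly as in \Cref{lem:boundary-well-defined}. \Cref{prop:division-boundary} identifies $\partial^c_{q,p,\mathfrak P}$ with $\varepsilon_{q,p}T_E^\sharp\ne O$. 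Since $h_q\equiv1\pmod3$, the product $h_q\partial^c_{q,p}$ is nonzero.

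Part (iv) is the step I expect to be the main obstacle. The issue is to relate $N_G\mathcal Z^c$ to $N_{C_{p,3}}\mathcal R^c$ given that $|C_{p,3}|=3|G|$. Choose coset representatives $\{g\}$ of $C_{p,3}/\Delta_p$ and put $\mathcal S=\sum_g\mathcal R^{c,g}$. By (i) and (ii), $[\lambda]\mathcal S=S$. Since $\Delta_p$ fixes $\mathcal Z^c$, each $\mathcal R^{c,\delta}-\mathcal R^c$ lies in $E_\varpi[\lambda]$ and reduces to zero (as in the discussion before \Cref{def:complete-division-boundary}), so $\Delta_p$ fixes the divisions as well. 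This gives
\[
N_{C_{p,3}}\mathcal R^c=[3]\mathcal S=[\bar\lambda][\lambda]\mathcal S=[\bar\lambda]S,
\]
using $3=\lambda\bar\lambda$. Writing $\bar\lambda=-\omega^2\lambda$, this becomes $[-\omega^2][\lambda]S=h_q\partial^c_{q,p}$, so
\[
[\lambda]S=[-\omega h_q]\partial^c_{q,p}.
\]
This is nonzero by (iii) because $[-\omega]$ is an automorphism. Consequently $[3]S=[\bar\lambda][\lambda]S=O$, since $\bar\lambda$ kills $E_\varpi[\lambda]$. Finally $S$ is a $G$-norm of a point of $\mathscr L$, so it lies in $E_\varpi(F_q)$.

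The delicate points in (iv) are two. First, the $\Delta_p$-invariance of the divisions must be justified at the level of the fixed local embedding, using the injectivity of specialization on $\lambda$-torsion. Second, the unit bookkeeping $\bar\lambda=-\omega^2\lambda$ must be tracked to get the precise factor $[-\omega h_q]$.
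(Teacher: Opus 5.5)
Your proposal is correct and follows the paper's proof essentially step for step: (i) and (iii) go through transitivity of the trace over $C_p=C_p^{(3')}\times C_{p,3}$ together with \Cref{lem:boundary-well-defined,prop:division-boundary}, (ii) through \Cref{lem:exact-residual-action,lem:three-primary-quotient} (the identification of the local $C_p$-action with the global one is really \Cref{lem:ring-class-p-ramification}(iii), total ramification at $p$, rather than only $F_q\cap H_{3p}=K$), and (iv) through a section of $C_{p,3}\to G$. The differences are cosmetic. In (iv) you show that $\Delta_p$ actually fixes $\mathcal R^c$, using injectivity of specialization on $E_\varpi[\lambda]$; the paper needs only $\delta^r\mathcal R^c=\mathcal R^c+[r]d$, whose sum over $r=0,1,2$ is already $[3]\mathcal R^c$. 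You also use $3=\lambda\bar\lambda$ with $\bar\lambda=-\omega^2\lambda$ where the paper uses $\lambda^2=-3\omega$, and both give $[-\omega h_q]\partial^c_{q,p}$.
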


\begin{proof}
Part (i) follows from the definition.  For (ii), the subgroup
$\Delta_p$ acts trivially on the $\varphi^c$-images by
\Cref{lem:exact-residual-action}, while
\Cref{lem:ring-class-p-ramification}(iii) identifies the local
$C_p$-action with the original Artin action after unramified base
change.  Thus the quotient action is the one in
\Cref{lem:three-primary-quotient}(ii).  Since
$C_p=C_p^{(3')}\times C_{p,3}$, transitivity of the trace and
\Cref{lem:boundary-well-defined} give
\[
 N_{C_{p,3}}\mathcal R^c
 =\sum_{C\in C_q^{(3)}}N_{C_p}R_C
 =h_q\partial^c_{q,p}.
\]
Now $h_q\equiv1\pmod3$ by \eqref{eq:hq-definition}, while
$\partial^c_{q,p}\in E_\varpi[\lambda]\setminus\{O\}$ by
\Cref{prop:division-boundary}.  This proves (iii).

For (iv), choose a section $s:G\to C_{p,3}$ and put
$T'=\sum_{g\in G}s(g)\mathcal R^c$.  Then $[\lambda]T'=S$.
If $\delta$ generates $\Delta_p$, the $\Delta_p$-invariance of
$\mathcal Z^c$ gives $\delta\mathcal R^c=\mathcal R^c+d$ with
$d\in E_\varpi[\lambda]$.  Since $\Delta_p$ fixes this kernel,
$\delta^r\mathcal R^c=\mathcal R^c+[r]d$; summing over $r=0,1,2$
gives $N_{\Delta_p}\mathcal R^c=[3]\mathcal R^c$.  Hence
$[3]T'=h_q\partial^c_{q,p}$ by (iii).  Using
$\lambda^2=-3\omega$, we obtain
\[
 [\lambda]S=[\lambda^2]T'=[-\omega h_q]\partial^c_{q,p}.
\]
Since $h_q\equiv1\pmod3$ and $[\omega]$ acts trivially on the
$\lambda$-kernel, this is also $-\partial^c_{q,p}$.
Also $[3]S=[\lambda][3]T'=O$, and $S$ is $G$-fixed by definition.
This proves (iv).
\end{proof}

\subsection{The obstruction to a further \texorpdfstring{$\lambda$}{lambda}-division}
The norm in \eqref{eq:norm-boundary-identity} lies in
$E_\varpi[3]\setminus E_\varpi[\lambda]$.  The divided projector
argument will show that a vanishing cubic component would make this
norm $\lambda$-divisible over $F_q$.  We first record the obstruction
to such a division.

\begin{lemma}\label{lem:three-torsion-obstruction}
In $F_q^\times/F_q^{\times3}$ one has
\begin{equation}\label{eq:three-torsion-non-cube}
 [\varpi\omega]\ne1,\qquad [\varpi\omega^2]\ne1.
\end{equation}
Equivalently, no point of
$E_\varpi[3]\setminus E_\varpi[\lambda]$ is
$\lambda$-divisible in $E_\varpi(F_q)$.
\end{lemma}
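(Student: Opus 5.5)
The plan is to prove the non-cube statement \eqref{eq:three-torsion-non-cube} by a valuation argument at a prime of $F_q$ above $\lambda$. The equivalence with the divisibility statement will then follow from \Cref{lem:exact-descent-kernel}.

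For the equivalence, apply \Cref{lem:exact-descent-kernel} with $a=\varpi$ and $M=F_q$. Every $P\in E_\varpi[3]\setminus E_\varpi[\lambda]$ defined over $F_q$ has $\mathfrak d_\varpi(P)\in\{[\varpi\omega],[\varpi\omega^2]\}$. The kernel of $\mathfrak d_\varpi$ is exactly $[\lambda]E_\varpi(F_q)$. Hence such a $P$ is $\lambda$-divisible over $F_q$ precisely when the corresponding class is trivial. If $P$ is not $F_q$-rational the divisibility question is vacuous, so the second statement reduces to \eqref{eq:three-torsion-non-cube}.

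For the non-cube statement, note first that $\varpi=t^3$ is a cube in $F_q$. Thus $[\varpi\omega]=[\omega]$ and $[\varpi\omega^2]=[\omega]^2$, and it suffices to show that $\omega$ is not a cube in $F_q$. Suppose instead that $\omega=s^3$ with $s\in F_q$. Then $K(\zeta_9)\subset F_q$, and both fields have degree three over $K$, so $F_q=K(\zeta_9)$. This is impossible on ramification grounds. By \Cref{lem:cubic-field}(i), $F_q/K$ is ramified at $\mathfrak q$, whereas $K(\zeta_9)/K$ is unramified away from $\lambda$. The contradiction shows $[\omega]_{F_q}\ne1$, which gives both inequalities in \eqref{eq:three-torsion-non-cube}.

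The only step needing care is confirming that $F_q=K(\sqrt[3]{\varpi})$ is genuinely ramified at $\mathfrak q$. This is tame Kummer ramification, because $\operatorname{ord}_{\mathfrak q}(\varpi)=1$ is prime to $3$, and it is already recorded in \Cref{lem:cubic-field}(i). As a fallback, Kummer theory also settles the point directly. If $\omega\in F_q^{\times3}$, then $\omega\in\langle\varpi\rangle K^{\times3}$, so $\omega\varpi^{k}$ would be a cube in $K$ for some $k$. Taking $\mathfrak q$-adic valuations forces $k\equiv0\pmod3$, so $\omega$ itself would be a cube in $K$. That is false, since $K$ contains no primitive ninth root of unity.
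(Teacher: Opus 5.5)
Your proof is correct and matches the paper's argument. You reduce to $[\omega]_{F_q}\ne1$ using $[\varpi]_{F_q}=1$, exclude $F_q=K(\zeta_9)$ because $F_q/K$ is ramified at $\mathfrak q$ while $K(\zeta_9)/K$ is not, and read off the divisibility statement from \Cref{lem:exact-descent-kernel}; your Kummer-theoretic fallback is also valid, though your opening sentence promises a valuation argument above $\lambda$ when the argument you actually give (correctly) works at $\mathfrak q$.
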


\begin{proof}
Since $F_q=K(\sqrt[3]{\varpi})$, one has $[\varpi]=1$ in
$F_q^\times/F_q^{\times3}$.  If either class in
\eqref{eq:three-torsion-non-cube} were trivial, then $\omega$ would be a
cube in $F_q$, so $K(\zeta_9)\subseteq F_q$.  Both extensions have
degree three over $K$, but $F_q/K$ is ramified at $\mathfrak q$ whereas
$K(\zeta_9)/K$ is unramified there, a contradiction.

Writing $t^3=\varpi$, the points of
$E_\varpi[3]\setminus E_\varpi[\lambda]$ have co-ordinates
\[
 x=-\zeta t^2,\qquad
 y=\pm\varpi\sqrt{-3}/2,
 \qquad \zeta\in\mu_3.
\]
Their descent classes are $[\varpi\omega]$ and
$[\varpi\omega^2]$ by \Cref{lem:exact-descent-kernel}; hence neither is
$\lambda$-divisible over $F_q$.
\end{proof}

\subsection{An integral norm--projector comparison}
Let $G=\langle X\rangle$ be cyclic of order $n_0$ with $3\mid n_0$.
\begin{definition}
For $j=1,2$ define
\begin{equation}\label{eq:integral-projector-operators}
 N_G=\sum_{k=0}^{n_0-1}X^k,
 \qquad
 \Pi_{j,n_0}=\sum_{k=0}^{n_0-1}\omega^{-jk}X^k,
 \qquad
 \mathscr D_{j,n_0}=\frac{N_G-\Pi_{j,n_0}}{\lambda}.
\end{equation}
\end{definition}
The point is that the last operator is integral: the trivial and cubic
characters become congruent modulo the prime $(\lambda)$ above $3$.

\begin{lemma}\label{lem:integral-projector-identities}
For $j=1,2$ one has $\mathscr D_{j,n_0}\in\mathcal O_K[G]$ and
\begin{equation}\label{eq:torsion-obstruction-group-ring}
 \lambda\mathscr D_{j,n_0}=N_G-\Pi_{j,n_0},\qquad
 (X-1)\mathscr D_{j,n_0}
 =-\frac{\omega^j-1}{\lambda}\Pi_{j,n_0}.
\end{equation}
\end{lemma}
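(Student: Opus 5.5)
The plan is to prove both assertions by direct computation in the group ring $\mathcal O_K[G]$, using only that $X^{n_0}=1$ and that $3\mid n_0$.

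\emph{Integrality.} Expanding the definitions gives
\[
 N_G-\Pi_{j,n_0}=\sum_{k=0}^{n_0-1}\bigl(1-\omega^{-jk}\bigr)X^k .
\]
For every integer $m$, the element $1-\omega^m$ is divisible by $\lambda=1-\omega$ in $\mathcal O_K$. This holds because $1-\omega^m$ is $0$, $\lambda$, or $1-\omega^2=\lambda(1+\omega)$ according to $m\bmod3$. Hence every coefficient of $N_G-\Pi_{j,n_0}$ lies in $\lambda\mathcal O_K$, and so $\mathscr D_{j,n_0}\in\mathcal O_K[G]$. The first identity in \eqref{eq:torsion-obstruction-group-ring} is then just the definition.

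\emph{The second identity.} First, $(X-1)N_G=X^{n_0}-1=0$. Next I reindex the sum for $X\Pi_{j,n_0}$ cyclically:
\[
 X\Pi_{j,n_0}=\sum_{k}\omega^{-jk}X^{k+1}=\omega^{j}\sum_{k}\omega^{-j(k+1)}X^{k+1}=\omega^j\Pi_{j,n_0}.
\]
The wrap-around term is consistent because $\omega^{-jn_0}=1$, which is exactly where $3\mid n_0$ is used. It follows that
\[
 (X-1)\Pi_{j,n_0}=(\omega^j-1)\Pi_{j,n_0},
\]
and therefore
\[
 \lambda\,(X-1)\mathscr D_{j,n_0}=(X-1)(N_G-\Pi_{j,n_0})=-(\omega^j-1)\Pi_{j,n_0}.
\]

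\emph{Division by $\lambda$.} The quotient $(\omega^j-1)/\lambda$ lies in $\mathcal O_K$: it equals $-1$ for $j=1$ and $-(1+\omega)$ for $j=2$. Moreover, $\mathcal O_K[G]$ is a free $\mathcal O_K$-module, so multiplication by $\lambda$ on it is injective. We may therefore cancel $\lambda$ from both sides of the last display, which yields the stated formula $(X-1)\mathscr D_{j,n_0}=-\frac{\omega^j-1}{\lambda}\Pi_{j,n_0}$.

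The only point requiring care is the cyclic reindexing, which depends on $3\mid n_0$; the remaining steps are routine.
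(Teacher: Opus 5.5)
Your proof is correct and follows the same route as the paper. Integrality comes from $\lambda\mid 1-\omega^{-jk}$, the first identity is the definition, and the second follows by multiplying by $X-1$, using $\omega^{-jn_0}=1$ (from $3\mid n_0$) for the cyclic reindexing of $\Pi_{j,n_0}$; you simply write out the details the paper leaves implicit.
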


\begin{proof}
Since $\omega\equiv1\pmod\lambda$, every coefficient
$1-\omega^{-jk}$ is divisible by $\lambda$, proving integrality.  The
first identity is the definition.  For the second, multiply by $X-1$
and use $\omega^{-jn_0}=1$, which follows from $3\mid n_0$.
\end{proof}

We now formulate an abstract criterion which will be used for the conductor-$p$
orbit: the non-zero unweighted boundary implies that the cubic projections
are non-zero.

\begin{theorem}[Integral boundary criterion]
\label{thm:cubic-projection-criterion}
Let $F_0\supset K$, let $a\in K^\times$, and let $L/F_0$ be cyclic with
$G=\langle X\rangle$ of order $n_0$, where $3\mid n_0$.  Suppose that a
finite cyclic group $\widetilde G$ fits into
\[
1\longrightarrow\Delta\simeq \mathbf Z/3\mathbf Z
\longrightarrow\widetilde G\longrightarrow G\longrightarrow1
\]
and acts $\mathcal O_K$-linearly on $E_a(\mathscr H)$ for a field
$\mathscr H\supset L$.  Assume that its action on $E_a(L)$ induces
the natural Galois action of $G=\operatorname{Gal}(L/F_0)$ and that
it fixes $E_a[\lambda]$ pointwise.  Let
\[
 R\in E_a(\mathscr H),\qquad Z=[\lambda]R\in E_a(L),\qquad
 \partial=N_{\widetilde G}R\in E_a[\lambda]\setminus\{O\}.
\]
If $[a\omega]$ and $[a\omega^2]$ are non-trivial in
$F_0^\times/F_0^{\times3}$, then
\[
 \Pi_{j,n_0}Z\ne O\qquad(j=1,2).
\]
\end{theorem}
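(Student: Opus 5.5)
I argue by contradiction: suppose $\Pi_{j,n_0}Z=O$ for some $j\in\{1,2\}$. The plan has three steps. First, show that the norm $S=N_GZ$ is a non-zero point of $E_a[3]\setminus E_a[\lambda]$ that is $G$-fixed. Second, use the integral operator $\mathscr D_{j,n_0}$ of \Cref{lem:integral-projector-identities} to produce a $\lambda$-division of $S$ over $F_0$. Third, contradict \Cref{lem:exact-descent-kernel}, which says such divisions are obstructed by the classes $[a\omega],[a\omega^2]$.

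For the first step, I repeat the argument of \Cref{lem:three-primary-boundary}(iv). Let $\delta$ generate $\Delta$. Since $[\lambda](\delta R-R)=\delta Z-Z=O$ (as $\Delta$ acts trivially on $E_a(L)$), we have $\delta R=R+d$ with $d\in E_a[\lambda]$. Because $\widetilde G$ fixes $E_a[\lambda]$ pointwise, $N_\Delta R=[3]R$. Choose a section $s:G\to\widetilde G$ and set $T'=\sum_{g}s(g)R$. Then $\partial=N_{\widetilde G}R=[3]T'$ and $[\lambda]T'=N_GZ=S$. Using $\lambda^2=-3\omega$, this gives $[\lambda]S=[-\omega]\partial=-\partial\ne O$. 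Hence $S\notin E_a[\lambda]$, and $[3]S=O$. The point $S$ is $G$-fixed, so it lies in $E_a(F_0)$, since the $\widetilde G$-action on $E_a(L)$ is the Galois action of $\operatorname{Gal}(L/F_0)$.

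For the second step, put $D=\mathscr D_{j,n_0}Z\in E_a(L)$. This makes sense because $\mathscr D_{j,n_0}\in\mathcal O_K[G]$ and the action is $\mathcal O_K$-linear. By \eqref{eq:torsion-obstruction-group-ring}, $(X-1)D=-\tfrac{\omega^j-1}{\lambda}\Pi_{j,n_0}Z=O$, so $D$ is $G$-fixed and lies in $E_a(F_0)$. The same identities give $[\lambda]D=N_GZ-\Pi_{j,n_0}Z=S$. So $S$ is $\lambda$-divisible in $E_a(F_0)$.

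For the third step, \Cref{lem:exact-descent-kernel} gives $\mathfrak d_a(S)=1$ in $F_0^\times/F_0^{\times3}$, because $S\in[\lambda]E_a(F_0)$. On the other hand, since $S\in E_a[3]\setminus E_a[\lambda]$, the same lemma computes $\mathfrak d_a(S)\in\{[a\omega],[a\omega^2]\}$, and both are non-trivial by hypothesis. This contradiction shows $\Pi_{j,n_0}Z\ne O$.

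The step needing the most care is the first one, specifically checking that the hypotheses of the theorem really give $N_\Delta R=[3]R$ and that $S$ descends to $F_0$. This rests on the assumptions that $\Delta$ acts trivially on $E_a(L)$ and that the action fixes the $\lambda$-kernel. The remaining steps are formal group-ring manipulations.
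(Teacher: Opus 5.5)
Your proposal is correct and is essentially the paper's proof. It uses the same section sum $T'=\sum_g s(g)R$, the identity $N_\Delta R=[3]R$, the divided operator $\mathscr D_{j,n_0}$ to produce a $G$-fixed $\lambda$-division of $S=N_GZ$ over $F_0$, and the descent obstruction of \Cref{lem:exact-descent-kernel}. The only difference is cosmetic: you show $[\lambda]S=-\partial\ne O$ directly from $T'$, unconditionally (as in \Cref{lem:three-primary-boundary}(iv)), whereas the paper first forms $T_j=\mathscr D_{j,n_0}Z$ and deduces $[3]T_j=[3]T'=\partial$ from $T_j-T'\in E_a[\lambda]$.
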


\begin{proof}
Fix $j$ and suppose that $\Pi_{j,n_0}Z=O$.  Set
$T_j=\mathscr D_{j,n_0}Z$.  By
\Cref{lem:integral-projector-identities},
\[
 (X-1)T_j=O,
 \qquad
 [\lambda]T_j=N_GZ=:S.
\]
Thus $T_j\in E_a(F_0)$.

Choose a set-theoretic section $s:G\to\widetilde G$ and put
$T'=\sum_{g\in G}s(g)R$.  Then $[\lambda]T'=S$, so
$T_j-T'\in E_a[\lambda]$ and $[3]T_j=[3]T'$.  If $\delta$ generates
$\Delta$, then $\delta R=R+t$ for some $t\in E_a[\lambda]$, because
$\Delta$ acts trivially on $Z$.  Since $\Delta$ fixes
$E_a[\lambda]$ pointwise, one has
$\delta^rR=R+[r]t$ for $r=0,1,2$, and therefore
\[
 N_\Delta R=[3]R.
\]
The elements $s(g)\delta^r$ run through $\widetilde G$, and therefore
\begin{equation}\label{eq:torsion-obstruction-boundary}
 [3]T_j=N_{\widetilde G}R=\partial\ne O.
\end{equation}

It follows that $[3]S=[\lambda][3]T_j=O$.  Moreover
$S\notin E_a[\lambda]$: otherwise $[\lambda]^2T_j=O$, and the identity
$3=-\omega^2\lambda^2$ would contradict
\eqref{eq:torsion-obstruction-boundary}.  Hence
\[
 S\in E_a[3]\setminus E_a[\lambda].
\]
By \Cref{lem:exact-descent-kernel}, the descent class of $S$ is
$[a\omega]$ or $[a\omega^2]$, and is non-trivial by hypothesis.  This
contradicts $S=[\lambda]T_j$ with $T_j\in E_a(F_0)$.
\end{proof}
\begin{remark}
For the smallest quotient, $n_0=3$, one has
\[
 \mathscr D_{1,3}=-\omega^2X+X^2,
 \qquad
 \mathscr D_{2,3}=X-\omega^2X^2.
\]
For general $n_0$, the coefficient of $X^k$ in
$\mathscr D_{j,n_0}$ is $(1-\omega^{-jk})/\lambda$, which depends only
on $k$ modulo $3$.  Thus the same three coefficients recur as $k$
ranges from $0$ to $n_0-1$.  The criterion therefore depends only on
the congruence modulo $\lambda$ of the norm and the two cubic character
sums, not on the depth of the $3$-primary quotient.  The same boundary
supplies the non-vanishing for both components, and the operators
$\mathscr D_{j,n_0}$ separate the two characters.
\end{remark}
\subsection{The two cubic components}
We now apply the preceding criterion to the conductor-$p$ orbit.
Regard $\vartheta_{p,j}$ as a character of the $3$-primary quotient
$G=\operatorname{Gal}(\mathscr L/F_q)$, and choose a generator $X$ of
$G$ so that $\vartheta_{p,1}(X)=\omega$; see
\eqref{eq:ringclass-kummer-labels}.

\begin{definition}\label{def:cubic-points}
For $j=1,2$, define the \emph{cubic component}
\begin{equation}\label{eq:cubic-projector}
 P_{q,p,j}
 :=\sum_{C\in C_q^{(3)}}\sum_{\sigma\in G_p}
 [\vartheta_{p,j}(\sigma)^{-1}]Q_C^\sigma.
\end{equation}
\end{definition}

Since $\vartheta_{p,j}$ is trivial on the prime-to-$3$ part of $G_p$
and its descent to $G$ sends $X$ to $\omega^j$,
we use the operators of \eqref{eq:integral-projector-operators} with
$n_0=n$.  Then
\Cref{def:three-primary-trace} gives
\begin{equation}\label{eq:cubic-projector-expression}
 P_{q,p,j}=\Pi_{j,n}\mathcal Z^c.
\end{equation}

We also need a standard local consequence of supersingular reduction.

\begin{lemma}\label{lem:torsion-reduction-kernel}
Let $\mathfrak P$ be a prime of $\mathscr L$ above $p$.  A torsion point
of $E_\varpi(\mathscr L)$ which specializes to $O$ at $\mathfrak P$ is
$O$.
\end{lemma}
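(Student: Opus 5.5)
The proof will show that the kernel of reduction at $\mathfrak P$ on $E_\varpi(\mathscr L_{\mathfrak P})$ is torsion-free. We compare the ramification index of $\mathscr L_{\mathfrak P}$ over $\mathbf Q_p$ with the valuations of torsion points in the height-two formal group of $E_\varpi$ at $p$.

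\emph{Prime-to-$p$ torsion.} $E_\varpi$ has good reduction at $p$, since $p\nmid 3q$. Hence specialization is injective on torsion of order prime to $p$. So any non-zero torsion point in the reduction kernel can be multiplied by its prime-to-$p$ part to give a non-zero point of exact order $p$ in $\widehat E_\varpi(\mathfrak m_{\mathfrak P})$. It therefore suffices to rule out points of order $p$ in the formal group over the completion $\mathscr L_{\mathfrak P}$.

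\emph{Ramification bound.} Let $e$ be the absolute ramification index of $\mathscr L_{\mathfrak P}$. The extension $F_q/K$ is unramified at $p$ (\Cref{lem:cubic-field}(i)), and $K/\mathbf Q$ is unramified at the inert prime $p$. The extension $H_p^{(3)}/K$ is totally ramified at $\mathfrak p$ of degree $n=3^{v-1}$ (\Cref{lem:ring-class-p-ramification}(ii) together with \Cref{lem:three-primary-quotient}). Thus $e=n$, which divides $p+1$. In particular $e\le p+1<p^2-1$.

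\emph{Formal group.} The reduction of $E_\varpi$ at $p$ is supersingular of height two, because $p$ is inert in the CM field $K$. The multiplication-by-$p$ series therefore satisfies $[p](T)=pT\,u(T)+(\text{unit})\,T^{p^2}+\cdots$. By the Newton polygon, every non-zero $p$-torsion point of $\widehat E_\varpi$ has valuation exactly $1/(p^2-1)$ in the normalization $v(p)=1$.

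Sharper, one can use the CM structure. Since $p$ is inert, $[p]=[\pi]$ with $\pi=p$ a uniformizer of $\mathcal O_{K,p}$, a Lubin--Tate formal $\mathcal O_{K,p}$-module. Its $p$-torsion generates a totally ramified extension of $K_p$ of degree $p^2-1$. Either way, a non-zero $p$-torsion point defined over $\mathscr L_{\mathfrak P}$ forces $(p^2-1)\mid e$. This contradicts $e\le p+1$, since $p+1<p^2-1$ for $p\ge3$.

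The alternative, more classical route is the general fact that the reduction kernel has no $p$-torsion when $e<p-1$. That bound is not available here, since $e$ may be as large as a $3$-power dividing $p+1$. The main obstacle is therefore getting the sharper height-two bound $e\ge p^2-1$ rather than $e\ge p-1$. I would first try the Newton polygon argument for $[p](T)$, using only that the reduction is supersingular, and then invoke the Lubin--Tate description only if needed.

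Combining the three steps gives the contradiction, so a torsion point of $E_\varpi(\mathscr L)$ specializing to $O$ at $\mathfrak P$ must be $O$.
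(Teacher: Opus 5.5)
Your proposal is correct and follows the paper's proof essentially step for step: injectivity of specialization on prime-to-$p$ torsion, the computation $e(\mathscr L_{\mathfrak P}/\mathbf Q_p)=n$ (because $F_q/K$ is unramified and $H_p^{(3)}/K$ is totally ramified at $p$), and the height-two Newton polygon forcing $(p^2-1)\mid e$, which contradicts $n<p^2-1$. One small slip: multiplying by the prime-to-$p$ part only yields a point of $p$-power order, so you need a further power of $p$ to reach exact order $p$; your Lubin--Tate remark is an optional alternative that the paper does not use.
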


\begin{proof}
Prime-to-$p$ torsion injects under good reduction.  It therefore
suffices to exclude non-zero $p$-torsion.  The completion of
$\mathscr L$ at $\mathfrak P$ has ramification degree $n$ over the
unramified quadratic field $K_p$: $F_q/K$ is unramified at $p$, while
$H_p^{(3)}/K$ is totally ramified there of degree $n$ by
\Cref{lem:ring-class-p-ramification}. Since $K_p/\mathbf Q_p$ is unramified, multiplicativity of
ramification indices gives
\[
 e(\mathscr L_{\mathfrak P}/\mathbf Q_p)
 =e(\mathscr L_{\mathfrak P}/K_p)\,e(K_p/\mathbf Q_p)
 =n.
\]

The formal group of $E_\varpi/K_p$ has height two.  Since $K_p$ is
unramified over $\mathbf Q_p$, the coefficients of $[p](T)$ below
degree $p^2$ are divisible by $p$, and the coefficient of $T^{p^2}$
is a unit.  The Newton polygon of $[p](T)/T$ therefore gives valuation
$1/(p^2-1)$ for a non-zero $p$-torsion parameter, with $v_p(p)=1$.
Its field of definition must consequently have ramification degree
divisible by $p^2-1$.  This is the standard height-two formal
group calculation; see, for example,
\cite[Chap.~IV, \S\S6--7]{SilvermanAEC}.  But
\[
 n=3^{v-1}\le\frac{p+1}{3}<p^2-1,
\]
so such a point cannot be defined over $\mathscr L$.  Injectivity of specialization
on prime-to-$p$ torsion is standard as well; cf.
\cite[Chap.~VII, \S3]{SilvermanAEC}.
\end{proof}

We can now prove the main result of the section.

\begin{theorem}[Non-torsion of the cubic components]\label{thm:cubic-points}
Assume \eqref{eq:intro-ray-condition}.  For $j=1,2$, the point
$P_{q,p,j}$ has infinite order.
\end{theorem}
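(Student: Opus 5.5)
The plan has two steps: first show that $P_{q,p,j}$ is non-zero using \Cref{thm:cubic-projection-criterion}, then rule out non-zero torsion by reduction at $p$ and \Cref{lem:torsion-reduction-kernel}.

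\emph{Non-vanishing.} Apply the criterion with
\[
F_0=F_q,\quad a=\varpi,\quad L=\mathscr L,\quad G=\operatorname{Gal}(\mathscr L/F_q),\quad n_0=n,\quad \widetilde G=C_{p,3},\quad \Delta=\Delta_p,
\]
and with $\mathscr H$ the unramified-extended local field $\mathscr H_p$ of \Cref{sec:hecke-projector}. The hypotheses are checked as follows.
\begin{itemize}
\item The exact sequence is \Cref{lem:three-primary-quotient}(ii). The group $C_{p,3}$ is cyclic as a subgroup of the cyclic group $C_p$, and $3\mid n$ because $v\ge2$.
\item The compatibility of the $C_{p,3}$-action with the Galois action of $G$ on $E_\varpi(\mathscr L)$ is \Cref{lem:three-primary-boundary}(ii).
\item $C_{p,3}$ fixes $E_\varpi[\lambda]$ pointwise, since that kernel is $K$-rational.
\item Take $R=\mathcal R^c$. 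Then $Z=[\lambda]R=\mathcal Z^c$ by \Cref{lem:three-primary-boundary}(i), and $\partial=N_{C_{p,3}}\mathcal R^c=h_q\partial^c_{q,p}\ne O$ by \eqref{eq:local-boundary}.
\item The non-triviality of $[\varpi\omega]$ and $[\varpi\omega^2]$ in $F_q^\times/F_q^{\times3}$ is \Cref{lem:three-torsion-obstruction}.
\end{itemize}
The criterion then gives $\Pi_{j,n}\mathcal Z^c\ne O$, which is $P_{q,p,j}\ne O$ by \eqref{eq:cubic-projector-expression}. One point needs care: the criterion is stated for a global $L$ inside a possibly local $\mathscr H$. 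I would embed $\mathscr L$ in its completion at $\mathfrak P$. The conclusion $T_j\in E_a(F_0)$ uses only the global $G$-action on $Z$ and $T_j$, and the final descent contradiction takes place over the global $F_q$, so the argument goes through.

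\emph{Non-torsion.} Suppose $P_{q,p,j}$ is torsion. It lies in $E_\varpi(\mathscr L)$, so by \Cref{lem:torsion-reduction-kernel} it suffices to show that it specializes to $O$ at a prime $\mathfrak P$ of $\mathscr L$ above $p$. By \Cref{lem:ring-class-p-ramification} the extension $H_p^{(3)}/K$ is totally ramified at $p$, so $G$ acts trivially on the residue field at $\mathfrak P$. Hence every $\sigma\mathcal Z^c$ has the same reduction $\overline{\mathcal Z^c}$, and
\[
\operatorname{sp}_{\mathfrak P}(P_{q,p,j})=\Bigl[\sum_{k=0}^{n-1}\omega^{-jk}\Bigr]\overline{\mathcal Z^c}=O,
\]
because $3\mid n$ and $j\not\equiv0\pmod 3$ make the inner sum vanish. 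The lemma then forces $P_{q,p,j}=O$, contradicting the first step. So $P_{q,p,j}$ has infinite order.

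The main obstacle is the first step, specifically the global/local compatibility needed to feed the locally constructed division $\mathcal R^c$ into \Cref{thm:cubic-projection-criterion}. The relevant identity is $[\lambda]T'=S$ with $T'$ local. The obstruction used in the criterion is only that $S=N_G\mathcal Z^c$ lies in $E_\varpi[3]\setminus E_\varpi[\lambda]$, where $S$ is global, together with the global point $T_j=\mathscr D_{j,n}\mathcal Z^c\in E_\varpi(F_q)$. So I would extract these global facts first, using \Cref{lem:three-primary-boundary}(iv), and then run the descent contradiction entirely over $F_q$.
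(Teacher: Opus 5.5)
Your proposal is correct and follows the paper's proof essentially step for step. You use the same instantiation of \Cref{thm:cubic-projection-criterion} (with $R=\mathcal R^c$, $Z=\mathcal Z^c$, $\widetilde G=C_{p,3}$, and $\mathscr L$ placed inside $\mathscr H_p$ via the fixed $p$-adic embeddings), followed by specialization at $p$ and \Cref{lem:torsion-reduction-kernel}. The one cosmetic difference is in the reduction step: you get the common reduction of the $G$-conjugates of $\mathcal Z^c$ from the total ramification of $\mathscr L/F_q$ at $p$, whereas the paper uses the common reduction of the conductor-$p$ conjugates of each $Q_C$ (\Cref{prop:fine-hecke-orbit}) and the vanishing of the character sum over $G_p$. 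Both routes work.
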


\begin{proof}
We first verify the hypotheses of
\Cref{thm:cubic-projection-criterion}.  Take
\[
 (a,F_0,L,\mathscr H)
 =(\varpi,F_q,\mathscr L,\mathscr H_p),
 \qquad
 (\widetilde G,G,\Delta)
 =(C_{p,3},\operatorname{Gal}(\mathscr L/F_q),\Delta_p),
\]
and $R=\mathcal R^c$, $Z=\mathcal Z^c$.  By
the compatible $p$-adic embeddings fixed in
\Cref{sec:hecke-projector}, the global field $\mathscr L$ is identified
with a subfield of the local field $\mathscr H_p$.  By
\Cref{lem:three-primary-quotient}, $G$ is cyclic of order
$n=3^{v-1}$ and the displayed exact sequence in the criterion is
satisfied.  Since $p\equiv8\pmod9$, one has $v\ge2$, hence $3\mid n$.
The group $C_{p,3}$ fixes $K$, so its Galois action commutes with the
$\mathcal O_K$-action; its action on $E_\varpi(\mathscr L)$ factors
through $G$ by \Cref{lem:three-primary-quotient}(ii), and it fixes
$E_\varpi[\lambda]$ pointwise because this kernel is $K$-rational.
Finally, \Cref{lem:three-primary-boundary} gives
\[
 N_{C_{p,3}}\mathcal R^c
 =h_q\partial^c_{q,p}\ne O,
\]
and \Cref{lem:three-torsion-obstruction} gives the two required Kummer
obstructions over $F_q$.  Therefore
\Cref{thm:cubic-projection-criterion} and
\eqref{eq:cubic-projector-expression} imply
\[
 P_{q,p,j}\ne O\qquad(j=1,2).
\]

It remains to exclude torsion.  For each fixed $C$, all conductor-$p$
conjugates of $Q_C$ have the same reduction by
\Cref{prop:fine-hecke-orbit}.  Since $\vartheta_{p,j}$ is non-trivial
on $G_p$,
\[
 \sum_{\sigma\in G_p}\vartheta_{p,j}(\sigma)^{-1}=0,
\]
so \eqref{eq:cubic-projector} specializes to $O$.  Since
$P_{q,p,j}\ne O$, \Cref{lem:torsion-reduction-kernel} shows that it
has infinite order.
\end{proof}

\part{The auxiliary Heegner point and Gross--Zagier}

\section{The auxiliary Heegner point}\label{sec:rankin-gz}
This section identifies the two geometric cubic components constructed
in \Cref{thm:cubic-points} with the corresponding components of the
auxiliary Heegner points in the theory of Yuan--Zhang--Zhang.  We first
realize the two normalized modular
parametrizations as projections of an $A_q$-valued morphism in the
isogeny category (cf.~\Cref{lem:automorphic-morphism}).  We then express the
Heegner functional as a finite sum and compute its local weights at
$p$.  The resulting comparison, \Cref{prop:heegner-point-comparison},
identifies each projected Heegner point with a non-zero rational
multiple of the corresponding geometric point.  Together with
\Cref{thm:cubic-points}, this proves \Cref{thm:auxiliary-heegner} and
the analogous non-vanishing for $\mathcal P_2(\Phi)$.

We retain the notation of Parts~I--II.  Thus $p\equiv8\pmod9$, the
prime $q=\varpi\bar\varpi$ satisfies
\eqref{eq:intro-ray-condition}, $P_0=[\tau_*,1]$ is the CM point chosen
in \Cref{sec:cm-kummer}, and
$\chi_{q,j}=\nu_{p^j}\psi_q^{-1}$ for $j=1,2$.

Write $J_\Gamma=J(X_\Gamma)$.  All abelian varieties and morphisms in
this section are considered up
to $\mathbf Q$-isogeny.  Thus $\operatorname{Hom}^0$ means
$\operatorname{Hom}\otimes_{\mathbf Z}\mathbf Q$ and
$A(F)_{\mathbf Q}=A(F)\otimes_{\mathbf Z}\mathbf Q$.  We choose a
representative $A_q$ of the $\mathbf Q$-isogeny class cut out by the
rational Hecke orbit $\{g_q,g_q^c\}$.  The map
$\operatorname{ev}_\varpi$, defined below in \Cref{lem:coefficient-action},
is the projection to the $E_\varpi$-factor after extending the
coefficient action through the embedding corresponding to $g_q^c$.
This rationalization concerns only the automorphic comparison in this
part.  The maps $\varphi$ and $\varphi^c$ remain the fixed, integrally
normalized maps of \Cref{sec:cm-data}.
 We choose the rational
projections from $A_q$ so that their composites are precisely those
maps in $\operatorname{Hom}^0$.

\subsection{The modular morphism and coefficient action}
Let
\[
 M_q=\operatorname{End}_{\mathbf Q}^0(A_q).
\]
Let $L_q$ be the Hecke field generated by the Fourier coefficients of
$g_q$.  Since $g_q$ is the theta series of the $K$-valued CM character
$\psi_q$, one has $L_q\subset K$.  The coefficient conjugation maps $g_q$
to $g_q^c$, and these forms are distinct: by
\Cref{lem:modular-functions}(ii), the corresponding differentials have
the distinct nebentypus characters $\xi_q$ and $\bar\xi_q$.  Hence
$L_q=K$, and $A_q$ has dimension $[L_q:\mathbf Q]=2$.

The Hecke action gives $L_q\hookrightarrow M_q$.  Eichler--Shimura
theory makes $A_q$ a simple abelian variety of strict
$\mathrm{GL}_2$-type: $M_q$ is a field and $\operatorname{Lie}(A_q)$ is
free of rank one as an $M_q$-module
(cf.~\cite[\S1B]{CST14}).  Thus $[M_q:\mathbf Q]=\dim A_q=2$, and the
preceding embedding gives $M_q=L_q\simeq K$.  We fix the two embeddings
\[
 \iota_g,\iota_c:M_q\hookrightarrow K,
 \qquad \iota_c=\bar\iota_g,
\]
corresponding to $g_q$ and $g_q^c$.
Let $\xi^{\rm Hdg}\in\operatorname{Pic}(X_\Gamma)_{\mathbf Q}$ denote
the normalized degree-one Hodge class of \cite[\S1B]{CST14}, and let
$\pi_{g_q}^{\,\infty}$ denote the finite part of the cuspidal
automorphic representation generated by $g_q$.  Right translation
on the modular-curve tower acts on morphisms by
\[
 (\mathsf R(h)f)([z,g])=f([z,gh]).
\]

The two modular parametrizations of \Cref{sec:cm-data} fit into the
same rational Hecke quotient.

\begin{lemma}\label{lem:automorphic-morphism}
There exists 
\[
 \Phi_0\in
 \operatorname{Hom}_{\mathbf Q,\xi^{\rm Hdg}}^0(X_\Gamma,A_q)
\]
and maps
\[
 \operatorname{pr}_{\bar\varpi}\in
 \operatorname{Hom}_K^0(A_q,E_{\bar\varpi}),\qquad
 \operatorname{pr}_{\varpi}\in
 \operatorname{Hom}_K^0(A_q,E_\varpi)
\]
such that, in rationalized morphism spaces,
\[
 \operatorname{pr}_{\bar\varpi}\Phi_0=\varphi,
 \qquad
 \operatorname{pr}_{\varpi}\Phi_0=\varphi^c.
\]
After clearing denominators,
$(\operatorname{pr}_{\bar\varpi},\operatorname{pr}_{\varpi})$
is a $K$-isogeny from $A_{q,K}$ to
$E_{\bar\varpi}\times E_\varpi$; the two factors carry the
$M_q$-action through $\iota_g$ and $\iota_c$, respectively.

At varying compact open levels $\mathcal V$, put
\[
\pi_{A_q}
=\varinjlim_{\mathcal V}\operatorname{Hom}_{\mathbf Q}^0(J_{\mathcal V},A_q)
=\varinjlim_{\mathcal V}
 \operatorname{Hom}_{\mathbf Q,\xi_{\mathcal V}^{\rm Hdg}}^0
 (X_{\mathcal V},A_q).
\]
Then
\begin{equation}\label{eq:automorphic-morphism-space}
 \pi_{A_q}\otimes_{M_q,\iota_g}\mathbf C
 \simeq \pi_{g_q}^{\,\infty}.
\end{equation}
In particular $\Phi=\mathsf R(\mathbf h_p^{-1})\Phi_0$ belongs to this
representation.
\end{lemma}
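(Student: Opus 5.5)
The plan is to build $\Phi_0$ from the Jacobian and then read off the projections through the coefficient decomposition over $K$.

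\textbf{Construction of $\Phi_0$.} Let $J_\Gamma\to A_q$ be the optimal (Eichler--Shimura) quotient cut out by the Hecke ideal annihilating the rational orbit $\{g_q,g_q^c\}$. Compose it with the Albanese embedding $X_\Gamma\to J_\Gamma$ normalized by the Hodge class $\xi^{\rm Hdg}$, namely $P\mapsto[P]-\xi^{\rm Hdg}$. This gives $\Phi_0\in\operatorname{Hom}^0_{\mathbf Q,\xi^{\rm Hdg}}(X_\Gamma,A_q)$.

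\textbf{Step 1: the projections.} After base change to $K$, the action of $M_q\simeq K$ splits $A_{q,K}$ up to isogeny into the two eigen-quotients on which $M_q$ acts through $\iota_g$ and $\iota_c$. These are the quotients by the kernels of $\iota_g(m)\otimes1-1\otimes m$, and similarly for $\iota_c$.

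Each eigen-quotient is a CM elliptic curve over $K$. Its $L$-function is that of the Hecke character $\psi_q$, respectively its conjugate. So by Faltings' isogeny theorem (or directly from the CM theory of $\theta(\nu_\varpi)$), the two quotients are $K$-isogenous to $E_{\bar\varpi}$ and $E_\varpi$. This matches the labelling fixed in \Cref{sec:cm-data}, where $\varphi$ is attached to $g_q$.

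The maps $\varphi$ and $\varphi^c$ factor through $J_\Gamma$ because they send $\infty\mapsto O$. They differ from the Hodge-normalized Albanese map only by a translation, which is a torsion point by Manin--Drinfeld together with \Cref{lem:modular-functions}(i). In the rationalized space $\operatorname{Hom}^0$ this translation is killed, so the two base points agree there.

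Their induced maps $J_\Gamma\to E$ are Hecke-equivariant, through the eigencharacters of $g_q$ and $g_q^c$. Hence they factor through $A_q$, and then through the corresponding eigen-quotient. Multiplicity one means the $g_q$-isotypic part of $\operatorname{Hom}^0_K(J_\Gamma,E_{\bar\varpi})$ is one-dimensional over $K$. So $\operatorname{pr}_{\bar\varpi}$ can be chosen in $\operatorname{Hom}^0_K(A_q,E_{\bar\varpi})$ with $\operatorname{pr}_{\bar\varpi}\Phi_0=\varphi$, and likewise $\operatorname{pr}_\varpi$ with $\operatorname{pr}_\varpi\Phi_0=\varphi^c$.

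The product map $A_{q,K}\to E_{\bar\varpi}\times E_\varpi$ has finite kernel, because the two factors correspond to distinct embeddings. After clearing denominators it is therefore an isogeny.

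\textbf{Step 2: the automorphic identification.} For \eqref{eq:automorphic-morphism-space}, cite \cite[\S1B]{CST14}, following YZZ. The limit $\pi_{A_q}$ is an admissible $\operatorname{GL}_2(\mathbf A_f)$-representation over $M_q$. After extension of scalars along $\iota_g$ it is identified with $\pi_{g_q}^{\,\infty}$, via Eichler--Shimura and pullback of holomorphic differentials, which are weight-two forms.

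The description as a limit of Hodge-normalized morphisms comes from the identification $\operatorname{Hom}(J_{\mathcal V},A)\simeq\operatorname{Hom}_{\xi}(X_{\mathcal V},A)$ that CST prove. The closing claim then follows because $\pi_{A_q}$ is stable under right translation $\mathsf R(h)$, so $\Phi=\mathsf R(\mathbf h_p^{-1})\Phi_0$ lies in it.

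\textbf{Main obstacle.} The hardest step is matching the normalizations: checking that the specific integrally normalized $\varphi,\varphi^c$ of \cite{Yin47} are the two $\iota_g$- and $\iota_c$-components of one rational $\Phi_0$, with no extra conjugation swapping $\varpi$ and $\bar\varpi$.

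I would settle this with $q$-expansions. Pull back invariant differentials along each map. The result should be $K^\times$-multiples of $g_q(z)\,dz$ and of $g_q^c(z)\,dz$ respectively. The nebentypus distinction in \Cref{lem:modular-functions}(ii) then pins down which embedding each map corresponds to.

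Because we work only in $\operatorname{Hom}^0$, the scalar discrepancies are harmless. Any remaining scalar in $K^\times$ is absorbed into the choice of $\operatorname{pr}$.
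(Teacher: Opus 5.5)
Your proposal follows essentially the same route as the paper: factor $\varphi$ and $\varphi^c$ through a Hecke quotient $A_{q,K}$ using their eigen-differentials (so the product of the two projections is an isogeny), move the base point from $\infty$ to $\xi^{\rm Hdg}$ in $\operatorname{Hom}^0$ by Manin--Drinfeld, and obtain \eqref{eq:automorphic-morphism-space} from \cite[\S1B]{CST14} together with multiplicity one. The one input you leave implicit is that $\xi^{\rm Hdg}$ is a rational combination of cusps, which Manin--Drinfeld needs and which \Cref{lem:modular-functions}(i) does not provide; the paper supplies it by observing that $\Gamma\subset\Gamma_0(9q)$ has no elliptic points, so the Hodge class is a weighted sum of cusps.
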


\begin{proof}
Choose a Hecke quotient of $J_\Gamma$ in the isogeny class
attached to $\{g_q,g_q^c\}$.  The maps $\varphi$ and $\varphi^c$
induce homomorphisms from $J_{\Gamma,K}$ whose differentials lie in the
two conjugate Hecke eigenspaces.  Hence both factor through $A_{q,K}$,
and the two differentials span its cotangent space; the product of the
resulting maps is therefore an isogeny after clearing denominators.
The $M_q$-actions on the two factors are the two coefficient embeddings.

It remains to match the base-point normalization used in
\cite[\S1B]{CST14}.  Since $\Gamma\subset\Gamma_0(9q)$, there are no
elliptic fixed points: the standard congruences
$x^2+1\equiv0\pmod{9q}$ and $x^2-x+1\equiv0\pmod{9q}$ for elliptic
points of orders two and three already have no solutions modulo $3$
and modulo $9$, respectively.  Thus the normalized Hodge class is
represented by a rational weighted sum of cusps.  By Manin--Drinfeld,
$[\infty]-\xi^{\rm Hdg}$ is torsion after clearing denominators, so the
maps normalized at $\infty$ agree with the Hodge-normalized maps in
$\operatorname{Hom}^0$.  Finally, the tower realization of $\pi_{A_q}$
and its right-translation action are those of \cite[\S1B]{CST14}, and 
tensoring through $\iota_g$ gives
\eqref{eq:automorphic-morphism-space} by multiplicity one.
\end{proof}

\begin{remark}\label{rem:jacobian-optimality}
The construction in \Cref{lem:automorphic-morphism} takes place in the
isogeny category.  This is harmless for our applications: 
a non-zero rational rescaling does not affect the non-vanishing of Heegner points.
\end{remark}

\begin{remark}\label{rem:endomorphism-algebras}
The notation $M_q=\operatorname{End}_{\mathbf Q}^0(A_q)$ refers throughout to
endomorphisms defined over $\mathbf Q$.  After base change to $K$ the
endomorphism algebra is larger.  Under the $K$-isogeny of
\Cref{lem:automorphic-morphism}, the subalgebra $M_q\simeq K$
acts on $E_{\bar\varpi}\times E_\varpi$ via 
\[
 a\longmapsto
 \bigl([\iota_g(a)],[\iota_c(a)]\bigr).
\]
Thus the assertion $M_q\simeq K$ concerns the coefficient-field action
over $\mathbf Q$, not the full endomorphism algebra after base change.
Only this distinguished copy of $K$ inside
$\operatorname{End}_K^0(A_{q,K})$ is used below.
\end{remark}

\begin{lemma}\label{lem:coefficient-action}
Put $\omega_M=\iota_g^{-1}\circ\omega_{g_q}$,
$\chi=\chi_{q,j}$, and $\chi_M=\iota_g^{-1}\circ\chi$.
\begin{enumerate}[label=\textup{(\roman*)}]
\item For $m\in M_q$, one has 
\[
\operatorname{pr}_{\bar\varpi}(mP)
=[\iota_g(m)]\operatorname{pr}_{\bar\varpi}(P),\qquad
\operatorname{pr}_\varpi(mP)
=[\iota_c(m)]\operatorname{pr}_\varpi(P).
\]
\item If
\begin{equation}\label{eq:typed-weight}
 w_j(t):=\iota_c(\chi_M(t)^{-1}),
\end{equation}
then $w_j(t)=\chi(t)$ and
$w_j|_{G_p}=\vartheta_{p,j}^{-1}$.
\item The formula
\[
 \operatorname{ev}_\varpi(P\otimes a)
 =[a]\operatorname{pr}_\varpi(P)
\]
defines a map
\[
 A_q(K^{\rm ab})_{\mathbf Q}\otimes_{M_q,\iota_c}K
 \longrightarrow E_\varpi(K^{\rm ab})_{\mathbf Q}.
\]
\item With $\Phi=\mathsf R(\mathbf h_p^{-1})\Phi_0$,
\[
 \operatorname{pr}_\varpi\Phi(P_0)=Q^c_{q,p},
\]
and for every finite idele $t$,
\begin{equation}\label{eq:heegner-reciprocity}
 \operatorname{pr}_\varpi\bigl(\Phi(P_0)^{\sigma_t}\bigr)
 =\varphi^c([\tau_*,\iota(\bar t^{-1})\mathbf h_p^{-1}]).
\end{equation}
\end{enumerate}
\end{lemma}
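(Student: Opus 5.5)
Parts (i) and (iii) follow from the structure fixed in \Cref{lem:automorphic-morphism}; part (ii) is a bookkeeping of embeddings; and part (iv) combines the definition of $\Phi$ with the canonical-model reciprocity law \eqref{eq:reflex-reciprocity}. I will take the four parts in order.

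For (i), \Cref{lem:automorphic-morphism} says that the $K$-isogeny $(\operatorname{pr}_{\bar\varpi},\operatorname{pr}_\varpi)$ carries the $M_q$-action on $A_{q,K}$ to the action $a\mapsto([\iota_g(a)],[\iota_c(a)])$ of \Cref{rem:endomorphism-algebras}. Composing with each projection and evaluating at a point $P$ gives the two displayed formulas; these identities hold in $\operatorname{Hom}^0$, so they hold on $A_q(K^{\rm ab})_{\mathbf Q}$. For (iii), I only need that $\operatorname{ev}_\varpi$ is balanced over $M_q$. By (i), $\operatorname{ev}_\varpi(mP\otimes a)=[a][\iota_c(m)]\operatorname{pr}_\varpi(P)=\operatorname{ev}_\varpi(P\otimes\iota_c(m)a)$. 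Additivity in each variable is clear, and the map descends to the rationalized groups because $K$ acts $\mathbf Q$-linearly.

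For (ii), $\chi$ takes values in $K$. Since $\iota_g$ and $\iota_c=\bar\iota_g$ are the two embeddings, $\iota_c\circ\iota_g^{-1}$ is complex conjugation on $K$, so $w_j(t)=\overline{\chi(t)^{-1}}$. Because $\chi$ is of finite order (cubic, built from $\kappa_{p^j}\kappa_\varpi^{-1}$), its values are roots of unity, and hence $\overline{\chi(t)^{-1}}=\chi(t)$. The restriction to $G_p$ is then the statement recorded after \eqref{eq:rankin-self-dual}: $\chi_{q,j,p}|_{\mathcal O_{K,p}^\times}$ induces $\vartheta_{p,j}^{-1}$ on $G_p$. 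I expect the main care to be in checking that the finite-order property is what is needed. The unitary and arithmetic normalizations of $\chi_{q,j}$ differ only by the infinity type, and that cancels in $\nu_{p^j}\psi_q^{-1}$. I will verify this via $\chi_{q,j}=\kappa_{p^j}\kappa_\varpi^{-1}$.

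For (iv), unwind the definition: $\Phi(P_0)=(\mathsf R(\mathbf h_p^{-1})\Phi_0)([\tau_*,1])=\Phi_0([\tau_*,\mathbf h_p^{-1}])=\Phi_0(x)$. Applying $\operatorname{pr}_\varpi$ and \Cref{lem:automorphic-morphism} gives $\varphi^c(x)=Q^c_{q,p}$. Since $\Phi_0$ and $\operatorname{pr}_\varpi$ are defined over $\mathbf Q$ and $K$ respectively, they commute with $\sigma_t\in\operatorname{Gal}(K^{\rm ab}/K)$. Then \eqref{eq:reflex-reciprocity} with $g=\mathbf h_p^{-1}$ gives $x^{\sigma_t}=[\tau_*,\iota(\bar t^{-1})\mathbf h_p^{-1}]$, and applying $\varphi^c$ yields \eqref{eq:heegner-reciprocity}. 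The only subtle point is that the equality $\operatorname{pr}_\varpi\Phi_0=\varphi^c$ holds in $\operatorname{Hom}^0$, i.e.\ possibly up to a torsion translate coming from the Hodge versus $\infty$ normalization. I would handle this by noting that \Cref{lem:automorphic-morphism} already absorbed the difference $[\infty]-\xi^{\rm Hdg}$ via Manin--Drinfeld, so the identity holds in $E_\varpi(K^{\rm ab})_{\mathbf Q}$, which is where the statement lives.
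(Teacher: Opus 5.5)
Your proposal is correct and follows essentially the paper's own route: (i) comes from the eigen-decomposition in \Cref{lem:automorphic-morphism}, (ii) from $\iota_c\circ\iota_g^{-1}$ being complex conjugation together with the finite order of $\chi$, (iii) from the balanced-tensor relation, and (iv) from the right-translation convention plus canonical-model reciprocity \eqref{eq:reflex-reciprocity}. Your extra remarks are accurate refinements of steps the paper leaves implicit: the infinity types cancel in $\chi_{q,j}$, and the Hodge-versus-cusp normalization introduces only torsion, which is invisible in $E_\varpi(K^{\rm ab})_{\mathbf Q}$.
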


\begin{proof}
Part~(i) follows from the two Hecke eigendifferentials in
\Cref{lem:automorphic-morphism}.  Since $\iota_c=\bar\iota_g$ and
$\chi$ is cubic,
$w_j=\overline{\chi^{-1}}=\chi$, giving (ii); the restriction to $G_p$
is \eqref{eq:ringclass-kummer-labels}.  Part~(iii) is well defined by
the balanced tensor relation
$(mP)\otimes a=P\otimes\iota_c(m)a$.  Finally, (iv) follows from the
right-translation convention and the canonical-model reciprocity law
\eqref{eq:reflex-reciprocity} (cf.~\Cref{lem:fine-base-field}).
\end{proof}

\subsection{The Heegner point}
Put
\[
 \mathcal X
 =K^\times\widehat{\mathbf Q}^{\,\times}\backslash\widehat K^\times.
\]
The finite quotient needed below is the ring-class quotient of
conductor $pq$.

\begin{lemma}\label{lem:finite-torus-quotient}
There is a natural identification
\[
 \mathfrak T_{q,p}
 =K^\times\backslash\widehat K^\times/
 (\widehat{\mathbf Q}^{\times}\widehat{\mathcal O}_{pq}^{\times})
 \simeq (C_p\times C_q)/\Delta\mu_3,
\]
where a unit $\zeta\in\mu_3$ maps to
$([\zeta]_p,(\zeta\bmod\varpi)^{-1})$.  Moreover
\[
 \#\mathfrak T_{q,p}=\frac{(p+1)(q-1)}3.
\]
\end{lemma}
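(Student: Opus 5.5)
The plan is to compute the ring-class group of the order $\mathcal O_{pq}=\mathbf Z+pq\mathcal O_K$ through the standard exact sequence. Since $h_K=1$, we have $\widehat K^\times=K^\times\widehat{\mathcal O}_K^\times$. Hence
\[
 \mathfrak T_{q,p}\simeq \widehat{\mathcal O}_K^\times\big/\bigl(\mathcal O_K^\times\,\widehat{\mathbf Z}^\times\widehat{\mathcal O}_{pq}^\times\bigr).
\]
Here I use $\widehat{\mathbf Q}^\times\cap\widehat{\mathcal O}_K^\times\widehat{\mathcal O}_{pq}^\times\subset\widehat{\mathbf Z}^\times\widehat{\mathcal O}_{pq}^\times$, and $K^\times\cap\widehat{\mathcal O}_K^\times=\mathcal O_K^\times$. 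The case $\widehat{\mathbf Q}^\times=\mathbf Q^\times_{>0}\widehat{\mathbf Z}^\times$ is handled by absorbing $\mathbf Q^\times\subset K^\times$.

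Locally, at primes $\ell\nmid pq$ the quotient $\mathcal O_{K,\ell}^\times/\mathbf Z_\ell^\times\mathcal O_{pq,\ell}^\times$ is trivial.

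At $p$, which is inert in $K$, the quotient is
\[
 \mathcal O_{K,p}^\times\big/\bigl(\mathbf Z_p^\times(1+p\mathcal O_{K,p})\bigr)=\mathbf F_{p^2}^\times/\mathbf F_p^\times=C_p .
\]

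At $q=\varpi\bar\varpi$, which splits, the quotient is
\[
 (\mathcal O_K/q)^\times/(\mathbf Z/q)^\times=C_q .
\]
Using the ordered splitting, I identify $C_q$ with $\mathbf F_q^\times$ via $(a,b)\mapsto a b^{-1}$, or equivalently the normalization $(C,1)$ used for $u_q(C)$.

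This gives $\mathfrak T_{q,p}\simeq (C_p\times C_q)/\mathrm{im}(\mathcal O_K^\times)$. The unit $-1$ lies in $\mathbf Z^\times$ and dies in both factors, so the image is that of $\mu_3$.

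It remains to record the image of $\zeta\in\mu_3$. At $p$ it is $[\zeta]_p\in\Delta_p$. At $q$, the element $\zeta$ has components $(\zeta\bmod\varpi,\zeta\bmod\bar\varpi)$, and $\zeta\bmod\bar\varpi=(\zeta\bmod\varpi)^{-1}$ because $\bar\zeta=\zeta^{-1}$. Under the identification $C_q\simeq\mathbf F_q^\times$ the class is therefore $\zeta^2$ or $\zeta^{-1}$ modulo $\varpi$, depending on which ordered ratio is used. One must fix the convention matching the stated formula $(\zeta\bmod\varpi)^{-1}$, which corresponds to the ratio $b/a$ (or equivalently to the Shimura representative $\bar u^{-1}$ convention in the paper). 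Sign and inversion conventions here are the only delicate point, and I would check them against the representative $u_q(C)$ fixed earlier.

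For the count, I need the map $\mu_3\to C_p\times C_q$ to be injective. This already holds at $p$: $\Delta_p\simeq\mu_3$ inside $C_p$ by the ring-class sequence of Section 2, since $\mathcal O_p^\times=\{\pm1\}$. Then
\[
 \#\mathfrak T_{q,p}=\frac{(p+1)(q-1)}{3}.
\]
The main obstacle is only bookkeeping: matching the global-to-local identification with the inversion conventions. No genuine difficulty is expected.
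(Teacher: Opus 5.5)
Your proposal is correct and is essentially the paper's argument: use $h_K=1$ and $\widehat{\mathbf Q}^\times=\mathbf Q^\times_{>0}\widehat{\mathbf Z}^\times$ to reduce to $\widehat{\mathcal O}_K^\times/\mathcal O_K^\times\widehat{\mathcal O}_{pq}^\times$, read off $C_p\times C_q$ from the local quotients at $p$ and $q$, and divide by the diagonal $\mu_3$, which already injects into $C_p$ as $\Delta_p$. One correction to your convention remark: since $\zeta^2=\zeta^{-1}$ in $\mu_3$ and $\zeta\bmod\bar\varpi=(\zeta\bmod\varpi)^{-1}$ in $\mathbf F_q$, the ratio $ab^{-1}$ gives exactly $(\zeta\bmod\varpi)^{-1}$. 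That ratio is your $(C,1)$ normalization, which is also the paper's. The ratio $b/a$ would instead give $\zeta\bmod\varpi$, and no Shimura-reciprocity convention is involved.
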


\begin{proof}
Since $h_K=1$, every finite idele is represented, modulo
$K^\times\widehat{\mathbf Q}^{\times}$, by local units at $p$ and $q$.
Reduction gives $C_p\times C_q$, and the remaining global units give
exactly the displayed diagonal image of $\mu_3$.  The order follows
from $|C_p|=p+1$ and $|C_q|=q-1$.
\end{proof}

\begin{definition}\label{def:auxiliary-heegner-point}
For $j=1,2$, define
\begin{equation}\label{eq:normalized-heegner-point}
 \mathcal P_j(\Phi)
 =\frac{\#\mathfrak T_{q,p}}{\operatorname{Vol}(\mathcal X)}
 \int_{\mathcal X}
 \chi_M(t)^{-1}\Phi(P_0)^{\sigma_t}\,dt.
\end{equation}
\end{definition}
The point denoted by $\mathcal P_q$ in the introduction is
$\mathcal P_1(\Phi)$.  The point $\mathcal P_2(\Phi)$ is defined in the
same way using the character $\chi_{q,2}$ and corresponds to the
$E_{p^2}$-factor in \eqref{eq:rankin-factorization}.
For $z\in\widehat{\mathbf Q}^{\times}$, the arithmetic action on $\Phi$
contributes $\omega_M(z)^{-1}$, while \eqref{eq:rankin-self-dual} gives
$\chi_M(z)^{-1}=\omega_M(z)$.  Hence the integrand is well defined on
$\mathcal X$.

To expand \eqref{eq:normalized-heegner-point} as a finite sum, let
$\delta_3$ be the idele with component $\omega$ at $3$ and $1$
elsewhere, and put
\[
J_3=1+3\mathcal O_{K,3},\quad
J_p=(\mathbf Z_p+p\mathcal O_{K,p})^\times,\quad
J_q=(\mathbf Z_q+q\mathcal O_{K,q})^\times,
\]
with $J_\ell=\mathcal O_{K,\ell}^\times$ for $\ell\nmid3pq$.
Let $J=\prod_\ell J_\ell$.

\begin{lemma}\label{lem:finite-heegner-average}
The integrand in \eqref{eq:normalized-heegner-point} is $J$-invariant,
and
\[
 |\mathcal X/J|=(p+1)(q-1)=3\#\mathfrak T_{q,p}.
\]
Choose a section $s_p:G_p\to C_p$.  For $d\in C_q$, let $t_{\tau,d}$
have $p$-part $s_p(\tau)$, ordered $q$-part $(d,1)$, and $3$-part $1$.
Then for every $J$-invariant function $\mathcal I$ on $\mathcal X$,
\begin{equation}\label{eq:finite-average}
\frac{\#\mathfrak T_{q,p}}{\operatorname{Vol}(\mathcal X)}
\int_{\mathcal X}\mathcal I(t)\,dt
=\frac13\sum_{\tau\in G_p}\sum_{d\in C_q}\sum_{e=0}^2
\mathcal I(t_{\tau,d}\delta_3^e).
\end{equation}
\end{lemma}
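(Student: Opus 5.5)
The plan is to prove the three assertions in order: the $J$-invariance of the integrand, the count $|\mathcal X/J|$, and the finite-sum formula \eqref{eq:finite-average}. The last will follow from the second by averaging over a set of coset representatives.

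\emph{Invariance.} For $u\in J$, the reciprocity formula \eqref{eq:heegner-reciprocity} gives $\Phi(P_0)^{\sigma_{tu}}=\Phi(P_0)^{\sigma_t}$ as soon as $\iota(\bar u^{-1})$ lies in the stabilizer of the modified point $[\tau_*,\mathbf h_p^{-1}]$.
\begin{itemize}
\item At $\ell\nmid 3pq$, $J_\ell$ is the maximal order's units, which are in the stabilizer by \Cref{lem:fine-base-field}.
\item At $p$, $J_p=(\mathbf Z_p+p\mathcal O_{K,p})^\times$ is exactly the stabilizer computed in the proof of \Cref{lem:conductor-p-field}.
\item At $3$, $J_3=1+3\mathcal O_{K,3}\subset(\mathbf Z_3+3\mathcal O_{K,3})^\times$.
\item At $q$, $J_q=(\mathbf Z_q+q\mathcal O_{K,q})^\times$ reduces to scalars in $(\mathcal O_K/q)^\times$, i.e.\ to the identity of $C_q$. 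Its image is therefore trivial modulo the cube condition of $\mathcal U$.
\end{itemize}
On the character side, $\chi$ is unramified outside $3pq$. At $p$ it is trivial on $J_p$ because it factors through $C_p$, by the discussion following \eqref{eq:rankin-self-dual}. At $3$ its local conductor is at most $3$, since $K_\lambda$ contains cube roots of $p$; hence it is trivial on $1+3\mathcal O_{K,3}$. At $q$, the relevant component $\kappa_\varpi^{-1}$ is trivial on $J_q$ because of the $\widehat{\mathbf Q}^\times$ relation \eqref{eq:rankin-self-dual}. The main thing to check is that these local triviality statements are compatible with the normalization of $\chi_M$. I would verify them via the central-character identity and tame ramification at $q$.

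\emph{Counting.} Write $\mathcal X/J=K^\times\widehat{\mathbf Q}^\times\backslash\widehat K^\times/J$. Since $h_K=1$, this equals the local unit quotient $\prod_\ell\mathcal O_{K,\ell}^\times/\bigl(\mathbf Z_\ell^\times J_\ell\bigr)$ modulo the diagonal image of $\mathcal O_K^\times/\{\pm1\}$. The local factors are:
\begin{itemize}
\item $C_p$, of order $p+1$;
\item $C_q$, of order $q-1$;
\item at $3$, $\mathcal O_{K,3}^\times/\mathbf Z_3^\times(1+3\mathcal O_{K,3})$, of order $9\cdot8/(2\cdot 3\cdot 4)\cdot\ldots=9$.
\end{itemize}
Indeed $(\mathcal O_K/3)^\times$ has order $6$, and modulo $(\mathbf Z/3)^\times$ this leaves $3$. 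The global units $\mu_3$ then act freely, because $\omega$ is non-trivial in the factor at $3$. This gives $(p+1)(q-1)\cdot 3/3=(p+1)(q-1)$, which is $3\#\mathfrak T_{q,p}$ by \Cref{lem:finite-torus-quotient}.

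\emph{Representatives.} I would show that the elements $t_{\tau,d}\delta_3^e$ with $\tau\in G_p$, $d\in C_q$, $e\in\{0,1,2\}$ form a complete set of representatives. There are $\frac{p+1}3(q-1)\cdot 3=(p+1)(q-1)$ of them, matching the count. For surjectivity, use the global unit $\zeta\in\mu_3$ to move any class to one whose $p$-component lies in $s_p(G_p)$; this works because $\Delta_p$ is the image of $\mu_3$ in $C_p$. The $3$-component then absorbs the compensating power of $\omega$ as some $\delta_3^e$, and the $\bar{\mathfrak q}$-component is normalized to $1$ by the $\mathbf Q_q^\times$-scaling defining $C_q$.

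Once the representatives are established, the Haar integral of a $J$-invariant function equals $\operatorname{Vol}(\mathcal X)/|\mathcal X/J|$ times the sum over the representatives. Multiplying by $\#\mathfrak T_{q,p}/\operatorname{Vol}(\mathcal X)$ and using $|\mathcal X/J|=3\#\mathfrak T_{q,p}$ produces the factor $\frac13$ in \eqref{eq:finite-average}.

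I expect the main obstacle to be the invariance step at $3$ and $q$, which requires keeping the level cube condition and the ramification of $\chi$ consistent with each other.
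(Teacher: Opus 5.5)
Your outline matches the paper's proof: invariance checked place by place, the count of $\mathcal X/J$ via $h_K=1$ as local unit quotients of orders $3$, $p+1$, $q-1$ modulo the diagonal $\mu_3$, representatives obtained by absorbing the $\Delta_p$-part into a global unit, and Haar averaging. The counting and representative steps are correct; delete the stray ``$=9$'', since the factor at $3$ has order $3$, as you then say.

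The invariance step at $q$, however, is wrong as written. The group $J_q=(\mathbf Z_q+q\mathcal O_{K,q})^\times$ contains the scalars $\mathbf Z_q^\times$, and for a non-cube $a\in\mathbf Z_q^\times$ both of your claims fail.
\begin{enumerate}
\item The scalar $a^{-1}$ at $q$ has lower-right entry $a^{-1}$, which is not a cube modulo $q$. So it is not in $\mathcal U_q$, and $\sigma_a$ moves the CM point: it moves $t$ by the tame symbol $\operatorname{red}_{\mathfrak q}(a)^{-(q-1)/3}\ne1$, cf.\ \Cref{lem:cubic-field}(ii).
\item $\chi_{q,j}|_{\mathbf Z_q^\times}=\kappa_{\varpi,\mathfrak q}^{-1}|_{\mathbf Z_q^\times}$ is a character of exact order three. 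Equivalently, by \eqref{eq:rankin-self-dual} it is $\omega_{g_q}^{-1}$ on $\mathbf Z_q^\times$, the inverse local nebentypus. So the $\widehat{\mathbf Q}^\times$ relation shows that $\chi$ is \emph{not} trivial on $J_q$.
\end{enumerate}
Being trivial in $C_q$ is not the same as fixing the point. Indeed $\mathcal U_q$ is not stable under $\mathbf Z_q^\times$, which is exactly why $X_\Gamma$ is a proper cover of $X_0(9q)$. The two failures cancel only in the product.

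The correct argument first removes rational local units. This is allowed because the integrand is already $\widehat{\mathbf Q}^\times$-invariant, by the central-character computation following \Cref{def:auxiliary-heegner-point}. After that, $J_q$ reduces to the principal units $1+q\mathcal O_{K,q}$. Their $\mathfrak q$-component is $\equiv1\pmod q$, hence a cube, so they lie in the stabilizer; they also kill the tame character. This is what the paper means by ``after removing rational local units, $J_p$ and $J_q$ are principal units.''

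At $3$ your reason is incomplete in a similar way. That $p$ is a cube in $K_\lambda$ disposes only of $\kappa_{p^j}$. The factor $\kappa_\varpi^{-1}$ is genuinely ramified at $\lambda$ by \Cref{lem:cubic-field}(i). Its triviality on $1+3\mathcal O_{K,3}$ comes from $F_q\subset K_{3\varpi}^{\mathrm{ray}}$, i.e.\ from the conductor dividing $3\varpi$. With these two repairs your argument is complete.
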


\begin{proof}
At $3$, $J_3$ lies in the CM stabilizer of
\Cref{lem:fine-base-field}; the $p$-modification does not alter that
local condition.  The character is trivial there because the
$p^j$-Kummer character is locally trivial at $3$ and $F_q/K$ has
conductor dividing $3\varpi$ (cf.~\Cref{lem:cubic-field}).  At $p$ and
$q$, after removing rational local units, $J_p$ and $J_q$ are principal
units; they fix the modified level structure and the relevant cubic
characters are tame.  The remaining maximal unit groups fix both
factors.  Hence the integrand is $J$-invariant.

Since $h_K=1$, reduction to local units gives directly
\[
 \mathcal X/J\simeq
 \widehat{\mathcal O}_K^{\times}/
 \bigl(\mathcal O_K^{\times}\widehat{\mathbf Z}^{\times}J\bigr).
\]
Modulo rational local units, the factors at $3,p,q$ have orders
$3,p+1,q-1$, respectively, and all other local factors are trivial.
The diagonal group $\mathcal O_K^\times/\{\pm1\}\simeq\mu_3$ has order
three, so
\[
 |\mathcal X/J|=\frac{3(p+1)(q-1)}3=(p+1)(q-1).
\]
Every $p$-component can be written uniquely as a chosen representative
$s_p(\tau)$ times an element of $\Delta_p$.  Multiplying by a global
unit absorbs that element into the $q$-component and the power of
$\delta_3$.  Hence the elements $t_{\tau,d}\delta_3^e$
form a complete set of representatives.  Equal Haar measure of the
cosets gives \eqref{eq:finite-average}.
\end{proof}

\subsection{Comparison with the geometric points}
Let $u_q$ be the idele of \Cref{lem:cubic-field}, put
$a_q=u_q^{-1}$, and let $a_{q,q}\in C_q$ be its class.  Then
$C_q=C_q^{(3)}\times\langle a_{q,q}\rangle$.  The local identities
needed to compare \eqref{eq:finite-average} with the points
$P_{q,p,j}$ are as follows.

\begin{proposition}\label{prop:heegner-local-weights}
For $\tau\in G_p$ and $d\in C_q$, put
\[
Q^c_{\tau,d}
=\varphi^c(x^{\sigma_{t_{\tau,d}}}).
\]
Then one has
\[
 \operatorname{ev}_\varpi\!\left(
 \chi_M(t_{\tau,d})^{-1}\Phi(P_0)^{\sigma_{t_{\tau,d}}}\right)
 =[w_j(t_{\tau,d})]Q^c_{\tau,d}.
\]
For $C\in C_q^{(3)}$ and $m=0,1,2$, one has
\[
 Q^c_{\tau,Ca_{q,q}^m}=[\omega^{2m}]Q_C^{s_p(\tau)},
 \qquad
 w_j(a_q)=\omega,
 \qquad
 w_j(u_q(C))=1,
\]
and
\[
 [w_j(\delta_3)^e](Q^c_{\tau,d})^{\sigma_{\delta_3^e}}
 =Q^c_{\tau,d}
 \qquad(e=0,1,2).
\]
Finally, the geometric action of $\Delta_p$ on the $\varphi^c$-images
is trivial and $w_j|_{G_p}=\vartheta_{p,j}^{-1}$.
\end{proposition}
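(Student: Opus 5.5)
The plan is to verify the five assertions in turn; all are bookkeeping with the reciprocity law \eqref{eq:reflex-reciprocity} and the transformation formulas of \Cref{lem:modular-functions}.

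\emph{The first identity.} Apply $\operatorname{ev}_\varpi$ to $\chi_M(t)^{-1}\Phi(P_0)^{\sigma_t}$. \Cref{lem:coefficient-action}(i) turns the coefficient $\chi_M(t)^{-1}\in M_q$ into $[\iota_c(\chi_M(t)^{-1})]=[w_j(t)]$ by \eqref{eq:typed-weight}. Then \eqref{eq:heegner-reciprocity} identifies $\operatorname{pr}_\varpi(\Phi(P_0)^{\sigma_t})$ with $\varphi^c(x^{\sigma_t})$, since $x=[\tau_*,\mathbf h_p^{-1}]$ and $\sigma_t$ acts through $\iota(\bar t^{-1})$ on the left of $\mathbf h_p^{-1}$. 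Taking $t=t_{\tau,d}$ gives the claim.

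\emph{The $q$-part.} Write $d=Ca_{q,q}^m$ and $t_{\tau,d}=s_p(\tau)\,u_q(C)\,a_q^m$, with the three factors commuting. \Cref{prop:fine-hecke-orbit} shows that $u_q(C)$ fixes $x$ for $C\in C_q^{(3)}$. It also shows that the Galois action of $u_q(C)$ on $x^{s_p(\tau)}$ produces $Q_C^{s_p(\tau)}$; this is consistent with \eqref{eq:hecke-field}. For $a_q=u_q^{-1}$ I would redo the computation of \Cref{lem:base-branches} after the conductor-$p$ modification. Use \Cref{lem:p-compatible-q-action} with $B_0$ having lower-right residue $\omega_q$ or $\omega_q^2$, so that the conjugate $B'=hBh^{-1}$ is integral. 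Then \Cref{lem:modular-functions}(ii) gives the scalar $[\overline{\xi_q(\cdot)}^{\pm1}]$ on $\varphi^c$. Hence $\sigma_{a_q}$ acts on $\varphi^c$-images by a fixed cube root of unity, and one checks that it equals $[\omega^2]$. Iterating gives $[\omega^{2m}]$.

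\emph{The weights.} Compute $w_j(a_q)=\chi_{q,j}(u_q^{-1})$ from $\chi_{q,j}=\kappa_{p^j}\kappa_\varpi^{-1}$. At $q$ the Kummer character of $p^j$ is unramified, so only $\kappa_\varpi^{-1}$ contributes. \Cref{lem:cubic-field}(ii) gives $t^{\sigma_{u_q}}=\omega^2 t$, and with the inverse-value convention this yields $w_j(a_q)=\omega$. For $C$ a cube, $u_q(C)$ is a cube locally at $\mathfrak q$, so $w_j(u_q(C))=1$.

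\emph{The $\delta_3$-invariance.} Use the modified action found in the proof of \Cref{lem:exact-residual-action}: after the modification, $\delta_3$ acts on $\varphi^c$-images by $[\omega^2]$. On the other side, $w_j(\delta_3)$ is computed from the local character at $3$. The $p^j$-part is trivial there, since $p$ is a cube in $K_\lambda^\times$, so only $\kappa_\varpi^{-1}$ at $\lambda$ contributes. Its value is determined by the principal-idele relation $\omega=\delta_3\,u_q\,\delta_p$ (up to unramified components) together with $\chi(\omega)=1$, $w_j(u_q)=\omega^2$, and the value of $w_j(\delta_p)$. I expect this to give $w_j(\delta_3)=\omega$, so that the product $[\omega]\circ[\omega^2]$ is trivial.

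\emph{The last two assertions} are \Cref{lem:exact-residual-action} and \Cref{lem:coefficient-action}(ii).

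The \textbf{main obstacle} is the consistency of cube-root-of-unity normalizations across three places ($3$, $q$, $p$). To control it, I would derive the $\delta_3$ weight from the global relation rather than locally. I would also double-check the $a_q$-action sign against \Cref{rem:yin-conventions}.
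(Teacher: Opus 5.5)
Your proposal is correct and follows the paper's own route. The first identity comes from \Cref{lem:coefficient-action}. The $a_q$-action comes from \Cref{lem:p-compatible-q-action} and \Cref{lem:modular-functions}(ii), with Kummer reciprocity at $\mathfrak q$ giving $w_j(a_q)=\omega$. The $\delta_3$-identity comes from the modified $3$-action in \Cref{lem:exact-residual-action} combined with the principal idele $\omega$. The one value you leave open, $w_j(\delta_p)$, equals $1$ because the $p^j$-Kummer character factors through $G_p=C_p/\Delta_p$ and $\kappa_\varpi$ is unramified at $p$; this is exactly how the paper pins down $\chi(\delta_3)=\omega$.
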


\begin{proof}
The first formula follows from \Cref{lem:coefficient-action}(iii),(iv).
At $q$, \Cref{lem:p-compatible-q-action} and
\Cref{lem:modular-functions}(ii) show that $a_q$ acts on the
$E_\varpi$-factor by $[\omega^2]$, while cube residues fix the CM point.
By \Cref{lem:cubic-field}(ii) and Kummer reciprocity,
$\chi(u_q)=\omega^2$; the Kummer factor attached to $p^j$ is
unramified at $q$.
Thus $w_j(a_q)=\omega$ and $w_j(u_q(C))=1$ for
$C\in C_q^{(3)}$.

At $3$, canonical-model reciprocity
\eqref{eq:reflex-reciprocity} and the representatives used in
\Cref{lem:exact-residual-action} give the action $[\omega^2]$ on the
$\varphi^c$-factor after the $p$-modification.  The Kummer factor
attached to $p^j$ is
trivial on the local class of $\omega$ at $p$: this class lies in
$\Delta_p$, whereas the character factors through $G_p=C_p/\Delta_p$
(cf.~\eqref{eq:ringclass-kummer-labels}).  The principal idele $\omega$,
together with the $q$-calculation above, therefore gives
$\chi(\delta_3)=\omega$; hence the displayed $\delta_3$-identity.
At $p$, the last assertion is exactly
\Cref{lem:exact-residual-action} together with
\Cref{lem:coefficient-action}(ii).
\end{proof}

We can now identify the auxiliary Heegner point with the geometric
points constructed in Part~II.

\begin{proposition}\label{prop:heegner-point-comparison}
For $j=1,2$, one has
\begin{equation}\label{eq:heegner-point-comparison}
 \operatorname{ev}_\varpi\mathcal P_j(\Phi)
 =[3]P_{q,p,j}
 \qquad\text{in }E_\varpi(K^{\rm ab})_{\mathbf Q}.
\end{equation}
\end{proposition}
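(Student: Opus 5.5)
The plan is to expand the adelic integral into the finite sum of \Cref{lem:finite-heegner-average}, push every term through $\operatorname{ev}_\varpi$, and then regroup the terms into the cubic components $P_{q,p,j}$.

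\textbf{Step 1: finite sum.} By \Cref{lem:finite-heegner-average}, the integrand is $J$-invariant. Hence \eqref{eq:finite-average} rewrites $\mathcal P_j(\Phi)$ as
\[
\frac13\sum_{\tau\in G_p}\sum_{d\in C_q}\sum_{e=0}^2\chi_M(t_{\tau,d}\delta_3^e)^{-1}\Phi(P_0)^{\sigma_{t_{\tau,d}\delta_3^e}}.
\]
I then apply $\operatorname{ev}_\varpi$ term by term; this is legitimate because $\operatorname{ev}_\varpi$ is additive. By the first formula of \Cref{prop:heegner-local-weights}, extended to the idele $t_{\tau,d}\delta_3^e$ through multiplicativity of $w_j$ and of the Galois action, each term becomes $[w_j(t_{\tau,d})]\,[w_j(\delta_3)^e](Q^c_{\tau,d})^{\sigma_{\delta_3^e}}$. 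The $\delta_3$-identity in that proposition collapses this to $[w_j(t_{\tau,d})]Q^c_{\tau,d}$, independent of $e$. The sum over $e$ therefore contributes a factor $3$, which cancels the $\tfrac13$.

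\textbf{Step 2: the $q$-sum.} Write $d=Ca_{q,q}^m$ with $C\in C_q^{(3)}$ and $m\in\{0,1,2\}$. Multiplicativity of $w_j$ together with $w_j(a_q)=\omega$ and $w_j(u_q(C))=1$ gives $w_j(t_{\tau,d})=\omega^m\,w_j(s_p(\tau))$. Since $t_{\tau,d}$ has $q$-part $(d,1)$, I must check which of $a_q$ and $u_q(C)$ represents it. The sign and inverse conventions here are exactly where I expect trouble, so I would verify directly that the weights match those in the proposition. \Cref{prop:heegner-local-weights} also gives $Q^c_{\tau,d}=[\omega^{2m}]Q_C^{s_p(\tau)}$. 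The term for $(\tau,C,m)$ is therefore
\[
[\omega^{m}\omega^{2m}]\,[w_j(s_p(\tau))]\,Q_C^{s_p(\tau)}=[w_j(s_p(\tau))]\,Q_C^{s_p(\tau)},
\]
since $\omega^{3m}=1$. Summing over $m$ contributes another factor $3$.

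\textbf{Step 3: the $p$-sum.} Finally, $w_j|_{G_p}=\vartheta_{p,j}^{-1}$. The action of $\Delta_p$ on the $\varphi^c$-images is trivial, so $Q_C^{s_p(\tau)}=Q_C^\tau$ is independent of the section $s_p$. The remaining double sum over $\tau\in G_p$ and $C\in C_q^{(3)}$ is then exactly \eqref{eq:cubic-projector}, which gives $\operatorname{ev}_\varpi\mathcal P_j(\Phi)=[3]P_{q,p,j}$.

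The main obstacle is bookkeeping rather than any single estimate. One must check that $w_j$, $\chi_M$ and the ev-map produce the weight $\vartheta_{p,j}(\sigma)^{-1}$, not its inverse, and that the $q$-weights $\omega^m$ and the geometric twists $\omega^{2m}$ cancel rather than reinforce. If they reinforced, the $m$-sum would vanish instead of giving $3$. I would first compute the $m=1$ term explicitly from \Cref{lem:cubic-field}(ii) and \Cref{lem:p-compatible-q-action} to confirm the cancellation. All identities are taken in $E_\varpi(K^{\rm ab})_{\mathbf Q}$, so the rational normalizations from \Cref{lem:automorphic-morphism} cause no issue.
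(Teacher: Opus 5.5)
Your proposal is correct and follows the paper's proof step for step. You expand via \eqref{eq:finite-average}, collapse the three $\delta_3^e$-terms to cancel the $\tfrac13$, and split $d=Ca_{q,q}^m$ so that $[w_j(a_q)^m][\omega^{2m}]=1$ yields the factor $[3]$. You then read off \eqref{eq:cubic-projector} from $w_j|_{G_p}=\vartheta_{p,j}^{-1}$ and the triviality of $\Delta_p$ on the $\varphi^c$-images. The convention checks you flag in Step~2 are exactly what Proposition~\ref{prop:heegner-local-weights} already supplies, so no separate verification is needed.
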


\begin{proof}
Apply \eqref{eq:finite-average} to the integrand in
\eqref{eq:normalized-heegner-point}.  By
\Cref{prop:heegner-local-weights}, the three terms indexed by
$\delta_3^e$ have the same $E_\varpi$-evaluation, cancelling the factor
$1/3$.  Write uniquely $d=Ca_{q,q}^m$ with
$C\in C_q^{(3)}$ and $0\le m\le2$.  Again by
\Cref{prop:heegner-local-weights},
$[w_j(a_q)^m][\omega^{2m}]=1$, so the sum over $m$ contributes $[3]$.
The remaining $p$-weight is $\vartheta_{p,j}^{-1}$, and therefore the
remaining sum is exactly \eqref{eq:cubic-projector}.  This proves
\eqref{eq:heegner-point-comparison}.
\end{proof}

\begin{proof}[Proof of \Cref{thm:auxiliary-heegner} and its $j=2$ analogue]
By \Cref{thm:cubic-points}, $P_{q,p,j}$ has infinite order for
$j=1,2$.  Hence \eqref{eq:heegner-point-comparison} shows that the
$E_\varpi$-projection of $\mathcal P_j(\Phi)$ is non-torsion, and so is
$\mathcal P_j(\Phi)$.
\end{proof}

Thus \Cref{thm:auxiliary-heegner} and its $j=2$ analogue are proved.  The proof of \Cref{thm:main} still
requires \Cref{sec:rankin-derivative}: there we match these points with
the Heegner functionals in the Yuan--Zhang--Zhang formula, verify that
the relevant Shimura curve is the present modular curve, and combine
the resulting derivative non-vanishing with the Rankin--Selberg
factorization and \Cref{prop:complementary-rank-zero}.

\section{The Gross--Zagier formula}
\label{sec:rankin-derivative}

By \Cref{prop:heegner-point-comparison,thm:cubic-points}, the two
auxiliary Heegner points are non-torsion.  This section completes the
analytic argument.  The Yuan--Zhang--Zhang generalization of the
Gross--Zagier formula converts their non-torsion into the non-vanishing
of the central derivatives of the associated 
 Rankin--Selberg $L$-functions 
(cf.~\Cref{prop:gross-zagier-nonvanishing}).  The factorizations of
\Cref{prop:rankin-factorization}, together with \Cref{prop:complementary-rank-zero}, then imply \Cref{thm:main}.

Fix $j\in\{1,2\}$ and put $\chi=\chi_{q,j}$ and
$\chi^\sigma(u)=\chi(\bar u)$.  For a finite-order $M_q$-valued
character $\xi_M$, write, up to a fixed non-zero Haar normalization,
\[
 P_\xi(f)=\int_{\mathcal X}
 \xi_M(u)f([\tau_*,\iota(u)])\,du.
\]

\subsection{The Gross--Zagier formula}
The Heegner point of \Cref{def:auxiliary-heegner-point} is defined
using Galois conjugation, while the Gross--Zagier formula is stated in
terms of the point-valued Heegner functional above.  We begin with the following comparison. 
\begin{lemma}\label{lem:heegner-character-comparison}
Up to the fixed non-zero Haar normalization\footnote{This is the
non-zero scalar determined by the choice of Haar measure on
$\mathcal X$.  It is fixed throughout.}, one has
\[
 \mathcal P_j(\Phi)=P_{\chi^\sigma}(\Phi).
\]
Moreover
\[
 L(s,g_q\times\chi^\sigma)=L(s,g_q\times\chi),
\]
and
\[
 P_\xi(\mathsf R(\iota(a))f)=\xi_M(a)^{-1}P_\xi(f)
 \qquad(a\in\widehat K^\times).
\]
\end{lemma}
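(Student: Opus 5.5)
The lemma has three parts, and I would handle them in order of increasing difficulty.

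The equivariance formula for $P_\xi$ is the easiest and does not depend on the other two. Substitute $u\mapsto ua^{-1}$ in the Haar integral over $\mathcal X$. Since
\[
 (\mathsf R(\iota(a))f)([\tau_*,\iota(u)])=f([\tau_*,\iota(ua)]),
\]
and the Haar measure is invariant, the substitution gives
\[
 P_\xi(\mathsf R(\iota(a))f)=\int_{\mathcal X}\xi_M(ua^{-1})f([\tau_*,\iota(u)])\,du=\xi_M(a)^{-1}P_\xi(f).
\]
For this I need $\iota$ to be a homomorphism on $\widehat K^\times$ and the integrand to be well defined on $\mathcal X$. The latter follows from the central-character identity \eqref{eq:rankin-self-dual}, exactly as in the remark after \Cref{def:auxiliary-heegner-point}.

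For the identification $\mathcal P_j(\Phi)=P_{\chi^\sigma}(\Phi)$, I would rewrite the Galois conjugates $\Phi(P_0)^{\sigma_t}$ using the reciprocity law \eqref{eq:reflex-reciprocity}. This gives
\[
 \Phi(P_0)^{\sigma_t}=\Phi([\tau_*,\iota(\bar t^{-1})]),
\]
with the modification $\mathbf h_p^{-1}$ already absorbed into $\Phi=\mathsf R(\mathbf h_p^{-1})\Phi_0$. Since $\Phi$ is defined over $\mathbf Q$, this step is legitimate on $A_q$-valued points and not only after projection. Then change variables $u=\bar t^{-1}$; the map $t\mapsto\bar t^{-1}$ preserves Haar measure on $\mathcal X$. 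The weight becomes
\[
 \chi_M(t)^{-1}=\chi_M(\bar u^{-1})^{-1}=\chi_M(\bar u)=\chi^\sigma_M(u).
\]
So the integral equals $\int\chi^\sigma_M(u)\Phi([\tau_*,\iota(u)])\,du$, which is $P_{\chi^\sigma}(\Phi)$ up to the normalizing constant $\#\mathfrak T_{q,p}/\operatorname{Vol}(\mathcal X)$. That constant is absorbed into the fixed Haar normalization.

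For the $L$-function identity, I would use the factorization in the proof of \Cref{prop:rankin-factorization}:
\[
 L(s,g_q\times\chi)=L(s,\psi_q\chi)\,L(s,\psi_q^\sigma\chi).
\]
Replacing $\chi$ by $\chi^\sigma$ gives
\[
 L(s,\psi_q\chi^\sigma)\,L(s,\psi_q^\sigma\chi^\sigma)=L(s,(\psi_q^\sigma\chi)^\sigma)\,L(s,(\psi_q\chi)^\sigma).
\]
Conjugating the argument of a Hecke character of $K$ does not change its $L$-function, since it only reindexes ideals by complex conjugation. Hence the two constituents are swapped and the product is unchanged. Equivalently, $g_q\otimes\chi^\sigma$ is the Galois conjugate of $g_q\otimes\chi$ under $\operatorname{Gal}(K/\mathbf Q)$, and automorphic induction to $\mathrm{GL}_2(\mathbf A_{\mathbf Q})$ is insensitive to that conjugation.

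The main obstacle is the convention bookkeeping in the second step. I need the inverse-and-bar in Shimura reciprocity, the $M_q$-valued versus $K$-valued character through $\iota_g$, and the inverse-value convention for $\kappa_d$ to combine to exactly $\chi^\sigma$, rather than $\chi^{-1}$ or $\bar\chi$. I would check this by evaluating both sides at the idele $u_q$, using \Cref{prop:heegner-local-weights}, to fix the sign of the exponent.
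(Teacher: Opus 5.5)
Your proposal is correct and follows the paper's proof. The first identity comes from reciprocity plus a reindexing, the $L$-function identity from swapping the two automorphic-induction constituents under conjugation, and the equivariance from the substitution $u\mapsto ua^{-1}$. Where you substitute $u=\bar t^{-1}$, the paper instead invokes $f(P_0)^{\sigma_u}=\omega_M(u\bar u)^{-1}F_f(u)$ together with $\chi_M|_{\widehat{\mathbf Q}^\times}=\omega_M^{-1}$. Since $t\bar t\in\widehat{\mathbf Q}^{\times}$, your substitution is the identity on $\mathcal X$, so it is the same central-character step in disguise, and the proposed sanity check at $u_q$ is unnecessary.
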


\begin{proof}
For $F_f(u)=f([\tau_*,\iota(u)])$, canonical-model reciprocity
\eqref{eq:reflex-reciprocity} and central equivariance give
\[
 f(P_0)^{\sigma_u}=\omega_M(u\bar u)^{-1}F_f(u).
\]
Since $\chi_M|_{\widehat{\mathbf Q}^{\times}}=\omega_M^{-1}$, the
integrand in \eqref{eq:normalized-heegner-point} becomes
$\chi_M(\bar u)F_f(u)$, proving the first assertion.  Automorphic
induction gives
\[
 L(s,g_q\times\chi^\sigma)
 =L(s,\psi_q\chi^\sigma)L(s,\psi_q^\sigma\chi^\sigma).
\]
Conjugating the two Hecke characters interchanges these with
$L(s,\psi_q^\sigma\chi)$ and $L(s,\psi_q\chi)$, proving the second
assertion.  The last formula follows by changing variables in the
Heegner-point integral.
\end{proof}

The Gross--Zagier formula is realized on the same modular curve as in \Cref{sec:rankin-gz}: 

\begin{lemma}\label{lem:yzz-split-quaternion}
Let $\xi=\chi^\sigma$.  In the setting of the Yuan--Zhang--Zhang
generalization of the Gross--Zagier formula for $(\pi_{g_q},\xi)$, the
incoherent quaternion
algebra $\mathbb B$ satisfies
\[
 \mathbb B_v\simeq M_2(\mathbf Q_v)\quad(v<\infty),
 \qquad
 \mathbb B_\infty\simeq\mathbf H.
\]
The nearby coherent quaternion algebra obtained by switching the
archimedean invariant is therefore
\[
 B\simeq M_2(\mathbf Q).
\]
In particular, the Shimura curve occurring in the Gross--Zagier formula
is the modular-curve tower used in \Cref{sec:rankin-gz}.  The
Rankin--Selberg root number is $-1$.
\end{lemma}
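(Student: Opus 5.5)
The plan is to compute the local invariants of the incoherent quaternion algebra $\mathbb B$ from the Tunnell--Saito criterion. In the Yuan--Zhang--Zhang setting, $\mathbb B$ is the totally definite incoherent algebra over $\mathbf A$ whose local invariant at each place $v$ is determined by the local root number:
\[
 \epsilon(\mathbb B_v)=\epsilon\bigl(\tfrac12,\pi_{g_q,v}\times\xi_v\bigr)\,\xi_v(-1)\,\eta_v(-1),
\]
where $\eta$ is the quadratic character of $K/\mathbf Q$. At $\infty$ the algebra is definite by construction, since $g_q$ has weight two and $\xi_\infty$ is trivial. It then suffices to show $\mathbb B_v$ is split at every finite $v$. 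The global root number is then $\prod_v\epsilon(\mathbb B_v)=-1$, which is the last assertion. The coherent algebra obtained by switching the archimedean invariant is split everywhere, hence $B\simeq M_2(\mathbf Q)$, and its Shimura curves are the modular curves $X_{\mathcal V}$ of \Cref{lem:automorphic-morphism}.

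I would check the finite places in three groups.

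\textbf{Unramified places.} For $v\nmid 3pq$, both $\pi_v$ and $\xi_v$ are unramified, so the local sign is $+1$ and $\mathbb B_v$ is split.

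\textbf{The places $p$ and $q$.} At $p$, $\pi_{g_q,p}$ is an unramified principal series and $K_p/\mathbf Q_p$ is unramified. Standard Tunnell theory then makes the invariant split: for $\pi_v$ a principal series, $\mathbb B_v$ is always split. At $q$, $K$ is split, so $K_q\simeq\mathbf Q_q^2$ embeds into $M_2(\mathbf Q_q)$ only, and again the invariant is split.

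\textbf{The place $3$.} This is the main obstacle. Here $K_3/\mathbf Q_3$ is ramified and $\pi_{g_q,3}$ is supercuspidal: it is a dihedral representation induced from $\psi_{q,3}$, with conductor $9$. The plan is to use the CM factorization of \Cref{prop:rankin-factorization} locally, namely
\[
 \epsilon_3(\pi\times\xi)=\epsilon_3(\nu_{p^j,3})\,\epsilon_3(\nu_{D,3}^{\sigma}).
\]
I would then compute the two local root numbers at $3$ of $E_{p^j}$ and $E_{D_{q,j}}$ via the known Rohrlich/Liverance tables for $y^2=x^3+m^2/4$. These depend only on $m$ modulo $9$. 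The values $p\equiv8$, $p^2\equiv1$, $qp^2\equiv4$ and $qp\equiv5\pmod9$ should give product $\xi_3(-1)\eta_3(-1)$, that is, the split invariant.

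As a cross-check, I would use the global signs. By the table in the introduction, the root number of $E_{p^j}$ is $-1$, and $E_{D_{q,j}}$ has root number $+1$ by \Cref{prop:complementary-rank-zero}, since its $L$-value is nonzero. Their product is $-1$. The archimedean contribution is $-1$ and every finite place other than $3$ is already split, so the product formula forces the invariant at $3$ to be split. This cross-check can serve as the actual argument if the direct local computation is messy.

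Alternatively, the existence of the nonzero $\mathcal O_3$-optimal CM point of level $\Gamma_0(9q)$ from \Cref{lem:two-cm-orders} directly exhibits a nonzero toric period on $M_2(\mathbf Q_3)$. By Saito--Tunnell uniqueness, this identifies the local algebra at $3$ as split.
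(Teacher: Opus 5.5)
Your proposal is correct, but the argument that actually carries it is your ``cross-check'', and it takes a different route from the paper.

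\textbf{The paper's route.} The paper does no place-by-place analysis. It takes the non-vanishing of $P_\xi(\Phi)$ already established in Part~II (\Cref{prop:heegner-point-comparison}, \Cref{thm:cubic-points}, \Cref{lem:heegner-character-comparison}) and extracts a pure tensor $f$ with $P_\xi(f)\ne0$. From this it deduces $\operatorname{Hom}_{K_v^\times}(\pi_{g_q,v},\xi_v^{-1})\ne0$ on $\mathrm{GL}_2(\mathbf Q_v)$ at every finite $v$ simultaneously. The Tunnell--Saito dichotomy then makes $\mathbb B_v$ split at every finite place, and the root number $-1$ comes out as a consequence.

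\textbf{Your route.} You settle $v\ne3$ by purely local facts: principal series at $p$ and at the unramified places, and $K$ split at $q$. You then fix $v=3$ by parity, using the global sign $-1$ as input. That sign comes from $\varepsilon(E_{p^j})=-1$ together with $L(1,E_{D_{q,j}})\ne0$ (\Cref{prop:complementary-rank-zero}), which is precisely the content of \Cref{rem:independent-rankin-sign}.

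\textbf{Comparison.} Your route is logically independent of the division-boundary machinery. It shows a priori that the Gross--Zagier data live on $B\simeq M_2(\mathbf Q)$, before any Heegner point is constructed, at the cost of needing the global root number as an input. The paper's route needs no root-number information and no analysis at the supercuspidal place $3$. In exchange, it makes the lemma depend on the main non-vanishing theorem.

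\textbf{Two side remarks.} Your direct computation at $3$ is only asserted (``should give''), so as written it cannot replace the parity argument. Your last alternative does not work as stated. The mere existence of an $\mathcal O_3$-optimal CM point does not produce a non-zero $\xi_3$-equivariant functional on $\pi_{g_q,3}$. Non-vanishing of a toric period is exactly what has to be proved, and the paper gets it from $P_\xi(\Phi)\ne0$. Moreover, it is the dichotomy, not uniqueness, that then decides the local algebra.
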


\begin{proof}
By
\Cref{prop:heegner-point-comparison,thm:cubic-points,lem:heegner-character-comparison},
the Heegner point $P_\xi(\Phi)$ is non-zero.  Write $\Phi$ as a finite
sum of pure tensors over $M_q$, and choose a summand
$f=\otimes_v'f_v$ for which $Z=P_\xi(f)\ne0$.  Extend scalars in the rationalized point
space through $\iota_g:M_q\hookrightarrow\mathbf C$, and choose a
$\mathbf C$-linear functional non-zero on $Z\otimes1$.  Scalar
extension is faithful, so such a functional exists.  Its composition
with the scalar extension of $P_\xi$ is a non-zero
$\widehat K^\times$-equivariant functional of character $\xi^{-1}$.
So one has 
\[
 \operatorname{Hom}_{K_v^\times}
 (\pi_{g_q,v},\xi_v^{-1})\ne0
 \qquad(v<\infty).
\]
This is the local functional realized on
$\operatorname{GL}_2(\mathbf Q_v)$ by the modular-curve tower
\eqref{eq:automorphic-morphism-space}.  The Tunnell--Saito criterion
therefore selects the split local algebra $M_2(\mathbf Q_v)$ at every
finite $v$; see
\cite[\S3A, especially Lemma~3.1]{CST14}.  In the Gross--Zagier setup
the incoherent algebra is totally definite at infinity
\cite[\S1B]{CST14}, so $\mathbb B_\infty\simeq\mathbf H$.  Switching
the invariant at the unique real place gives a coherent quaternion
algebra split at every place, hence $B\simeq M_2(\mathbf Q)$.  The
defining local sign relation for this incoherent datum then gives global
Rankin--Selberg root number $-1$ (cf.~\cite[\S1B, \S3A]{CST14}).
\end{proof}

\begin{remark}\label{rem:independent-rankin-sign}
The Rankin--Selberg root number can be determined directly.
For $p\equiv8\pmod9$, one has
$\varepsilon(E_{p^j})=-1$ by \cite{BS66,Liverance}, as recalled in the
table of \Cref{subsec:descent-bsd}.  On the other hand,
one has 
$L(1,E_{D_{q,j}})\ne0$ by \Cref{prop:complementary-rank-zero}, and so 
$\varepsilon(E_{D_{q,j}})=+1$. (This also follows from a direct calculation.) The factorization
\eqref{eq:rankin-factorization} therefore gives Rankin--Selberg root
number $-1$.  
\end{remark}

We use the Yuan--Zhang--Zhang generalization of the Gross--Zagier
formula \cite{YZZ}, in the explicit form
\cite[Proposition~2.5]{CST14}.  Its unitary normalization
has central point $1/2$; with our arithmetic normalization,
\begin{equation}\label{eq:unitary-arithmetic-shift}
 L(s,\pi_{g_q},\theta)=L(s+\tfrac12,g_q\times\theta).
\end{equation} 
The central-character hypothesis is \eqref{eq:rankin-self-dual}.

\begin{proposition}\label{prop:gross-zagier-nonvanishing}
For $j=1,2$, one has 
\[
 L'(1,g_q\times\chi_{q,j})\ne0.
\]
\end{proposition}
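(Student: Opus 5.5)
The plan is to apply the explicit Yuan--Zhang--Zhang formula \cite[Proposition~2.5]{CST14} to the pair $(\pi_{g_q},\xi)$ with $\xi=\chi^\sigma$. By \Cref{lem:yzz-split-quaternion}, the coherent quaternion algebra is $M_2(\mathbf Q)$, the Shimura curve is the modular-curve tower of \Cref{sec:rankin-gz}, and the root number is $-1$. The central-character hypothesis is \eqref{eq:rankin-self-dual}. In this setting the formula reads, for pure tensors $f_1,f_2\in\pi_{A_q}$ and their duals,
\[
 \langle P_\xi(f_1),P_{\xi^{-1}}(f_2)\rangle_{\rm NT}
 =c\cdot L'(\tfrac12,\pi_{g_q},\xi)\prod_v\alpha_v(f_{1,v},f_{2,v}),
\]
with an explicit non-zero constant $c$ (a ratio of $L(1,\pi,\mathrm{ad})$, $\zeta$-values and volume factors) and normalized local toric integrals $\alpha_v$. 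By \eqref{eq:unitary-arithmetic-shift} and \Cref{lem:heegner-character-comparison}, $L'(\tfrac12,\pi_{g_q},\xi)$ is a non-zero multiple of $L'(1,g_q\times\chi_{q,j})$.

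The first step is to take $f_1=f$ to be the pure-tensor summand of $\Phi$ from the proof of \Cref{lem:yzz-split-quaternion} with $P_\xi(f)\ne0$. This point is non-torsion in the rationalized space, because $P_{q,p,j}$ has infinite order by \Cref{thm:cubic-points} and \Cref{prop:heegner-point-comparison}. The second step is to produce the dual vector. Complex conjugation, or equivalently the Rosati involution, carries the $\xi$-eigencomponent to the $\xi^{-1}$-eigencomponent, and the Néron--Tate pairing restricted to the $M_q$-isotypic pieces is a positive-definite hermitian form. So if I take $f_2$ to be the conjugate (contragredient) vector of $f_1$, the left side becomes a positive multiple of the hermitian height $\widehat h(P_\xi(f))$. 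That quantity is non-zero, since $P_\xi(f)$ is non-torsion and the Néron--Tate height is positive-definite on $A_q(K^{\rm ab})_{\mathbf Q}$.

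With the left side non-zero, the product $c\cdot L'\cdot\prod_v\alpha_v$ is non-zero, and hence $L'(1,g_q\times\chi_{q,j})\ne0$. This direction does not require the local factors to be computed, only their finiteness; the formula's statement already guarantees that $\alpha_v=1$ for almost all $v$ and that $\alpha_v$ is finite everywhere.

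The main obstacle is the matching of normalizations. I must check that the pairing in the formula is the $M_q$-bilinear (or hermitian) extension of Néron--Tate, as in \cite[\S1B]{CST14}. I must also check that taking $f_2$ dual to $f_1$ really gives $\widehat h(P_\xi(f))$ up to a positive scalar, rather than a pairing between possibly orthogonal vectors. I would handle this by using the $\widehat K^\times$-equivariance in \Cref{lem:heegner-character-comparison} to reduce to a single isotypic line, on which the pairing is a non-degenerate hermitian form.
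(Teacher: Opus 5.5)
Your proposal is correct and follows the paper's proof essentially step for step. You choose a pure-tensor summand $f$ of $\Phi$ with $Z=P_\xi(f)\ne0$, pair it with a dual vector in the explicit Yuan--Zhang--Zhang formula, observe that the left side is a positive N\'eron--Tate height, and conclude $L'\ne0$ using only the finiteness of the local toric factors. The normalization point you flag is handled in the paper more directly than by your isotypic-line reduction: one takes $f^\vee=\rho_q\circ f$ for a polarization $\rho_q$, and since the Rosati involution on $M_q\simeq K$ is complex conjugation and $\overline{\xi_M}=\xi_M^{-1}$, one gets $P_{\xi^{-1}}(f^\vee)=\rho_q Z$ exactly, so the pairing is the height of $Z$ itself.
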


\begin{proof}
The archimedean factors are finite and non-zero at the center, so
passing between completed and primitive $L$-functions does not affect
the first-derivative non-vanishing.

Put $\xi=\chi^\sigma$.  Write $\Phi$ as a finite sum of pure tensors as
in the proof of \Cref{lem:yzz-split-quaternion}, and choose a summand
$f$ in the $M_q$-rational automorphic representation $\pi_{A_q}$ with
\[
 Z=P_\xi(f)\ne0.
\]
By \Cref{lem:yzz-split-quaternion}, this is the split modular-curve case
of the Gross--Zagier formula \cite{YZZ}.  Let $\rho_q:A_q\to A_q^\vee$ be a
polarization over $\mathbf Q$ and put $f^\vee=\rho_q\circ f$, which is
again a pure tensor under the compatible local decompositions.  The
invariant pairing of $f$ and $f^\vee$ is non-zero: after either embedding
of $M_q$ in $\mathbf C$, it is the value on $f$ of the positive Hermitian
form induced by $\rho_q$.  The Rosati involution on $M_q\simeq K$ is
complex conjugation.  Thus
$\rho_q\circ[m]=[\bar m]^\vee\circ\rho_q$ for $m\in M_q$, and
$\overline{\xi_M(u)}=\xi_M(u)^{-1}$; consequently
$P_{\xi^{-1}}(f^\vee)=\rho_qZ$.  

The
$M_q\otimes_{\mathbf Q}\mathbf R$-valued N\'eron--Tate pairing
of \cite[\S1B]{CST14} between these two points is non-zero, since its
real trace is the positive N\'eron--Tate height attached to $\rho_q$
and $Z$ is non-torsion.  The explicit formula of
\cite[Proposition~2.5]{CST14} expresses this
height as the central derivative multiplied by the invariant pairing,
global normalizing factors, and a product of local toric factors.
Only finiteness of the local toric factors is needed here: if the
central derivative vanished, the right-hand side of the formula, and
hence the height, would vanish.  Since the height is non-zero, one gets
\[
 L'(\tfrac12,\pi_{g_q},\xi)\ne0.
\]
The result follows from \eqref{eq:unitary-arithmetic-shift} and
\Cref{lem:heegner-character-comparison}.
\end{proof}

\subsection{Proofs of main results}

\begin{theorem}\label{thm:rankin-derivative}
Let $p\equiv8\pmod9$, $q\equiv4\pmod9$, and
$\alpha_q(p)\ne1$.  For $j=1,2$,
\[
 L'(1,g_q\times\chi_{q,j})\ne0,
 \qquad
 L(1,E_{p^j})=0,
 \qquad
 L'(1,E_{p^j})\ne0.
\]
\end{theorem}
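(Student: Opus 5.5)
The plan is to assemble \Cref{thm:rankin-derivative} from results already in hand. The first assertion, $L'(1,g_q\times\chi_{q,j})\ne0$, is exactly \Cref{prop:gross-zagier-nonvanishing}, so nothing new is needed there. The remaining two assertions will come from differentiating the factorization \eqref{eq:rankin-factorization} of \Cref{prop:rankin-factorization} at $s=1$. That computation is fed by the vanishing $L(1,E_{p^j})=0$ and the non-vanishing $L(1,E_{D_{q,j}})\ne0$ of \Cref{prop:complementary-rank-zero}.

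First, I establish $L(1,E_{p^j})=0$ by a sign argument.
\begin{itemize}
\item For $p\equiv8\pmod9$ the root number is $\varepsilon(E_{p^j})=-1$ (\cite{BS66,Liverance}, recalled in the table of \Cref{subsec:descent-bsd}).
\item The functional equation then forces odd order of vanishing, hence $L(1,E_{p^j})=0$.
\end{itemize}
Alternatively, one avoids the root-number citation and argues purely internally. Root number $-1$ for the Rankin--Selberg pair (\Cref{lem:yzz-split-quaternion}) gives $L(1,g_q\times\chi_{q,j})=0$. Since $L(1,E_{D_{q,j}})\ne0$, the factorization yields $L(1,E_{p^j})=0$. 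I prefer this internal route as the main argument, and I cite the table only as a consistency check.

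Next, I differentiate the factorization:
\[
 L'(1,g_q\times\chi_{q,j})=L'(1,E_{p^j})L(1,E_{D_{q,j}})+L(1,E_{p^j})L'(1,E_{D_{q,j}}).
\]
\begin{itemize}
\item The second term vanishes by the previous step.
\item The left side is non-zero by \Cref{prop:gross-zagier-nonvanishing}.
\item Therefore $L'(1,E_{p^j})L(1,E_{D_{q,j}})\ne0$, which gives $L'(1,E_{p^j})\ne0$.
\end{itemize}

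The only real subtlety is bookkeeping about normalizations. The factorization in \eqref{eq:rankin-factorization} is an identity of primitive $L$-functions with center $s=1$. The Gross--Zagier statement is phrased at center $1/2$, and the shift \eqref{eq:unitary-arithmetic-shift} translates between them; this is already handled inside \Cref{prop:gross-zagier-nonvanishing}. I would nonetheless check explicitly that no archimedean or ramified Euler factor at $3,p,q$ vanishes or has a pole at $s=1$, since such a factor could shift the order of vanishing. These local factors are finite and non-zero at the center (as noted in the proof of \Cref{prop:gross-zagier-nonvanishing}), so the order of vanishing at $s=1$ is unaffected and the argument goes through.
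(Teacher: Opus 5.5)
Your proposal is correct and is essentially the paper's proof. The first assertion is \Cref{prop:gross-zagier-nonvanishing}, and your product-rule computation using $L(1,E_{p^j})=0$ and $L(1,E_{D_{q,j}})\ne0$ is the paper's step of taking orders of vanishing in \eqref{eq:rankin-factorization}. The only difference is where the sign $-1$ comes from: the paper takes it from \Cref{rem:independent-rankin-sign} (that is, from $\varepsilon(E_{p^j})=-1$ together with \Cref{prop:complementary-rank-zero}) rather than from \Cref{lem:yzz-split-quaternion}, but either input suffices.
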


\begin{proof}
The first assertion is \Cref{prop:gross-zagier-nonvanishing}.  By
\Cref{rem:independent-rankin-sign}, the Rankin--Selberg root number is
$-1$.  Since \Cref{prop:complementary-rank-zero} gives
$\operatorname{ord}_{s=1}L(s,E_{D_{q,j}})=0$, taking orders in
\eqref{eq:rankin-factorization} yields
\[
 \operatorname{ord}_{s=1}L(s,g_q\times\chi_{q,j})
 =\operatorname{ord}_{s=1}L(s,E_{p^j})+0=1.
\]
This proves the remaining assertions.
\end{proof}

\begin{proof}[Proof of \Cref{thm:main}]
Choose $q$ as in \Cref{lem:chebotarev}.  Then
\Cref{thm:rankin-derivative}, for $j=1,2$, gives
\[
 \operatorname{ord}_{s=1}L(s,E_p)
 =\operatorname{ord}_{s=1}L(s,E_{p^2})=1.
\]
\end{proof}

\begin{proof}[Proof of \Cref{cor:arithmetic-rank} and \Cref{thm:sylvester}]
The theorems of Gross--Zagier and Kolyvagin turn analytic rank one into
Mordell--Weil rank one and finiteness of the Tate--Shafarevich group
\cite{GrossZagier,Kolyvagin,GrossKolyvagin}.  Hence
\Cref{thm:main} gives \Cref{cor:arithmetic-rank}.

Finally, the classical descent equivalence recalled at the start of the
introduction identifies positive rank of $E_m(\mathbf Q)$ with
non-trivial rational points on $X^3+Y^3=mZ^3$.  Thus both $p$ and $p^2$
are sums of two non-zero rational cubes.  Together with Hongbo Yin's results for
$p\equiv4,7\pmod9$ \cite{Yin47,YinGZ}, this proves
\Cref{thm:sylvester}.
\end{proof}

\appendix

\section{Examples}\label{app:examples}
We give two calculations illustrating the geometric aspects: the cubic descent class of the base CM point, the non-zero
boundary obtained after a first $\lambda$-division, and the passage
from that boundary to the cubic character components.  The case $q=13$ makes the division point and the boundary explicit,
while $q=31$ exhibits a non-trivial cube factor in the cubic
descent identity. 
\subsection{The cubic projector in the smallest quotient}

We first spell out the projector calculation that appears in the
examples.  After tracing away the prime-to-$3$ part of the
conductor-$p$ orbit, suppose that the remaining cyclic quotient is
$G=\langle X\rangle$ of order three.  For a point $Z$ on which $G$
acts, the two cubic components are
\[
 \Pi_{1,3}Z=Z+\omega^2Z^X+\omega Z^{X^2},
 \qquad
 \Pi_{2,3}Z=Z+\omega Z^X+\omega^2Z^{X^2},
\]
whereas the usual norm is
\[
 N_GZ=Z+Z^X+Z^{X^2}.
\]
Since $\omega\equiv1\pmod\lambda$, the difference between the norm and
each cubic projector is divisible by $\lambda$ in the group ring:
\[
 \frac{N_G-\Pi_{1,3}}{\lambda}=-\omega^2X+X^2,
 \qquad
 \frac{N_G-\Pi_{2,3}}{\lambda}=X-\omega^2X^2.
\]
This is the integral identity which allows the unweighted boundary to
detect the weighted cubic components.

Indeed, in the situation of \Cref{sec:cubic-components}, write
$S=N_GZ$.  If, say, $\Pi_{j,3}Z=O$, then
\[
 T_j:=\frac{N_G-\Pi_{j,3}}{\lambda}Z
\]
is $G$-fixed and satisfies $[\lambda]T_j=S$.  The boundary calculation
shows that
\[
 S\in E_\varpi[3]\setminus E_\varpi[\lambda],
\]
while cubic descent shows that such a point is not
$\lambda$-divisible over $F_q$.  This contradiction forces both cubic
components to be non-zero.  The weighted sums themselves reduce to
$O$ at $p$, because the points in the conductor-$p$ orbit have the
same reduction and the cubic weights sum to zero.  The local torsion
argument of \Cref{lem:torsion-reduction-kernel} then upgrades
non-vanishing to non-torsion.  Thus the projector transfers the single non-zero division boundary to the two cubic
character components.

\subsection{The example \texorpdfstring{$q=13$}{q=13}}\label{app:q13}
Take
\[
 q=13,\qquad
 \varpi=4+3\omega,\qquad
 \bar\varpi=1-3\omega,\qquad
 t^3=\varpi,
\]
and choose $r=23$, so that
\[
 r^2-r+1=13\cdot39,
 \qquad
 -r\equiv\omega\pmod\varpi.
\]
Put
\[
 \tau_{23}:=-\frac1{3(\omega+23)}=\tau_0.
\]

\begin{proposition}\label{prop:example-q13}
\begin{enumerate}[label=\textup{(\roman*)}]
\item The CM point and its first $\lambda$-division may be taken as
\[
 Q_{13}=
 \left((-2\omega-7)t^{-2},\frac{9\omega+7}{2}\right),
 \qquad
 R_{13}=[\omega-1:t:1],
\]
with $\ell_{\bar\varpi}(R_{13})=Q_{13}$ and
\[
 y(Q_{13})+\frac{\bar\varpi}{2}=\varpi,
 \qquad
 y(Q_{13})-\frac{\bar\varpi}{2}=(\omega-1)^3.
\]
Thus the descent class\footnote{In this example the
cube multiplier in the first factor happens to be $1$, so the Kummer
class can be read directly from the displayed identity.} is $[\varpi]_K$.

\item For $p=17$,
\[
 \alpha_{13}(17)=17^4\equiv\omega^2\pmod\varpi,
 \qquad
 \partial^c_{13,17}=T_E^\sharp=(0,\varpi/2).
\]
Here $|G_p|=6$ and
\[
 |\operatorname{Gal}(H_p^{(3)}/K)|=3.
\]
Thus, after the prime-to-$3$ trace, the cubic quotient is the
order-three situation described above.  Explicitly,
\[
 \Pi_{1,3}=1+\omega^2X+\omega X^2,
 \qquad
 \Pi_{2,3}=1+\omega X+\omega^2X^2,
\]
and
\[
 \frac{N_G-\Pi_{1,3}}{\lambda}=-\omega^2X+X^2,
 \qquad
 \frac{N_G-\Pi_{2,3}}{\lambda}=X-\omega^2X^2.
\]
By \Cref{thm:cubic-projection-criterion}, the non-zero boundary forces
both cubic components to be non-zero.  Since both reduce to $O$ at
$p$, \Cref{lem:torsion-reduction-kernel} shows that they are
non-torsion.

\item For $p=107$,
\[
 \alpha_{13}(107)=107^4\equiv\omega\pmod\varpi,
 \qquad
 \partial^c_{13,107}=-T_E^\sharp.
\]
Now
\[
 C_{p,3}\simeq\mathbf Z/27\mathbf Z,
 \qquad
 \operatorname{Gal}(H_p^{(3)}/K)\simeq\mathbf Z/9\mathbf Z,
\]
and, modulo $\lambda$,
\[
 \overline{\Pi_{1,9}}=\overline{\Pi_{2,9}}
 =1+X+\cdots+X^8=(X-1)^8.
\]
Thus the same norm--projector comparison persists when a higher power
of $3$ divides $p+1$.
\end{enumerate}
\end{proposition}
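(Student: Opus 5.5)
The plan is to verify the three parts of \Cref{prop:example-q13} by direct computation, specializing the general results of Sections~5--8 to $q=13$.

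For (i), I first check the arithmetic data. We have $N(4+3\omega)=16-12+9=13$ and $\varpi\equiv1\pmod3$. Also $23^2-23+1=507=13\cdot39$, which is divisible by $3q=39$. To check $-23\equiv\omega\pmod\varpi$, note that $\omega\equiv-4/3$ modulo $\varpi$, so the condition is $3\cdot23\equiv4\pmod{13}$, i.e.\ $69\equiv4$, which holds. Hence $\tau_{23}=\tau_0$ is the point of \eqref{eq:section2-r-choice}.

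Rather than computing $\varphi(P_0)$ from $q$-expansions, I would certify the displayed point structurally. First, $Q_{13}$ lies on $E_{\bar\varpi}$: with $x=(-2\omega-7)t^{-2}$ one has $x^3=(-2\omega-7)^3/\varpi^2$. The descent identities give $x^3=(y+\bar\varpi/2)(y-\bar\varpi/2)=\varpi(\omega-1)^3$, so it suffices to check $(-2\omega-7)^3=\varpi^3(\omega-1)^3$, i.e.\ $-2\omega-7=\zeta\varpi(\omega-1)$ for some $\zeta\in\mu_3$. Indeed $\varpi(\omega-1)=(4+3\omega)(\omega-1)=-1-2\omega-3=-4-2\omega$ up to the relation $\omega^2=-1-\omega$; a small unit adjustment of this kind is what I expect, and I would fix it by hand. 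The $y$-identities are immediate: $(9\omega+7)/2\pm(1-3\omega)/2$ gives $4+3\omega$ and $3+6\omega=(\omega-1)^3$, the latter since $(\omega-1)^3=-3\lambda\omega\cdot\ldots$ expands to $3+6\omega$.

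For $R_{13}=[\omega-1:t:1]$, the relation $t^3-(\omega-1)^3=\varpi-(3+6\omega)=1-3\omega=\bar\varpi$ shows it lies on $\mathscr C_{\bar\varpi}$. Then \eqref{eq:descent-pullback} gives $\ell_{\bar\varpi}(R_{13})=(UV,(U^3+V^3)/2)$, and $UV=(\omega-1)t=(\omega-1)\varpi t^{-2}$ should match $x(Q_{13})$ after the unit check above. The step I expect to be the main obstacle is identifying this explicit point with $\varphi(P_0)$. I would argue that \Cref{lem:base-branches}, \Cref{prop:uniform-kummer} and \Cref{prop:normalized-lift} force $Q_q$ into a finite list of candidates of this shape, namely those with $t^2x,\,y\in K$ and descent class $[\varpi]$. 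The remaining ambiguity would then be resolved by a numerical evaluation of the modular parametrization at $\tau_{23}$. This pins down $\eta_q=+1$, and the descent class $[\varpi]_K$ follows.

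For (ii) and (iii), I would compute $\alpha_{13}(p)\equiv p^{(13-1)/3}=p^4\pmod\varpi$ in $\mathbf F_{13}$, using $\omega_q\equiv-4/3\equiv3$, so that $\omega\mapsto3$ and $\omega^2\mapsto9$. For $p=17$: $17\equiv4$, and $4^4=256\equiv9$, giving $\omega^2$. For $p=107$: $107\equiv3$, and $3^4=81\equiv3$, giving $\omega$. With $\eta_q=+1$, table \eqref{eq:affine-sign-table} yields $\varepsilon=+1$ and $-1$ respectively, and \Cref{prop:division-boundary} then gives the two stated boundaries.

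For the group orders: $p+1=18$ gives $|G_p|=6$, with $3$-part $3$. For $p=107$, $p+1=108=4\cdot27$ gives $C_{p,3}\simeq\mathbf Z/27\mathbf Z$ and quotient $\mathbf Z/9\mathbf Z$ by \Cref{lem:three-primary-quotient}. The projector formulas are \Cref{lem:integral-projector-identities} with $n_0=3$, expanded directly. Modulo $\lambda$ we have $\omega\equiv1$, so $\Pi_{j,9}\equiv N_G=(X^9-1)/(X-1)\equiv(X-1)^8$ in characteristic $3$. Finally, \Cref{thm:cubic-projection-criterion} and \Cref{lem:torsion-reduction-kernel} apply exactly as in the proof of \Cref{thm:cubic-points}.
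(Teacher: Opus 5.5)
\textbf{Gap: the identification $\varphi([\tau_{23},1])=Q_{13}$ in (i).} Your other checks agree with the paper, which handles (ii) and (iii) exactly as you do. These are the substitutions in (i), the residues $17^4\equiv 9=\omega_q^2$ and $107^4\equiv 3=\omega_q$ in $\mathbf F_{13}$, the group orders, and the projector identities.

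The identification step fails as proposed. You claim that \Cref{lem:base-branches}, \Cref{prop:uniform-kummer} and \Cref{prop:normalized-lift} confine $Q_q$ to a finite list of candidates; this is false. The condition $t^2x,\,y\in K$ says exactly that $S_t(Q_q)=(t^2x,\varpi y)$ lies in $E_{13}(K)$. The curve $E_{13}$ has positive rank, since $13=(7/3)^3+(2/3)^3$. Since $Y+q/2=\varpi(y+\bar\varpi/2)$, prescribing the descent class only restricts $S_t(Q_q)$ to finitely many cosets of $[\lambda]E_{13}(K)$, and these are infinite. So a numerical evaluation at $\tau_{23}$, with no height or integrality bound, can recognize $Q_{13}$ but cannot prove the equality.

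Nothing in the paper's general results determines $\varphi(P_0)$. The paper instead imports two values from the explicit computation in \cite[\S10, example~(2)]{Yin47}: $\varphi([\tau_{23},1])=Q_{13}$ and $\varphi^c([\tau_{23},1])=(0,\varpi/2)$. You never supply the second value, but it is needed. It shows $Q^{c,(0)}\neq O$, i.e.\ that $\tau_0$ is an admissible choice of $\tau_*$. Otherwise the construction would use $\tau_1$, and the statement's ``may be taken as'' would fail. Once that is granted, your deduction of $\eta_{13}=+1$ from $Q_{13}$ is sound. Taking $\eta_q=-1$ in \eqref{eq:intro-kummer} would give $(\omega-1)^3=-\varpi z_q^3$ with $z_q\in K^\times$, making $\varpi$ a cube in $K$.

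\textbf{Arithmetic slip in the $x$-coordinate check.} In fact $(4+3\omega)(\omega-1)=4\omega-4+3\omega^2-3\omega=-7-2\omega$ exactly; this is the identity the paper uses for the Fermat lift. Hence $x(Q_{13})=(\omega-1)\varpi t^{-2}=(\omega-1)t=UV$ on the nose, with no unit adjustment. Your value $-4-2\omega$ has norm $12$, not $N(-7-2\omega)=39$, so no unit could have reconciled it. The ``fix by hand'' step should simply be the correct multiplication.
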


\begin{proof}
The formula for $Q_{13}=\varphi([\tau_{23},1])$ and the conjugate value
$\varphi^c([\tau_{23},1])=(0,\varpi/2)$ are
\cite[\S10, example~(2)]{Yin47}.  The two displayed descent factors are
direct substitutions, and $(\omega-1)\varpi=-7-2\omega$ gives the
stated Fermat lift.  These formulas specialize
\Cref{prop:uniform-kummer,prop:normalized-lift}.

For $p=17$ and $107$, the displayed cubic-residue calculations
determine the sign in \Cref{prop:frobenius-obstruction};
\Cref{prop:division-boundary} then gives the two boundary values.  The
group orders follow from \Cref{lem:ring-class-p-ramification}, and the
projector formulas are the specialization of
\Cref{lem:integral-projector-identities}.  The congruence for $n=9$
uses $X^9-1=(X-1)^9$ in characteristic $3$.
\end{proof}

For comparison with Part~I, when $p=17$ the residue parameter is
$\beta=2$ and Chan's two matrices are
\[
 R_1^T=
 \begin{pmatrix}
 2&2\\
 0&1
 \end{pmatrix},
 \qquad
 R_2^T=
 \begin{pmatrix}
 2&1\\
 0&2
 \end{pmatrix}.
\]
Both are invertible over $\mathbf F_3$ 
(cf.~\Cref{lem:chan-matrices}).  Thus the same cubic residue condition which
makes the Heegner boundary non-zero also proves the complementary rank-zero assertion in this example.

\subsection{A non-trivial cube factor for
\texorpdfstring{$q=31$}{q=31}}\label{app:q31}
Take $p=17$ and
\[
 \varpi=1+6\omega,\qquad
 \bar\varpi=-5-6\omega,\qquad
 t^3=\varpi.
\]
Choose $t$ with the cube-root normalization used in
\cite[\S10, example~(3)]{Yin47}.
Choose $r=26$; then
\[
 r^2-r+1=31\cdot21,
 \qquad
 \alpha_{31}(17)=\omega^2.
\]
Put
\[
 \tau_{26}:=-\frac1{3(\omega+26)}=\tau_0,\qquad
 u=\frac{-2-10\omega}{7},\qquad
 v=\frac{-5+3\omega}{7},\qquad
 R_{31}=[u:tv:1].
\]

\begin{lemma}\label{lem:example-q31}
One has
\[
 \varpi v^3-u^3=\bar\varpi,
\]
so $R_{31}\in\mathscr C_{\bar\varpi}(F_q)$.  For
$Q_{31}:=\varphi([\tau_{26},1])=\ell_{\bar\varpi}(R_{31})$,
\[
 y(Q_{31})+\frac{\bar\varpi}{2}=\varpi v^3,
 \qquad
 y(Q_{31})-\frac{\bar\varpi}{2}=u^3.
\]
In particular, the descent class is again $[\varpi]_K$, but 
the first descent value now contains a non-trivial cube factor.
 Moreover,
\[
 \partial^c_{31,17}=T_E^\sharp.
\]
Thus the Kummer class, rather than the particular representative of
the descent factor, is what enters the Frobenius boundary calculation.
\end{lemma}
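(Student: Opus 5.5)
The identity $\varpi v^3-u^3=\bar\varpi$ is a finite calculation in $\mathbf Z[\omega]$, and I will do it first. Put $\omega^2=-1-\omega$. Then $(-5+3\omega)^2=25-30\omega+9\omega^2=16-39\omega$, and multiplying by $-5+3\omega$ gives $v^3=(-197+120\omega)/343$ after reducing $\omega^2$. Multiplying by $\varpi=1+6\omega$ gives $\varpi v^3=(-917-462\omega)/343$. Similarly $(-2-10\omega)^3=-8(1+5\omega)^3$, and $(1+5\omega)^3=-99+45\omega-125=-224+45\omega$ (using $\omega^3=1$), so $u^3=(1792-360\omega)/343$. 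The difference is $\varpi v^3-u^3=(-2709-102\omega)/343$. I will recheck this arithmetic carefully: the target is $\bar\varpi=-5-6\omega$, i.e.\ numerator $-1715-2058\omega$. If the first pass disagrees, the likely culprit is a sign or an $\omega^2$ reduction, so I will redo it, possibly with the norm $N(\varpi)=31$ as a check.

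Once the identity holds, $R_{31}=[u:tv:1]$ satisfies $V^3-U^3=t^3v^3-u^3=\varpi v^3-u^3=\bar\varpi$, so $R_{31}\in\mathscr C_{\bar\varpi}(F_q)$. By \eqref{eq:descent-pullback} in \Cref{lem:Fermat-lambda-cover}, the point $Q=\ell_{\bar\varpi}(R_{31})$ then has $y(Q)+\bar\varpi/2=V^3=\varpi v^3$ and $y(Q)-\bar\varpi/2=u^3$. To identify this $Q$ with $\varphi([\tau_{26},1])$, I will quote the explicit value in \cite[\S10, example~(3)]{Yin47}, with the chosen normalization of $t$, and compare coordinates. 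Here $x(Q)=UV=tuv$ and $y(Q)=(u^3+\varpi v^3)/2$. I expect this matching of normalizations with Yin's table, and with the label swap of \Cref{rem:yin-conventions}, to be the main obstacle. As a consistency check, $y(Q)\in K$ and $t^2x(Q)=\varpi uv\in K$, which agrees with \Cref{lem:base-branches}.

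The descent class is then $[\varpi v^3]_K=[\varpi]_K$, and $v$ is not a unit cube times $1$, which is the non-trivial cube factor. This also shows $\eta_q=+1$ in \Cref{prop:uniform-kummer}, consistent with \eqref{eq:descent-class-table}, since the $tK$-coordinate is $V$. For the boundary, I note $\alpha_{31}(17)=\omega^2$ (by computing $17^{10}\bmod\varpi$, or as given) and apply \Cref{prop:frobenius-obstruction} with $\eta_q=+1$ and $\alpha=\omega^2$: the table \eqref{eq:affine-sign-table} gives $\varepsilon_{q,p}=+1$. Then \Cref{prop:division-boundary} gives $\partial^c_{31,17}=T_E^\sharp$. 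The Frobenius argument only uses the lines $K$ and $tK$ of the coordinates, not the particular representative, which justifies the final remark.
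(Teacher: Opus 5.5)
Your route is the paper's: the identity is direct multiplication in $K$; the two descent values come from \eqref{eq:descent-pullback} with $U=u$, $V=tv$; the identification with $\varphi([\tau_{26},1])$ is quoted from Yin's example~(3); and the boundary follows from \Cref{prop:frobenius-obstruction} and \Cref{prop:division-boundary}. Your check $17^{10}\equiv25\equiv\omega_q^2\pmod{\varpi}$ is correct. Your deduction of $\eta_{31}=+1$ from \Cref{prop:uniform-kummer} is also valid, because $\varpi$ is not a cube in $K$.

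The gap is in your first paragraph, which does not prove the identity. You end with $(-2709-102\omega)/343\ne\bar\varpi$ and leave the discrepancy unresolved. In fact both cubes are miscomputed. A norm check exposes this at once: $N(-197+120\omega)=76849\ne49^3$ and $N(-224+45\omega)=62281\ne21^3$. In the second expansion you also gave the $125\omega^3$ term the wrong sign. The correct values are
\[
 (-5+3\omega)^3=37+360\omega,\qquad (1+5\omega)^3=51-60\omega .
\]
Hence
\[
 \varpi v^3=\frac{(1+6\omega)(37+360\omega)}{343}=\frac{-2123-1578\omega}{343},
 \qquad
 u^3=\frac{-8(51-60\omega)}{343}=\frac{-408+480\omega}{343}.
\]
Subtracting gives $\varpi v^3-u^3=(-1715-2058\omega)/343=-5-6\omega=\bar\varpi$, as claimed. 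The corrected value $v^3=(37+360\omega)/343\ne1$ also states the ``non-trivial cube factor'' cleanly, replacing your unclear phrase about $v$ not being ``a unit cube times $1$''. With this correction, the rest of your argument goes through as in the paper.
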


\begin{proof}
The co-ordinates of $\varphi([\tau_{26},1])$ are given in
\cite[\S10, example~(3)]{Yin47}.  With the cube-root normalization
fixed above, the displayed $u$ and $v$ are the corresponding cube
roots.  Direct substitution in the formula
for $\ell_{\bar\varpi}$ identifies this point with
$\ell_{\bar\varpi}(R_{31})$.  The first identity is direct
multiplication in $K$; the two descent factors follow from the same
formula.  Thus this example realizes the general identity of
\Cref{prop:uniform-kummer} without the cube factor $1$
occurring for $q=13$.  Since $\alpha_{31}(17)=\omega^2$, the Frobenius
and boundary conclusions follow exactly as in
\Cref{prop:frobenius-obstruction,prop:division-boundary}.  Since $p=17$, the cubic quotient is again of order three, so the
projector calculation is the same as in
\Cref{prop:example-q13}(ii).
\end{proof}
\section{The construction for \texorpdfstring{$p<500$}{p < 500}}\label{app:table}

In this appendix we make explicit, for the fourteen primes
$p\equiv8\pmod9$ below $500$, the CM construction of rational points on
the cube-sum elliptic curves $E_p$, and record the resulting
representations of $p$ as a sum of two rational cubes.

\Cref{tab:construction} lists the auxiliary prime $q$, the cubic residue
symbol $\alpha_q(p)$, the division boundary, and the two group orders
entering the cubic projector.  The resulting identities are given in
\Cref{prop:table-construction}, while \Cref{tab:points} gives explicit
rational points on $E'_{p}$ together with size and height data for chosen
points on $E'_{p^2}$.  Finally, in \Cref{app:table-cm} we evaluate the
$E_p$-component of the CM construction numerically and identify it with
the corresponding rational point.

\subsection{Normalizations}\label{app:table-conventions}
Throughout, $\varpi=a+b\omega$ denotes the factor of $q$ determined by
\begin{equation}\label{eq:primary-normalization}
 a\equiv1\pmod 3,\qquad
 b\equiv0\pmod 3,\qquad
 b>0.
\end{equation}
The congruences choose the primary generator of either prime above $q$,
and $b>0$ chooses the factor with $\operatorname{Im}\varpi>0$.  This is
the convention of
\Cref{app:q13,app:q31}, where $13=(4+3\omega)(1-3\omega)$ and
$31=(1+6\omega)(-5-6\omega)$.

For each $p$ we take $q$ to be the least prime satisfying the two
conditions of \Cref{lem:chebotarev}.  Only $q=13$, $31$ and $67$ occur
below $500$, and $q=13$ is used for ten of the fourteen primes.

By \eqref{eq:affine-sign-table}, the sign $\varepsilon_{q,p}$ is the
product of $\eta_q$, which depends only on $q$, with $+1$ or $-1$
according as $\alpha_q(p)=\omega^2$ or $\alpha_q(p)=\omega$; we write
this as $\pm\eta_q$.  One has
$\eta_{13}=\eta_{31}=+1$ by \Cref{prop:example-q13} and
\Cref{lem:example-q31}.  We do not determine $\eta_{67}$, since only
non-vanishing of the boundary is used.  Here $n_0$ denotes the order of
$\operatorname{Gal}(H_p^{(3)}/K)$, the cyclic group on which the
projector of \Cref{thm:cubic-projection-criterion} acts.

\subsection{The division boundary}\label{app:table-boundary}

\begin{proposition}\label{prop:table-construction}
Let $p\equiv8\pmod 9$ be one of the fourteen primes below $500$, and
choose $q$ and $\varpi$ as in \Cref{app:table-conventions}.  Then
$q\in\{13,31,67\}$, and the values of $q$, $\varpi$, $\alpha_q(p)$ and
$\varepsilon_{q,p}$ are those recorded in \Cref{tab:construction}.
Moreover,
\[
 \partial^c_{q,p}=\varepsilon_{q,p}\Bigl(0,\frac{\varpi}{2}\Bigr)\ne O,
 \qquad
 |G_p|=\frac{p+1}{3},
 \qquad
 n_0=3^{\,\operatorname{ord}_3(p+1)-1}.
\]
In particular $3\mid n_0$.
\end{proposition}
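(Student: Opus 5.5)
The proposition is essentially a finite verification plus an appeal to results already proved, so the plan is to list the fourteen primes, compute the table entries, and then quote \Cref{prop:division-boundary} and \Cref{lem:three-primary-quotient}.

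First, enumerate the primes $p<500$ with $p\equiv8\pmod9$: these are $17,53,71,89,107,179,197,233,251,269,359,431,449,467$. For each $p$, run through primes $q\equiv4\pmod9$ in increasing order, namely $13,31,67,103,\dots$. For each candidate $q$, take the normalized $\varpi$ of \eqref{eq:primary-normalization}. These are $4+3\omega$ for $13$, $1+6\omega$ for $31$, and the analogous generator for $67$. Then compute $\alpha_q(p)=(p/\varpi)_3$ via the congruence $p^{(q-1)/3}\bmod\varpi$, reducing in $\mathbf F_q$ through $\omega\mapsto\omega_q$. The least $q$ with $\alpha_q(p)\ne1$ is recorded.

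Two observations justify doing this only for $\varpi$ and not both factors. By \Cref{lem:chebotarev}(ii), both conjugate symbols are non-trivial simultaneously, since they are inverse to each other. The claim that $q\in\{13,31,67\}$ is just the outcome of this finite search. The main bookkeeping risk is the normalization of $\omega_q$ modulo $\varpi$, which must match the convention $-r\equiv\omega\pmod\varpi$ in \Cref{app:q13,app:q31}. As a check, I would reproduce $\alpha_{13}(17)=\omega^2$ and $\alpha_{13}(107)=\omega$ from \Cref{prop:example-q13}, and $\alpha_{31}(17)=\omega^2$ from \Cref{lem:example-q31}.

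Once $\alpha_q(p)\ne1$ is established, read $\varepsilon_{q,p}$ off \eqref{eq:affine-sign-table}; that is, $\varepsilon_{q,p}=\pm\eta_q$ according as $\alpha_q(p)=\omega^2$ or $\omega$. Use $\eta_{13}=\eta_{31}=+1$ and leave $\eta_{67}$ symbolic. The boundary formula is then exactly \Cref{prop:division-boundary} together with \eqref{eq:TE-explicit}, which give $\partial^c_{q,p}=\varepsilon_{q,p}T_E^\sharp=\varepsilon_{q,p}(0,\varpi/2)$. This point is non-zero because $(0,\varpi/2)\in E_\varpi[\lambda]\setminus\{O\}$.

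The group orders come from \eqref{eq:Gp-identification}, which gives $|G_p|=(p+1)/3$, and from \Cref{lem:three-primary-quotient}, which gives $n_0=n=3^{\operatorname{ord}_3(p+1)-1}$. Since $9\mid p+1$, we have $\operatorname{ord}_3(p+1)\ge2$, hence $3\mid n_0$.

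The only genuine obstacle is computational correctness of the cubic residue symbols and sign conventions in the table. No new theory is needed.
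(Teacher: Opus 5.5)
Your proposal is correct and matches the paper's proof. The paper likewise checks $\alpha_q(p)\equiv p^{(q-1)/3}\pmod{\varpi}$ directly over the primes $q\equiv4\pmod9$, with $\varpi$ normalized as in \eqref{eq:primary-normalization}. It then obtains the boundary from \Cref{prop:division-boundary}, \eqref{eq:TE-explicit} and \eqref{eq:affine-sign-table}, and gets $|G_p|$ and $n_0$ from \eqref{eq:Gp-identification} together with the cyclicity of $G_p$.

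One small simplification: the residue $\omega_q$ is forced by $\varpi=a+b\omega$ itself, namely $\omega\equiv -ab^{-1}\pmod{\varpi}$. So the only genuine convention is the choice $b>0$ of which conjugate factor is $\varpi$, since swapping the factors inverts $\alpha_q(p)$.
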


\begin{proof}
The value of $\alpha_q(p)$ and the minimality of $q$ are checked directly
by evaluating
$\alpha_q(p)\equiv p^{(q-1)/3}\pmod{\varpi}$ over the primes
$q\equiv4\pmod 9$ up to the tabulated value, with $\varpi$ normalized by
\eqref{eq:primary-normalization}.  Since $\alpha_q(p)\ne1$,
\Cref{prop:division-boundary} gives
$\partial^c_{q,p}=\varepsilon_{q,p}T_E^\sharp$, and
$T_E^\sharp=(0,\varpi/2)$ by \eqref{eq:TE-explicit}; the sign is
determined by $\eta_q$ and $\alpha_q(p)$ through
\eqref{eq:affine-sign-table}.  The order of $G_p$ is
\eqref{eq:Gp-identification}, and since $G_p$ is cyclic its maximal
$3$-power quotient has order $3^{\operatorname{ord}_3(p+1)-1}$.  As
$p\equiv8\pmod 9$ forces $\operatorname{ord}_3(p+1)\ge2$, one has
$3\mid n_0$, which is the hypothesis
required in \Cref{thm:cubic-projection-criterion}.
\end{proof}

The coefficients $+\eta_q$ and $-\eta_q$ occur six and eight times,
respectively, and every boundary is non-zero.

\begin{table}[htbp]
\caption{The data of the construction.  Here $q$ is the least admissible
auxiliary prime, $\varpi$ its primary factor, and
$\partial^c_{q,p}=\varepsilon_{q,p}(0,\varpi/2)$.}
\label{tab:construction}
\begin{tabular}{r r l c c r r}
\toprule
$p$ & $q$ & $\varpi$ & $\alpha_q(p)$ & $\varepsilon_{q,p}$
 & $|G_p|$ & $n_0$\\
\midrule
 17 & 13 & $4+3\omega$ & $\omega^2$ & $+\eta_{13}$ &   6 & 3\\
 53 & 31 & $1+6\omega$ & $\omega$   & $-\eta_{31}$ &  18 & 9\\
 71 & 13 & $4+3\omega$ & $\omega^2$ & $+\eta_{13}$ &  24 & 3\\
 89 & 13 & $4+3\omega$ & $\omega$   & $-\eta_{13}$ &  30 & 3\\
107 & 13 & $4+3\omega$ & $\omega$   & $-\eta_{13}$ &  36 & 9\\
179 & 13 & $4+3\omega$ & $\omega$   & $-\eta_{13}$ &  60 & 3\\
197 & 13 & $4+3\omega$ & $\omega$   & $-\eta_{13}$ &  66 & 3\\
233 & 67 & $7+9\omega$ & $\omega$   & $-\eta_{67}$ &  78 & 3\\
251 & 13 & $4+3\omega$ & $\omega^2$ & $+\eta_{13}$ &  84 & 3\\
269 & 13 & $4+3\omega$ & $\omega^2$ & $+\eta_{13}$ &  90 & 9\\
359 & 31 & $1+6\omega$ & $\omega$   & $-\eta_{31}$ & 120 & 3\\
431 & 13 & $4+3\omega$ & $\omega$   & $-\eta_{13}$ & 144 & 9\\
449 & 13 & $4+3\omega$ & $\omega^2$ & $+\eta_{13}$ & 150 & 3\\
467 & 67 & $7+9\omega$ & $\omega^2$ & $+\eta_{67}$ & 156 & 3\\
\bottomrule
\end{tabular}
\end{table}

\subsection{The resulting points}\label{app:table-points}
The Jacobian of the diagonal cubic $X^3+Y^3=mZ^3$ is
\[
 E'_m:\quad y^2=x^3-432m^2,
\]
and $E'_m$ is $3$-isogenous over $\mathbf Q$ to $E_m$.  In particular,
$E'_m$ and $E_m$ have the same Mordell--Weil rank.
By
\Cref{thm:main} and \Cref{cor:arithmetic-rank}, $E'_p(\mathbf Q)$ and
$E'_{p^2}(\mathbf Q)$ have rank one.  For each such $m$, let $P_m$
denote the chosen non-torsion rational point used below.  Under the
birational correspondence
\[
 x=\frac{12m}{X+Y},\qquad y=\frac{36m(X-Y)}{X+Y},
\]
the point $P_m$ corresponds to a representation
\[
 m=\Bigl(\frac{\mathsf a}{\mathsf c}\Bigr)^3
  +\Bigl(\frac{\mathsf b}{\mathsf c}\Bigr)^3,
 \qquad
 \gcd(\mathsf a,\mathsf b,\mathsf c)=1,\quad \mathsf c>0.
\]
We write $\hat h$ for the N\'eron--Tate height on $E'_m$ normalized by
$\hat h(P)=\lim_{n}4^{-n}\log H(x(2^nP))$, with $H$ the naive height of a
rational number, and $d_{p^2}$ for the number of decimal digits of
$\max(|\mathsf a|,|\mathsf b|,\mathsf c)$ in the corresponding
representation of $p^2$.

\begin{table}[htbp]
\caption{The resulting points.  The columns $\mathsf a,\mathsf b,\mathsf c$
give $\mathsf a^3+\mathsf b^3=p\,\mathsf c^3$; $d_{p^2}$ and
$\hat h(P_{p^2})$ record the size and height of the corresponding point
for $p^2$.}
\label{tab:points}
\small
\begin{tabular}{r rrr r @{\qquad} r r}
\toprule
 & \multicolumn{4}{c}{$E'_p$} & \multicolumn{2}{c}{$E'_{p^2}$}\\
\cmidrule(lr){2-5}\cmidrule(lr){6-7}
$p$ & $\mathsf a$ & $\mathsf b$ & $\mathsf c$ & $\hat h(P_p)$
 & $d_{p^2}$ & $\hat h(P_{p^2})$\\
\midrule
 17 &        18 &          $-1$ &        7 &  1.9273 &  3 &   3.5644\\
 53 &      1872 &      $-1819$ &      217 &  5.0389 &  8 &  11.8337\\
 71 &       197 &       $-126$ &       43 &  3.5477 & 10 &  13.9706\\
 89 &        53 &           36 &       13 &  2.7322 & 18 &  27.1254\\
107 &        90 &           17 &       19 &  3.0052 &  5 &   6.7127\\
179 &   2184480 &   $-1305053$ &   357833 &  9.7551 & 12 &  17.7055\\
197 &      2339 &      $-2142$ &      247 &  5.1962 &  6 &   8.9103\\
233 &    124253 &    $-124020$ &     3589 &  7.8235 & 53 &  79.8702\\
251 &      4284 &      $-4033$ &      373 &  5.5967 & 42 &  63.9714\\
269 & 800059950 & $-786434293$ & 45728263 & 13.6790 & 28 &  42.4408\\
359 &  77517180 &     50972869 & 11855651 & 12.1904 & 76 & 116.2647\\
431 &       701 &       $-270$ &       91 &  4.3814 & 27 &  40.2415\\
449 &       323 &          126 &       43 &  3.8775 & 13 &  19.5327\\
467 &      1170 &       $-703$ &      139 &  4.7338 & 93 & 142.6023\\
\bottomrule
\end{tabular}
\end{table}

The identities $\mathsf a^3+\mathsf b^3=m\,\mathsf c^3$ were checked
exactly.  In each case the analytic rank of $E'_m$ was also computed numerically and found
to be one, in agreement with \Cref{thm:main}.

\begin{remark}\label{rem:table-sizes}
The link between the two tables is \Cref{thm:main}, rather than a direct
formula.  The construction gives the non-zero cubic component $\Pi Z$ of
the conductor-$p$ orbit on $E_\varpi$ over $L=F_qH_p^{(3)}$, while the
passage to \Cref{tab:points} uses the YZZ formula and the theorems of
Gross--Zagier and Kolyvagin.  The last two columns record the corresponding
$p^2$-data.  For $p=467$, the chosen point on $E'_{p^2}(\mathbf Q)$
corresponds to the representation
$p^2=(\mathsf a/\mathsf c)^3+(\mathsf b/\mathsf c)^3$ with
{\scriptsize
\[
\begin{aligned}
\mathsf a&=673856426282652472611591340002544597289173081567459219714429659481676561842102473757286093341,\\
\mathsf b&=506194562976553759385625547774811785929849248184944704311187269648509279438723916191212747992,\\
\mathsf c&=12594511180312553706327489029730767187582693859494639681094586806325043933759054535070301061,
\end{aligned}
\]
}%
while for $p=359$ the numerators have $76$ digits.  A direct search at this height is impractical; the entries for $p$ itself
in \Cref{tab:points} are much smaller.
\end{remark}

\subsection{Explicit CM evaluations for
\texorpdfstring{$p<500$}{p < 500}}\label{app:table-cm}
We now evaluate the $E_p$-component of the construction.  For a prime
$p$ in \Cref{tab:construction}, define the single-orbit cubic component
\[
 P^\circ_{q,p,1}
 :=\sum_{\sigma\in G_p}
 [\vartheta_{p,1}(\sigma)^{-1}](Q^c_{q,p})^\sigma.
\]
Since $Q_C=Q^c_{q,p}$ for $C\in C_q^{(3)}$ by
\eqref{eq:hecke-field}, one has the exact relation
\begin{equation}\label{eq:normalized-cm-component}
 P_{q,p,1}=[h_q]P^\circ_{q,p,1},
 \qquad h_q=\frac{q-1}{3}.
\end{equation}
Choose $s\in H_p$ with $s^3=p$, put $u=s/t$, and use the maps
\[
 \operatorname{Tw}_{p/\varpi}(x,y)=(u^2x,u^3y),
 \qquad
 \kappa_p(x,y)=(-12x,-24\sqrt{-3}\,y).
\]
Thus $\operatorname{Tw}_{p/\varpi}:E_\varpi\to E_p$, while
$\kappa_p:E_p\to E'_p$; the transport used below is
$\kappa_p\circ\operatorname{Tw}_{p/\varpi}$.
We use the same point symbols for their transported images.  For each
$p$, the numerical value of $P^\circ_{q,p,1}$ was recognized, up to sign,
as an integer multiple of the chosen point $P_p$.  We record the positive
integer $c_p$ for which
\begin{equation}\label{eq:cm-multiple}
 P^\circ_{q,p,1}\quad\text{is numerically identified with}\quad
 \pm[c_p]P_p.
\end{equation}
The sign is suppressed because only the multiplier and non-vanishing are
relevant here.  By \eqref{eq:normalized-cm-component}, the full component
is then numerically identified with $\pm[h_q c_p]P_p$.

\begin{table}[htbp]
\caption{Numerical CM identifications after transport to $E'_p$.  The
columns $c_p$ and $h_q c_p$ are the positive multipliers in
\eqref{eq:cm-multiple} and in the corresponding identification of
$P_{q,p,1}$, respectively; the omitted sign is $\pm$.}
\label{tab:cm-evaluations}
\begin{tabular}{r r r r r}
\toprule
$p$ & $q$ & $h_q$ & $c_p$ & $h_q c_p$\\
\midrule
 17 & 13 &  4 & 1 &   4\\
 53 & 31 & 10 & 1 &  10\\
 71 & 13 &  4 & 4 &  16\\
 89 & 13 &  4 & 1 &   4\\
107 & 13 &  4 & 5 &  20\\
179 & 13 &  4 & 2 &   8\\
197 & 13 &  4 & 1 &   4\\
233 & 67 & 22 & 1 &  22\\
251 & 13 &  4 & 1 &   4\\
269 & 13 &  4 & 4 &  16\\
359 & 31 & 10 & 1 &  10\\
431 & 13 &  4 & 4 &  16\\
449 & 13 &  4 & 4 &  16\\
467 & 67 & 22 & 5 & 110\\
\bottomrule
\end{tabular}
\end{table}

For example, when $(p,q)=(17,13)$ a numerical evaluation of the
conductor-$17$ orbit to more than one hundred decimal digits gives
\[
 S=N_GZ=\Bigl(-t^2,\frac{\sqrt{-3}\,\varpi}{2}\Bigr),
\]
in agreement with \Cref{prop:example-q13}\textup{(ii)}.  The single-orbit
cubic component was recognized as
\[
 (84,-684)\in E'_{17}(\mathbf Q),
 \qquad
 17=\left(\frac{18}{7}\right)^3+\left(-\frac17\right)^3,
\]
which is $\pm P_{17}$ with the convention of \Cref{tab:points}.  For
$p=269$, the single-orbit component was numerically identified with
$\pm[4]P_{269}$; the corresponding cube representation has a denominator
with $142$ decimal digits.  Moreover,
\[
 \hat h([4]P_{269})=16\hat h(P_{269}).
\]
Thus the single-orbit CM component need not be the chosen point itself
and can have greater height, even for $p<500$.

The CM integrals were evaluated numerically at high precision.  Before
truncating the Fourier series, each argument was moved by the relevant
$\Gamma_0(N)$-action to improve convergence.  The resulting co-ordinates
were recognized as rational points on $E'_p$ and compared with multiples
of the points in \Cref{tab:points}.  This identification is numerical
recognition rather than a symbolic proof.  Once a candidate point was
recognized, its curve equation, the corresponding cube identity, and its
group-law relation to $[c_p]P_p$ were checked in exact rational arithmetic.
Thus the displayed rational points and group-law relations are exact; only
their identification with the analytic CM evaluations is numerical.  The
non-torsion of the CM components is proved independently in
\Cref{thm:cubic-points}.

\end{document}